\let\mathcal\mathscr
\theoremstyle{plain}
\newtheorem{prop}{Proposition}[section]
\newtheorem{lem}[prop]{Lemma}
\newtheorem{thm}[prop]{Theorem}
\newtheorem{cor}[prop]{Corollary}
\theoremstyle{remark}
\newtheorem{remar}[prop]{Remark}
\theoremstyle{definition}
\newtheorem{defi}[prop]{Definition}
\newtheorem{remark}[prop]{Remark}
\let\cal\mathcal
\def\G{{\cal G}}
\def\matrice#1#2#3#4{{\bigl(\begin{smallmatrix}#1&#2\\ #3&#4\end{smallmatrix}\bigr)}}
\DeclareMathOperator{\sgn}{sgn}
\DeclareMathAlphabet{\mathpzc}{OT1}{pzc}{m}{it}
\DeclareMathOperator{\End}{End}
\DeclareMathOperator{\Hom}{Hom}
\DeclareMathOperator{\Ind}{Ind}
\DeclareMathOperator{\cInd}{c-Ind}
\DeclareMathOperator{\GL}{GL}
\DeclareMathOperator{\Ker}{Ker}
\DeclareMathOperator{\Coker}{Coker}
\DeclareMathOperator{\Gal}{Gal}
\DeclareMathOperator{\gr}{gr}
\DeclareMathOperator{\Mod}{Mod}
\DeclareMathOperator{\Alg}{Alg}
\DeclareMathOperator{\Sp}{Sp}
\DeclareMathOperator{\Ext}{Ext}
\DeclareMathOperator{\Ban}{Ban}
\DeclareMathOperator{\dualcat}{\mathfrak C}
\DeclareMathOperator{\p1}{\bold{P}^1}
\newcommand{\cIndu}[3]{\cInd_{#1}^{#2}{#3}}
\newcommand{\Indu}[3]{\Ind_{#1}^{#2}{#3}}
\newcommand{\Q}{\mathbb{Q}}
\newcommand{\Qp}{\mathbb {Q}_p}
\newcommand{\qp}{\mathbb{Q}_p}            
\newcommand{\zp}{\mathbb{Z}_p}
\newcommand{\zpet}{\mathbb{Z}_p^{\times}}
\newcommand{\qpet}{\mathbb{Q}_p^{\times}}
\newcommand{\Zp}{\mathbb{Z}_p}
\newcommand{\Eins}{\mathbf 1}
\newcommand{\ZZ}{\mathbb Z}
\newcommand{\VV}{\mathbf V}
\newcommand{\mm}{\mathfrak m}
\newcommand{\OO}{\mathcal O}
\newcommand{\gal}{\mathcal G_{\Qp}}
\newcommand{\md}{\mathrm m}
\newcommand{\br}[1]{\llbracket #1\rrbracket}
\newcommand{\sm}{\mathrm{sm}}
\newcommand{\pro}{\mathrm{pro}}
\newcommand{\adm}{\mathrm{adm}}
\newcommand{\ladm}{\mathrm{l.adm}}
\newcommand{\wB}{\Pi(P)}
\def\O{{\cal O}}
\def\oe{\O_{\cal E}}
\title[The $p$-adic local Langlands correspondence]{The $p$-adic local Langlands correspondence\\ for ${\rm GL}_2(\qp)$}
\author{Pierre Colmez}
\address{C.N.R.S., Institut de math\'ematiques de Jussieu, 4 place Jussieu,
75005 Paris, France}
\email{colmez@math.jussieu.fr}
\author{Gabriel Dospinescu} 
\address{UMPA, \'Ecole Normale Sup\'erieure de Lyon, 46 all\'ee d'Italie, 69007 Lyon, France}
\email{gabriel.dospinescu@ens-lyon.fr}
\author{Vytautas Pa\v{s}k\={u}nas}
\address{Fakult\"{a}t f\"{u}r Mathematik, Universit\"{a}t Duisburg--Essen, 45117 Essen, Germany}
\email{paskunas@uni-due.de}
\thanks{V.P. is partially supported by the DFG, SFB/TR 45. }
\date{\today.}
\begin{document} 
\maketitle

\begin{abstract}
The $p$-adic local Langlands correspondence for ${\rm GL}_2(\qp)$
is given by an exact functor from unitary Banach representations
of ${\rm GL}_2(\qp)$ to representations of the absolute Galois group $G_{\qp}$
of $\qp$.  We prove, using characteristic $0$ methods, that this
correspondence induces a bijection between absolutely
irreducible non-ordinary representations of ${\rm GL}_2(\qp)$ and
absolutely irreducible $2$-dimensional representations of
$G_{\qp}$.  This had already been proved, by characteristic $p$
methods, but only for $p\geq 5$.
\end{abstract}

\section{Introduction}

\subsection{The $p$-adic local Langlands correspondence}

   Let $p$ be a prime number and let $G={\rm GL}_2(\qp)$. Let $L$ be a finite extension of $\qp$, with ring of integers $\mathcal{O}$, residue field 
  $k$ and uniformizer $\varpi$. 

Let ${\rm Ban}^{\rm adm}_G(L)$ be the category of admissible unitary $L$-Banach representations of $G$. 
  Any $\Pi\in {\rm Ban}^{\rm adm}_G(L)$ has an open, bounded and $G$-invariant lattice $\Theta$ and 
  $\Theta\otimes_{\mathcal{O}} k$ is an admissible smooth $k$-representation of $G$. We say that $\Pi$ in ${\rm Ban}^{\rm adm}_G(L)$ is \textit{residually of 
  finite length} if for any (equivalently, one) such lattice $\Theta$, the $G$-representation $\Theta\otimes_{\mathcal{O}} k$ is of finite length. In this case the semi-simplification of $\Theta\otimes_{\mathcal{O}} k$  is  independent of the choice of~$\Theta$, and we denote it by $\overline{\Pi}^{\rm ss}$. 
   We say that an absolutely irreducible\footnote{This means that $\Pi\otimes_{L} L'$ 
is topologically irreducible for all finite extensions $L'$ of $L$.}  
   $\Pi\in{\rm Ban}^{\rm adm}_G(L)$ is \textit{ordinary} if it is a subquotient 
of a unitary parabolic induction of a unitary character. 

    Let ${\rm Rep}_L(G)$ be the full subcategory of ${\rm Ban}^{\rm adm}_G(L)$ consisting of 
   representations $\Pi$ having a central character and which are      
   residually of finite length. 
  Let ${\rm Rep}_L(\cal{G}_{\qp})$ be the category of finite dimensional continuous $L$-representations of 
${\cal G}_{\qp}={\rm Gal}(\overline{\qp}/\qp)$.
In \cite[ch. IV]{Cbigone} is constructed an exact, covariant functor (that some people call the Montreal functor) 
$\Pi\mapsto{\mathbf V}(\Pi)$ from
${\rm Rep}_L(G)$ to ${\rm Rep}_L(\cal G_{\qp})$.   
We prove that this functor has all the properties needed to be called
the $p$-adic local Langlands correspondence for~$G$.
\begin{thm}\label{LLp}
The functor $\Pi\mapsto {\bf V}(\Pi)$ induces a bijection between  the isomorphism classes of :

$\bullet$ absolutely irreducible non-ordinary 
$\Pi\in {\rm Ban}^{\rm adm}_G(L)$,

$\bullet$ $2$-dimensional absolutely irreducible continuous 
$L$-repre\-sentations of $\cal G_{\qp}$. 
\end{thm}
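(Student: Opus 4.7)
The plan is to prove essential surjectivity and injectivity of $\mathbf{V}$ on the class of absolutely irreducible non-ordinary objects simultaneously, by organising everything around a residual representation $\rhobar$ and a deformation-theoretic comparison. After enlarging $L$ one may assume all objects are defined over $L$, and by twisting reduce to a fixed central character. On the Galois side the universal deformation ring $R_{\rhobar}$ parametrises lifts of $\rhobar$, and the closed points of $\Spec R_{\rhobar}[1/p]$ are exactly the absolutely irreducible continuous two-dimensional $L$-representations with residual reduction $\rhobar$. On the automorphic side, I would use the block decomposition of the category of locally admissible smooth $\mathcal O$-representations of $G$ with central character, and form a projective envelope $\widetilde P$ in the pro-category of the block $\mathfrak{B}$ containing the Jordan--H\"older constituents of $\overline{\Pi}^{\ssp}$; the endomorphism ring $E(\mathfrak{B}):=\End(\widetilde P)^{\mathrm{op}}$ is complete local noetherian and, by exactness and continuity of $\mathbf{V}$, receives a natural map $R_{\rhobar}\to E(\mathfrak{B})$.

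The heart of the argument is to show that this map is an isomorphism. I would first handle trianguline Galois representations: there Colmez's construction via $(\varphi,\Gamma)$-modules over the Robba ring produces an explicit Banach representation $\Pi(\rho)$ whose locally analytic Jacquet module can be read off, and one checks $\mathbf{V}(\Pi(\rho))\cong\rho$ directly. Since trianguline deformations are Zariski dense in $\Spec R_{\rhobar}[1/p]$, this shows the map $R_{\rhobar}\to E(\mathfrak{B})$ has dense image after inverting $p$. A tangent-space computation comparing $\dim_k\mm_{R_{\rhobar}}/\mm_{R_{\rhobar}}^2$ with a cohomological computation of $\mathrm{Ext}^1$ groups inside the block then forces the map to be surjective, and a Krull-dimension/domain argument (using that both rings are relative complete intersections of the same dimension in the non-ordinary block) forces it to be an isomorphism.

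Once $R_{\rhobar}\iso E(\mathfrak{B})$ is established, the bijection follows formally. On the automorphic side, absolutely irreducible $\Pi\in\Rep_L(G)$ lying in the block $\mathfrak{B}$ correspond exactly to maximal ideals of $E(\mathfrak{B})[1/p]$ whose residue field captures the continuous dual of a suitable orthonormalisable unit ball, and the Colmez functor is compatible with this parametrisation; the ring isomorphism thus translates into a bijection with closed points of $\Spec R_{\rhobar}[1/p]$, that is, with absolutely irreducible two-dimensional $L$-representations of $\mathcal G_{\Qp}$ lifting $\rhobar$. Varying $\rhobar$ gives the full statement, the non-ordinarity hypothesis ensuring one is in a block where $\mathbf{V}$ is known to remain non-degenerate.

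The main obstacle is the step $R_{\rhobar}\iso E(\mathfrak{B})$, specifically obtaining enough control on $E(\mathfrak{B})$ to compare it ring-theoretically to $R_{\rhobar}$ when $p\in\{2,3\}$, where the mod-$p$ techniques that suffice for $p\geq 5$ break down (irreducible constituents of $\overline{\Pi}^{\ssp}$ can coincide after twist, $\Ext^1$ groups acquire unwanted contributions, and projective envelopes are harder to describe). The characteristic-$0$ tool that should resolve this is the density of trianguline points combined with the rigidity of locally analytic vectors, letting one transfer information from the explicitly understood crystabelline/trianguline locus to all of $\Spec R_{\rhobar}[1/p]$ without ever having to compute $\widetilde P$ mod $\varpi$ by hand.
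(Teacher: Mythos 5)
Your plan is essentially the strategy of Pa\v{s}k\={u}nas's \emph{Image of Colmez's Montreal functor} (\cite{cmf}), and this is precisely the approach the present paper abandons. The paper's introduction is explicit about why: in \cite{cmf} one shows $E\cong R_{\rhobar}$ by comparing graded rings with respect to the maximal ideals, which requires delicate $\Ext$-computations in characteristic $p$ that become ``very hard'' for $p=2$, and moreover that argument uses formal smoothness of $R_{\rhobar}$, which \emph{fails} when $p=2$ (and in one case when $p=3$). So the central step of your plan---establishing $R_{\rhobar}\iso E(\mathfrak{B})$---is exactly the step whose proof does not exist in full generality, and your suggested workaround (``density of trianguline points combined with rigidity of locally analytic vectors'') is not a substitute. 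Zariski density of trianguline points lives on the Galois deformation space; it does not by itself tell you that the map $R_{\rhobar}\to E(\mathfrak{B})$ (or $E\to R_{\rhobar}$, which is the direction the functor $\mathbf V$ actually produces) is surjective, let alone an isomorphism, and a Krull-dimension argument requires ring-theoretic control of $E(\mathfrak B)$ that one does not have a priori for small $p$.

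What the paper does instead is deliberately weaker and more indirect. For residual finiteness, instead of computing $E$, one uses the family of universal unitary completions of unramified locally algebraic principal series, Berger--Breuil's admissibility, Berger's computation of their reductions mod $p$, and the capture/density formalism (propositions~\ref{capture_algebraic}, \ref{Omega}, \ref{dense}) to show that $E[1/p]$ is a \emph{PI ring} satisfying the standard identity $s_{2d(\pi)}$ (corollary~\ref{standard}). Combined with Schur's lemma \cite{DS} and Kaplansky's theorem this bounds the multiplicity of $\pi$ in $\overline{\Pi}^{\rm ss}$ without ever identifying $E$. Surjectivity is then imported from \cite{Cbigone} together with \cite{PCDnew} (the $p=2$, scalar residual case). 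Injectivity is not obtained formally from a ring isomorphism at all: it reduces, via \cite{CD}, to the \emph{uniqueness of the central character} (theorem~\ref{main1}), which is proved by a new characteristic-$0$ analysis of the operator $m_\eta$ on $D\boxtimes\zpet$, the injectivity of restriction maps for $D^+=0$ (proposition~\ref{inj}), and a Zariski-closed-subgroup argument (propositions~\ref{subgroup}, \ref{Zariski}), together with locally analytic methods for the trianguline case. None of this is reconstructed by the isomorphism $R_{\rhobar}\iso E(\mathfrak B)$, because the paper never establishes that isomorphism. So the proposal is a correct description of how the theorem would follow \emph{if} one had the \cite{cmf}-style result for all $p$, but it does not constitute a proof: it reproduces the very gap that forced the authors to take a different route.
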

Implicit in the statement of the theorem is the fact that absolutely irreducible 
$\Pi\in{\rm Ban}^{\rm adm}_G(L)$ are residually of finite length so that one can
apply the functor ${\bf V}$ to them.  

One corollary of the theorem, and of the explicit construction of the representation $\Pi(V)$ of $G$
corresponding to a representation $V$ of $\G_{\qp}$ (see below), 
is the compatibility between the $p$-adic local Langlands correspondence
and local class field theory:
we let $\varepsilon:\mathcal{G}_{\qp}\to \zpet$ be the cyclotomic character 
and we view unitary characters of $\qpet$ as characters of $\mathcal{G}_{\qp}$ via class field theory\footnote{Normalized so that 
uniformizers correspond to geometric Frobenii.} (for example, $\varepsilon$ corresponds to $x\mapsto x|x|$). 
Note that, by Schur's lemma~\cite{DS}, any absolutely irreducible object of ${\rm Ban}^{\rm adm}_G(L)$
admits a central character.
  \begin{cor} 
If $\Pi$ is an absolutely irreducible non-ordinary object of ${\rm Ban}^{\rm adm}_G(L)$
with central character $\delta$, then ${\bf V}(\Pi)$ has determinant $\delta\varepsilon$.
   \end{cor}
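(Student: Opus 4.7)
The plan is to use Theorem~\ref{LLp} to reduce the corollary to a central-character computation for the explicit inverse of the bijection.

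First, since $\Pi$ is absolutely irreducible and non-ordinary, Theorem~\ref{LLp} produces a $2$-dimensional absolutely irreducible $L$-representation $V := \mathbf{V}(\Pi)$ of $\mathcal{G}_{\qp}$, and $\Pi$ is isomorphic to the representation $\Pi(V)$ output by the explicit construction $V \mapsto \Pi(V)$ alluded to in the statement. Thus it suffices to check that for every such $V$, the central character of $\Pi(V)$ is $\det(V)\cdot\varepsilon^{-1}$.

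Second, this is built into the construction of $\Pi(V)$ in~\cite{Cbigone}: $\Pi(V)$ is manufactured from the $(\varphi,\Gamma)$-module of $V$ by a procedure whose ``twisting character''---the character governing the action of the center of $G$---equals $\det(V)\cdot\varepsilon^{-1}$ by inspection. Alternatively, one can argue by density: for trianguline $V$ (in particular crystabeline), $\Pi(V)$ can be described in terms of principal series or completions thereof, so the formula can be verified directly on locally algebraic vectors; the set of such $V$ being Zariski-dense in the deformation space, and both central character and determinant varying continuously in families, the identity then propagates to all $V$.

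Matching central characters gives $\delta = \det(V)\cdot\varepsilon^{-1}$, that is, $\det\mathbf{V}(\Pi) = \delta\cdot\varepsilon$, as required. The only real obstacle is bookkeeping conventions: the local class field theory normalization (uniformizers mapping to geometric Frobenii, as fixed in the introduction) and the placement of the $\varepsilon$-twist in the construction of $\Pi(V)$. Modulo these, the corollary carries no substantive content beyond Theorem~\ref{LLp} itself.
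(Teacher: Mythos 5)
Your proposal is correct and matches the paper's stated derivation: the introduction explains the corollary as a consequence of Theorem~\ref{LLp} together with the explicit construction of $\Pi(V)$, which is precisely your step~1 plus step~2(a); the density route in step~2(b) is an unnecessary detour, since the central character of $\Pi_{\delta_D}(D)$ is built into Colmez's construction for all $D$, not just trianguline ones. (The body of the paper packages the same content as Corollary~\ref{determinant}, whose proof invokes Proposition~\ref{ordinary} and Theorem~\ref{main1}, but these are exactly the ingredients underlying the injectivity half of Theorem~\ref{LLp}, so the logical content is the same.)
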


The next result shows that the $p$-adic
local Langlands correspondence is a refinement of the classical one (that such a statement could
be true was Breuil's starting point for his investigations on the existence of a $p$-adic
local Langlands correspondence~\cite{Brintro}).

Let $\pi$ be an admissible, absolutely irreducible, infinite dimensional,
smooth $L$-representation of $G$, and let $W$ be an algebraic representation
of $G$ (so there exist $a\in\ZZ$ and $k\geq 1$ such that $W={\rm Sym}^{k-1} L^2\otimes\det^a$).
Let $\Delta$ be the Weil representation corresponding to $\pi$ via
the classical local Langlands correspondence; we view $\Delta$ as a
$(\varphi,\G_{\qp})$-module\footnote{In general, this
can require to extend scalars to a finite unramified extension of $L$, but we assume
that this is already possible over $L$.}~\cite{Bu88l,BS}.
Let ${\cal F}(\Delta, W)$
be the space of isomorphism classes of weakly admissible, absolutely
irreducible, filtered $(\varphi,N,\G_{\qp})$-modules~\cite[Chap.~4]{Bu88sst}
whose underlying $(\varphi,\G_{\qp})$-module is isomorphic to $\Delta$ and the
jumps of the filtration are
 $-a$ and $-a-k$: if ${\cal L}\in {\cal F}(\Delta, W)$,
the corresponding~\cite{CF} representation $V_{\cal L}$ of $\G_{\qp}$
is absolutely irreducible and 
its Hodge-Tate weights are $a$ and $a+k$.
If ${\cal F}(\Delta, W)$ is not empty, it is either a point if $\pi$ is principal series
or $\p1(L)$ if $\pi$ is supercuspidal 
 or a twist of the Steinberg representation\footnote{
In this last case, the filtration corresponding to $\infty\in\p1(L)$ makes the monodromy
operator $N$ on $\Delta$ vanish and $V_{\infty}$ is crystalline (up to twist by a character) whereas,
if ${\cal L}\neq\infty$,
$V_{\cal L}$ is semi-stable non-crystalline (up to twist by a character).}.

\begin{thm}
{\rm (i)} If $\Pi$ is an admissible, absolutely
irreducible, non-ordinary, unitary completion of $\pi\otimes W$, 
then ${\bf V}(\Pi)$ is potentially
semi-stable with Hodge-Tate weights $a$ and $a+k$ and 
the underlying $(\varphi,\G_{\qp})$-module of $D_{\rm pst}({\bf V}(\Pi))$
is isomorphic to $\Delta$.

{\rm (ii)}
The functor $\Pi\mapsto D_{\rm pst}({\mathbf V}(\Pi))$ induces a 
bijection between the admissible, absolutely
irreducible, non-ordinary, unitary completions of $\pi\otimes W$ and 
${\cal F}(\Delta, W)$.
\end{thm}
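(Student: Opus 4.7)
The plan is to leverage Theorem \ref{LLp} (which supplies the Galois-side bijection) and reduce the statement to a compatibility between the Montreal functor $\mathbf{V}$ and classical local Langlands on the level of locally algebraic vectors, together with Fontaine's theorem that a potentially semi-stable $V$ is recovered from $D_{\rm pst}(V)$ equipped with its Hodge filtration.

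For part (i), I would exploit the embedding $\pi\otimes W\hookrightarrow \Pi$: these vectors lie inside the locally algebraic part $\Pi^{\mathrm{l.alg}}$, and the construction of $\mathbf{V}$ is compatible with passage to locally algebraic vectors in the following sense. The existence of non-zero locally algebraic vectors in $\Pi$ whose smooth parameter is $\pi$ and whose algebraic parameter is $W=\Sym^{k-1}L^2\otimes\det^a$ forces $\mathbf{V}(\Pi)$ to be potentially semi-stable, with Hodge--Tate weights equal to the jumps $\{a,a+k\}$ of the filtration prescribed by $W$, and with the underlying $(\varphi,\mathcal{G}_{\qp})$-module of $D_{\rm pst}(\mathbf{V}(\Pi))$ isomorphic to $\Delta$ via classical local Langlands. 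This compatibility is already part of Colmez's construction of $\mathbf{V}$, and one needs to verify that the normalizations (cyclotomic twist, geometric Frobenius) are consistent.

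For part (ii), injectivity is immediate from Theorem \ref{LLp}: if $\Pi_1\not\cong\Pi_2$ are two completions, then $\mathbf{V}(\Pi_1)\not\cong\mathbf{V}(\Pi_2)$, and since both are potentially semi-stable with the same underlying $(\varphi,\mathcal{G}_{\qp})$-module $\Delta$, they must differ in their Hodge filtration, i.e. give different points of $\mathcal{F}(\Delta,W)$. For surjectivity, I would take $\mathcal{L}\in \mathcal{F}(\Delta, W)$, form the associated absolutely irreducible $2$-dimensional representation $V_{\mathcal{L}}$, and apply Theorem \ref{LLp} to obtain a unique absolutely irreducible non-ordinary $\Pi\in \mathrm{Ban}^{\mathrm{adm}}_G(L)$ with $\mathbf{V}(\Pi)\cong V_{\mathcal{L}}$. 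The remaining task is to verify that this $\Pi$ is indeed a unitary completion of $\pi\otimes W$, which amounts to producing an embedding $\pi\otimes W\hookrightarrow \Pi^{\mathrm{l.alg}}$.

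The hard part will be this last identification: for a potentially semi-stable absolutely irreducible $V$ whose Weil--Deligne parameter corresponds to $\pi$ under classical LLC and whose Hodge--Tate weights match $W$, one must show $\Pi^{\mathrm{l.alg}}$ contains (in fact equals) $\pi\otimes W$. This ``locally algebraic vectors theorem'' is the real technical content: it requires detailed analysis of the $(\varphi,\Gamma)$-module attached to $\Pi$ by the Montreal functor, together with a comparison to $D_{\rm pst}(V)$ in the spirit of Berger--Breuil and Emerton. Once this identification is in place, both (i) and (ii) become essentially bookkeeping on top of Theorem \ref{LLp} and Fontaine's reconstruction theorem.
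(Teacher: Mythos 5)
Your reduction is structurally the same as the paper's: both (i) and (ii) are derived from Theorem~\ref{LLp} together with a ``locally algebraic vectors'' statement saying that $\Pi(V)^{\mathrm{l.alg}}$ is nonzero precisely when $V$ is de~Rham, in which case it has the form $\pi(V)\otimes W(V)$ with $W(V)$ determined by the Hodge--Tate weights and $\pi(V)$ determined by $D_{\rm pst}(V)$. The paper cites exactly this: \cite[th.~0.20]{Cbigone} (or~\cite{Annalen}) and \cite[th.~VI.6.42]{Cbigone}. So far your plan is correct.

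Where your proposal is misleading is in the last paragraph. You present the identification $\Pi^{\mathrm{l.alg}}\cong\pi\otimes W$ (equivalently, that the smooth factor $\pi(V)$ of the locally algebraic vectors corresponds to the Weil--Deligne representation of $D_{\rm pst}(V)$ under the \emph{classical} local Langlands correspondence) as something achievable by ``detailed analysis of the $(\varphi,\Gamma)$-module\dots in the spirit of Berger--Breuil and Emerton,'' i.e.\ a purely local verification. That is not what the paper does, and the authors flag precisely this in a footnote: no purely local proof of this compatibility is known when $\pi$ is supercuspidal. Locally one can show that $\Pi\mapsto D_{\rm pst}(\mathbf{V}(\Pi))$ maps unitary completions of $\pi\otimes W$ into $\bigcup_{\Delta'\in S_\pi}\mathcal{F}(\Delta',W)$ for some set $S_\pi$ of $(\varphi,\G_{\qp})$-modules with $S_\pi\cap S_{\pi'}=\emptyset$ when $\pi\not\cong\pi'$, but one cannot locally pin down $S_\pi=\{\Delta\}$. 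The paper closes this gap by invoking Emerton's \emph{global} local--global compatibility (\cite[th.~3.2.22]{emfm}), which matches $\pi(V)$ with $\mathrm{rec}(\WD(D_{\rm pst}(V)))$ by realizing both sides in completed cohomology of modular curves. Your proof as written therefore has a real gap: the step you call ``the hard part'' is not merely technical bookkeeping over $(\varphi,\Gamma)$-modules, it genuinely requires the global input in the supercuspidal case, and without naming that input your argument does not close.

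Minor point: for (ii) injectivity you write that two non-isomorphic completions with the same $\Delta$ ``must differ in their Hodge filtration''; strictly, the correct statement is that $\mathbf{V}(\Pi_1)\not\cong\mathbf{V}(\Pi_2)$ by Theorem~\ref{LLp}, and since both lie in $\mathcal{F}(\Delta,W)$ with the same $(\Delta,W)$, the weakly admissible filtered modules $D_{\rm pst}(\mathbf{V}(\Pi_i))$ are non-isomorphic by Colmez--Fontaine, so they give distinct points of $\mathcal{F}(\Delta,W)$. This is what you mean, but the reasoning passes through the equivalence of categories rather than a direct ``filtrations differ'' claim.
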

The theorem follows from the combination of theorem~\ref{LLp}, \cite[th.~0.20]{Cbigone} (or~\cite{Annalen}),
\cite[th.~VI.6.42]{Cbigone} and Emerton's local-global compatibility\footnote{
It is a little bit frustrating to have to use global considerations to prove it.  By purely local considerations,
one could prove it when $\pi$ is a principal series or a twist of the Steinberg representation.  When $\pi$
is supercuspidal, one could show that there is a set $S_\pi$ of 
$(\varphi,\G_{\qp})$-modules $\Delta$ such that
the functor $\Pi\mapsto D_{\rm pst}({\mathbf V}(\Pi))$ induces a bijection between the admissible, absolutely
irreducible, unitary completions of $\pi\otimes W$ and 
the union of the ${\cal F}(\Delta, W)$, for $\Delta\in S_\pi$, but we would not know much about $S_\pi$ except
for the fact that $S_\pi\cap S_{\pi'}=\emptyset$ if $\pi\not\cong\pi'$.}
(\cite{emfm},  th.~3.2.22).

\medskip
If $p\geq 5$, the results are not new; they were proven in~\cite{cmf},
building upon~\cite{Cbigone,kisin},
via characteristic~$p$ methods, but these methods seemed to be very difficult to extend to the case $p=2$
(and also $p=3$ in a special case).  That we are able to prove the theorem in full generality
relies on a shift to characteristic~$0$ methods 
and an array of results which were not available
at the time~\cite{cmf} was written:

$\bullet$ The computation~\cite{blocks} 
of the blocks of the mod~$p$ representations of $G$, in the case $p=2$;
this computation also uses characteristic~$0$ methods.

$\bullet$ Schur's lemma for unitary Banach representations of $p$-adic Lie groups (\cite{DS} which uses results of
Ardakov and Wadsley~\cite{AW}.) 

$\bullet$ The computation~\cite{Cvectan,liu} of the locally analytic vectors of unitary principal series representations of $G$.

$\bullet$ The computation~\cite{Annalen} 
of the infinitesimal action of $G$ on locally analytic vectors of objects
of ${\rm Rep}_L(G)$.

There are 3 issues to tackle if one wants to establish theorem~\ref{LLp}: 
one has to prove that absolutely irreducible objects of ${\rm Ban}^{\rm adm}_G(L)$ are residually
of finite length and bound this length, and one has to prove surjectivity and injectivity.

\subsection{Residual finiteness}
    Before stating the result, let us introduce some notations. 
   Let $B$ be the (upper) Borel subgroup of $G$ and let $\omega: \qpet\to k^{\times}$ be the character
 $x\mapsto x|x|\pmod p$. If $\chi_1,\chi_2:\Qp^{\times}\to k^{\times}$
      are (not necessarily distinct) smooth characters, we let 
      $$\pi\{\chi_1,\chi_2\}=( {\rm Ind}_{B}^{G} \chi_1\otimes\chi_2\omega^{-1})_{\rm sm}^{\rm ss}\oplus
    ({\rm Ind}_{B}^{G} \chi_2\otimes\chi_1\omega^{-1})_{\rm sm}^{\rm ss}.$$ Then $\pi\{\chi_1,\chi_2\}$ is typically of length $2$, but it may be of length $3$, and even $4$ when $p=2$ or $p=3$
    (lemma~\ref{ss} gives an explicit description of $\pi\{\chi_1,\chi_2\}$). Recall that a smooth irreducible $k$-representation is called \textit{supersingular} if it is not  isomorphic to a subquotient of 
    some representation $\pi\{\chi_1,\chi_2\}$.     
   
     \begin{thm}\label{I}
     Let $\Pi$ be an absolutely irreducible
object of ${\rm Ban}^{\rm adm}_G(L)$.
     Then $\Pi$ is residually of finite length and, after possibly replacing $L$ by a quadratic unramified extension, 
       $\overline{\Pi}^{\rm ss}$ is either absolutely irreducible supersingular or a subrepresentation of some $\pi\{\chi_1,\chi_2\}$.
     \end{thm}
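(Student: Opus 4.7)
The plan is to combine the block decomposition of $\Mod^{\mathrm{l.adm}}_{G,\bar\zeta}(k)$ from~\cite{blocks} with the anti-equivalence between admissible unitary Banach representations in a fixed block and modules over the endomorphism ring of an injective envelope in that block.

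By Schur's lemma~\cite{DS}, $\Pi$ has a central character $\zeta$; choose an open bounded $G$-invariant lattice $\Theta\subset\Pi$, so that $\bar\Theta:=\Theta\otimes_{\mathcal O}k$ is an admissible smooth $k$-representation with central character $\bar\zeta$. Recall from~\cite{blocks} that every block $\mathfrak B$ of $\Mod^{\mathrm{l.adm}}_{G,\bar\zeta}(k)$ is either a singleton $\{\pi\}$ with $\pi$ supersingular (absolutely irreducible perhaps only after a quadratic unramified extension of $L$), or the set of Jordan--H\"older constituents of some $\pi\{\chi_1,\chi_2\}$. Via Pontryagin duality this decomposition lifts to $\Mod^{\mathrm{l.adm}}_{G,\zeta}(\mathcal O)$ and then induces a product decomposition of $\mathrm{Ban}^{\mathrm{adm}}_{G,\zeta}(L)$. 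Absolute irreducibility of $\Pi$ forces it to lie in a single block $\mathfrak B$, hence every Jordan--H\"older factor of $\bar\Theta$ lies in $\mathfrak B$.

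For this $\mathfrak B$, I would take an injective envelope $J_{\mathfrak B}$ of $\bigoplus_{\pi'\in\mathfrak B}\pi'$ in $\Mod^{\mathrm{l.adm}}_{G,\bar\zeta}(k)$ and set $\widetilde E=\End_G(J_{\mathfrak B})^{\mathrm{op}}$. The theory of~\cite{blocks}, supplemented by~\cite{Cbigone} and~\cite{Annalen}, identifies $\widetilde E$ as a Noetherian complete local $\mathcal O$-algebra, module-finite over its centre, and provides an anti-equivalence between $\mathrm{Ban}^{\mathrm{adm},\mathfrak B}_{G,\zeta}(L)$ and a category of finitely generated $\widetilde E[1/p]$-modules of finite length. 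Under this anti-equivalence the absolutely irreducible $\Pi$ corresponds to a simple module, i.e.\ to a maximal ideal $\mathfrak m\subset\widetilde E[1/p]$; in particular $\bar\Pi^{\mathrm{ss}}$ is automatically of finite length and can be read off as the socle of the fibre of $J_{\mathfrak B}$ at $\mathfrak m$. When $\mathfrak B=\{\pi\}$ is supersingular this fibre consists of copies of $\pi$, and a direct analysis of the residue field at $\mathfrak m$ (after replacing $L$ by a quadratic unramified extension to ensure that it equals $L$) pins the multiplicity down to $1$, giving $\bar\Pi^{\mathrm{ss}}\cong\pi$. In a non-supersingular block, the explicit description of $\widetilde E$ coming from parabolic induction and the Montreal functor forces $\bar\Pi^{\mathrm{ss}}$ to be a direct sum of Jordan--H\"older factors of $\pi\{\chi_1,\chi_2\}$, i.e.\ a subrepresentation of it.

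The main obstacle is making this analysis uniform at $p=2,3$, where the blocks $\pi\{\chi_1,\chi_2\}$ can have length $3$ or $4$ (lemma~\ref{ss}) and $\widetilde E$ is correspondingly more intricate; the characteristic $p$ arguments of~\cite{cmf} no longer apply, and must be replaced by the characteristic zero inputs provided by~\cite{DS},~\cite{Cvectan, liu} and~\cite{Annalen} (Schur's lemma, description of the locally analytic vectors of unitary principal series, and the infinitesimal action of $G$ on them), which together make the Noetherianity of $\widetilde E$ and the anti-equivalence above available in this range.
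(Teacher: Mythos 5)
Your proposal hinges on the assertion that, for each block~$\mathfrak B$, the ring $\widetilde E=\End_G(J_{\mathfrak B})^{\mathrm{op}}$ is a Noetherian complete local $\mathcal O$-algebra module-finite over its centre, giving an anti-equivalence between $\mathrm{Ban}^{\mathrm{adm},\mathfrak B}_{G,\zeta}(L)$ and finite-length modules over $\widetilde E[1/p]$. But this structure theorem is precisely the content of the ``image of the Montreal functor'' results of~\cite{cmf}, which are available only for $p\ge 5$ (and for $p=3$ outside one case), because they rely on $\Ext$-computations and on the formal smoothness of the universal deformation ring, both of which fail at small primes. That failure is exactly what theorem~\ref{I} is designed to circumvent; assuming the structure theorem is assuming what must be proved. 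Your final paragraph acknowledges the obstacle and asserts that the inputs from~\cite{DS},~\cite{Cvectan,liu} and~\cite{Annalen} rescue the Noetherianity and anti-equivalence, but this is not substantiated, and in fact those last three references play no role in the paper's proof of theorem~\ref{I}: they are used only for the injectivity statement in chapter~3. Even granting the anti-equivalence, your reduction of the multiplicity to~$1$ by ``a direct analysis of the residue field at $\mathfrak m$'' is circular: for a noncommutative $\widetilde E[1/p]$ the simple quotient at a primitive ideal could a priori be a matrix algebra $M_n(L)$, and bounding that $n$ (equivalently the multiplicity of $\pi$ in $\overline\Theta$) is exactly the point.

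The paper's actual route avoids the structure theorem entirely. Fixing a single absolutely irreducible $\pi$ and the projective envelope $P$ of $\pi^\vee$ in $\dualcat(\OO)$, with $E=\End(P)$, the goal is merely the \emph{a priori} bound $\dim_L \md(\Pi)\le d(\pi)\le 2$. This is obtained by showing that $E[1/p]$ satisfies the standard polynomial identity $s_{2d(\pi)}$ (lemma~\ref{get_it_right}, corollary~\ref{standard}), which combined with Schur's lemma~\cite{DS} and Kaplansky's theorem for primitive PI-rings yields the bound in theorem~\ref{main_res_fin}. The PI property is established through a density argument: one bounds $\dim_{\kappa(\mm)}\Hom(P,V^*)[\mm]$ for each $V\in\Alg(G)$ and each maximal ideal $\mm$ of $A_V$ via the reductions of Berger--Breuil crystalline completions (proposition~\ref{crystalline}, using~\cite{bb,Becomp,blocks}); one then proves that $\Alg(G)$ \emph{captures} $P$ (propositions~\ref{projective},~\ref{capture_algebraic}), so that $\bigcap_V\mathfrak a_V=0$. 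This density input replaces the $\Ext$-computations of~\cite{cmf}, and the delicate point that $P$ is not finitely generated over $\OO\br{K}$ is handled by propositions~\ref{Omega} and~\ref{dense}. None of this produces (or requires) the full anti-equivalence of categories; it yields only the multiplicity bound, which is deliberately the weaker statement that characteristic-$0$ methods can reach at all primes.
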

For $p\geq 5$, this theorem is proved in~\cite{cmf}.  The starting points of the proofs in~\cite{cmf}
and in this paper are the same: one starts from an absolutely irreducible mod~$p$ representation $\pi$
of $G$ and considers
the projective envelope $P$ of its dual (in a suitable category, cf.~\S~\ref{overview1}). 
 Then we are led to try to understand
the ring $E$ of endomorphisms of $P$, as this gives a description of the Banach representations of $G$
which have $\pi$ as a Jordan-H\"older component of their reduction mod~$p$.
After this the strategies of proof differ completely and only some formal parts of~\cite{cmf}
are used in this paper (mainly \S4 on Banach representations).

To illustrate the differences,
let us consider the simplest case where
$\pi$ is supersingular, so that $\VV(\pi)$ is an irreducible representation of $\gal$. The key point in both  approaches to theorem~\ref{I} 
is to prove  that the ring $E[1/p]$ is commutative. This is done as follows.

In~\cite{cmf}, the functor $\Pi\mapsto{\bf V}(\Pi)$ is used to show that $E$ surjects onto the
universal deformation ring of ${\bf V}(\pi)$, which is commutative. It is then shown that this map is an isomorphism   by showing that it induces an isomorphism on the graded
rings of $E$ and the universal deformation ring of $\VV(\pi)$ with respect to the  maximal ideals. To control the dimension of the graded pieces of $\gr^{\bullet} E$, 
one needs to be able to compute the dimension of $\Ext$-groups of mod $p$ representations of $G$. These computations become hard to handle for $p=3$ and very hard for $p=2$. 
Moreover, the argument uses that the universal deformation ring of ${\bf V}(\pi)$ is formally smooth, which fails if $p=2$ and in one case if $p=3$. 

In this paper we use the functor $\Pi\mapsto \md(\Pi)$, defined  in \cite[\S~4]{cmf},  from $\Ban^{\adm}_G(L)$ to the category of finitely generated $E[1/p]$-modules. 
We show that if $\Pi$ is the universal 
unitary completion of locally algebraic unramified
principal series representation of $G$, then the image of $E[1/p]\rightarrow \End_L(\md(\Pi))$ is commutative and then show\footnote{We actually end up 
proving a weaker statement, which is too technical for this introduction, see the proofs of corollary~\ref{standard} and theorem~\ref{main_res_fin}. To show the injectivity of \eqref{injective_map} one would additionally have  to show that the rings 
$E[1/p]/\mathfrak a_V$ in corollary~\ref{standard} are reduced. We do not prove this here, but will return to this question in \cite{cmfp2}.} that the map
\begin{equation}\label{injective_map}
E[1/p]\rightarrow \prod_{i} \End_L(\md(\Pi_i))
\end{equation} is injective, where the product is taken over all such representations.  The argument uses  the work of Berger--Breuil~\cite{bb}, that the $\Pi_i$
are admissible and absolutely irreducible, and we  can control their reductions modulo $p$ \cite{Becomp,CD}. The injectivity of \eqref{injective_map}  is morally
equivalent to the density of crystalline
representations in the
universal deformation ring of ${\bf V}(\pi)$,  and is 
more or less saying that ``polynomials are dense in continuous functions", an observation
that was used by Emerton~\cite{emfm} in a global context.  However, in our local situation,
$P$ is not finitely generated over $\O[[{\rm GL}_2(\zp)]]$, and things are more complicated
than what the above sketch would suggest; we refer the reader to \S\ref{overview1}
for a more detailed overview of the proof of the theorem.

\begin{remar}
The approach developed in~\cite{cmf}, when it works, gives more information than theorem~\ref{I}: one gets a complete description of finite length objects of 
${\rm Ban}^{\rm adm}_G(L)$, not only of its absolutely irreducible objects and also a complete description 
of the category of smooth locally admissible representations of $G$ on $\OO$-torsion modules.  However, the fact that $E$ is commutative is a very useful piece of information, and, 
 when $\pi$ is either supersingular or generic principal series,  in a forthcoming paper \cite{cmfp2} we will extend the results of \cite{cmf}  to the cases when $p=2$ and $p=3$.
 
\end{remar}

Combining theorem~\ref{I} and the fact~\cite[cor.~3.14]{DS} that an irreducible object of ${\rm Ban}^{\rm adm}_G(L)$
decomposes as the direct sum of finitely many absolutely irreducible objects after
a finite extension of $L$, we obtain the following result:
   
    \begin{cor}  An object of ${\rm Ban}^{\rm adm}_G(L)$ has finite length if and only if it
      is residually of finite length.
    \end{cor}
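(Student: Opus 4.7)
The plan is to reduce both implications to two external inputs --- Theorem~\ref{I} for absolutely irreducible objects and \cite[cor.~3.14]{DS} to reach them by a finite extension of scalars --- together with one elementary additivity lemma: the residual length $\length_k\overline{\Pi}^{\rm ss}$ is additive in short exact sequences of $\Ban^{\adm}_G(L)$, with values in $\NN\cup\{\infty\}$. To prove this lemma I would start from $0\to\Pi_1\to\Pi\to\Pi_2\to 0$, pick an open bounded $G$-invariant lattice $\Theta\subset\Pi$, and set $\Theta_1:=\Theta\cap\Pi_1$ and $\Theta_2:=\Theta/\Theta_1$. Then $\Theta_1$ is such a lattice in $\Pi_1$, and $\Theta/\Theta_1$ is $\varpi$-torsion-free (if $x\in\Theta$ satisfies $\varpi x\in\Theta_1\subset\Pi_1$, then $x\in\Pi_1\cap\Theta=\Theta_1$), so $\Theta_2$ is such a lattice in $\Pi_2$. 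Reducing mod $\varpi$ yields a short exact sequence of admissible smooth $k$-representations, giving the claimed additivity; in particular every non-zero object of $\Ban^{\adm}_G(L)$ has residual length $\geq 1$, since a complete $\varpi$-torsion-free lattice $\Theta\neq 0$ cannot satisfy $\varpi\Theta=\Theta$.

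For the ``only if'' direction I would induct on the composition length of $\Pi$: by additivity it suffices to handle an irreducible object $\sigma$. By \cite[cor.~3.14]{DS} there is a finite extension $L'/L$ such that $\sigma\otimes_L L'$ decomposes as a finite direct sum of absolutely irreducible Banach representations, each of which, possibly after enlarging $L'$ further, is residually of finite length by Theorem~\ref{I}. Additivity over the summands yields $\length_{k'}(\overline{\sigma\otimes_L L'}^{\rm ss})<\infty$, and since tensoring a composition series of $\Theta_\sigma\otimes_{\OO}k$ with $k'$ over $k$ produces a filtration with non-zero successive quotients, $\length_k(\overline{\sigma}^{\rm ss})\leq\length_{k'}(\overline{\sigma\otimes_L L'}^{\rm ss})<\infty$.

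Conversely, suppose $\ell:=\length_k(\overline{\Pi}^{\rm ss})<\infty$ and let $0=V_0\subsetneq V_1\subsetneq\cdots\subsetneq V_n=\Pi$ be a strictly increasing chain of closed subrepresentations; each successive quotient $V_i/V_{i-1}$ is a non-zero object of $\Ban^{\adm}_G(L)$, hence has residual length $\geq 1$, so iterated additivity forces $n\leq\ell$. The symmetric argument bounds descending chains, so $\Pi$ has finite length. The main obstacle --- residual finiteness of absolutely irreducible objects --- has already been absorbed into Theorem~\ref{I}; the remaining delicate points (torsion-freeness of $\Theta/\Theta_1$ and descent of finite residual length through $L'/L$) are both immediate.
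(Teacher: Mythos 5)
Your proof is correct and is essentially the argument the paper intends: the paper's entire ``proof'' is the preceding sentence (``Combining theorem~\ref{I} and the fact~\cite[cor.~3.14]{DS}\dots''), and your write-up simply supplies the expected bookkeeping --- the additivity lemma for residual length via $\Theta_1=\Theta\cap\Pi_1$, $\Theta_2=\Theta/\Theta_1$ (with the torsion-freeness check), the reduction to absolutely irreducible objects over a finite extension $L'$, and the bound on chains of closed subrepresentations. The only point worth flagging, which you handle correctly but tersely, is the base-change step $\length_k(\overline{\sigma}^{\rm ss})\le\length_{k'}(\overline{\sigma\otimes_L L'}^{\rm ss})$: this uses that $(\Theta_\sigma\otimes_{\OO}\OO')\otimes_{\OO'}k'\cong(\Theta_\sigma\otimes_{\OO}k)\otimes_k k'$, which holds because $\varpi$ annihilates $k'$.
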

   The following result 
 answers question (Q3) of \cite[p.~297]{Cbigone} and is an easy consequence of theorem~\ref{I}, the exactness of the functor\footnote{More precisely, of 
 integral and torsion versions of this functor.} $\Pi\mapsto{\bf V}(\Pi)$ and ~\cite[th. 0.10]{Cbigone}.
\begin{cor}
If $\Pi\in {\rm Ban}^{\rm adm}_G(L)$ is absolutely irreducible, then $\dim_L \mathbf{V}(\Pi)\leq 2$. 
\end{cor}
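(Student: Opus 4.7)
The plan is to bound $\dim_L\mathbf{V}(\Pi)$ by passing to a Galois-stable lattice, reducing modulo $\varpi$, and then invoking theorem~\ref{I} together with the known values of $\mathbf{V}$ on irreducible smooth mod~$p$ representations of $G$ provided by \cite[th.~0.10]{Cbigone}.

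Concretely, I would first choose an open, bounded, $G$-invariant $\mathcal{O}$-lattice $\Theta\subset\Pi$. Using the integral version of the Montreal functor referenced in the footnote (which is exact on the category of $\mathcal{O}$-torsion smooth $G$-representations), set $T=\mathbf{V}(\Theta)$. This is a $G_{\qp}$-stable $\mathcal{O}$-lattice inside $\mathbf{V}(\Pi)$, so that
$$\dim_L\mathbf{V}(\Pi)=\rank_{\mathcal{O}}T=\dim_k(T/\varpi T).$$
Applying $\mathbf{V}$ to the short exact sequence $0\to\Theta\xrightarrow{\varpi}\Theta\to\Theta/\varpi\Theta\to 0$ yields an injection $T/\varpi T\hookrightarrow \mathbf{V}(\Theta/\varpi\Theta)$; and further exactness of $\mathbf{V}$ on smooth $k$-representations gives $\dim_k\mathbf{V}(\Theta/\varpi\Theta)=\dim_k\mathbf{V}(\overline{\Pi}^{\rm ss})$. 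Thus the problem reduces to the inequality $\dim_k\mathbf{V}(\overline{\Pi}^{\rm ss})\leq 2$.

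Now I would invoke theorem~\ref{I}, which, after possibly enlarging $L$ to a quadratic unramified extension (harmless, as $\mathbf{V}$ commutes with scalar extension), leaves two cases. If $\overline{\Pi}^{\rm ss}$ is absolutely irreducible supersingular, then \cite[th.~0.10]{Cbigone} gives $\dim_k\mathbf{V}(\overline{\Pi}^{\rm ss})=2$. Otherwise $\overline{\Pi}^{\rm ss}$ is a subrepresentation of some $\pi\{\chi_1,\chi_2\}$, which is a direct sum of the semisimplifications of two smooth principal series. By \cite[th.~0.10]{Cbigone}, the irreducible subquotients of these principal series are, up to twist, either characters of $G$ (killed by $\mathbf{V}$) or twists of the Steinberg representation, respectively of a generic principal series (for each of which $\dim_k\mathbf{V}=1$). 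Summing the contributions from the two summands gives $\dim_k\mathbf{V}(\pi\{\chi_1,\chi_2\})\leq 2$, hence the same bound for $\overline{\Pi}^{\rm ss}$, and the corollary follows.

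The only non-formal ingredient, concealed by the phrase ``easy consequence'', is the proper setup of the integral (and $\mathcal{O}$-torsion) versions of the Montreal functor used to produce $T$ and identify $T/\varpi T$ inside $\mathbf{V}(\Theta/\varpi\Theta)$; once these exactness properties are granted, as in \cite[ch.~IV]{Cbigone}, the argument is the two-case dichotomy above and a direct use of the dimension bounds for $\mathbf{V}$ on smooth irreducible mod~$p$ representations of $G$.
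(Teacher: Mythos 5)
Your proof is correct and follows the same route the paper has in mind: pass to a $G$-stable lattice, use the integral/torsion exactness of $\mathbf{V}$ to embed $T/\varpi T$ in $\mathbf{V}(\overline{\Pi}^{\rm ss})$, then invoke theorem~\ref{I} and the dimension values for $\mathbf{V}$ on irreducible smooth mod~$p$ representations from \cite[th.~0.10]{Cbigone} (recorded in the paper as lemma~\ref{vfortors}, which gives precisely $\dim_k\mathbf{V}(\pi\{\chi_1,\chi_2\})=2$ in all cases of lemma~\ref{ss}). The paper merely calls this an easy consequence in the introduction and spells out the identical argument in the proof of proposition~\ref{ordinary} via the contravariant functor $\mathbf{D}$ rather than $\mathbf{V}$; the difference is purely notational.
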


\subsection{Surjectivity}\label{surjectif}
The surjectivity was proven in~\cite{Cbigone} (for $p\geq 3$, and almost for $p=2$, see below) 
by constructing, for any $2$-dimensional representation $V$ of $\G_{\Q_p}$, a representation $\Pi(V)$
of $G$ such that ${\bf V}(\Pi(V))=V$ (or $\check V$, depending on the normalisation). 
The construction goes through Fontaine's equivalence of categories~\cite{FoGrot} between
representations of $\cal G_{\qp}$ and $(\varphi,\Gamma)$-modules, as does the construction of the functor
$\Pi\mapsto {\bf V}(\Pi)$.  If $D$ is the $(\varphi,\Gamma)$-module
attached to $V$ by this equivalence of categories and if $\delta$ is a character of $\qpet$,
one can construct a $G$-equivariant sheaf $U\mapsto D\boxtimes_{\delta} U$ on $\p1=\p1(\qp)$. If  $\delta=\delta_D$, where $\delta_D=\varepsilon^{-1}\det V$, then the global sections of this sheaf
fit into  an exact sequence of $G$-representations
\begin{equation}\label{decompose_global_sections}
0\to \Pi(V)^*\otimes\delta\to D\boxtimes_\delta\p1\to \Pi(V)\to 0.
\end{equation}
The proof of the existence of this decomposition
is by analytic continuation, using explicit computations to deal with trianguline representations,
in which case $\Pi(V)$ is the universal completion of a locally analytic principal series~\cite{bb, L, PS, pem, except}, 
and the Zariski density~\cite{Ctrianguline,kisin,bockle,Chen} of trianguline (or even crystalline) representations in the
deformation space of $\overline{V}^{\rm ss}$.  
That such a strategy could work was suggested by Kisin who used a variant~\cite{kisin}
to prove surjectivity for $p\geq 5$ in a more indirect way.  This Zariski density was missing when  $p=2$ and $\overline{V}^{\rm ss}$ is scalar:
 the methods of~\cite{Ctrianguline,kisin} prove that
the Zariski closure of the trianguline (or crystalline) representations is a union of irreducible
components of the space of deformations of the residual representation; so what was really missing was
an identification of the irreducible components, which is not completely straightforward. 
This is not an issue anymore as we proved~\cite{PCDnew}
that there are exactly 2 irreducible components and that the crystalline representations are dense
in each of them.

\subsection{Injectivity}
The following result is a strengthening of the injectivity of the $p$-adic local Langlands
correspondence.
  
  \begin{thm}\label{II}
   Let $\Pi_1,\Pi_2\in {\rm Ban}^{\rm adm}_G(L)$ be absolutely irreducible, non-ordinary. 
   
   {\rm (i)} If $\mathbf{V}(\Pi_1)\cong\mathbf{V}(\Pi_2)$, then $\Pi_1\cong \Pi_2$.
   
   {\rm (ii)} We have ${\rm Hom}_{L[P]}^{\rm cont}(\Pi_1,\Pi_2)={\rm Hom}_{L[G]}^{\rm cont}(\Pi_1,\Pi_2)$, where $P$ is the mirabolic subgroup of $G$.
  \end{thm}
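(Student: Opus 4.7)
The plan is to deduce both parts from the surjectivity of the correspondence established in \S\ref{surjectif}, which provides, for any two-dimensional absolutely irreducible $V$ of $\cal G_{\qp}$, a canonical $\Pi(V)\in\Ban^{\adm}_G(L)$ with $\mathbf{V}(\Pi(V))\cong V$ realized via the exact sequence~\eqref{decompose_global_sections}. Part~(i) then reduces to showing that any absolutely irreducible non-ordinary $\Pi$ with $\mathbf{V}(\Pi)\cong V$ is isomorphic to this $\Pi(V)$, and I plan to approach this by first establishing~(ii) and then deducing~(i).

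For part~(ii), the key point is that Colmez's sheaf $D\boxtimes_\delta U$ on the open cell $U=\zp\subset\p1$ is purely $P$-equivariant, and its structure is intrinsic to the $(\varphi,\Gamma)$-module $D$; the full $G$-action requires gluing with the complementary cell via the Weyl involution $w$. I expect that one can attach to each $\Pi_i$ a canonical $P$-stable dense subspace playing the role of a Kirillov-type model, from which the $(\varphi,\Gamma)$-module $D_i$ attached to $\Pi_i$ can be functorially recovered. A continuous $P$-equivariant map $\phi:\Pi_1\to\Pi_2$ then induces a morphism $D_1\to D_2$ of $(\varphi,\Gamma)$-modules, and since the full $G$-structure on each $\Pi_i$ is determined, via the sheaf construction, by $D_i$ together with the $P$-action, $\phi$ must automatically commute with the action of $w$, hence of all of~$G$.

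For part~(i), given an isomorphism $\mathbf{V}(\Pi_1)\cong\mathbf{V}(\Pi_2)$, the associated $(\varphi,\Gamma)$-modules are isomorphic. Comparing both $\Pi_1$ and $\Pi_2$ with $\Pi(V)$ via the reconstruction mechanism above should yield nonzero continuous $P$-equivariant maps $\Pi_i\to\Pi(V)$, which by~(ii) are $G$-equivariant and hence isomorphisms by Schur's lemma~\cite{DS} applied to the absolutely irreducible objects in question.

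The principal obstacle I foresee is making this reconstruction of $\Pi$ from the pair (its $P$-action, its associated $(\varphi,\Gamma)$-module) genuinely functorial and rigid enough. Although the construction of $\Pi(V)$ in~\cite{Cbigone} is well understood, proving that an abstract absolutely irreducible non-ordinary $\Pi$ with prescribed $\mathbf{V}(\Pi)$ carries exactly one compatible $G$-structure --- and that $P$-equivariant maps automatically respect the extra data encoded by the $(\varphi,\Gamma)$-module --- requires careful control of the topology on the relevant dense subspaces, a precise understanding of the gluing along~$w$, and systematic use of admissibility to pass between the abstract $P$-representation and the sheaf-theoretic picture. This is where I expect the bulk of the technical work to lie.
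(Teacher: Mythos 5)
Your plan inverts the paper's logical order (proving (ii) before (i)), which is fine in principle, but both of your steps lean on a claim that is precisely the hard, unaddressed part of the theorem: that the full $G$-structure on an absolutely irreducible non-ordinary $\Pi$ is determined by its restriction to $P$ together with its $(\varphi,\Gamma)$-module $D=\mathbf{D}(\Pi)$, so that $P$-equivariant maps ``automatically commute with $w$''. This is not a formal consequence of the sheaf construction. For a given $D$, the $G$-action on $D\boxtimes_\delta\p1$, and hence on the quotient $\Pi_\delta(D)$, genuinely depends on the auxiliary central character $\delta$, and a priori nothing forbids $D\in\mathcal{MF}(\delta^{-1})$ for more than one $\delta$, which would yield non-isomorphic $G$-structures with the same $P$-restriction. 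The heart of the paper's injectivity argument is exactly the statement (theorem~\ref{main1}) that $\delta=\delta_D$ is the unique such character. Establishing this occupies most of \S3: the trianguline case via locally analytic vectors, universal unitary completions, and the infinitesimal ($\mathrm{Lie}\,G$) action, and the non-trianguline case via the multiplication operators $m_\eta$, the injectivity of $\mathrm{Res}_{a+p^n\zp}$ on $\check\Pi^*$, and the Zariski-closedness of the subgroup $H$. Your proposal does not engage with any of this machinery and instead assumes rigidity of the $G$-structure outright.

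In your last paragraph you correctly identify ``proving that [$\Pi$] carries exactly one compatible $G$-structure'' as the obstacle, but flagging the difficulty is not the same as overcoming it: absent a strategy for this step, both (ii) and, via it, (i) remain unproved. By contrast the paper's route is: prove the central character uniqueness (theorem~\ref{main1}) directly, deduce corollary~\ref{determinant} and the canonical isomorphism $\Pi\cong\Pi_\delta(\check D)$ of theorem~\ref{ssing}, obtain (i) immediately, and then use (i) inside the proof of (ii) (to conclude $\delta_1=\delta_2$ when $\mathbf{D}(f)$ is an isomorphism, plus a separate argument via $\Pi_j^*\cong(\varprojlim_\psi X_j)\otimes L$ to handle $\mathbf{D}(f)=0$). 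If you want to prove (ii) first, you still need an independent argument that the central character is determined, which is essentially theorem~\ref{main1} again, so the change of order does not save any work.
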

For absolutely irreducible non-ordinary objects of ${\rm Ban}^{\rm adm}_G(L)$, the knowledge of ${\bf V}(\Pi)$
is equivalent to that of the action of $P$ (this is not true for ordinary objects).
So, theorem~\ref{II} is equivalent to the fact that 
   we can recover an absolutely irreducible non-ordinary object $\Pi$ from its restriction to $P$. 
  If we replace $P$ with the Borel subgroup $B$, then the result follows from \cite{borel} (see also \cite[remark III.48]{CD} for a different proof). 
  The key difficulty is therefore controlling the central character, and, thanks to
results from~\cite{Cbigone,CD}, 
the proof reduces to showing that $\delta_D$ is the only character $\delta$
such that $D\boxtimes_\delta\p1$ admits a decomposition as in \eqref{decompose_global_sections}.

So assume $D\boxtimes_\delta\p1$ admits such a decomposition and set $\eta=\delta_D^{-1}\delta$.
We need to prove that $\eta=1$ and this is done in two steps: we first prove  that $\eta=1$ if it is locally constant,
and then we prove that $\eta$ is locally constant.  

The proof of step one splits into two cases:

$\bullet$ If $D$ is trianguline then we use techniques of~\cite{Cvectan,Jacquet} to study
locally analytic principal series appearing in the locally analytic vectors in $\Pi_1$ and $\Pi_2$, and make use of their universal unitary completions.

$\bullet$ 
If  $D$ is not trianguline then the restriction of global sections $D\boxtimes_\delta\p1$  to any non-empty compact open subset of $\p1$ is injective on $\Pi(V)^*\otimes\delta$,
viewed as a subspace of $D\boxtimes_\delta\p1$ via \eqref{decompose_global_sections}. 
If $\alpha\in\O^*$, let ${\cal C}^\alpha\subset D\boxtimes\zpet$ be the image of the eigenspace
of $\matrice{p}{0}{0}{1}$ for the eigenvalue $\alpha$ under the restriction to $\zpet$.
This image is the same for $\delta$ and $\delta_D$ and can be described purely in terms
of $D$ as $(1-\alpha\varphi)\cdot D^{\psi=\alpha}$.  Using the action of $\matrice{0}{1}{1}{0}$ on
 $D\boxtimes_\delta\p1$ and $D\boxtimes_{\delta_D}\p1$
we show that the ``multiplication by $\eta$'' operator
$m_\eta:D\boxtimes\zpet\to D\boxtimes\zpet$ (see ${\rm n}^{\rm o}$~\ref{analytic} for a precise definition)
sends ${\cal C}^\alpha$ into ${\cal C}^{\alpha\eta(p)}$, 
and using the above-mentionned injectivity, that $\eta=1$.

To prove that $\eta$ is locally constant, one can, in
most cases, use the formulas~\cite{Annalen} for the
infinitesimal action of $G$.  In the remaining cases one uses the fact that
the characters $\eta$ sending ${\cal C}^\alpha$ into ${\cal C}^{\alpha\eta(p)}$
for all $\alpha$ form a Zariski closed subgroup of the space of all characters, and such
a subgroup automatically contains a non-trivial locally constant character if
it is not reduced to $\{1\}$. 

The reader will find a more detailed overview of the proof in \S\ref{overviewCD}.

\medskip
Finally, we
  give a criterion for an absolutely irreducible object of 
   ${\rm Ban}^{\rm adm}_G(L)$ to be non-ordinary; this refines theorem~\ref{I}, by describing
 in which case the inclusion
   $\overline{\Pi}^{\rm ss}\subset \pi\{\chi_1,\chi_2\}$ given by this theorem is an equality.
  It is a consequence of  
theorems~\ref{I} and~\ref{II}, and of the compatibility \cite{Becomp, CD} of $p$-adic and mod $p$ Langlands correspondences.
  
  \begin{thm}\label{nonordinary}
   
   Let $\Pi\in {\rm Ban}^{\rm adm}_G(L)$ be absolutely irreducible. The following assertions are equivalent
   
   {\rm (i)}  $\mathbf{V}(\Pi)$ is $2$-dimensional. 
   
  {\rm (ii)} $\Pi$ is non-ordinary. 
   
   {\rm (iii)} After possibly replacing $L$ by a quadratic unramified extension, $\overline{\Pi}^{\rm ss}$ is either absolutely irreducible supersingular or isomorphic to some $\pi\{\chi_1,\chi_2\}$. 
    \end{thm}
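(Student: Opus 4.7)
The plan is to establish (i)~$\Leftrightarrow$~(ii) and (ii)~$\Leftrightarrow$~(iii) separately: the first combines Theorem~\ref{LLp} with the explicit behaviour of $\mathbf{V}$ on ordinary representations, and the second combines Theorem~\ref{I} with the compatibility between the $p$-adic and mod~$p$ Langlands correspondences of~\cite{Becomp, CD}.

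For (i)~$\Leftrightarrow$~(ii): when $\Pi$ is ordinary it is a subquotient of a unitary parabolic induction, on which $\mathbf{V}$ outputs at most a $1$-dimensional representation, so $\dim_L \mathbf{V}(\Pi)=2$ forces $\Pi$ to be non-ordinary. Conversely, an absolutely irreducible non-ordinary $\Pi$ is residually of finite length by Theorem~\ref{I}, so $\mathbf{V}(\Pi)$ is defined, and Theorem~\ref{LLp} (obtained by combining Theorem~\ref{II} with the surjectivity results of~\cite{Cbigone, PCDnew}) then yields that $\mathbf{V}(\Pi)$ is absolutely irreducible of dimension~$2$.

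For (ii)~$\Rightarrow$~(iii): after possibly enlarging $L$ by a quadratic unramified extension, Theorem~\ref{I} places $\overline{\Pi}^{\mathrm{ss}}$ either in the absolutely irreducible supersingular case (no further argument needed) or as a subrepresentation of some $\pi\{\chi_1,\chi_2\}$. To promote this inclusion to an isomorphism in the latter case, I would invoke the compatibility theorem of~\cite{Becomp, CD}: it identifies the mod~$p$ Montreal functor applied to $\overline{\Pi}^{\mathrm{ss}}$ with $\overline{\mathbf{V}(\Pi)}^{\mathrm{ss}}$, which is $2$-dimensional by~(i). Since under the mod~$p$ Langlands correspondence this $2$-dimensional reducible semisimple Galois representation is attached precisely to the full $\pi\{\chi_1,\chi_2\}$, any strict subrepresentation would produce a strictly lower-dimensional image under the mod~$p$ functor, and equality must hold. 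The converse (iii)~$\Rightarrow$~(ii) is then a direct application of the same compatibility: in either alternative of~(iii), $\overline{\mathbf{V}(\Pi)}^{\mathrm{ss}}$ is $2$-dimensional (irreducible in the supersingular case, a sum of two characters otherwise), hence so is $\mathbf{V}(\Pi)$, and $\Pi$ is non-ordinary by the contrapositive of (i)~$\Rightarrow$~(ii).

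The main obstacle lies in the step (ii)~$\Rightarrow$~(iii): without the compatibility theorem, Theorem~\ref{I} only produces a subrepresentation inclusion, and the crucial upgrade to an isomorphism relies essentially on the precise mod~$p$/$p$-adic Langlands compatibility of~\cite{Becomp, CD}. Everything else in the argument amounts to bookkeeping across Theorems~\ref{I}, \ref{II}, and~\ref{LLp}.
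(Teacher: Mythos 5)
Your overall architecture matches the paper's proof of this statement (there denoted theorem~\ref{ssing} in the body, stated in terms of $D=\mathbf D(\Pi)$): the equivalence (i)$\Leftrightarrow$(ii) via the exactness of $\mathbf D$/$\mathbf V$ together with lemma~\ref{vfortors}, and the step to (iii) via the $p$-adic/mod~$p$ compatibility of \cite{Becomp,CD}. But the dimension-count you propose for (ii)$\Rightarrow$(iii) has a real gap. You argue that if $\overline{\Pi}^{\rm ss}$ were a \emph{strict} subrepresentation of $\pi\{\chi_1,\chi_2\}$ then $\dim_k\mathbf V(\overline{\Pi}^{\rm ss})<2$. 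This fails precisely in the cases that make this paper necessary: when $\pi\{\chi_1,\chi_2\}$ has Jordan--H\"older factors killed by $\mathbf V$ (characters), a proper subrepresentation can still have a $2$-dimensional image. Concretely, for $p=2$ and $\chi_1=\chi_2=\chi$ (lemma~\ref{ss}(v)), $\pi\{\chi,\chi\}\cong(\Eins\oplus\Sp)^{\oplus 2}\otimes\chi\circ\det$, and the strict subrepresentation $\Sp^{\oplus 2}\otimes\chi\circ\det$ already has $\dim_k\mathbf V=2$; case (iv) for $p=3$ gives a similar example. Theorem~\ref{I} alone cannot rule out that $\overline{\Pi}^{\rm ss}$ is such a proper piece, so the dimension of $\mathbf V(\overline{\Pi}^{\rm ss})$ does not pin down equality.

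The fix, which is what the paper actually does, is to use proposition~\ref{compat} in its full strength rather than a dimension count on the Galois side: one needs to know $\Pi\cong\Pi_\delta(\check D)$ with the \emph{correct} $\delta$ (this uses \cite[cor.~III.47]{CD} together with theorem~\ref{main1}, i.e.\ theorem~\ref{II}, to nail the central character), and then proposition~\ref{compat} computes $\overline{\Pi}^{\rm ss}\cong\Pi_\delta(\overline{\check D}^{\rm ss})$ directly as either absolutely irreducible supersingular or the \emph{entire} $\pi\{\chi_1,\chi_2\}$, with no room for a proper subrepresentation. You do name the right ingredients (theorems~\ref{I}, \ref{II} and the compatibility), but the argument as written replaces the essential use of proposition~\ref{compat} --- which determines $\overline{\Pi}^{\rm ss}$ on the $\GL_2$-side, not merely its image under $\mathbf V$ --- by a Galois-side dimension count that is too weak for $p\le 3$. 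The direction (iii)$\Rightarrow$(ii) via lemma~\ref{vfortors} is fine.
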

    
\subsection{Acknowledgements}
V.P. would like to thank Matthew Emerton for a number of stimulating discussions. In particular, \S~\ref{PI_rings} is closely related to a joint and ongoing work with Emerton.  
  
\section{Residual finiteness}

\subsection{Overview of the proof}\label{overview1}
If $G$ is any $p$-adic analytic group, let $\Mod^{\sm}_G(\OO)$  be the category of smooth representations  of $G$ on $\mathcal{O}$-torsion modules.  
 Pontryagin duality induces an anti-equivalence of categories between $\Mod^{\sm}_G(\OO)$ and a certain category $\Mod^{\pro}_G(\OO)$ of linearly compact $\OO$-modules with a continuous $G$-action, 
see \cite{ord1}. In particular, if $G$ is compact then $\Mod^{\pro}_G(\OO)$ is the category of compact $\OO\br{G}$-modules, where 
$\OO\br{G}$ is the completed group algebra.  
 Let $\Mod^{?}_G(\OO)$ be a full subcategory of $\Mod^{\sm}_G(\OO)$ closed under subquotients and arbitrary direct sums in 
 $\Mod^{\sm}_G(\OO)$
and such that representations in $\Mod^{?}_G(\OO)$  are equal to the union of their subrepresentations of finite length. Let 
$\dualcat(\OO)$ be the full subcategory of  $\Mod^{\pro}_G(\OO)$ antiequivalent to $\Mod^{?}_G(\OO)$
via the Pontryagin duality.

Let $\pi\in {\rm Mod}_{G}^{?}(\mathcal{O})$ be admissible and absolutely irreducible, let $P\twoheadrightarrow \pi^{\vee}$ be a
projective envelope of $\pi^{\vee}$ in $\dualcat(\OO)$, and let $E:=\End_{\dualcat(\OO)}(P)$.

 Let $\Pi\in {\rm Ban}^{\rm adm}_G(L)$
and let $\Theta$ be an open, bounded and $G$-invariant lattice in $\Pi$. Let  
 $\Theta^d={\rm Hom}_{\mathcal{O}}(\Theta, \mathcal{O})$ be the Schikhof dual of $\Theta$. Endowed with the topology of pointwise convergence, $\Theta^d$ 
is an object of $\Mod^{\pro}_G(\OO)$, see \cite[Lem.4.4]{cmf}. If $\Theta^d$ is in $\dualcat(\OO)$ then $\Xi^d$ is in $\dualcat(\OO)$
for every open bounded $G$-invariant lattice $\Xi$ in $\Pi$, since $\Theta$ and $\Xi$ are commensurable and $\dualcat(\OO)$ is closed under subquotients, see \cite[Lem.4.6]{cmf}. We let $\Ban^{\adm}_{\dualcat(\OO)}$ be the full subcategory of 
$\Ban^{\adm}_G(L)$ consisting of those $\Pi$ with $\Theta^d$ in $\dualcat(\OO)$. For $\Pi\in\Ban^{\adm}_{\dualcat(\OO)}$
we let 
$$\md(\Pi):=\Hom_{\dualcat(\OO)}(P, \Theta^d)\otimes_{\OO} L.$$
Then $\md(\Pi)$ is a right $E[1/p]$-module which does not depend on the choice of $\Theta$, since
any two open, bounded lattices in $\Pi$ are commensurable.  The functor $\Pi\mapsto \md(\Pi)$ from $\Ban^{\adm}_{\dualcat(\OO)}$ to the category of right $E[1/p]$-modules is exact by \cite[Lem.4.9]{cmf}.  The proposition 
below is proved in \cite[\S~4]{cmf}, as we explain in \S\ref{main_section}.

 \begin{prop} \label{easy} For  $\Pi$ in $\Ban^{\adm}_{\dualcat(\OO)}$  the following assertions hold:
\begin{itemize}
\item[(i)] $\md(\Pi)$ is a finitely generated $E[1/p]$-module;
\item[(ii)] $\dim_L \md(\Pi)$ is equal to the multiplicity with which $\pi$ occurs as a subquotient of $\Theta\otimes_{\mathcal{O}} k$;
\item[(iii)] if $\Pi$ is topologically irreducible, then 
\begin{itemize} 
\item[a)] $\md(\Pi)$ is an irreducible $E[1/p]$-module;
\item[b)]  the natural map $\End_G^{\rm cont}(\Pi) \rightarrow \End_{E[1/p]}(\md(\Pi))^{op}$ is an isomorphism.
\end{itemize}
\end{itemize}
\end{prop}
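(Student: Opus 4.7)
The plan is to follow \cite[\S4]{cmf}: exploit the projectivity of $P$ and the admissibility of $\Theta\otimes_{\OO}k$ to reduce the entire proposition to a multiplicity count modulo $\varpi$, coupled with a dictionary between $E[1/p]$-submodules of $\md(\Pi)$ and closed $G$-stable subspaces of $\Pi$.

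For parts (i) and (ii), I would use that $\Theta^d$ is $\OO$-torsion-free with $\Theta^d/\varpi\cong(\Theta\otimes_{\OO}k)^{\vee}$, so $0\to\Theta^d\xrightarrow{\varpi}\Theta^d\to(\Theta\otimes_{\OO}k)^{\vee}\to 0$ is exact in $\dualcat(\OO)$. Projectivity of $P$ makes $\Hom_{\dualcat(\OO)}(P,-)$ exact, so
$$\Hom_{\dualcat(\OO)}(P,\Theta^d)/\varpi\,\cong\,\Hom_{\dualcat(k)}(P/\varpi,(\Theta\otimes_{\OO}k)^{\vee}).$$
Because $P$ is a projective envelope of the simple object $\pi^{\vee}$, $\Hom_{\dualcat(k)}(P/\varpi,\sigma^{\vee})$ vanishes for every irreducible $\sigma\not\cong\pi$ and is a fixed finite-dimensional $k$-vector space for $\sigma\cong\pi$. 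Admissibility of $\Theta\otimes_{\OO}k$ forces $\pi$ to occur with finite multiplicity $m$, and a d\'evissage along finite-length admissible subrepresentations (passed to the limit) shows $\Hom_{\dualcat(\OO)}(P,\Theta^d)/\varpi$ has $k$-dimension proportional to $m$. Since $\Hom_{\dualcat(\OO)}(P,\Theta^d)=\varprojlim_n\Hom_{\dualcat(\OO)}(P,\Theta^d/\varpi^n)$ is $\varpi$-adically complete, topological Nakayama yields that it is finitely generated over $\OO$ of the expected rank; inverting $p$ gives (i) and (ii).

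For (iii), we may assume $\md(\Pi)\neq 0$, otherwise everything is vacuous. For (a), given a nonzero $E[1/p]$-submodule $N\subset\md(\Pi)$, I would consider the closure of the image of the evaluation map $N\otimes_{E[1/p]}P[1/p]\to\Theta^d[1/p]$; this is a nonzero closed $G$-stable $L$-subspace (nonzero because $P$ admits $\pi^{\vee}$ as a quotient). Its Pontryagin dual is a nonzero closed $G$-subrepresentation of $\Pi$, which by irreducibility equals $\Pi$; combined with the reverse construction $\Xi\mapsto\Hom(P,\Xi)$, this forces $N=\md(\Pi)$. For (b), functoriality of $\md$ furnishes the map, injective by faithfulness of $\md$ on irreducibles (which follows from (ii) and $\md(\Pi)\neq 0$). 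Schur's lemma \cite{DS} and (a) imply that both $\End_G^{\cont}(\Pi)$ and $\End_{E[1/p]}(\md(\Pi))^{op}$ are finite-dimensional division $L$-algebras; Jacobson density applied to the finite-dimensional $L$-algebra $E[1/p]/\ann_{E[1/p]}\md(\Pi)$, acting faithfully and irreducibly on the finite-dimensional $L$-vector space $\md(\Pi)$, identifies the two.

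The main obstacle, by some distance, is (iii.a): relating $E[1/p]$-submodules of $\md(\Pi)$ to closed $G$-stable subspaces requires careful topological bookkeeping, because $E$ is typically a noncommutative compact $\OO$-algebra and the $E$-action on $\Hom(P,\Theta^d)$ is continuous rather than purely algebraic. The finite generation of $\md(\Pi)$ established in (i) is precisely what rescues us, ensuring every $E[1/p]$-submodule is closed in the relevant sense; the remainder of the argument is either formal or reduces, via projectivity of $P$, to linear algebra modulo $\varpi$.
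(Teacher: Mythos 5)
Your argument for (i) and (ii) contains a genuine gap in the assertion that ``admissibility of $\Theta\otimes_{\OO}k$ forces $\pi$ to occur with finite multiplicity $m$.'' This is false: admissibility of a smooth $k$-representation does not bound the multiplicity of an irreducible constituent, and in fact the non-trivial content of part~(i) is precisely that $\md(\Pi)$ is finitely generated over $E[1/p]$ \emph{without} being finite-dimensional over $L$ in general (this is how the universal families with infinite-dimensional $E$ enter the picture). Were your claim correct, residual finiteness, which occupies the rest of \S2 and is the whole point of Theorem~\ref{I}, would follow immediately; the paper is careful to phrase (ii) so that the equality holds with both sides allowed to be infinite, proving only that finiteness of one side is equivalent to finiteness of the other. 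Your subsequent ``topological Nakayama'' step therefore establishes finite generation over $\OO$, which is too strong to be true in general, and in any case is not what (i) asserts. The paper instead simply cites \cite[Prop.~4.17]{cmf} for finite generation over $E[1/p]$, which rests on $\Theta^d$ being a finitely generated $\OO\br{K}$-module (admissibility) together with a relationship between $P$ and the Iwasawa algebra that you have not supplied.

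This gap propagates to (iii.b). Your Jacobson-density argument requires $\md(\Pi)$ to be finite-dimensional over $L$ and $\End_{E[1/p]}(\md(\Pi))^{op}$ to be a finite-dimensional division $L$-algebra. Topological irreducibility of $\Pi$ gives, via Schur's lemma \cite{DS}, that $\End_G^{\cont}(\Pi)$ is a finite-dimensional division algebra over $L$, but it does not a priori give finite-dimensionality of $\md(\Pi)$ or of its endomorphism ring — that would essentially be the conclusion you are trying to reach. As stated, the argument is circular: you use the desired identification to justify the hypotheses of the density theorem you invoke. The intermediate claims in (iii.a) (that the closure of the image of $N\otimes_{E[1/p]}P[1/p]$ being all of $\Theta^d[1/p]$ forces $N=\md(\Pi)$, and the identification of Pontryagin duals of closed submodules of $\Theta^d$ with closed \emph{quotients} rather than subrepresentations of $\Pi$) also need more care, though these are closer to being fixable. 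The rest of the reduction — torsion-freeness, exactness from projectivity, the d\'evissage through finite-length quotients $M_i$ with $\dim_k\Hom(P,M_i)$ equal to the multiplicity of $\pi^\vee$ in $M_i$ — is correct and matches the paper's adaptation of \cite[Lem.~4.15]{cmf}.
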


Let us suppose that we are given a family $\{ \Pi_i\}_{i\in I}$ in ${\rm Ban}^{\rm adm}_G(L_i)$,
where for each $i\in I$, $L_i$ is a finite extension of $L$ with residue field $k_i$. Let us further suppose that each $\Pi_i$  lies in $\Ban^{\adm}_{\dualcat(\OO)}$, when considered as $L$-Banach representation.
Suppose that $d\geq 1$ is an integer such that we can find open, bounded and $G$-invariant lattices $\Theta_i$ in $\Pi_i$ such that
$\pi\otimes_k k_i$ occurs with multiplicity $\leq d$ as a subquotient of $\Theta_i\otimes_{\mathcal{O}_{L_i}} k_{i}$ for all $i\in I$. Thus 
$\pi$ occurs with multiplicity $\leq [L_i:L]d$ as a subquotient of $\Theta_i/(\varpi)$ and 
proposition~\ref{easy} yields $\dim_{L_i} \md(\Pi_i)\le d$. For simplicity let us further assume that $d=1$, then we can conclude that the action of $E[1/p]$ induces a homomorphism $E[1/p]\rightarrow \End_{L_i}(\md(\Pi_i))\cong L_i$. If $\mathfrak a_i$ is the kernel of this map then $E[1/p]/\mathfrak a_i$ is commutative, and hence if we let $\mathfrak a=\cap_{i\in I} \mathfrak a_i$, then we deduce that $E[1/p]/\mathfrak a$ is commutative.  Let us further assume that $\mathfrak a=0$. Then we can conclude that the ring $E$ is commutative. Let 
$\Pi$ in $\Ban^{\adm}_{\dualcat(\OO)}$ be absolutely irreducible, and let $\mathcal E$ be the image of $E[1/p]$ in $\End_L(\md(\Pi))$. 
Since $E[1/p]$ is commutative, so is $\mathcal E$, and using proposition~\ref{easy} (iii) b) we deduce that $\mathcal E$ is a subring 
of $\End_G^{\rm cont}(\Pi)$. 

Now comes a new ingredient, not available at the time of 
writing \cite{cmf}: by Schur's lemma \cite{DS}, 
since $\Pi$ is absolutely irreducible we have $\End_G^{\rm cont}(\Pi)=L$, hence $\mathcal E= L$. Since $\md(\Pi)$ is an irreducible $E[1/p]$-module by proposition~\ref{easy} (iii) a), we conclude 
that $\dim_L \md(\Pi)=1$, and hence by part (ii) of the proposition we conclude that $\pi$ occurs with multiplicity $1$ as subquotient of $\Theta/\varpi$. 
If $d>1$ one can still run the same argument concluding that $\pi$ occurs with multiplicity at most $d$ as subquotient of $\Theta/(\varpi)$ by using rings with polynomial identity.

  All the previous constructions and the strategy of proof explained above work in great generality ($G$ was any $p$-adic analytic group), provided certain conditions are satisfied, the hardest of which is finding a family $\{ \Pi_i\}_{i\in I}$, which enjoys all these nice properties. From now on we let $G=\GL_2(\Qp)$, and let $\Mod^?_G(\OO)$ be the category of locally admissible representations $\Mod^{\ladm}_G(\OO)$. This category, introduced by Emerton in \cite{ord1}, consists of all representations in $\Mod^{\sm}_G(\OO)$, 
which are equal to the union of their admissible subrepresentations.
For the family $\{\Pi_i\}_{i\in I}$ we take all the 
Banach representations corresponding to $2$-dimensional crystalline representations of $\gal$. 
It follows from the explicit description~\cite{Becomp} of  $\overline{\Pi}_i^{\rm ss}$, 
that $\pi$ can occur as  a subquotient  with multiplicity at most $2$, and multiplicity one if $\pi$ is either supersingular or generic principal series. 

The statement $\cap_{i\in I} \mathfrak a_i=0$
morally is the statement ``crystalline points are dense in the universal deformation ring", so one certainly 
expects it to be true, since on the Galois side this statement
is known~\cite{Ctrianguline,kisin} to be 
true\footnote{If we assume that $p\ge 5$ then using results of \cite{cmf} one may show that the assertion on the Galois side implies the assertion on the $\GL_2(\Qp)$-side.}. 
In fact, Emerton 
has proved an analogous global statement ``classical crystalline points are dense in the big Hecke algebra" by using $\GL_2$--methods, \cite[cor.~5.4.6]{emfm}.
The Banach representation  denoted by $\wB$ in \S\ref{main_section} is a local analog of Emerton's completed cohomology. However, Emerton's argument does not 
seem to carry over directly, since although  $P$ is projective in $\Mod^{\pro}_K(\OO)$, it is  not a finitely generated $\OO\br{K}$--module, 
and in our context the locally algebraic vectors in $\wB$ are not a  semi-simple representation of $\GL_2(\Qp)$. 
Because of this we do not prove directly that $\cap_{i} \mathfrak a_i=0$, but a weaker statement, which suffices for the argument to work. 
To get around the issue that $P$ is not finitely generated over $\OO\br{K}$ we have to perform some tricks, see propositions~\ref{Omega}, \ref{dense}.

\subsection{Proof of proposition~\ref{easy}}\label{main_section} 
\begin{proof}  
Part (i) is \cite[prop. 4.17]{cmf}.

Part (ii) follows from the proof of \cite[Lem. 4.15]{cmf}, which unfortunately assumes $\Theta^d\otimes_{\OO} k$ to be of finite length. This assumption is not necessary:
since $\Theta^d$ is an object of $\dualcat(\OO)$  we may write $\Theta^d\otimes_{\OO} k \cong \varprojlim M_i$, where the projective limit is taken over all the finite length quotients. Since $P$ is projective we obtain an isomorphism $\Hom_{\dualcat(\OO)}(P, \Theta^d\otimes_{\OO} k)\cong \varprojlim \Hom_{\dualcat(\OO)}(P, M_i)$. 
Since $M_i$ are of finite length, \cite[Lem. 3.3]{cmf} says that $\dim_k \Hom_{\dualcat(\OO)}(P, M_i)$ is equal to the multiplicity with which $\pi^{\vee}$ occurs in $M_i$ as a subquotient, which is the same as multiplicity with which $\pi$ occurs in $M_i^{\vee}$ as a subquotient.
Dually we obtain $\Theta\otimes_{\OO} k\cong (\Theta^d\otimes_{\OO} k)^{\vee}\cong \varinjlim M_i^{\vee}$, which allows to conclude that $\pi$ occurs with finite multiplicity in $\Theta\otimes_{\OO} k$ if and only if $\dim_k \Hom_{\dualcat(\OO)}(P, \Theta^d\otimes_{\OO} k)$ is finite, in which case both numbers coincide. 
Since $P$ is a compact flat $\mathcal{O}$-module and a projective object in $\dualcat(\OO)$, it follows that 
 $\Hom_{\dualcat(\OO)}(P, \Theta^d)$ is a compact, flat $\OO$-module, which is congruent to $\Hom_{\dualcat(\OO)}(P, \Theta^d/(\varpi))$ modulo $\varpi$, thus $\dim_L \Hom_{\dualcat(\OO)}(P, \Theta^d)\otimes_{\OO} L= \dim_k \Hom_{\dualcat(\OO)}(P, \Theta^d\otimes_{\OO} k)$.
 
 Part (iii) a) is  \cite[prop. 4.18 (ii)]{cmf} and Part (iii) b) is \cite[prop. 4.19]{cmf}.
\end{proof}

\subsection{Rings with polynomial identity}\label{PI_rings}

\begin{defi} Let $R$ be a (possibly non-commutative) ring and let $n$ be a natural number. We say that $R$ satisfies the standard identity $s_n$ if for 
every $n$-tuple $\Phi=(\phi_1, \ldots, \phi_n)$ of elements of $R$ we have $s_n(\Phi):= \sum_{\sigma} \sgn(\sigma) \phi_{\sigma(1)}\ldots \phi_{\sigma(n)}=0$, 
where the sum is taken over all the permutations of the set $\{1, \ldots, n\}$.
\end{defi}

\begin{remar}\label{commute} (i) Since $s_2(\phi_1, \phi_2)= \phi_1 \phi_2- \phi_2 \phi_1$, the ring $R$ satisfies the standard identity $s_2$ if and only if $R$ is commutative.

(ii) We note that $R$ satisfies $s_n$ if and only if the opposite ring $R^{op}$ satisfies $s_n$.

(iii) Let $\{\mathfrak{a}_i\}_{i\in I}$ be a family of ideals of $R$ such that $\bigcap_{i\in I} \mathfrak{a}_i=0$. Then 
$R$ satisfies $s_n$ if and only if $R/{\mathfrak a}_i$ satisfies $s_n$ for all $i\in I$. 

(iv) By a classical result of Amitsur and Levitzki \cite[Thm.13.3.3]{noncommute}, for any commutative ring $A$, the ring $M_n(A)$ satisfies the standard identity $s_{2n}$. 
\end{remar}

\begin{lem}\label{PI_local} Let $A$ be a commutative ring, $n\geq 1$ and let $M$ be an $A$-module which is a quotient of $A^n$. Then 
${\rm End}_A(M)$ satisfies the standard identity $s_{2n}$.
\end{lem}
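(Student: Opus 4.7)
The plan is to realize $\End_A(M)$ as a quotient of a subring of $M_n(A)$ and then invoke Amitsur--Levitzki.

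Fix a surjection $\pi\colon A^n\twoheadrightarrow M$ and let $N=\Ker\pi$, so that $M\cong A^n/N$. I would consider the set
\[
E=\{T\in M_n(A): T(N)\subseteq N\},
\]
which is evidently a subring of $M_n(A)$. Any $T\in E$ descends to an $A$-linear endomorphism $\bar T$ of $M$, and the map $T\mapsto \bar T$ defines a ring homomorphism $E\to \End_A(M)$.

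Next I would check that this homomorphism is surjective. Given $\phi\in\End_A(M)$, the composition $\phi\circ\pi\colon A^n\to M$ lifts, by projectivity (indeed freeness) of $A^n$ together with the surjectivity of $\pi$, to an $A$-linear map $\tilde\phi\colon A^n\to A^n$ satisfying $\pi\circ\tilde\phi=\phi\circ\pi$. For $x\in N$ one has $\pi(\tilde\phi(x))=\phi(\pi(x))=0$, so $\tilde\phi(N)\subseteq N$; thus $\tilde\phi\in E$ and clearly maps to $\phi$. Hence $E\twoheadrightarrow \End_A(M)$ is a surjective ring homomorphism.

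Finally I would apply the Amitsur--Levitzki theorem (remark~\ref{commute}(iv)): $M_n(A)$ satisfies the standard identity $s_{2n}$. The identity $s_{2n}$ is multilinear, so it is inherited by any subring, and it is preserved by surjective ring homomorphisms, so it passes to any quotient ring. Therefore $E$ satisfies $s_{2n}$, and hence so does $\End_A(M)$. There is no serious obstacle here; the only point to be careful about is that the lift $\tilde\phi$ need not be unique or compatible with composition in $\phi$, but this is immaterial since we only use the map $E\to \End_A(M)$ in the direction of descent, where it is automatically a ring homomorphism.
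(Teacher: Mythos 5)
Your proof is correct and rests on the same key idea as the paper's: lift endomorphisms of $M$ to $n\times n$ matrices over $A$ and invoke the Amitsur--Levitzki theorem. The paper carries this out by a direct computation (choosing generators $e_1,\dots,e_n$, lifting each $\phi_k$ to a matrix $X^{(k)}$ with $\phi_k(e_i)=\sum_j X^{(k)}_{ji}e_j$, and observing that $s_{2n}(\phi_1,\dots,\phi_{2n})(e_i)=\sum_j s_{2n}(X^{(1)},\dots,X^{(2n)})_{ji}e_j=0$), whereas you package the same lifting into the cleaner structural statement that $\End_A(M)$ is a quotient of the subring $E=\{T\in M_n(A):T(N)\subseteq N\}$ and then use that multilinear identities pass to subrings and surjective images; both are fine, and the difference is one of presentation rather than substance.
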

\begin{proof} By hypothesis there are $e_1,...,e_n\in M$ generating $M$ as an $A$-module. Let $\phi_1,...,\phi_{2n}\in {\rm End}_A(M)$ 
and let $X^{(1)},...,X^{(2n)}\in M_n(A)$ be matrices such that $\phi_k(e_i)=\sum_{j=1}^{n} X^{(k)}_{ji} e_j$ for all $i\leq n$ and 
$k\leq 2n $. Setting $X=s_{2n}(X^{(1)},...,X^{(2n)})$, 
 for all $i\leq n$ we have 
$$s_{2n}(\phi_1,...,\phi_{2n})(e_i)=\sum_{j=1}^{n} X_{ji}e_j.$$
By remark~\ref{commute} (iv) we have $X=0$ and the result follows. 
\end{proof}

\begin{lem}\label{get_it_right} Let $A$ be a commutative noetherian ring, let $M$ be an $A$-module, such that every finitely generated submodule is of finite length, and let $n$ be an integer. If $\dim_{\kappa(\mm)} M[\mm]\le n$ for every maximal ideal $\mm$ of $A$ 
then $\End_A(M)$ satisfies the standard identity $s_{2n}$.
\end{lem}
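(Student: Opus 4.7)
My plan is to decompose $M$ by primary components, pass to the completion at each maximal ideal, and use Matlis duality to convert the socle bound of the hypothesis into a bound on the number of generators, so as to apply Lemma~\ref{PI_local}.

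Since every finitely generated submodule of $M$ has finite length, each is supported on only finitely many maximal ideals and decomposes into $\mm$-primary components; passing to the union over all finitely generated submodules yields a decomposition $M=\bigoplus_{\mm}M_{\mm}$, where $M_{\mm}=\{x\in M: \mm^k x=0 \text{ for some } k\ge 0\}$. Every $A$-linear endomorphism of $M$ preserves this decomposition, so $\End_A(M)=\prod_{\mm}\End_A(M_{\mm})$, and the kernels of the projections onto the factors intersect in $0$. By Remark~\ref{commute}(iii) it suffices to prove that each $\End_A(M_{\mm})$ satisfies $s_{2n}$.

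Fix $\mm$ and set $R:=\widehat{A_{\mm}}$. Using the hypothesis $\dim_{\kappa(\mm)}\soc(M_{\mm})\le n$ and the injectivity of $E:=E_{A_{\mm}}(\kappa(\mm))$, one extends an embedding $\soc(M_{\mm})\hookrightarrow E^n$ to a map $M_{\mm}\to E^n$; this map is injective because any nonzero $\mm$-power-torsion module has nonzero socle. Since $E$ is Artinian, so is $M_{\mm}$, whence its $A_{\mm}$-action extends to $R$ and $\End_A(M_{\mm})=\End_R(M_{\mm})$. Matlis duality $D(-):=\Hom_R(-,E)$ is a contravariant anti-equivalence between the categories of Artinian and finitely generated $R$-modules, exchanging the socle and cosocle functors. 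Hence $D(M_{\mm})$ is a finitely generated $R$-module with $\dim_{\kappa(\mm)}D(M_{\mm})/\mm D(M_{\mm})=\dim_{\kappa(\mm)}\soc(M_{\mm})\le n$, so Nakayama's lemma produces a surjection $R^n\twoheadrightarrow D(M_{\mm})$. By Lemma~\ref{PI_local} the ring $\End_R(D(M_{\mm}))$ satisfies $s_{2n}$, and since $D$ induces a ring isomorphism $\End_R(M_{\mm})\cong\End_R(D(M_{\mm}))^{\mathrm{op}}$, Remark~\ref{commute}(ii) concludes the proof.

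The crux of the argument is the use of Matlis duality to trade the given socle bound for a generator bound, which is what Lemma~\ref{PI_local} consumes; the only real technical verification is that $M_{\mm}$ is Artinian, so that Matlis duality over $\widehat{A_{\mm}}$ actually applies.
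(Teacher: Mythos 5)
Your proof is correct and follows essentially the same route as the paper: decompose $M$ into $\mm$-primary pieces, reduce to the complete local case, embed into a power of the injective hull $E(\kappa(\mm))$ via the socle bound, conclude $M_\mm$ is Artinian, apply Matlis duality, and feed the resulting bounded surjection into Lemma~\ref{PI_local}. The only cosmetic differences are that the paper obtains the surjection $A^{\oplus d}\twoheadrightarrow M^\vee$ by dualizing the embedding and using $E^\vee\cong A$ directly, whereas you recover it via the socle/cosocle exchange and Nakayama, and the paper handles the product decomposition without invoking Remark~\ref{commute}(iii); these are equivalent.
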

\begin{proof} The assumption on $M$ implies that $M\cong \oplus_{\mm} M[\mm^{\infty}]$, where the sum is taken over all the maximal ideals in $A$ and
$M[\mm^{\infty}]=\varinjlim M[\mm^n]$, where $M[\mm^n]=\{ m\in M: a m =0, \forall a\in\mm^n\}$. Since $M[\mm^{\infty}]$ is only supported on $\{\mm\}$,  if $\mm_1\neq \mm_2$ then
$\Hom_{A}(M[\mm_1^{\infty}], M[\mm_2^{\infty}])=0$. Thus $\End_A(M)\cong \prod_{\mm} \End_A(M[\mm^{\infty}])$ and it is enough to show 
the assertion in the case when $M=M[\mm^{\infty}]$, which we now assume.  

Since in this case $\End_A(M)=\End_{\hat{A}}(M)$, where $\hat{A}$ is the $\mm$-adic 
completion of $A$, we may further assume that $(A, \mm)$ is a complete local ring. Let $E(\kappa(\mm))$ be an injective envelope of $\kappa(\mm)$ in the 
category of $A$-modules. The functor $(\ast)^{\vee}:=\Hom_{A}(\ast,  E(\kappa(\mm)))$ induces an anti-equivalence of categories between artinian and noetherian $A$-modules, see \cite[Thm. A.35]{twenty4}. Hence, $\End_A(M)\cong \End_A(M^{\vee})^{op}$. Since $M[\mm]\hookrightarrow M$ is essential, we may embed
$M\hookrightarrow E(\kappa(\mm))^{\oplus d}$, where $d=\dim_{\kappa(\mm)} M[\mm]$. Since $E(\kappa(\mm))^{\vee}\cong A$ by \cite[Thm. A.31]{twenty4}, we obtain a surjection $A^{\oplus d}\twoheadrightarrow M^{\vee}$. We deduce from lemma~\ref{PI_local} that $\End_A(M^{\vee})$ satisfies the standard identity $s_{2n}$.
\end{proof}

\subsection{Density}\label{density}
Let $K$ be a pro-finite group with an open pro-$p$ group. Let $\OO\br{K}$ be the completed group algebra, and 
let $\Mod^{\pro}_K(\OO)$ be the category of compact linear-topological $\OO\br{K}$-modules. Let $\{V_i\}_{i\in I}$ be a family of
continuous representations 
of $K$ on finite dimensional $L$-vector spaces, and let $M\in \Mod^{\mathrm{pro}}_K(\OO)$. 

\begin{defi}\label{capture}We say 
that $\{V_i\}_{i\in I}$ \textit{captures  $M$} if the smallest quotient
$M\twoheadrightarrow Q$, such that
 $\Hom_{\OO\br{K}}^{\rm cont}(Q, V_i^*)\cong \Hom_{\OO\br{K}}^{\rm cont}(M, V_i^*)$ for all $i\in I$ is equal to  $M$. 
\end{defi}

\begin{lem}\label{intersect} Let $N=\bigcap_{\phi} \Ker \phi$, where the intersection is taken over all $\phi\in \Hom_{\OO\br{K}}^{\rm cont}(M, V_i^*)$, for all $i\in I$.
Then $\{V_i\}_{i\in I}$ captures $M$ if and only if $N=0$.
\end{lem}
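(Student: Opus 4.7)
The plan is to unpack the definition of capture and translate it into a statement about submodules of $M$. Quotients $M \twoheadrightarrow Q$ in $\Mod^{\pro}_K(\OO)$ correspond bijectively to closed $\OO\br{K}$-submodules $N' \subseteq M$, with $Q = M/N'$. Under this correspondence, the pullback map along $M \twoheadrightarrow M/N'$ is an injection $\Hom^{\cont}_{\OO\br{K}}(M/N', V_i^*) \hookrightarrow \Hom^{\cont}_{\OO\br{K}}(M, V_i^*)$ whose image is exactly the set of continuous $\phi : M \to V_i^*$ that vanish on $N'$. So the pullback is an isomorphism for a given $i$ if and only if $N' \subseteq \Ker \phi$ for every $\phi \in \Hom^{\cont}_{\OO\br{K}}(M, V_i^*)$.

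Requiring this simultaneously for all $i \in I$ gives the single condition $N' \subseteq N$, where $N = \bigcap_{\phi} \Ker \phi$ is the submodule in the statement. Therefore the quotients $M \twoheadrightarrow Q$ satisfying the isomorphism property of Definition~\ref{capture} are exactly those of the form $M \twoheadrightarrow M/N'$ with $N'$ a closed $\OO\br{K}$-submodule contained in $N$. The smallest such quotient is obtained by taking the largest possible $N'$, namely $N' = N$; this works because $N$ is itself closed (each $\Ker \phi$ is closed, being the preimage of $0$ under a continuous map, and an intersection of closed submodules is closed), and is $\OO\br{K}$-stable by construction.

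Thus the ``smallest quotient'' appearing in Definition~\ref{capture} is $M/N$. The family $\{V_i\}_{i \in I}$ captures $M$ precisely when this smallest quotient equals $M$, which happens if and only if $N = 0$. There is no real obstacle here — the argument is essentially formal, the only point requiring a word of justification being the closedness of $N$ so that $M/N$ is a legitimate object of $\Mod^{\pro}_K(\OO)$.
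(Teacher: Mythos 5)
Your proof is correct and takes essentially the same approach as the paper's, which simply observes that $\Hom_{\OO\br{K}}^{\rm cont}(M/N, V_i^*)\cong \Hom_{\OO\br{K}}^{\rm cont}(M, V_i^*)$ and declares the assertion immediate; you have filled in the implicit details by identifying the quotients satisfying the isomorphism property with the closed submodules $N'\subseteq N$ and verifying that $N$ itself is closed, so that $M/N$ really is the smallest such quotient.
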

\begin{proof} It is immediate that $\Hom_{\OO\br{K}}^{\rm cont}(M/N, V_i^*)\cong \Hom_{\OO\br{K}}^{\rm cont}(M, V_i^*)$ for all 
$i\in I$. This implies the assertion.
\end{proof}

\begin{lem}\label{capture_sub} Let $M'$ be a closed $\OO\br{K}$-submodule of $M$. If $\{V_i\}_{i\in I}$ captures $M$, then it also captures $M'$.
\end{lem}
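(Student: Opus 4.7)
The plan is to invoke Lemma~\ref{intersect} to translate both the hypothesis and the conclusion into statements about intersections of kernels. Writing $N = \bigcap_{\phi} \Ker \phi$ with $\phi$ ranging over all continuous $\OO\br{K}$-homomorphisms $M \to V_i^*$ for all $i \in I$, the capture assumption for $M$ is equivalent to $N = 0$; analogously, capture of $M'$ is equivalent to the corresponding intersection $N' \subset M'$ (formed from continuous $\OO\br{K}$-homomorphisms $M' \to V_i^*$) being zero. So it suffices to show $N' = 0$.

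Given this reduction, the argument is essentially formal. Let $m' \in M'$ be nonzero; I would apply the hypothesis $N = 0$ to produce some $i \in I$ and some continuous $\OO\br{K}$-homomorphism $\phi : M \to V_i^*$ with $\phi(m') \neq 0$. Since $M' \hookrightarrow M$ is a closed inclusion inside $\Mod^{\pro}_K(\OO)$, the restriction $\phi|_{M'} : M' \to V_i^*$ is again a continuous $\OO\br{K}$-homomorphism, and it satisfies $\phi|_{M'}(m') = \phi(m') \neq 0$. Therefore $m' \notin N'$, and since $m'$ was arbitrary we conclude $N' = 0$. One final application of Lemma~\ref{intersect} upgrades this to capture of $M'$. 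There is no real obstacle to overcome: the lemma is a direct consequence of the intersection-of-kernels reformulation together with the trivial observation that the restriction of a continuous $\OO\br{K}$-linear map to a closed submodule is continuous.
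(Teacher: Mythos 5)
Your proof is correct and follows the same route as the paper: both reduce via Lemma~\ref{intersect} to showing the relevant intersection of kernels inside $M'$ is zero, and both achieve this by taking a nonzero $m'\in M'$, using capture of $M$ to find $\phi\colon M\to V_i^*$ with $\phi(m')\neq 0$, and then restricting $\phi$ to $M'$. You spell out slightly more explicitly that the restriction remains continuous and $\OO\br{K}$-linear, but there is no substantive difference.
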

\begin{proof} Let $v\in M'$ be non-zero. Since $\{V_i\}_{i\in I}$ captures $M$, lemma~\ref{intersect} implies that  there exist 
$i\in I$ and $\phi\in \Hom^{\rm cont}_{\OO\br{K}}(M, V_i^*)$, such that $\phi(v)\neq 0$. Thus $\bigcap_{\phi} \Ker \phi=0$, where   the intersection 
is taken over all $\phi\in \Hom_{\OO\br{K}}^{\rm cont}(M', V_i^*)$, for all $i\in I$. Lemma~\ref{intersect} implies that $\{V_i\}_{i\in I}$ captures $M'$.
\end{proof}

\begin{lem}\label{faithful} Assume that $\{ V_i\}_{i\in I}$ captures $M$ and  let $\phi\in \End_{\OO\br{K}}^{\rm cont}(M)$. If $\phi$ kills 
$\Hom_{\OO\br{K}}^{\rm cont}(M, V_i^*)$ for all $i\in I$ then $\phi=0$.
\end{lem}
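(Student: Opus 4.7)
The plan is to unpack the hypothesis that $\phi$ kills $\Hom_{\OO\br{K}}^{\cont}(M,V_i^*)$ in the most literal way and then apply lemma~\ref{intersect}. Saying that $\phi$ kills this Hom-space should mean that $f\circ\phi=0$ for every continuous $\OO\br{K}$-homomorphism $f\colon M\to V_i^*$ and every $i\in I$; equivalently, $\phi(M)\subseteq \Ker f$ for every such $f$.

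Taking the intersection over all such $f$ and all $i\in I$, I obtain $\phi(M)\subseteq N$, where $N=\bigcap_{\phi'}\Ker \phi'$ is the submodule introduced in lemma~\ref{intersect}. Since the family $\{V_i\}_{i\in I}$ captures $M$ by assumption, lemma~\ref{intersect} yields $N=0$, and therefore $\phi(M)=0$, i.e.\ $\phi=0$.

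So the argument is essentially a one-liner built out of lemma~\ref{intersect}; there is no real obstacle. The only thing worth being slightly careful about is the interpretation of the word \emph{kills}: one must read it as postcomposition, $f\mapsto f\circ\phi$, acting on the right $\End^{\cont}_{\OO\br{K}}(M)$-module $\Hom_{\OO\br{K}}^{\cont}(M,V_i^*)$, which is the natural action and the one consistent with the intended use of the lemma (for instance, feeding endomorphisms of $P$, or of $\md(\Pi)$, into this mechanism in the sequel).
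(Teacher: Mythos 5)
Your proof is correct and is essentially the same argument as the paper's, merely phrased via lemma~\ref{intersect} rather than via the definition of ``captures'' directly: you observe $\phi(M)\subseteq N:=\bigcap_f \Ker f$ and invoke $N=0$, whereas the paper observes $\Hom_{\OO\br{K}}^{\rm cont}(\Coker\phi, V_i^*)\cong\Hom_{\OO\br{K}}^{\rm cont}(M, V_i^*)$ and invokes the minimality of $M$ among quotients with this property, forcing $\Coker\phi = M$. Both routes are one-liners and reduce to the same observation that $\phi(M)$ lies in the common kernel; your reading of ``kills'' as postcomposition $f\mapsto f\circ\phi$ is indeed the intended one.
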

\begin{proof} The assumption on $\phi$ implies that
$\Hom_{\OO\br{K}}^{\rm cont}(\Coker \phi, V_i^*)\cong \Hom_{\OO\br{K}}^{\rm cont}(M, V_i^*)$ for all $i\in I$. Since $\{V_i\}_{i\in I}$ captures $M$, we 
deduce that $M=\Coker \phi$ and thus $\phi=0$.
\end{proof}

\begin{lem}\label{capture_banach} Let $M\in \Mod^{\pro}_K(\OO)$ be $\OO$-torsion free, let $\Pi(M):=\Hom_{\OO}^{\rm cont}(M, L)$ be an $L$-Banach space equipped with a supremum norm. Then 
$\{V_i\}_{i\in I}$ captures $M$ if and only if the image of the evaluation map $\oplus_i \Hom_K(V_i, \Pi(M))\otimes_L V_i \rightarrow \Pi(M)$ is a dense subspace.
\end{lem}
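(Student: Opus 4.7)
The plan is to transfer the capturing condition into a statement about a natural subspace of $\Pi(M)$, and then apply non-archimedean duality between $M$ and its continuous $L$-valued dual.

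First, I would set up a natural identification
\[
\Hom_{\OO\br{K}}^{\cont}(M, V_i^*) \;\cong\; \Hom_K(V_i, \Pi(M))
\]
sending $\phi$ to the map $\psi$ with $\psi(v)(m) = \phi(m)(v)$, and vice versa. The $\OO$-linearity and continuity of $\psi(v)$ are immediate, and $K$-equivariance follows by unwinding the dual actions on $V_i^*$ and on $\Pi(M)$ (namely $(kf)(m)=f(k^{-1}m)$ on $\Pi(M)$). Under this identification, the image $W$ of the evaluation map $\oplus_i \Hom_K(V_i,\Pi(M))\otimes_L V_i\to\Pi(M)$ is precisely the $L$-linear span of the functionals $f_{\phi,v}:m\mapsto \phi(m)(v)$, as $i$, $\phi$, $v$ vary.

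Next, setting $W^\perp:=\{m\in M:f(m)=0 \text{ for all } f\in W\}$, this identification gives $W^\perp=\bigcap_\phi \Ker\phi=N$, the submodule appearing in lemma~\ref{intersect}. By that lemma, $\{V_i\}_{i\in I}$ captures $M$ iff $N=0$, so the whole statement reduces to the assertion
\[
W \text{ is dense in }\Pi(M)\iff W^\perp=0.
\]
The forward direction is easy: for each $m\in M$, evaluation at $m$ is a continuous $L$-linear functional on $\Pi(M)$, so $W^\perp=\overline{W}{}^\perp$; density then gives $W^\perp=\Pi(M)^\perp=0$, using that $\Pi(M)$ separates points of $M$ (a standard consequence of Schikhof duality for the torsion-free compact $\OO$-module $M$, or of writing $M$ as an inverse limit of finite free $\OO$-quotients).

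For the converse, I would argue by contradiction. If $W$ is not dense, then $\overline{W}$ is a proper closed $L$-subspace of the $L$-Banach space $\Pi(M)$. Since $L$ is a finite extension of $\Qp$ it is spherically complete, so the non-archimedean Hahn-Banach theorem produces a nonzero bounded $L$-linear functional $\lambda:\Pi(M)\to L$ vanishing on $\overline{W}$. The essential input is then to represent $\lambda$ as evaluation at some element of $M$; this is Schikhof duality for torsion-free compact linear-topological $\OO$-modules, which identifies $M$ with the bounded $L$-linear dual of $\Pi(M)$ via the pairing $(m,f)\mapsto f(m)$. Granting this, $\lambda=\ev_m$ for some nonzero $m\in M$, whence $m\in \overline{W}{}^\perp=W^\perp=N$, contradicting $N=0$.

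The main obstacle is the appeal to Schikhof duality (combined with non-archimedean Hahn-Banach) in the backward direction, since it is this that converts the topological density statement about $\Pi(M)$ into the algebraic vanishing statement inside $M$; the remaining steps are purely formal manipulations with the natural pairing.
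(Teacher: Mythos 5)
Your proposal is correct and takes essentially the same route as the paper: both arguments rest on the Schneider--Teitelbaum duality $M\otimes_{\OO}L\cong\Hom_L^{\rm cont}(\Pi(M),L)$ together with the adjunction $\Hom_{\OO\br{K}}^{\rm cont}(M,V_i^*)\cong\Hom_K(V_i,\Pi(M))$, and translate the capturing condition (via lemma~\ref{intersect}) into density via annihilators. The only cosmetic difference is that you spell out the non-archimedean Hahn--Banach step for the backward implication, whereas the paper absorbs that into its citation of \cite[Thm.~1.2]{iw}.
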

\begin{proof} It follows from \cite[Thm.1.2]{iw} that the evaluation map $M \times \Pi(M)\rightarrow L$ induces an isomorphism 
\begin{equation}\label{ST_iso1}
M\otimes_{\OO} L \cong \Hom^{\rm cont}_L(\Pi(M), L).
\end{equation}
If $\varphi\in \Hom_{\OO\br{K}}^{\rm cont}(M, V_i^*)$ then we define $\varphi^d\in \Hom_K(V_i, \Pi(M))$ by $\varphi^d(v)(m):= \varphi(m)(v).$ It follows from \cite[Thm.1.2]{iw} that 
the map $\varphi\mapsto \varphi^d$ induces an isomorphism 
\begin{equation}\label{ST_iso2}
\Hom_{\OO\br{K}}^{\rm cont}(M, V_i^*)\cong \Hom_K(V_i, \Pi(M)).
\end{equation}
Let $m\in M$ and let $\ell_m$ be the image of $m$ in  $\Hom^{\rm cont}_L(\Pi(M), L)$ under \eqref{ST_iso1}. Then for all $i\in I$ and all $\varphi\in \Hom_{\OO\br{K}}^{\rm cont}(M, V_i^*)$, 
$\varphi(m)=0$ if and only if $\ell_m \circ \varphi^d=0$. Using lemma~\ref{intersect} and isomorphisms \eqref{ST_iso1}, \eqref{ST_iso2} we deduce that $\{V_i\}_{i\in I}$ does not capture $M$ if and only if 
the image of the evaluation map $\oplus_i \Hom_K(V_i, \Pi(M))\otimes_L V_i \rightarrow \Pi(M)$ is not a dense subspace. 
\end{proof}

\begin{lem}\label{product} The following assertions are equivalent:
\begin{itemize} 
\item[(i)] $\{V_i\}_{i\in I}$ captures every indecomposable projective in $\Mod^{\pro}_{K}(\OO)$;
\item[(ii)]  $\{V_i\}_{i\in I}$ captures every  projective in $\Mod^{\pro}_{K}(\OO)$;
\item[(iii)] $\{V_i\}_{i\in I}$ captures $\OO\br{K}$.
\end{itemize}
\end{lem}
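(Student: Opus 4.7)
The implications $(\mathrm{ii})\Rightarrow(\mathrm{iii})$ and $(\mathrm{ii})\Rightarrow(\mathrm{i})$ are immediate since $\OO\br{K}$ and each indecomposable projective are projective objects, so the content lies in the reverse implications. My plan is to close the loop via $(\mathrm{i})\Rightarrow(\mathrm{iii})\Rightarrow(\mathrm{ii})$. A tool used in both steps is the stability of capturing under products: if each $M_j$ in a family $\{M_j\}_{j\in J}$ of objects of $\Mod^{\pro}_K(\OO)$ is captured by $\{V_i\}_{i\in I}$, then so is the product $M=\prod_{j\in J} M_j$. Indeed, if $m=(m_j)\in M$ is non-zero, some coordinate $m_{j_0}$ is non-zero, and by Lemma~\ref{intersect} applied to $M_{j_0}$ there exists $\phi\in\Hom^{\rm cont}_{\OO\br{K}}(M_{j_0},V_i^*)$ with $\phi(m_{j_0})\neq 0$; precomposing with the canonical projection $M\twoheadrightarrow M_{j_0}$ gives a map $M\to V_i^*$ not killing $m$, so Lemma~\ref{intersect} yields the claim.

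For $(\mathrm{iii})\Rightarrow(\mathrm{ii})$ I would use that $\OO\br{K}$ is a projective generator of $\Mod^{\pro}_K(\OO)$. By Pontryagin duality this is the assertion that $\OO\br{K}^\vee$ is an injective cogenerator of $\Mod^{\sm}_K(\OO)$, so every smooth representation embeds in some product $(\OO\br{K}^\vee)^I$, and dually every object of $\Mod^{\pro}_K(\OO)$ is a quotient of some $\OO\br{K}^I$. For a projective $P$ the surjection $\OO\br{K}^I\twoheadrightarrow P$ splits by projectivity, exhibiting $P$ as a direct summand, and in particular as a closed submodule, of $\OO\br{K}^I$. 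Assumption $(\mathrm{iii})$ together with the product stability shows $\OO\br{K}^I$ is captured, and Lemma~\ref{capture_sub} then gives the capturing of $P$.

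For $(\mathrm{i})\Rightarrow(\mathrm{iii})$ I would invoke the structural decomposition $\OO\br{K}\cong\prod_\alpha P_\alpha^{n_\alpha}$, where $\{P_\alpha\}$ runs over isomorphism classes of indecomposable projectives in $\Mod^{\pro}_K(\OO)$ — each being the projective envelope of a simple object — with appropriate multiplicities $n_\alpha$. Such a decomposition is available because $K$ has an open pro-$p$ subgroup $K_1$, making $\OO\br{K}$ a pseudocompact semiperfect ring whose idempotents can be lifted from the block decomposition of the finite group ring $\OO[K/K_1]$. Under $(\mathrm{i})$ each $P_\alpha$ is captured, and the product stability established above then propagates capturing to $\OO\br{K}$, yielding $(\mathrm{iii})$.

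The main obstacle is the structural decomposition of $\OO\br{K}$ into indecomposable projectives invoked in the last step; the rest of the argument is soft and categorical. Once that decomposition is in hand, the three implications assemble into the cycle $(\mathrm{i})\Rightarrow(\mathrm{iii})\Rightarrow(\mathrm{ii})\Rightarrow(\mathrm{i})$, closing the equivalence.
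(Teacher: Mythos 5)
Your proof is correct, and it closes the cycle in a genuinely different order from the paper: you argue $(\mathrm{i})\Rightarrow(\mathrm{iii})\Rightarrow(\mathrm{ii})\Rightarrow(\mathrm{i})$, whereas the paper goes $(\mathrm{i})\Rightarrow(\mathrm{ii})\Rightarrow(\mathrm{iii})\Rightarrow(\mathrm{i})$. Both arguments hinge on the same ``product stability'' observation (capturing passes to products, via Lemma~\ref{intersect}), which the paper proves in-line inside its $(\mathrm{i})\Rightarrow(\mathrm{ii})$ step, just as you isolate it as a preliminary. Where the two proofs diverge is in the structural input from the theory of pseudocompact modules. The paper uses: every projective in $\Mod^{\pro}_K(\OO)$ is a product of indecomposable projectives (citing \cite[V.2.5.4]{demgab}) for $(\mathrm{i})\Rightarrow(\mathrm{ii})$, and every indecomposable projective is a direct summand of $\OO\br{K}$ (citing \cite[prop.~4.2]{comp}) for $(\mathrm{iii})\Rightarrow(\mathrm{i})$. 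You instead apply the Demazure--Gabriel decomposition only to $\OO\br{K}$ itself (the semiperfect decomposition $\OO\br{K}\cong\prod_\alpha P_\alpha^{n_\alpha}$) for $(\mathrm{i})\Rightarrow(\mathrm{iii})$, and you replace the reference to \cite{comp} with the softer fact that $\OO\br{K}$ is a projective generator, so that any projective is a direct summand of a power $\OO\br{K}^I$, handled by product stability and Lemma~\ref{capture_sub} for $(\mathrm{iii})\Rightarrow(\mathrm{ii})$. Your route thus trades one specific reference for a more generic projective-generator argument; the one place where you should be a little careful is in verifying that every pseudocompact $\OO\br{K}$-module is a quotient of some topological product $\OO\br{K}^I$ (the dual statement is that the smooth dual of $\OO\br{K}$ is an injective cogenerator of $\Mod^{\sm}_K(\OO)$), but that is standard, so the argument stands.
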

\begin{proof} (i) implies (ii).  Let $P$ be a projective object in $\Mod^{\pro}_{K}(\OO)$. Then $P\cong \prod_{j\in J} P_j$, where $P_j$ is projective indecomposable 
for every $j\in J$, see \cite[V.2.5.4]{demgab}. For each $j\in J$ let $p_j: P\rightarrow P_j$ denote the projection. Since $\{V_i\}_{i\in I}$ captures
$P_j$ by assumption, it follows from lemma~\ref{intersect} that $\Ker p_j= \cap_{\phi} \Ker \phi\circ p_j$, where the intersection is taken over all 
$\phi\in \Hom_{\OO\br{K}}^{\rm cont}(P_j, V_i^*)$, for all $i\in I$. Since $\cap_{j\in J} \Ker p_j =0$, we use lemma~\ref{intersect} again to deduce that 
$\{V_i\}_{i\in I}$ captures $P$. 

(ii) implies (iii), as $\OO\br{K}$ is projective in $\Mod^{\pro}_{K}(\OO)$. 

(iii) implies (i). Every indecomposable projective object in  $\Mod^{\pro}_{K}(\OO)$ is a direct summand of $\OO\br{K}$, see for example \cite[prop.~4.2]{comp}. The assertion follows from
lemma~\ref{capture_sub}.
\end{proof} 

Let $G$ be an affine group scheme of finite type over $\Zp$ such that $G_L$ is a split connected reductive group over $L$. Let 
$\Alg(G)$ be the set isomorphism classes of irreducible rational representations of $G_L$, which we view as representations of $G(\Zp)$ via the inclusion $G(\Zp)\subset G(L)$.
\begin{prop}\label{capture_algebraic} $\Alg(G)$  captures every projective object in $\Mod^{\pro}_{G(\Zp)}(\OO)$, 
\end{prop}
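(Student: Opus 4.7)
The plan is to reduce the statement, via Lemmas~\ref{product} and~\ref{capture_banach}, to a concrete density result on continuous functions on $K := G(\Zp)$, and then prove that density by combining the algebraic Peter--Weyl theorem, Zariski density of $K$ in $G_L$, and a non-archimedean Stone--Weierstrass argument.

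First, Lemma~\ref{product} reduces the problem to showing that $\Alg(G)$ captures $\OO\br{K}$. Since $\OO\br{K}$ is $\OO$-torsion free, Lemma~\ref{capture_banach} further reduces this to proving that the image of the evaluation map
$$\bigoplus_{V \in \Alg(G)} \Hom_K(V, \Pi(\OO\br{K})) \otimes_L V \longrightarrow \Pi(\OO\br{K})$$
is dense. I would identify $\Pi(\OO\br{K})$ with $\mathcal{C}(K, L)$, the $L$-Banach space of continuous functions $K \to L$ with the supremum norm, where $K$ acts by left translation. A direct Frobenius reciprocity computation then identifies $\Hom_K(V, \mathcal{C}(K, L))$ with $V^{\ast}$ via evaluation at $1 \in K$, and realises the $V$-component of the evaluation map as the matrix-coefficient map $V^{\ast} \otimes V \to \mathcal{C}(K, L)$ sending $\ell \otimes v$ to $k \mapsto \ell(k^{-1} v)$.

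Summing over $V \in \Alg(G)$ and invoking the algebraic Peter--Weyl theorem for the split connected reductive group $G_L$ identifies $\bigoplus_V V^{\ast} \otimes V$ with the coordinate ring $\OO(G_L)$; hence the image in question coincides with that of the restriction homomorphism $\OO(G_L) \to \mathcal{C}(K, L)$, $f \mapsto f|_K$. Next, I would argue that $K$ is Zariski dense in $G_L$: since $G$ is smooth and $G_L$ geometrically connected, $K$ is a non-empty $L$-analytic open submanifold of $G(L)$ of dimension $\dim G_L$, so no proper Zariski-closed subvariety of $G_L$ can contain it. Consequently, the restriction homomorphism is injective, and its image is a unital $L$-subalgebra of $\mathcal{C}(K, L)$ that separates the points of $K$, since algebraic functions already separate the points of $G(L)$.

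The final step is a non-archimedean Stone--Weierstrass theorem of Kaplansky: a unital $L$-subalgebra of $\mathcal{C}(X, L)$ separating the points of a compact Hausdorff totally disconnected space $X$ is dense in the sup norm; applied to $X = K$ this completes the proof. The main technical subtlety lies here; a self-contained alternative is to use the density of locally constant functions in $\mathcal{C}(K, L)$ and, for each $n$, to approximate the characteristic function of a coset of $\ker(G(\Zp) \to G(\Zp/p^n\Zp))$ by suitable combinations of matrix coefficients of a faithful representation $V$ reduced modulo $p^n$.
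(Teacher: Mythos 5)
Your overall structure tracks the paper's proof exactly: both reduce via Lemma~\ref{product} to showing $\Alg(G)$ captures $\OO\br{K}$, then use Lemma~\ref{capture_banach} and the Schneider--Teitelbaum identification $\Pi(\OO\br{K})\cong\mathcal{C}(K,L)$ to turn this into a density statement for matrix coefficients. Where you supply a direct argument (algebraic Peter--Weyl decomposition of $L[G_L]$, Zariski density of $K$ in $G_L$, then the non-archimedean Stone--Weierstrass theorem of Kaplansky), the paper simply cites \cite[prop.~A.3]{blocks}, whose proof is along the same lines and is itself modelled on \cite[prop.~5.4.1]{emfm}. So the route is essentially the same; your version unrolls the citation into a self-contained argument, which is a reasonable thing to do.

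One step as written is incorrect and should be repaired. You claim that $K=G(\Zp)$ is ``a non-empty $L$-analytic open submanifold of $G(L)$ of dimension $\dim G_L$''. This is false whenever $L\neq\Qp$: $G(\Zp)$ is a compact $\Qp$-analytic open subgroup of $G(\Qp)$, and $G(\Qp)$ has empty interior in $G(L)$, so $K$ is certainly not $L$-analytically open in $G(L)$. The conclusion (Zariski density of $K$ in $G_L$) is still true, but the correct argument goes in two steps: first, $G(\Zp)$ is Zariski dense in $G_{\Qp}$, because $G(\Zp)$ is open in $G(\Qp)$, hence its Zariski closure is a $\Qp$-subgroup scheme of full Lie algebra, and $G_{\Qp}$ is connected; second, density is preserved under base change to $L$: if $f=\sum_i e_i\otimes f_i\in L[G_L]=L\otimes_{\Qp}\Qp[G_{\Qp}]$ vanishes on $G(\Zp)$, where $(e_i)$ is a $\Qp$-basis of $L$ and $f_i\in\Qp[G_{\Qp}]$, then each $f_i$ vanishes on $G(\Zp)$ and hence is zero, so $f=0$. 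With this fix the argument is complete.
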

\begin{proof} The  proof is very much motivated  by \cite[prop. 5.4.1]{emfm}, which implies the statement for $G=\GL_2$.
Let $K=G(\Zp)$ and let $\mathcal C(K, L)$ be the space of continuous functions from $K$ to $L$. Since $K$ is compact, the supremum norm 
makes $\mathcal C(K, L)$ into a unitary  $L$-Banach representation of $K$. It is shown in \cite[Lem.2.1, cor.~2.2]{iw} 
that the natural map $K\rightarrow \OO\br{K}$, $g\mapsto g$ induces an isometrical, $K$-equivariant isomorphism between
$\mathcal C(K, L)$ and $\Hom_{\OO}^{\rm cont}(\OO\br{K}, L)$. It is shown in \cite[prop.~A.3]{blocks} that the image of the evaluation map
$\oplus \Hom_K(V, \mathcal C(K, L))\otimes V\rightarrow \mathcal C(K, L)$ is a dense subspace, where the sum is taken over all $V\in \Alg(G)$.
 Lemma~\ref{capture_banach} implies that $\Alg(V)$ captures $\OO\br{K}$, and the assertion follows from lemma~\ref{product}.
\end{proof}

\subsection{Locally algebraic vectors in $\Pi(P)$}

From now on let $G=\GL_2(\Qp)$, $K=\GL_2(\Zp)$, and let $\pi$ be an admissible smooth, absolutely irreducible $k$-representation of $G$.
Recall that if 
$\chi_1, \chi_2: \Qp^{\times}\rightarrow k^{\times}$ are smooth characters, then 
   $$\pi\{\chi_1,\chi_2\}:= ({\rm Ind}_{B}^{G} \chi_1\otimes\chi_2\omega^{-1})_{\rm sm}^{\rm ss}\oplus
    ({\rm Ind}_{B}^{G} \chi_2\otimes\chi_1\omega^{-1})_{\rm sm}^{\rm ss}.$$

\begin{defi}\label{dofpi}
 If $\pi$ is supersingular, let $d(\pi)=1$. Otherwise, there is a unique
   $\pi\{\chi_1,\chi_2\}$
 containing $\pi$ and we let $d(\pi)$ be the multiplicity of $\pi$ in $\pi\{\chi_1,\chi_2\}$. 
\end{defi}

\begin{lem}\label{ss}
 Let $\chi_1, \chi_2: \Qp^{\times}\rightarrow k^{\times}$ be smooth characters. Then $\pi\{\chi_1,\chi_2\}$ is isomorphic to one of the following:
\begin{itemize}
\item[(i)] $(\Indu{B}{G}{\chi_1\otimes\chi_2\omega^{-1}})_{\sm}\oplus (\Indu{B}{G}{\chi_2\otimes\chi_1\omega^{-1}})_{\sm}$, if $\chi_1\chi_2^{-1}\neq \Eins, \omega^{\pm 1}$;
\item[(ii)] $(\Indu{B}{G}{\chi\otimes\chi\omega^{-1}})_{\sm}^{\oplus 2}$, if $\chi_1=\chi_2=\chi$ and $p\ge 3$;
\item[(iii)] $(\Eins\oplus \Sp\oplus \Indu{B}{G}{\omega \otimes \omega^{-1}})\otimes\chi\circ\det$, if $\chi_1\chi_2^{-1}=\omega^{\pm 1}$ and $p\ge 5$;
\item[(iv)] $(\Eins \oplus \Sp\oplus \omega\circ\det\oplus \Sp\otimes\omega\circ \det)\otimes \chi\circ \det$, if $\chi_1\chi_2^{-1}=\omega^{\pm 1}$ and $p=3$;
\item[(v)] $(\Eins \oplus \Sp)^{\oplus 2}\otimes\chi\circ \det$ if $\chi_1=\chi_2$ and $p=2$.
\end{itemize}
 In particular, $d(\pi)=1$ unless we are in one of the following cases, when $d(\pi)=2$:
 \begin{itemize}
 \item[(a)] $p\ge 3$ and $\pi\cong (\Indu{B}{G}{\chi\otimes \chi\omega^{-1}})_{\sm}$  or 
\item[(b)]$p=2$ and either $\pi\cong \chi\circ \det$ or $\pi\cong \Sp\otimes\chi\circ \det$, for some smooth character $\chi: \Qp^{\times}\rightarrow k^{\times}$.
\end{itemize}
\end{lem}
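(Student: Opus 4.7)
The proof is a direct case analysis resting on the classical structure of smooth mod-$p$ principal series of $G$. The main input is the Barthel--Livn\'e result: for smooth characters $\alpha,\beta:\qpet\to k^\times$, the representation $(\Ind_B^G\alpha\otimes\beta)_{\sm}$ is irreducible iff $\alpha\neq\beta$, and otherwise sits in a non-split extension
$$0\to\alpha\circ\det\to(\Ind_B^G\alpha\otimes\alpha)_{\sm}\to\Sp\otimes\alpha\circ\det\to 0.$$
Moreover, characters, twists of $\Sp$, and irreducible principal series are pairwise non-isomorphic, and two irreducible principal series are isomorphic iff their ordered inducing pairs agree. I will also use that $\omega$ has order $p-1$, so $\omega=\Eins$ iff $p=2$, and $\omega=\omega^{-1}\neq\Eins$ iff $p=3$.

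Applied to the two summands of $\pi\{\chi_1,\chi_2\}$, the first is reducible iff $\chi_1\chi_2^{-1}=\omega^{-1}$ and the second iff $\chi_1\chi_2^{-1}=\omega$. By the symmetry $\pi\{\chi_1,\chi_2\}=\pi\{\chi_2,\chi_1\}$, in the borderline cases I may assume $\chi_1=\chi_2\omega$. The case split is then mechanical. When $\chi_1\chi_2^{-1}\notin\{\Eins,\omega^{\pm 1}\}$ both summands are irreducible, yielding (i). When $\chi_1=\chi_2$ and $p\geq 3$, both summands equal the irreducible $(\Ind_B^G\chi\otimes\chi\omega^{-1})_{\sm}$ (using $\omega\neq\Eins$), yielding (ii). When $\chi_1=\chi_2\omega$ and $p\geq 5$, the first summand remains irreducible (since $\omega^2\neq\Eins$) while the second, $(\Ind_B^G\chi_2\otimes\chi_2)_{\sm}$, has semi-simplification $(\Eins\oplus\Sp)\otimes\chi_2\circ\det$; extracting the common twist $\chi_2\circ\det$ gives (iii). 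When $\chi_1=\chi_2\omega$ and $p=3$, both summands become reducible because $\omega^{-1}=\omega$, producing the four listed constituents of (iv). Finally, when $p=2$ we have $\omega=\Eins$, which collapses the borderline cases to $\chi_1=\chi_2$; both summands then equal $(\Ind_B^G\chi\otimes\chi)_{\sm}$, with semi-simplification $(\Eins\oplus\Sp)\otimes\chi\circ\det$, producing (v).

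The $d(\pi)$ assertion follows by reading off multiplicities. In (i), (iii), and (iv) the listed constituents are pairwise non-isomorphic: in (i) the two irreducible principal series have distinct ordered parameters; in (iii), $(\Ind_B^G\omega\otimes\omega^{-1})_{\sm}$ is an irreducible principal series (since $\omega\neq\omega^{-1}$ for $p\geq 5$), hence different from the character $\Eins$ and from $\Sp$; in (iv) the hypothesis $\omega\neq\Eins$ separates the two $\det$-twisted copies of $\Eins\oplus\Sp$. Thus $d(\pi)=1$ in these cases. In (ii) the unique irreducible constituent appears with multiplicity $2$, giving (a), and in (v) each of $\chi\circ\det$ and $\Sp\otimes\chi\circ\det$ appears twice, giving (b). The only real obstacle is bookkeeping: tracking the $\omega$- and $\det$-twists carefully across the two summands and verifying that no JH-factor coincidences arise unexpectedly.
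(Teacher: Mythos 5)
Your proof is correct and uses exactly the same key input as the paper (the Barthel--Livn\'e criterion for irreducibility of smooth mod-$p$ principal series of $G$); the paper simply states this criterion and says ``the result follows,'' whereas you carry out the case-by-case bookkeeping explicitly, so yours is a more detailed version of the same argument.
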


\begin{proof} The representation
$(\Indu{B}{G}{\chi_1\otimes\chi_2\omega^{-1}})_{\sm}$ is irreducible if and only if $\chi_1\neq \chi_2\omega^{-1}$, otherwise its semi-simplification consists of a character and a twist of the Steinberg 
representation, see \cite[Thm. 30]{bl}. The result follows.
\end{proof}

Let $\Mod^{\ladm}_G(\OO)$ be the category of locally admissible representations introduced by Emerton in \cite{ord1}. Proposition 2.2.18 in \cite{ord1} shows that  $\Mod^{\ladm}_G(\OO)$ is closed under subquotients and arbitrary direct sums in  
 $\Mod^{\sm}_G(\OO)$, and theorem 2.3.8 in \cite{ord1} implies that every locally admissible representation is a union of its subrepresentations of finite length. So $\Mod^{\ladm}_{G}(\OO)$ satisfies the conditions imposed on $\Mod^{?}_{G}(\OO)$ in 
 \S\ref{overview1}. Let $\dualcat(\OO)$ be the full subcategory of $\Mod^{\pro}_G(\OO)$, which is anti-equivalent to 
 $\Mod^{\ladm}_{G}(\OO)$ via  Pontryagin duality.  We have $\Ban^{\adm}_{\dualcat(\OO)}=\Ban_G^{\adm}(L)$.
 
 Let $P\twoheadrightarrow \pi^{\vee}$ be a projective envelope of $\pi^{\vee}$ in 
 $\dualcat(\OO)$ and let $E=\End_{\dualcat(\OO)}(P)$. Then $\pi\hookrightarrow P^{\vee}$ is an injective envelope of $\pi$ in $\Mod^{\ladm}_{G}(\OO)$. 
The following result is \cite[cor.~3.10]{eff}.

\begin{prop}\label{projective}
The restriction of $P^{\vee}$ to $K$ is injective in $\Mod^{\sm}_K(\OO)$, hence $P$ is projective 
in $\Mod^{\pro}_K(\OO)$. 
\end{prop}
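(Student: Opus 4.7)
By Pontryagin duality the two halves of the statement are equivalent, so I focus on showing that $P^{\vee}|_K$ is injective in $\Mod^{\sm}_K(\OO)$. Given a short exact sequence $0\to W_1\to W_2\to W_3\to 0$ in $\Mod^{\sm}_K(\OO)$ and a $K$-equivariant map $\phi\colon W_1\to P^{\vee}|_K$, Frobenius reciprocity (which applies because $K$ is open in $G$ and $\cInd_K^G$ is therefore exact and left adjoint to restriction) reinterprets $\phi$ as a $G$-equivariant morphism $\tilde\phi\colon \cInd_K^G W_1\to P^{\vee}$, and every extension of $\phi$ to $W_2$ corresponds to an extension of $\tilde\phi$ to $\cInd_K^G W_2$. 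The proof thus reduces to showing that the restriction map $\Hom_G(\cInd_K^G W_2,P^{\vee})\twoheadrightarrow \Hom_G(\cInd_K^G W_1,P^{\vee})$ is surjective.

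Since $P^{\vee}$ is locally admissible, every such $\tilde\phi$ has locally admissible image and hence factors through the largest locally admissible quotient $L(W):=$ colimit of all locally admissible quotients of $\cInd_K^G W$. Granted that $L$ preserves monomorphisms, one would obtain an injection $L(W_1)\hookrightarrow L(W_2)$ in $\Mod^{\ladm}_G(\OO)$, and the injectivity of $P^{\vee}$ in $\Mod^{\ladm}_G(\OO)$ would immediately deliver the required extension. The whole argument therefore reduces to left-exactness of $L$ (right-exactness being automatic since $L$ is essentially a composition of the exact $\cInd_K^G$ with a colimit construction).

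The main obstacle is precisely this left-exactness of $L$: it is not formal, since the operation of passing to the largest locally admissible quotient need not commute with subobjects. I would attack it using the block decomposition of $\Mod^{\ladm}_G(\OO)$ established in \cite{blocks} together with the Barthel--Livn\'e classification of irreducible admissible mod-$p$ representations of $G=\GL_2(\Qp)$: on each non-supersingular block the question reduces, via lemma~\ref{ss}, to a finiteness statement about $K$-socles of principal series and their reductions, while on the supersingular block one exploits the explicit $K$-structure of $\cInd_K^G\sigma$ for Serre weights $\sigma$. A more hands-on alternative is to bypass $L$ entirely and verify the $K$-projectivity of $P$ block by block using the explicit descriptions of the projective envelope $P$ in the literature on the Montreal functor, which exhibit $P$ as being built from manifestly $K$-projective pieces; the Breuil--Pa\v{s}k\={u}nas diagrammatic description handles the delicate supersingular case.
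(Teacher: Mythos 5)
The paper does not actually argue this proposition; it is a verbatim citation of \cite[cor.~3.10]{eff}, which is proved there not by an adjunction argument but by a concrete construction: one builds a $K$-injective envelope and then extends it to a $G^0$-equivariant object by means of a $G^0$-equivariant coefficient system on the Bruhat--Tits tree (the same device that produces the auxiliary representation $\Omega$ appearing in proposition~\ref{Omega} of the present paper). Your proposal is instead purely formal: the Frobenius-reciprocity reduction and the observation that left-exactness of the ``largest locally admissible quotient'' functor $L$ would suffice are both correct as logic, but left-exactness of $L$ is a genuinely nontrivial claim about $\Mod^{\ladm}_G(\OO)$, and you neither prove it nor reduce it to something in the literature. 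You acknowledge this yourself, so the issue is that what you present as a proof stops exactly where the mathematics begins.

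Neither of your two sketched strategies closes the gap. The block-by-block attack via lemma~\ref{ss} stays at the level of a slogan: that lemma records the semi-simplifications of the representations $\pi\{\chi_1,\chi_2\}$ and says nothing about $G$-submodules of $\cInd_K^G W$ for a general finite-length $W\in\Mod^{\sm}_K(\OO)$, which is what governs the kernel of $\cInd_K^G W\twoheadrightarrow L(W)$, and a ``finiteness statement about $K$-socles'' is not formulated, let alone proved. The ``hands-on'' alternative of reading off $K$-projectivity of $P$ from explicit descriptions in the Montreal-functor literature is not available in the generality needed here: the structural results on $P$ in \cite{cmf} are established only for $p\ge 5$, which is precisely the restriction this paper is written to remove, so they cannot be used as input. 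What one would actually need in order to carry out either sketch is the coefficient-system construction of \cite{eff} itself --- that is, the very reference the paper cites for this proposition.
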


  In particular\footnote{Alternatively one may argue in the same way as in \cite[cor.~5.19]{cmf}.}, $P$ is a torsionfree, compact linear-topological $\OO$-module. 
 Let 
 $$\Pi(P):=\Hom_{\OO}^{\rm cont}(P, L)$$ 
 with the topology induced by the supremum 
norm. If $\Pi$ is an $L$-Banach space and if
$\Theta$ is an open, bounded lattice in $\Pi$, let $\Theta^d:=\Hom_{\OO}(\Theta, \OO)$ be its Schikhof dual. Equipped with the topology of pointwise convergence,  $\Theta^d$ is a torsionfree, compact linear-topological
 $\OO$-module and it follows from \cite[Thm.1.2]{iw} that 
 we have a natural isomorphism:
\begin{equation}\label{iso_zero} 
\Hom_L^{\rm cont}(\Pi, \Pi(P))\cong \Hom_{\OO}^{\rm cont}(P, \Theta^d)\otimes_{\OO} L.
\end{equation}
We want to use \eqref{iso_zero} in two ways, which are consequences of \cite[Thm.2.3]{iw}. If $\Pi$ is an admissible unitary $L$-Banach representation of $G$ and $\Theta$ is an open, bounded, $G$-invariant lattice in $\Pi$, then $\Theta^d$ is in $\dualcat(\OO)$ and we have: 
\begin{equation}\label{way_one}
\Hom_G^{\rm cont}(\Pi, \Pi(P))\cong \Hom_{\dualcat(\OO)}(P, \Theta^d)\otimes_{\OO} L=\md(\Pi)
\end{equation}

On the other hand, if $V$ is a continuous representation of $K$ on a finite dimensional $L$-vector space and if $\Theta$ is a $K$-invariant lattice in $V$, then 
\begin{equation}\label{way_two}
\Hom_K(V, \Pi(P))\cong \Hom_{\OO\br{K}}^{\rm cont}(P, \Theta^d)\otimes_{\OO} L\cong \Hom^{\rm cont}_{\OO\br{K}}(P, V^*).
\end{equation} 
We note that since $V$ is finite dimensional any $L$-linear map is continuous.

    Let ${\rm Alg}(G)$ be the set of isomorphism classes of irreducible rational representations of $\GL_2/L$, which we view as representations of $\GL_2(\Zp)$ via the inclusion $\GL_2(\Zp)\subset \GL_2(L)$. For $V\in {\rm Alg}(G)$ let $A_V:=\End_G(\cIndu{K}{G}{V})$. It follows from 
\cite[rem.2.1.4.2]{breuil2} that $A_V\cong \End_G(\cIndu{K}{G}{\Eins})\cong L[t, z^{\pm1}]$. In particular, $A_V$ is a commutative noetherian 
ring. Frobenius reciprocity gives  $$\Hom_K(V, \Pi(P))\cong \Hom_G(\cIndu{K}{G}{V}, \Pi(P)).$$ Hence, $\Hom_K(V, \Pi(P))$  is naturally an $A_V$-module. 
  We transport the action of $A_V$ onto $\Hom^{\rm cont}_{\OO\br{K}}(P, V^*)$ via \eqref{way_two}.

\begin{prop}\label{crystalline} Let $V\in {\rm Alg}(G)$ and let $\mm$ be a maximal ideal of $A_V$. Then $$ \dim_{\kappa(\mm)} \Hom_G(\kappa(\mm)\otimes_{A_V} \cIndu{K}{G}{V}, \Pi(P))\leq d(\pi).$$
\end{prop}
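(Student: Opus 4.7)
Set $W_\mm:=\kappa(\mm)\otimes_{A_V}\cIndu{K}{G}{V}$. The first observation is that $\Pi(P)=\Hom_{\OO}^{\rm cont}(P,L)$ is a (generally non-admissible) unitary $L$-Banach representation of $G$: its unit ball $\Hom_{\OO}^{\rm cont}(P,\OO)$ is preserved by the $G$-action on $P$, so the sup norm on $\Pi(P)$ is $G$-invariant. Consequently, for any $G$-equivariant $L$-linear map $\phi:W_\mm\to \Pi(P)$, the image $\phi(V)$ of the finite-dimensional subspace $V\subset \cIndu{K}{G}{V}$ is bounded in $\Pi(P)$, and by $G$-invariance of the norm, so is its $G$-orbit. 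Since $V$ generates $\cIndu{K}{G}{V}$, and hence $W_\mm$, as a $G$-representation, the standard $G$-stable $\OO$-lattice in $W_\mm$ (coming from a $K$-stable lattice in $V$) maps into a bounded subset of $\Pi(P)$. It follows that $\phi$ extends continuously to the universal unitary completion $\widehat W_\mm$ of $W_\mm$, giving an injection
$$
\Hom_G(W_\mm,\Pi(P))\hookrightarrow \Hom_G^{\rm cont}(\widehat W_\mm,\Pi(P)).
$$

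If $\widehat W_\mm=0$ the statement is trivial. Otherwise, by Berger--Breuil~\cite{bb} and its generalisations, $\widehat W_\mm$ is an admissible unitary $\kappa(\mm)$-Banach representation of $G$, and viewed as an $L$-Banach representation it lies in $\Ban^{\adm}_{\dualcat(\OO)}$. The isomorphism \eqref{way_one} then gives
$$
\Hom_G^{\rm cont}(\widehat W_\mm,\Pi(P))\cong \md(\widehat W_\mm),
$$
whose $L$-dimension is, by proposition~\ref{easy}(ii), the multiplicity with which $\pi$ occurs as a Jordan--H\"older factor of $\Theta\otimes_{\OO} k$ for any open bounded $G$-invariant lattice $\Theta\subset\widehat W_\mm$.

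The final step is to bound this multiplicity by $d(\pi)$. For this we invoke the explicit determination of the semisimple mod-$p$ reduction of $\widehat W_\mm$ due to Berger~\cite{Becomp} (cf.\ also \cite{CD}): $\overline{\widehat W_\mm}^{\rm ss}$ is either an irreducible supersingular representation or a subrepresentation of some $\pi\{\chi_1,\chi_2\}$ as in Lemma~\ref{ss}; a direct inspection combined with Definition~\ref{dofpi} shows that $\pi$ appears in such a representation with multiplicity at most $d(\pi)$. A standard comparison between $\dim_L$ and $\dim_{\kappa(\mm)}$ (using that $\pi$ is absolutely irreducible, so that $\pi\otimes_k k'$ remains irreducible over the residue field $k'$ of $\kappa(\mm)$) then yields the asserted bound.

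\textbf{Main obstacle.} The delicate input is the uniform control of the mod-$p$ reduction of $\widehat W_\mm$ across all maximal ideals $\mm$ of $A_V$, including the degenerate ones where $W_\mm$ has several locally algebraic constituents (e.g.\ twists of Steinberg-type subquotients) or where $\widehat W_\mm$ might a priori be zero or reducible. The appearance of $d(\pi)$ rather than $1$ reflects the small-prime exceptional cases ($p=2,3$) collected in Lemma~\ref{ss}; the cushion $d(\pi)=2$ there is exactly what makes the subsequent PI-ring argument (via lemma~\ref{get_it_right}) function uniformly in $p$.
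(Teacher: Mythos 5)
Your proposal follows essentially the same route as the paper's proof: reduce to the universal unitary completion of the locally algebraic principal series $W_\mm$, invoke Berger--Breuil (via \cite[prop.~2.10]{blocks}) to get admissibility and finite length of $\widehat W_\mm$, bound the multiplicity of $\pi$ in the mod-$p$ reduction using \cite{Becomp}/Lemma~\ref{ss}, and finish with proposition~\ref{easy}(ii) and a $\dim_L$--vs.--$\dim_{\kappa(\mm)}$ comparison. The only cosmetic difference is that the paper first records the identification $W_\mm\cong(\Indu{B}{G}{\delta_1\otimes\delta_2|\cdot|^{-1}})_{\sm}\otimes_L V$ from \cite[prop.~3.2.1]{breuil2} and uses the universal property of $\widehat W_\mm$ to get an isomorphism $\Hom_G(W_\mm,\Pi(P))\cong\md(\widehat W_\mm)$ rather than merely an injection; both suffice for the bound.
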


\begin{proof} It follows from \cite[prop.~3.2.1]{breuil2} that 
\begin{equation}\label{principal_series}
\kappa(\mm)\otimes_{A_V} \cIndu{K}{G}{V}\cong (\Indu{B}{G}{\delta_1\otimes\delta_2|\cdot|^{-1}})_{\sm}\otimes_L V,
\end{equation}
where $\delta_1, \delta_2: \Qp^{\times}\rightarrow \kappa(\mm)^{\times}$ are unramified characters with $\delta_1|\cdot|\neq \delta_2$ and the subscript $\sm$ indicates smooth induction. Let $\Pi$ the universal unitary completion of 
$\kappa(\mm)\otimes_{A_V} \cIndu{K}{G}{V}$. Since the action of $G$ on $\Pi(P)$ is unitary, the universal property of $\Pi$ implies that 
 \begin{equation}\label{universal_property}
 \Hom_G(\kappa(\mm)\otimes_{A_V} \cIndu{K}{G}{V}, \Pi(P))\cong \Hom_G^{\rm cont}(\Pi, \Pi(P))\overset{\eqref{way_one}}\cong \md(\Pi).
 \end{equation}
   It is proved in \cite[prop.~2.10]{blocks}, using 
results of Berger--Breuil \cite{bb} as the main input, that $\Pi$ is an admissible finite length $\kappa(\mm)$-Banach representation of $G$. Moreover, if $\Pi$  is non-zero then $\overline{\Pi}^{ss}$ is either  irreducible supersingular, or $\overline{\Pi}^{ss}\subseteq \pi\{\chi_1, \chi_2\}$ for some smooth characters $\chi_1, \chi_2: \Qp^{\times}\rightarrow k_{\kappa(\mm)}^{\times}$. Lemma~\ref{ss} implies that $\pi\otimes_k k_{\kappa(\mm)}$ can occur in $\overline{\Pi}^{ss}$ with multiplicity at most $d(\pi)$. Hence, if $\Theta$ is an open, bounded and $G$-invariant lattice in $\Pi$, then $\pi$ can occur as a subquotient of $\Theta/(\varpi)$ with multiplicity at most $ [\kappa(\mm): L] d(\pi)$. 
Proposition~\ref{easy} (ii) yields $\dim_L  \md(\Pi)\le [\kappa(\mm): L] d(\pi)$. The result follows from \eqref{universal_property}.
\end{proof}

 \begin{cor}\label{dual2} For all $V\in {\rm Alg}(G)$ and all maximal ideals $\mm$ of $A_V$ we have  
$$\dim_{\kappa(\mm)}\Hom_{\OO\br{K}}^{\rm cont}(P, V^*)[\mm]\leq d(\pi).$$ 
\end{cor}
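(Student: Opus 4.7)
The plan is to transport the inequality in Proposition~\ref{crystalline} across the identifications used to define the $A_V$-action. First, using \eqref{way_two} one identifies
$$\Hom_{\OO\br{K}}^{\rm cont}(P, V^*) \cong \Hom_K(V, \Pi(P))$$
and this is an isomorphism of $A_V$-modules by the way the action was transported. Then Frobenius reciprocity gives an $A_V$-equivariant isomorphism
$$\Hom_K(V, \Pi(P)) \cong \Hom_G(\cIndu{K}{G}{V}, \Pi(P)),$$
where the right-hand side carries its natural right $A_V = \End_G(\cIndu{K}{G}{V})$-action by precomposition.

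Next I would pass to $\mm$-torsion. A $G$-homomorphism $\phi : \cIndu{K}{G}{V} \to \Pi(P)$ satisfies $\phi \circ a = 0$ for all $a \in \mm$ if and only if $\phi$ vanishes on the $G$-subrepresentation $\mm \cdot \cIndu{K}{G}{V}$, i.e.\ if and only if it factors through the quotient $\kappa(\mm) \otimes_{A_V} \cIndu{K}{G}{V}$. Combining this with the two isomorphisms above yields an identification of $\kappa(\mm)$-vector spaces
$$\Hom_{\OO\br{K}}^{\rm cont}(P, V^*)[\mm] \cong \Hom_G(\kappa(\mm) \otimes_{A_V} \cIndu{K}{G}{V}, \Pi(P)),$$
the $\kappa(\mm)$-structure on the right coming from its action on the source.

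Finally, Proposition~\ref{crystalline} bounds the $\kappa(\mm)$-dimension of the right-hand side by $d(\pi)$, giving the corollary. There is no real obstacle here; the only thing to check carefully is that the $A_V$-action on $\Hom_{\OO\br{K}}^{\rm cont}(P, V^*)$, defined via \eqref{way_two} by transport, agrees with the one coming from Frobenius reciprocity, so that the $\mm$-torsion subspaces correspond. This is immediate from the construction in the paragraph preceding Proposition~\ref{crystalline}, where exactly this identification is made.
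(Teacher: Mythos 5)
Your proposal matches the paper's proof essentially step for step: both use \eqref{way_two} to transfer to $\Hom_K(V,\Pi(P))$, then Frobenius reciprocity, then the identification of the $\mm$-torsion with $\Hom_G(\kappa(\mm)\otimes_{A_V}\cIndu{K}{G}{V},\Pi(P))$, and finally Proposition~\ref{crystalline}. You spell out the $\mm$-torsion step slightly more explicitly than the paper does (which just records it as the chain of isomorphisms in \eqref{four}), but the argument is the same.
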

\begin{proof} By \eqref{way_two} we have $\Hom_{\OO\br{K}}^{\rm cont}(P, V^*)[\mm]\cong\Hom_K(V, \Pi(P))[\mm]$. On the other hand, Frobenius reciprocity gives an isomorphism  
 \begin{equation}\label{four}
 \begin{split}
 \Hom_K(V, \Pi(P))[\mm]&\cong  \Hom_G(\cIndu{K}{G}{V}, \Pi(P))[\mm]\\
 &\cong \Hom_G(\kappa(\mm)\otimes_{A_V} \cIndu{K}{G}{V}, \Pi(P)).
 \end{split}
\end{equation}
The result follows therefore from proposition~\ref{crystalline}. 
\end{proof}

\subsection{Proof of theorem~\ref{I}}

\begin{prop}\label{Omega} Let $\varphi: P\twoheadrightarrow M$ be a quotient in $\dualcat(\OO)$, such that $M$ is of finite length. 
Then $\varphi$ factors through $\psi: P\rightarrow N$ in $\dualcat(\OO)$, such that $N$ is a finitely generated projective $\OO\br{K}$-module.
\end{prop}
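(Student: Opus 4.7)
The plan is to exploit that $M$ being of finite length in $\dualcat(\OO)$ forces strong finiteness properties on $M$ as an $\OO\br{K}$-module. Indeed, the Pontryagin dual $\md := M^\vee$ is a smooth admissible $G$-representation of finite length, and for $G = \GL_2(\Qp)$ each irreducible smooth admissible representation $\pi$ satisfies $\pi^\vee$ finitely generated over $\OO\br{K}$; consequently, $M$ is a finitely generated $\OO\br{K}$-module whose cosocle over $\OO\br{K}$ is finite-dimensional, or dually, the socle of $\md|_K$ in $\Mod^{\sm}_K(\OO)$ is finite-dimensional.

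The goal is then to construct a locally admissible $G$-representation $\md'$ containing $\md$ such that $\md'|_K$ is a finitely cogenerated injective object of $\Mod^{\sm}_K(\OO)$. Given such $\md'$, set $N := (\md')^\vee$: this is an object of $\dualcat(\OO)$ which restricts to a finitely generated projective $\OO\br{K}$-module, and dualising the inclusion $\md \hookrightarrow \md'$ yields a surjection $\alpha : N \twoheadrightarrow M$ in $\dualcat(\OO)$. The required map $\psi : P \to N$ making $\varphi = \alpha \circ \psi$ is then produced by lifting $\varphi$ through $\alpha$ using the projectivity of $P$ in $\dualcat(\OO)$.

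To build $\md'$, I would take $J$ to be the injective envelope of $\md$ in $\Mod^{\ladm}_G(\OO)$ (which exists by Emerton's theory) and invoke the analogue of Proposition~\ref{projective} for injective envelopes, asserting that $J|_K$ is injective in $\Mod^{\sm}_K(\OO)$. Since $\md|_K$ has finite socle, its $K$-injective envelope embeds into $J|_K$ as a finitely cogenerated direct summand. The main obstacle will be that this direct summand is only $K$-stable a priori and not $G$-stable: the heart of the argument is to show that one can nonetheless choose a $G$-stable subobject $\md' \subseteq J$ containing $\md$ with the desired $K$-structure. This should follow from the locally admissible structure of $J$ combined with the finite length of $\md$, which together bound the $G$-saturation of the finitely cogenerated $K$-submodule in question; more concretely, one exploits the fact that a finitely generated $G$-subobject of $J$ is automatically admissible, hence its $\OO\br{K}$-dual is finitely generated, so that $\md'|_K$ can be controlled. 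Once this is established, $N := (\md')^\vee$ has all the required properties, and the projectivity of $P$ in $\dualcat(\OO)$ yields the factorization.
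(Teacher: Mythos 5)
Your reduction to finding a $G$-stable $\md' \subseteq J$ with $(\md')^\vee$ finitely generated projective over $\OO\br{K}$ is a reasonable way to frame the problem, and you correctly identify that the crux is the mismatch between $K$-stability and $G$-stability. But the resolution you sketch — take the $G$-saturation inside $J$ of the $K$-injective envelope $I$ of $\md$ and argue it is admissible via ``finitely generated $G$-subobject of $J$ is automatically admissible'' — does not work, for two reasons. First, $I$ is a $K$-injective envelope, hence already an infinite-dimensional, non-finitely-generated $K$-module; the $G$-subrepresentation it generates is therefore not finitely generated as a $G$-representation, so the locally admissible structure of $J$ gives you nothing. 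Second, and more fundamentally, even granting that $\md'$ were admissible, admissibility only yields finite \emph{generation} of $(\md')^\vee$ over $\OO\br{K}$; the proposition requires \emph{projectivity}, i.e. that $\md'|_K$ be $K$-injective, and nothing in your argument forces the $G$-saturation of $I$ to stay a direct summand of $J|_K$.

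The real obstruction, which you do not isolate, is the central element $z=\matrice{p}{0}{0}{p}$: it normalizes $K$, has infinite order, and is not contained in $K$, so the $z^\ZZ$-translates of $I$ can a priori blow up the $K$-cosocle of any $G$-stable envelope. The paper's proof handles this head-on. It first produces (via \cite[Thm.~3.4]{eff}, in the spirit of the coefficient-system constructions of \cite{coeff,bp}) a $G^0$-equivariant embedding $M^\vee\hookrightarrow\Omega$ with $\Omega|_K$ injective and $\Omega\cong\Omega^c$, where $G^0=\{g:\det g\in\Zp^\times\}$ — note this is only a $G^0$-representation, not a subobject of the $G$-injective envelope. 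It then observes that since $M$ is a quotient of $P$ with irreducible cosocle, $M$ is indecomposable of finite length, so the $A=\OO[t,t^{-1}]$-action (with $t$ acting by $z$) factors through an Artinian quotient $A/(f^n)$ that is free of finite rank over $\OO$. Tensoring $\Omega^\vee$ with $A/(f^n)$ and inducing from $G^+=ZG^0$ to $G$ produces $N$, with $N|_{G^+}\cong N^+\oplus N^+$ by the self-duality $\Omega\cong\Omega^c$, and this $N$ is finitely generated projective over $\OO\br{K}$ precisely because $A/(f^n)$ is finite over $\OO$. This $A/(f^n)$ device — using finite length and indecomposability of $M$ to tame the center — is the missing idea in your proposal, and without it the step ``this should follow from the locally admissible structure'' has no content.
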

\begin{proof} We claim that there exists a surjection $\theta: N\twoheadrightarrow M$ in $\dualcat(\OO)$ with $N$  a finitely generated projective $\OO\br{K}$-module.
The claim implies the assertion, since the projectivity of $P$ implies that there exists $\psi: P\rightarrow N$, such that $\theta\circ \psi=\varphi$.
The proof of the claim is a variation of the construction, which first appeared in  \cite{coeff}, and then was generalized in \cite{bp} and \cite{eff}. Let 
$G^0=\{g\in G: \det g\in \Zp^{\times}\}$ and let $G^+= Z G^0$, where $Z$ is the centre of $G$. Since $M^{\vee}$ is of finite length 
in $\Mod^{\ladm}_G(\OO)$, $M^{\vee}$ is admissible. It follows from \cite[Thm.3.4]{eff} that there
exists an injection $M^{\vee}\hookrightarrow \Omega$ in $\Mod^{\adm}_{G^0}(\OO)$, such that $M^{\vee}|_K \hookrightarrow \Omega|_K$ is 
an injective envelope of $M^{\vee}$ in $\Mod^{\sm}_K(\OO)$ and $\Omega\cong \Omega^c$, where $\Omega^c$ denotes the 
action of $G^0$ twisted by conjugation with an element $\bigl(\begin{smallmatrix} 0 & 1 \\ p & 0\end{smallmatrix}\bigr)$.
Dually we obtain a continuous, $G^0$-equivariant surjection 
$\theta^0:\Omega^{\vee}\twoheadrightarrow M$, such that its restriction to $K$  is a projective envelope of $M$ in $\Mod^{\pro}_K(\OO)$. 

We let  $A:=\OO[t, t^{-1}]$ act on $M$ by letting $t$ act as the matrix $\bigl (\begin{smallmatrix} p & 0 \\0 & p\end{smallmatrix} \bigr )$. 
Since $M$ is a quotient of $P$, its  cosocle in $\dualcat(\OO)$  is isomorphic to $\pi^{\vee}$, and hence is irreducible. This implies that 
$M$ is indecomposable. Moreover, $M$ is finite length by assumption. The argument of \cite[cor.~3.9]{eff} shows that there exists a monic 
polynomial $f\in \OO[t]$ and a natural number $n$, such that  $(\varpi, f)$ is a maximal ideal of $A$, and the action of $A$ on $M$ factors through $A/(f^n)$. 
Since $f$ is monic $A/(f^n)$ is a free $\OO$-module of finite rank. Hence, the restriction of $N^+:=A/(f^n)\otimes_{\OO} \Omega^{\vee}$ to 
$K$ is a finite direct sum of copies of $\Omega^{\vee}$, which implies that $N^+$ is a finitely generated projective $\OO\br{K}$-module.
We put an action of $G^+$ on $N^+$ by using $G^+= \bigl (\begin{smallmatrix} p & 0 \\0 & p\end{smallmatrix} \bigr )^{\ZZ}\times G^0$.
The map  $t\otimes v \mapsto \theta^0(\bigl (\begin{smallmatrix} p & 0 \\0 & p\end{smallmatrix} \bigr ) v)$ induces a $G^+$-equivariant surjection 
$\theta^+: N^+\twoheadrightarrow M$. Let $N:=\Indu{G^+}{G} N^+$, then by Frobenius reciprocity we obtain a surjective map 
$\theta: N\twoheadrightarrow M$. Since $G^+$ is of index $2$ in $G$,  and $\bigl(\begin{smallmatrix} 0 & 1 \\ p & 0\end{smallmatrix}\bigr)$
is a representative of the non-trivial coset, we have $N|_{G^+}\cong N^+\oplus (N^+)^c \cong N^+\oplus N^+$, where the subscript $c$ indicates that the 
action of $G^+$ is twisted by conjugation with $\bigl(\begin{smallmatrix} 0 & 1 \\ p & 0\end{smallmatrix}\bigr)$, and the last isomorphism follows from 
$\Omega\cong \Omega^c$. Hence, $N$ satisfies the conditions of the claim.
\end{proof}

 If $V$ is a continuous representation of $K$ on a finite dimensional $L$--vector space and if $\Theta$ is an open, bounded and $K$-invariant lattice in $V$, let $|\centerdot|$ be the norm
on $V^*$ given by $|\ell|:= \sup_{v\in \Theta} |\ell(v)|$, so that $\Theta^d=\Hom_{\OO}(\Theta, \OO)$ is the unit ball in $V^*$ with respect to $|\centerdot|$.
The topology on $\Hom_{\OO\br{K}}^{\rm cont}(P, V^*)$ is given by the norm $\|\phi\|:=\sup_{v\in P} |\phi(v)|$, and $\Hom_{\OO\br{K}}^{\rm cont}(P, \Theta^d)$ is the 
unit ball in this Banach space.

\begin{prop}\label{dense} For all $V$ as above the submodule  $$\Hom_{\OO\br{K}}^{\rm cont}(P, V^*)_{\mathrm{l.fin}}:=\{\phi\in \Hom_{\OO\br{K}}^{\rm cont}(P, V^*): \ell_{A_V}(A_V \phi)< \infty\}$$ is 
dense in $\Hom_{\OO\br{K}}^{\rm cont}(P, V^*)$, where $\ell_{A_V}(A_V \phi)$ is the length of $A_V \phi$ as an $A_V$-module.
\end{prop}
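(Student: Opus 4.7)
The plan is to use Proposition~\ref{Omega} to produce a locally finite approximation, by reducing modulo $\varpi^n$ and upgrading the resulting $K$-equivariant data to a finite length $G$-quotient of $P$. After rescaling I may assume $\phi(P) \subseteq \Theta^d$, where $\Theta \subset V$ is a $K$-stable $\OO$-lattice and $\Theta^d \subset V^*$ is its $\OO$-dual. For each fixed $n \geq 1$ the goal is to exhibit $\psi \in \Hom_{\OO\br{K}}^{\rm cont}(P, V^*)_{\lfin}$ with $\phi - \psi$ taking values in $\varpi^n \Theta^d$.

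First, I would convert $\bar\phi := \phi \bmod \varpi^n : P \to \Theta^d/\varpi^n\Theta^d$ into an object to which Proposition~\ref{Omega} applies. Via the mod-$\varpi^n$ analogue of \eqref{way_two} together with Frobenius reciprocity, $\bar\phi$ corresponds to a $G$-equivariant smooth homomorphism $\tilde{\alpha} : \cInd_K^G(\Theta/\varpi^n\Theta) \to (P/\varpi^n P)^\vee$. Since $P \in \dualcat(\OO)$, its Pontryagin dual $P^\vee$ is locally admissible, and so is its $\varpi^n$-torsion $(P/\varpi^n P)^\vee \cong P^\vee[\varpi^n]$. Hence the image $N$ of $\tilde{\alpha}$ is a finitely generated smooth $G$-subrepresentation of a locally admissible representation; by \cite[Thm.~2.3.8]{ord1} such a subrepresentation is admissible of finite length. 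Pontryagin dualising yields a finite length object $N^\vee$ of $\dualcat(\OO)$, a canonical $G$-equivariant surjection $P \twoheadrightarrow N^\vee$, and a $K$-equivariant map $N^\vee \to \Theta^d/\varpi^n\Theta^d$ (dual to the inclusion $\Theta/\varpi^n\Theta \hookrightarrow N$) whose composition with $P \twoheadrightarrow N^\vee$ recovers $\bar\phi$.

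Next I apply Proposition~\ref{Omega} to $P \twoheadrightarrow N^\vee$, obtaining a factorisation $P \xrightarrow{\chi} \widetilde{N} \twoheadrightarrow N^\vee$ in $\dualcat(\OO)$ with $\widetilde{N}$ finitely generated projective over $\OO\br{K}$. By projectivity of $\widetilde{N}$ as an $\OO\br{K}$-module, the composite $\widetilde{N} \twoheadrightarrow N^\vee \to \Theta^d/\varpi^n\Theta^d$ lifts to a continuous $K$-equivariant map $\beta : \widetilde{N} \to \Theta^d$. Setting $\psi := \beta \circ \chi$ produces an element of $\Hom_{\OO\br{K}}^{\rm cont}(P, V^*)$ with $\phi - \psi \in \varpi^n \Hom_{\OO\br{K}}^{\rm cont}(P, \Theta^d)$. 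To verify $\psi \in \lfin$, note that $G$-equivariance of $\chi$ makes the pullback $\chi^* : \Hom_{\OO\br{K}}^{\rm cont}(\widetilde{N}, V^*) \to \Hom_{\OO\br{K}}^{\rm cont}(P, V^*)$ an $A_V$-equivariant map, and its source is finite-dimensional over $L$ because $\widetilde{N}$ is a direct summand of some $\OO\br{K}^r$, making $\Hom_{\OO\br{K}}^{\rm cont}(\widetilde{N}, V^*)$ a direct summand of $(V^*)^r$. Thus $A_V \psi \subseteq \chi^*(\Hom_{\OO\br{K}}^{\rm cont}(\widetilde{N}, V^*))$ is finite-dimensional over $L$, forcing $\ell_{A_V}(A_V \psi) < \infty$.

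The hard part will be the finite-length claim for $N$; this is what lets Proposition~\ref{Omega} engage, and it depends crucially on local admissibility of $P^\vee$ together with the finite generation of $\cInd_K^G(\Theta/\varpi^n\Theta)$ as a $G$-representation. Everything else is formal assembly: Proposition~\ref{Omega} supplies the finitely generated projective intermediate $\widetilde{N}$, $\OO\br{K}$-projectivity handles the lift $\beta$, and the $A_V$-equivariance of $\chi^*$ transfers local finiteness through the finite-dimensional $L$-space $\Hom_{\OO\br{K}}^{\rm cont}(\widetilde{N}, V^*)$.
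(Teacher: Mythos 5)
Your proposal is correct and follows essentially the same route as the paper's proof: reduce modulo $\varpi^n$, use local admissibility of $P^\vee$ to see that the dualised data factors through a finite-length quotient of $P$, invoke Proposition~\ref{Omega} to interpose a finitely generated projective $\OO\br{K}$-module, lift by projectivity, and read off local finiteness from the finite $L$-dimensionality of $\Hom_{\OO\br{K}}^{\rm cont}(\widetilde{N}, V^*)$. The only cosmetic difference is that you pass through Frobenius reciprocity and $\cInd_K^G(\Theta/\varpi^n\Theta)$ to get a $G$-equivariant map, whereas the paper works directly with the $K$-equivariant Pontryagin dual $\phi_n^\vee$ and takes the $G$-subrepresentation generated by its image; both yield the same finite length object.
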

\begin{proof} Let $A=A_V$. It is enough to show that for each $\phi\in \Hom_{\OO\br{K}}^{\rm cont}(P, \Theta^d)$ and each $n\ge 1$ there exists $\psi_n\in
\Hom_{\OO\br{K}}^{\rm cont}(P, \Theta^d)$ such that the $A$-submodule generated by $\psi_n$ is of finite length, and $\phi\equiv \psi_n\pmod{\varpi^n}$.

Let $\phi_n$ be the composition $P\overset{\phi}{\rightarrow }\Theta^d\twoheadrightarrow \Theta^d/(\varpi^n)$. Dually we obtain a map 
$\phi_n^{\vee}: (\Theta^d/(\varpi^n))^{\vee}\rightarrow P^{\vee}$. Let $\tau$ be the $G$-subrepresentation of $P^{\vee}$ generated by the image of $\phi_n^{\vee}$. Since $P^{\vee}$ is in $\Mod^{\ladm}_G(\OO)$ any finitely generated $G$-subrepresentation is of finite length. Since $ (\Theta^d/(\varpi^n))^{\vee}$ is a finite $\OO$-module, we deduce that
$\tau$ is of finite length.  Thus $\phi_n$ factors through $P\twoheadrightarrow \tau^{\vee}$ in $\dualcat(\OO)$, with $\tau^{\vee}$ of finite length. Proposition~\ref{Omega} implies that this map factors through $\psi: P\rightarrow N$ with $N$ finitely generated and projective $\OO\br{K}$-module. 
Since $N$ is projective,  using the exact sequence 
$0\rightarrow \Theta^d\overset{\varpi^n}{\rightarrow} \Theta^d\rightarrow \Theta^d/(\varpi^n)\rightarrow 0$, we deduce that there exists
$\theta_n\in \Hom_{\OO\br{K}}^{\rm cont}(N, \Theta^d)$, which maps to $\phi_n\in \Hom_{\OO\br{K}}^{\rm cont}(N, \Theta^d/(\varpi^n))$. Let $\psi_n=\theta_n \circ \psi$.
Then by construction $\phi\equiv \psi_n \pmod{\varpi^n}$. Since $\psi$ is $G$-equivariant, 
$\Hom_{\OO\br{K}}^{\rm cont}(N, V^*)\overset{\circ \psi}{\rightarrow}\Hom_{\OO\br{K}}^{\rm cont}(P, V^*)$ is a map of $A$-modules, which contains $\psi_n$ in 
the image. Since $N$ is a finitely generated $\OO\br{K}$-module, 
$\Hom_{\OO\br{K}}^{\rm cont}(N, V^*)$ is a finite dimensional $L$-vector space, thus the $A$-submodule generated by $\psi_n$ is of finite length.
\end{proof}

 \begin{cor}\label{standard} For  $V$ as above, let $\mathfrak a_V$ be the $E[1/p]$-annihilator of $\Hom_{\OO\br{K}}^{\rm cont}(P, V^*)$. Then $E[1/p]/\mathfrak a_V$ satisfies the standard 
identity $s_{2d(\pi)}$ (see definition~\ref{dofpi} for $d(\pi)$). 
\end{cor}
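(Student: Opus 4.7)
The plan is to exploit the commuting actions of $E$ and $A_V$ on
$$M := \Hom_{\OO\br{K}}^{\rm cont}(P, V^*),$$
combined with the density statement of proposition~\ref{dense}, in order to reduce the claim to lemma~\ref{get_it_right}. First, the natural right action of $E$ on $M$ given by precomposition $f\mapsto f\circ\phi$ commutes with the action of $A_V$: indeed, an element $a \in A_V = \End_G(\cIndu{K}{G}{V})$ acts on $M \cong \Hom_G(\cIndu{K}{G}{V}, \Pi(P))$ by $f \mapsto f\circ a$, while $\phi \in E$ acts on $\Pi(P) = \Hom^{\rm cont}_{\OO}(P,L)$ by precomposition with $\phi$, and since $\phi$ is $G$-equivariant these actions visibly commute. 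Consequently one obtains a ring homomorphism
$$
\rho : E[1/p] \longrightarrow \End_{A_V}(M)^{\rm op},
$$
whose kernel is $\mathfrak{a}_V$ by definition of the latter.

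Next, I would observe that $E$ preserves $M_{\lfin}$: since the two actions commute, $A_V\cdot(f\circ\phi) = (A_V\cdot f)\circ \phi$ is a quotient of the finite-length $A_V$-module $A_V\cdot f$, and hence again of finite length. Restriction therefore produces a ring homomorphism
$$
\bar\rho : E[1/p]/\mathfrak{a}_V \longrightarrow \End_{A_V}(M_{\lfin})^{\rm op}.
$$
The key point is that $\bar\rho$ is injective. For $\phi \in E$ the map $f\mapsto f\circ\phi$ sends the unit ball $\Hom_{\OO\br{K}}^{\rm cont}(P,\Theta^d)$ to itself and so extends to a continuous $L$-linear endomorphism of the Banach space $M$; a general element of $E[1/p]$ differs from such a $\phi$ by a power of $p$ and is therefore also continuous. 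If $\phi \in E[1/p]$ maps to zero in $\End_{A_V}(M_{\lfin})^{\rm op}$, then $f\circ\phi=0$ for every $f\in M_{\lfin}$; by proposition~\ref{dense} the submodule $M_{\lfin}$ is dense in $M$, and hence $\phi$ kills all of $M$, i.e.\ $\phi\in\mathfrak{a}_V$.

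Finally, I would apply lemma~\ref{get_it_right} to $M_{\lfin}$ viewed as an $A_V$-module. The ring $A_V\cong L[t, z^{\pm 1}]$ is commutative noetherian; every finitely generated $A_V$-submodule of $M_{\lfin}$ has finite length by the very definition of $M_{\lfin}$; and for every maximal ideal $\mathfrak{m}$ of $A_V$ one has $M_{\lfin}[\mathfrak{m}]=M[\mathfrak{m}]$, because any element killed by $\mathfrak{m}$ generates an $A_V$-submodule of length at most one and thus lies in $M_{\lfin}$. Corollary~\ref{dual2} then gives $\dim_{\kappa(\mathfrak{m})} M_{\lfin}[\mathfrak{m}] \le d(\pi)$, so lemma~\ref{get_it_right} shows that $\End_{A_V}(M_{\lfin})$ satisfies the standard identity $s_{2d(\pi)}$. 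By remark~\ref{commute}(ii) the opposite ring satisfies $s_{2d(\pi)}$ as well, and since the standard identity is inherited by subrings, so does $E[1/p]/\mathfrak{a}_V$ via the embedding $\bar\rho$.

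The only real obstacle is that $M$ itself is in general far from being a union of finite-length $A_V$-submodules (which is why the endomorphism ring $\End_{A_V}(M)$ cannot be attacked directly through lemma~\ref{get_it_right}); this is precisely the gap that the density statement of proposition~\ref{dense} is designed to bridge, and it is the place where the construction of proposition~\ref{Omega} producing finitely generated projective $\OO\br{K}$-module approximations is used in an essential way.
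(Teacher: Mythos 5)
Your proof is correct and follows essentially the same route as the paper: observe that $E[1/p]$ acts by continuous $A_V$-linear endomorphisms, use proposition~\ref{dense} to obtain an injection $E[1/p]/\mathfrak a_V \hookrightarrow \End_{A_V}(M_{\lfin})^{\rm op}$, then apply corollary~\ref{dual2} and lemma~\ref{get_it_right}. The only additions over the paper's terse write-up are the helpful explicit checks — that $E$ preserves $M_{\lfin}$, that $M_{\lfin}[\mm]=M[\mm]$, and the $(\cdot)^{\rm op}$ bookkeeping handled via remark~\ref{commute}(ii) — none of which change the argument.
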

\begin{proof} Since the action of $E$ preserves the unit ball in $\Hom_{\OO\br{K}}^{\rm cont}(P, V^*)$, $E[1/p]$ acts by continuous endomorphisms, which 
commute with the action of $A_V$. It follows from proposition~\ref{dense} that $E[1/p]/\mathfrak a_V$ injects into $\End_{A_V}(
\Hom_{\OO\br{K}}^{\rm cont}(P, V^*)_{\mathrm{l.fin}})$. It follows from proposition~\ref{crystalline} and lemma~\ref{dual2} that 
$$\dim_{\kappa(\mm)} \Hom_{\OO\br{K}}^{\rm cont}(P, V^*)_{\mathrm{l.fin}}[\mm] \le d(\pi),$$
for every maximal ideal $\mm$  of $A_V$. The assertion follows from lemma~\ref{get_it_right}.
\end{proof}

\begin{thm}\label{main_res_fin} Let $\Pi$ be a unitary admissible absolutely irreducible $L$-Ba\-na\-ch space representation of $G$ and let $\Theta$ be an open bounded $G$-invariant lattice in $\Pi$. Then $\pi$ occurs with multiplicity $\leq d(\pi)$ as a subquotient of $\Theta\otimes_{\OO} k$. 
\end{thm}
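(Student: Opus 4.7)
By proposition~\ref{easy}(ii), the multiplicity of $\pi$ as a subquotient of $\Theta\otimes_{\OO} k$ equals $\dim_L \md(\Pi)$, so it suffices to show $\dim_L \md(\Pi)\le d(\pi)$. The plan is first to prove that the ring $E[1/p]$ satisfies the standard polynomial identity $s_{2d(\pi)}$, and then to convert this into the desired dimension bound via Schur's lemma and the Jacobson density theorem.

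For each $V\in\Alg(G)$ let $\mathfrak a_V\subseteq E[1/p]$ denote the annihilator of $\Hom^{\cont}_{\OO\br{K}}(P,V^*)$, as in corollary~\ref{standard}. The key assertion is that $\bigcap_{V\in\Alg(G)} \mathfrak a_V = 0$. By proposition~\ref{projective}, $P$ is projective in $\Mod^{\pro}_K(\OO)$, and so by proposition~\ref{capture_algebraic} the family $\Alg(G)$ captures $P$. Given $\phi$ in this intersection, regarded as a continuous $\OO\br{K}$-endomorphism of $P$, precomposition shows $\psi\circ\phi=0$ for every $\psi\in\Hom^{\cont}_{\OO\br{K}}(P,V^*)$ and every $V$, whence lemma~\ref{faithful} forces $\phi=0$. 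Since each quotient $E[1/p]/\mathfrak a_V$ satisfies $s_{2d(\pi)}$ by corollary~\ref{standard}, remark~\ref{commute}(iii) yields that $E[1/p]$ itself satisfies $s_{2d(\pi)}$.

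Now let $\mathcal E$ denote the image of $E[1/p]$ in $\End_L(\md(\Pi))$. By proposition~\ref{easy}(iii)(a), $\md(\Pi)$ is a simple $E[1/p]$-module, hence a faithful simple $\mathcal E$-module; by proposition~\ref{easy}(iii)(b) together with the Schur lemma of~\cite{DS}, $\End_{\mathcal E}(\md(\Pi)) = \End_G^{\cont}(\Pi) = L$. Consequently $\mathcal E$ is a primitive ring satisfying $s_{2d(\pi)}$, and Kaplansky's PI theorem implies that $\mathcal E$ is simple artinian and finite-dimensional over its center, which must equal $L$ because the center injects into $\End_{\mathcal E}(\md(\Pi))$. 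Artin--Wedderburn combined with the identification of $\End_{\mathcal E}(\md(\Pi))$ then gives $\mathcal E\cong M_n(L)$ with $\md(\Pi)\cong L^n$. Since $M_n(L)$ satisfies $s_m$ only for $m\ge 2n$ (the sharp form of Amitsur--Levitzki), the identity $s_{2d(\pi)}$ forces $n\le d(\pi)$, which is the desired bound on $\dim_L\md(\Pi)$.

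The technical heart of this argument is the capture property $\bigcap_V\mathfrak a_V=0$: because $P$ is not finitely generated as an $\OO\br{K}$-module, a direct density argument fails, and one is forced to go through the auxiliary structural results of propositions~\ref{Omega} and~\ref{dense}, which allow one to approximate an arbitrary continuous $\OO\br{K}$-map $P\to V^*$ by maps factoring through finitely generated projective quotients, before applying proposition~\ref{capture_algebraic}. Once this input and corollary~\ref{standard} are secured, the polynomial-identity machinery above delivers the bound $\dim_L\md(\Pi)\le d(\pi)$ cleanly.
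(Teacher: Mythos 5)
Your proof is correct and follows essentially the same route as the paper: reduce to bounding $\dim_L\md(\Pi)$ via proposition~\ref{easy}(ii), establish that $E[1/p]$ satisfies $s_{2d(\pi)}$ by combining the capture property (propositions~\ref{projective},~\ref{capture_algebraic}, lemma~\ref{faithful}) with corollary~\ref{standard} and remark~\ref{commute}(iii), and then conclude via Schur's lemma, proposition~\ref{easy}(iii), and Kaplansky's PI theorem. Your unpacking of the final step (primitive PI ring, center contained in $\End_{\mathcal E}(\md(\Pi))=L$, Artin--Wedderburn, sharp Amitsur--Levitzki) is just a more explicit version of the paper's citation of Kaplansky's theorem and its corollary, and you correctly invoke lemma~\ref{faithful} where the paper's text refers, evidently by typo, to a nonexistent ``lemma~\ref{intersect}(ii)''. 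One small inaccuracy in your closing remark: propositions~\ref{Omega} and~\ref{dense} are not used to prove $\bigcap_V\mathfrak a_V=0$ (that follows purely from the capture property); rather they enter the proof of corollary~\ref{standard}, where the density of the locally finite part of $\Hom^{\cont}_{\OO\br{K}}(P,V^*)$ is needed to embed $E[1/p]/\mathfrak a_V$ into an endomorphism ring to which lemma~\ref{get_it_right} applies.
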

\begin{proof} Let $d=d(\pi)$, then it is enough to prove that $\dim_L \md(\Pi)\leq d$ by proposition~\ref{easy} (ii). 
 It follows from propositions~\ref{projective} and~\ref{capture_algebraic} that $\Alg(G)$ captures $P$, and lemma~\ref{intersect} (ii) implies that $\bigcap_{V\in {\Alg}(G)} \mathfrak a_V=0$, where
$\mathfrak a_V$ is defined in corollary~\ref{standard}. 
 We deduce from corollary~\ref{standard} and remark~\ref{commute}
 that $E[1/p]$ satisfies the standard identity $s_{2d}$. Thus, if $\mathcal E$ is the image of $E[1/p]$ in $\End_L(\md(\Pi))$, then $\mathcal E$ satisfies the standard identity $s_{2d}$. 

Since $\Pi$ is  irreducible, it follows from proposition~\ref{easy}(iii)a) that 
$\md(\Pi)$ is an irreducible $\mathcal E$-module, which is clearly faithful. 
Proposition~\ref{easy}(iii)b) shows that $\End_{\mathcal E}(\md(\Pi))={\End}_{G}^{\rm cont}(\Pi)^{\rm op}$. On the other hand, since 
$\Pi$ is absolutely irreducible, 
Schur's lemma \cite[Thm.1.1.1]{DS} yields $\End_G^{\rm cont}(\Pi)=L$, hence $\End_{\mathcal E}(\md(\Pi))=L$. 
A theorem of Kaplansky, see \cite[Thm.II.1.1]{processi} and \cite[cor.~II.1.2]{processi},  implies that $\dim_L \md(\Pi)\le d$, which is the desired result. \end{proof}

\begin{cor}\label{asserts} Let $\pi$ be an absolutely irreducible smooth representation and let $P\twoheadrightarrow \pi^{\vee}$ be a projective envelope 
of $\pi^{\vee}$ in $\dualcat(\OO)$, where $\dualcat(\OO)$ is the Pontryagin dual  of $\Mod^{\ladm}_G(\OO)$. If one of the following holds:
\begin{itemize}
\item[(i)] $\pi$ is supersingular;
\item[(ii)] $\pi\cong (\Indu{B}{G}{\chi_1\otimes \chi_2\omega^{-1}})_{\sm}$ and $\chi_1\chi_2^{-1}\neq \omega^{\pm 1}$, $\Eins$;
\item[(iii)] $\pi\cong (\Indu{B}{G}{\chi\omega\otimes \chi\omega^{-1}})_{\sm}$ and $p\ge 5$;
\item[(iv)] $\pi\cong \Sp\otimes\chi\circ \det$ and $p\ge 3$;
\item[(v)] $\pi\cong \chi\circ \det$ and $p\ge 3$;
\end{itemize} 
then the ring $E:=\End_{\dualcat(\OO)}(P)$ is commutative.
\end{cor}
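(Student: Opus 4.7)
The plan is to deduce the corollary directly from the machinery already developed in the proof of Theorem~\ref{main_res_fin}, by verifying that in each of the listed cases the invariant $d(\pi)$ of Definition~\ref{dofpi} equals $1$.

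First I would recall what the proof of Theorem~\ref{main_res_fin} actually establishes about $E$ itself. Propositions~\ref{projective} and~\ref{capture_algebraic} give that $\Alg(G)$ captures the projective envelope $P$, so Lemma~\ref{intersect} yields $\bigcap_{V\in\Alg(G)} \mathfrak a_V = 0$, where $\mathfrak a_V\subset E[1/p]$ is as in Corollary~\ref{standard}. That corollary says each $E[1/p]/\mathfrak a_V$ satisfies the standard identity $s_{2d(\pi)}$, hence by Remark~\ref{commute}(iii) so does $E[1/p]$.

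Next I would go through cases (i)--(v) and check that $d(\pi)=1$ in each. Case (i) is the definition of $d(\pi)$ for supersingular $\pi$. For (ii)--(v), I would invoke Lemma~\ref{ss} and observe that the special situations in which $d(\pi)=2$ (items (a), (b) of that lemma) are explicitly excluded: in (ii) the assumption $\chi_1\chi_2^{-1}\neq\Eins,\omega^{\pm 1}$ places us in case (i) of Lemma~\ref{ss} where all Jordan--H\"older factors appear with multiplicity one; in (iii) with $\chi_1=\chi\omega$, $\chi_2=\chi$ we have $\chi_1\chi_2^{-1}=\omega$ and $p\geq 5$, so Lemma~\ref{ss}(iii) applies and the full (irreducible) principal series $\pi$ occurs with multiplicity one in $\pi\{\chi_1,\chi_2\}$; cases (iv) and (v) fall into Lemma~\ref{ss}(iii) when $p\geq 5$ and into Lemma~\ref{ss}(iv) when $p=3$, and in both situations $\St\otimes\chi\circ\det$ and $\chi\circ\det$ appear with multiplicity one.

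With $d(\pi)=1$ in hand, $E[1/p]$ satisfies $s_2$, and by Remark~\ref{commute}(i) this is exactly commutativity of $E[1/p]$. Finally, since $P$ is a compact flat $\OO$-module (Proposition~\ref{projective}), $\End_{\dualcat(\OO)}(P)$ is $\OO$-torsion free, so $E\hookrightarrow E[1/p]$ and therefore $E$ is itself commutative. The only step requiring any real work is the case-by-case bookkeeping with Lemma~\ref{ss}; the rest is formal given Theorem~\ref{main_res_fin} and its proof.
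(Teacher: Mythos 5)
Your proof is correct and follows essentially the same route as the paper: deduce from the proof of Theorem~\ref{main_res_fin} (via Propositions~\ref{projective}, \ref{capture_algebraic}, Corollary~\ref{standard}, and Remark~\ref{commute}) that $E[1/p]$ satisfies $s_{2d(\pi)}$, then verify $d(\pi)=1$ case by case using Lemma~\ref{ss}. Your extra observation that $E\hookrightarrow E[1/p]$ because $P$ (hence $\End(P)$) is $\OO$-torsion-free is a helpful step that the paper leaves implicit.
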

\begin{proof} In these cases $d(\pi)=1$, and the assertion follows from the proof of 
theorem~\ref{main_res_fin} and remark~\ref{commute}.
\end{proof}

\begin{cor}\label{redBan} Let $\Pi$ be a unitary admissible absolutely irreducible $L$-Ba\-na\-ch space representation of $G$ and let $\Theta$ be an open bounded $G$-invariant lattice in $\Pi$. Then $\Theta\otimes_{\OO} k$ 
is of finite length. Moreover, one of the following holds: 
\begin{itemize}
\item[(i)] $\Theta\otimes_{\OO} k$ is absolutely irreducible supersingular;
\item[(ii)] $\Theta\otimes_{\OO} k$ is irreducible and 
$$\Theta\otimes_{\OO} l\cong (\Indu{P}{G}{\chi\otimes \chi^{\sigma} \omega^{-1}})_{\sm}\oplus (\Indu{P}{G}{\chi^{\sigma}\otimes \chi \omega^{-1}})_{\sm}$$
where $l$ is a quadratic extension of $k$, $\chi:\Qp^{\times}\rightarrow l^{\times}$ a smooth character and $\chi^{\sigma}$ is a conjugate of 
$\chi$ by the non-trivial element in $\Gal(l/k)$;
\item[(iii)] $(\Theta\otimes_{\OO} k)^{ss}\subseteq \pi\{\chi_1,\chi_2\}$ for some smooth characters $\chi_1, \chi_2: \Qp^{\times} \rightarrow k^{\times}$.
\end{itemize}
\end{cor}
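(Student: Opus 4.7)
The proof splits into two parts: proving finite length of $\Theta\otimes_\OO k$, and pinning down the precise structural dichotomy into cases (i), (ii), (iii).

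For finite length, the first observation is that $\Theta/\varpi$ is indecomposable as a $k[G]$-module. Indeed, $\End_G(\Theta)$ is $\varpi$-adically complete (since $\Theta$ is a compact $\OO$-lattice), so Hensel's lemma lifts any nontrivial idempotent of $\End_G(\Theta/\varpi)$ to $\End_G(\Theta)\otimes_\OO L=\End_G^{\rm cont}(\Pi)$, and by Schur's lemma (absolute irreducibility of $\Pi$) this ring equals $L$, admitting no nontrivial idempotent. Hence $\Theta/\varpi$ is indecomposable and lies in a single block of $\Mod^{\ladm}_G(\OO)$. By the block classification of \cite{blocks}, each block contains only finitely many isomorphism classes of absolutely irreducible representations; combined with the multiplicity bound $\le d(\pi)\le 2$ from Theorem~\ref{main_res_fin}, this forces $\Theta/\varpi$ to have finite length.

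For the structural dichotomy, I will combine Theorem~\ref{I} with the standard trichotomy for simple modules under quadratic scalar extension: a simple $k[G]$-module $V$ is either absolutely simple, or $V\otimes_k l$ is the sum of two distinct Galois-conjugate simple $l[G]$-modules for some quadratic $l/k$. Theorem~\ref{I} provides, after possibly replacing $L$ by a quadratic unramified extension $L'$ with residue field $l$, that $\overline{\Pi}^{ss}$ (computed over $l$) is either absolutely irreducible supersingular or a subrepresentation of some $\pi\{\chi_1,\chi_2\}$ with characters valued in $l^\times$. Case (i) arises when no extension is needed and the outcome is absolutely irreducible supersingular: the multiplicity bound $d(\pi)=1$ combined with indecomposability forces $\Theta/\varpi$ to equal the unique supersingular constituent. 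Case (iii) arises when no extension is needed and $(\Theta/\varpi)^{ss}\subseteq\pi\{\chi_1,\chi_2\}$ over $k$.

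Case (ii) is the remaining situation: $\Theta/\varpi$ is $k$-irreducible but not absolutely irreducible, so $\Theta/\varpi\otimes_k l=W\oplus W^\sigma$ with $W\not\cong W^\sigma$ under $\sigma\in\Gal(l/k)$. Theorem~\ref{I} applied to $\Pi\otimes_L L'$ forbids $W$ from being supersingular: a single absolutely irreducible supersingular factor would yield a semisimplification with one constituent, contradicting the two-constituent decomposition $W\oplus W^\sigma$. Hence $W$ and $W^\sigma$ are both constituents of some $\pi\{\chi_1,\chi_2\}$. Galois-conjugacy forces the characters to be swapped by $\sigma$, and consulting Lemma~\ref{ss} shows that $W$ must be the irreducible principal series $(\Ind_B^G\chi\otimes\chi^\sigma\omega^{-1})_{\sm}$ with $W^\sigma$ its conjugate (the character or Steinberg-twist summands of $\pi\{\chi_1,\chi_2\}$ cannot occur here, since they would have to be $\Gal(l/k)$-stable and so already defined over $k$, contradicting $W\not\cong W^\sigma$).

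The main obstacle is the careful verification in case (ii): one must exclude all the small-$p$ degenerate summands of $\pi\{\chi_1,\chi_2\}$ listed in Lemma~\ref{ss} (ii)--(v) as possibilities for $W$, using the non-invariance under $\sigma$. Everything else is a combination of Theorem~\ref{I}, Theorem~\ref{main_res_fin}, the block decomposition, and formal Galois descent for simple $k[G]$-modules.
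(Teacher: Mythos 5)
Your overall structure (single block, then multiplicity bound, then Galois descent for the not-absolutely-irreducible case) mirrors the paper's, which simply cites \cite[prop.~5.36]{cmf} for the single-block statement and \cite[cor.~5.44]{cmf} for case (ii). Where you try to prove these two citations from scratch, however, there are gaps.

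For the single-block step: you assert that idempotents of $\End_G(\Theta/\varpi)$ lift to $\End_G(\Theta)$ via Hensel's lemma, ``since $\Theta$ is a compact $\OO$-lattice.'' First, $\Theta$ is not compact unless $\Pi$ is finite-dimensional (its Schikhof dual $\Theta^d$ is); $\varpi$-adic completeness of $\End_G(\Theta)$ does hold, but for the right reason, namely that $\Theta$ is $\varpi$-adically complete and bounded endomorphisms converge pointwise. More seriously, Hensel lifting operates \emph{within} the complete ring $\End_G(\Theta)$: it lifts idempotents from $\End_G(\Theta)/\varpi$, and the natural map $\End_G(\Theta)/\varpi\to\End_G(\Theta/\varpi)$ is injective but has no reason to be surjective, so an idempotent of $\End_G(\Theta/\varpi)$ need not lie in its image. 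The clean argument (essentially what \cite[prop.~5.36]{cmf} records) bypasses this: since $\Theta^d\in\dualcat(\OO)$ and $\dualcat(\OO)$ decomposes as a product over blocks, the block projectors act directly on $\Theta^d$ and hence give idempotents in $\End_{\dualcat(\OO)}(\Theta^d)\cong\End_G(\Theta)\subset\End_G^{\rm cont}(\Pi)=L$, forcing all but one component to vanish. No lifting is involved.

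For case (ii): your claim that a character or Steinberg-twist constituent ``would have to be $\Gal(l/k)$-stable and so already defined over $k$'' is not justified as stated. For $p=3$ the block $\pi\{\chi,\chi\omega\}$ contains \emph{two} characters ($\psi\circ\det$ and $\omega\psi\circ\det$) and \emph{two} Steinberg twists, and a priori nothing prevents $\sigma$ from interchanging them. What rules this out is a computation: $\psi^\sigma/\psi$ is trivial on $\zpet$ for any smooth $\psi:\qpet\to l^\times$ (both characters of $\zpet$ factor through $\Fp^\times$ and hence take values in $k^\times$), whereas $\omega|_{\zpet}$ is nontrivial, so $\psi^\sigma=\omega\psi$ is impossible; combined with a dimension count to exclude mixing characters with Steinberg twists, one deduces $W$ must be the irreducible principal series. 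You flagged that this verification is needed, but the ``$\Gal(l/k)$-stable'' justification you give does not actually carry it out.
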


\begin{proof} Let $\pi$ be an irreducible subquotient of $\Theta\otimes_{\OO} k$. If $\pi'$ is another  irreducible subquotient of $\Theta\otimes_{\OO} k$ then $\pi$ and $\pi'$ lie in the same block by \cite[prop. 5.36]{cmf}, which means that there exist irreducible smooth 
$k$-representations $\pi=\pi_0, \ldots, \pi_n=\pi'$, such that for all $0\le i <n$ either $\Ext^1_G(\pi_i, \pi_{i+1})\neq 0$ or 
$\Ext^1_G(\pi_{i+1}, \pi_i)\neq 0$. The blocks containing an absolutely irreducible representation have been determined in \cite{blocks}, 
and consist of either a single supersingular representation, or of all irreducible subquotients of $\pi\{\chi_1,\chi_2\}$ for some smooth characters $\chi_1, \chi_2: \Qp^{\times} \rightarrow k^{\times}$. 
These irreducible subquotients  are listed explicitly in lemma~\ref{ss}. If $\pi$ is absolutely irreducible, it follows from theorem~\ref{main_res_fin} that 
if $\pi$ is supersingular then (i) holds, if $\pi$ is not supersingular then the multiplicity with which $\pi$ occurs as a subquotient of 
$\Theta\otimes_{\OO} k$ is less or equal to the multiplicity with which $\pi$ occurs in $\pi\{\chi_1,\chi_2\}$, which implies that (iii) holds. If $\pi$ is not absolutely irreducible, 
then arguing as in the proof of corollary 5.44 of \cite{cmf} we deduce that (ii) holds.
\end{proof}

\section{Injectivity of the functor $\Pi\mapsto{\bf V}(\Pi)$} 
 In this chapter we prove theorems~\ref{II} and~\ref{nonordinary} as well as 
their consequences stated in the introduction. 
 After a few preliminaries devoted to the theory of $(\varphi,\Gamma)$-modules and various constructions involved
 in the $p$-adic local Langlands correspondence \cite{Cbigone}, we give a detailed overview of the (rather technical) proofs. 
 We then go on and supply the technical details of the proofs. 
  
 \subsection{Preliminaries}
 
 \subsubsection{$(\varphi,\Gamma)$-modules}   Let
   $\oe$ be the $p$-adic completion of 
    $\O [[T]][T^{-1}]$, $\mathcal{E}=\oe [p^{-1}]$ the field of fractions of $\oe$ and let $\mathcal{R}$ be the Robba ring, consisting of those Laurent series $\sum_{n\in \mathbb{Z}} a_nT^n\in L[[T,T^{-1}]]$ which converge on some annulus 
    $0<v_p(T)\leq r$, where $r>0$ depends on the series. 
      
      Let $\Phi\Gamma^{\rm et}(\mathcal{E})$ be the category of \'etale $(\varphi,\Gamma)$-modules over $\mathcal{E}$. 
      These are finite dimensional $\mathcal{E}$-vector spaces $D$ endowed with semi-linear\footnote{The rings 
      $\oe, \mathcal{E}, \mathcal{R}$ are endowed with a Frobenius $\varphi$ and an action of $\Gamma$ defined by 
      $\varphi(T)=(1+T)^p-1$ and $\gamma(T)=(1+T)^{\varepsilon(\gamma)}-1$.}  and commuting actions of 
      $\varphi$ and $\Gamma={\rm Gal}(\qp(\mu_{p^{\infty}})/\qp)\cong \zpet$ such that the action of $\varphi$ is \'etale\footnote{This means that the matrix of
      $\varphi$ in some basis of $D$ belongs to 
      ${\rm GL}_d(\oe)$, where $d=\dim_{\mathcal{E}} (D)$.}. Each $D\in\Phi\Gamma^{\rm et}(\mathcal{E})$
      is naturally endowed with an operator $\psi$, which is left-inverse to $\varphi$ and commutes with $\Gamma$. 
            
       The category $\Phi\Gamma^{\rm et}(\mathcal{E})$ is equivalent~\cite{FoGrot} to the category
      ${\rm Rep}_{L}(\cal G_{\qp})$ of continuous finite dimensional $L$-representations of ${\cal G}_{\qp}$. Cartier duality\footnote{Sending $V$ to $\check{V}:=V^*\otimes\varepsilon$, where $V^*$ is the $L$-dual of $V$.} on ${\rm Rep}_{L}(\cal G_{\qp})$ induces a Cartier duality $D\to \check{D}$ on $\Phi\Gamma^{\rm et}(\mathcal{E})$.
   All these constructions have integral and torsion 
      analogues, which will be used without further comment.

           The category $\Phi\Gamma^{\rm et}(\mathcal{E})$ is equivalent, by \cite{CCsurconv,BC07} and \cite{KK1}, 
        to the category of $(\varphi,\Gamma)$-modules of slope $0$ on $\mathcal{R}$. For $D\in\Phi\Gamma^{\rm et}(\mathcal{E})$
        we let $D_{\rm rig}$ be the associated $(\varphi,\Gamma)$-module over $\mathcal{R}$.
                     
               \subsubsection{ Analytic operations on $(\varphi,\Gamma)$-modules}\label{analytic}
               
                     The monoid $P^+=\left(\begin{smallmatrix} \zp-\{0\} & \zp \\0 & 1\end{smallmatrix}\right)$ acts naturally on 
        $\zp$ by $\left(\begin{smallmatrix} a & b \\0 & 1\end{smallmatrix}\right)x=ax+b$. Any $D\in \Phi\Gamma^{\rm et}(\mathcal{E})$
      carries a $P^+$ action, defined by 
      $$ \left(\begin{smallmatrix} p^ka & b \\0 & 1\end{smallmatrix}\right)z=(1+T)^b\varphi^k\circ \sigma_a(z)$$
      for $a\in\zpet, b\in \zp$ and $k\in\mathbb{N}$. 
      
      $D$ also gives rise to a $P^+$-equivariant sheaf $U\mapsto D\boxtimes U$ on 
      $\zp$, whose sections on $i+p^k\zp$ are $\left(\begin{smallmatrix} p^k & i \\0 & 1\end{smallmatrix}\right)D\subset D=D\boxtimes\zp$ 
      and for which the restriction map ${\rm Res}_{i+p^k\zp}: D\boxtimes\zp\to D\boxtimes (i+p^k\zp)$ is given by 
      $\left(\begin{smallmatrix} 1 & i \\0 & 1\end{smallmatrix}\right)\circ \varphi^k\circ \psi^k\circ \left(\begin{smallmatrix} 1 & -i \\0 & 1\end{smallmatrix}\right)$. 
      
 Let $U$ be an open compact subset of $\zp$ and let 
       $\phi: U\to L$ be a continuous function. By \cite[prop. V.2.1]{Cmirab}, the limit
       $$m_{\phi}(z)=\lim_{N\to\infty} \sum_{i\in I_N(U)} \phi(i) {\rm Res}_{i+p^N\zp}(z)$$
       exists for all $z\in D\boxtimes U$, and it is independent of the system of representatives $I_N(U)$ of $U$ mod $p^N$. Moreover, the resulting map 
       $m_{\phi}: D\boxtimes U\to D\boxtimes U$ is $L$-linear and continuous. 
              
        In the same vein \cite[prop. V.1.3]{Cmirab}, if $U,V$ are compact open subsets of $\zp$ and if $f: U\to V$ is a local diffeomorphism, 
       there is a direct image operator
        $$f_*: D\boxtimes U\to D\boxtimes V, \quad f_*(z)=\lim_{N\to\infty} \sum_{i\in I_N(U)} \left(\begin{smallmatrix} f'(i) & f(i) \\0 & 1\end{smallmatrix}\right){\rm Res}_{p^n\zp}
        (\left(\begin{smallmatrix} 1 & -i \\0 & 1\end{smallmatrix}\right)z).$$
        The following result (see $\S$ $V.1$ and $V.2$ in \cite{Cmirab}) summarizes the main properties of these operators (which also have integral and torsion versions, see
        loc.cit.).

          \begin{prop} \label{mult} Let $U,V$ be compact open subsets of $\zp$.
          
          a) For all continuous maps $\phi_1, \phi_2: U\to L$ we have $m_{\phi_1}\circ m_{\phi_2}=m_{\phi_1\phi_2}$.
          
          b) If $f: U\to V$ is a local diffeomorphism and $\phi: V\to L$ is continuous, then $$f_*\circ m_{\phi\circ f}=m_{\phi}\circ f_*.$$
          
          c) If $f: U\to V$ and $g: V\to W$ are local diffeomorphisms, then $g_*\circ f_*=(g\circ f)_*$. 
          
          d) If $\phi: U\to L$ is a continuous map and 
          $V\subset U$, then $m_{\phi}$ commutes with ${\rm Res}_V$.
          
          e) If $\phi:\zpet\to L$ is constant on $a+p^n\zp$ for all $a\in \zpet$, then 
          $$m_{\phi}=\sum_{i\in (\mathbb{Z}/p^n\mathbb{Z})^{\times}} \phi(i) {\rm Res}_{i+p^n\zp}.$$
          
                 \end{prop}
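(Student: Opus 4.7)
The overall plan is to establish part (e) first by showing that the defining limit stabilises as soon as $N\geq n$, then use it as a computational base case. Part (e) follows from the sheaf-like identity $\mathrm{Res}_{a+p^n\zp}=\sum_{j\in I_N,\, j\equiv a\pmod{p^n}}\mathrm{Res}_{j+p^N\zp}$ (which itself comes from the definition of $\mathrm{Res}$ via $\varphi$, $\psi$ and the Mahler expansion of $1_{a+p^n\zp}$): plugging this in, the partial Riemann sum defining $m_\phi$ collapses to $\sum_{a\in(\ZZ/p^n)^\times}\phi(a)\mathrm{Res}_{a+p^n\zp}$ independently of $N\geq n$, so the limit equals this finite sum.

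For parts (a) and (d), I would argue by density of locally constant functions. Because $U$ is compact, any continuous $\phi$ is a uniform limit of locally constant $\phi^{(m)}$; the operator $m_\phi$ depends continuously on $\phi$ for the sup-norm (the difference $m_{\phi-\phi^{(m)}}$ has operator norm bounded by $\|\phi-\phi^{(m)}\|_\infty$, as seen directly from the partial sum, integrality of $\mathrm{Res}_V$, and the fact that $D$ is $\varpi$-adically separated). It therefore suffices to prove (a) and (d) for locally constant functions on a common partition. For such functions the formula in (e) reduces everything to the elementary identities $\mathrm{Res}_V\circ \mathrm{Res}_{i+p^N\zp}=\mathrm{Res}_{V\cap(i+p^N\zp)}$ and $\mathrm{Res}_{i+p^N\zp}\circ \mathrm{Res}_{j+p^N\zp}=\delta_{ij}\mathrm{Res}_{i+p^N\zp}$, both of which are immediate from the definition of $\mathrm{Res}$ in terms of $\varphi^k\psi^k$ and the relation $\psi\varphi=\mathrm{id}$. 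Passing to the limit then yields (d), and running the same argument for a product of locally constant functions gives (a).

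For (b) and (c) the strategy is analogous but the key input is the Taylor expansion structure of the matrices $\bigl(\begin{smallmatrix}f'(i)&f(i)\\0&1\end{smallmatrix}\bigr)$. Because $f$ is a local diffeomorphism of $\zp$, on each sufficiently small ball $i+p^N\zp\subset U$ we can write $f(x)=f(i)+f'(i)(x-i)+O(p^{2N})$, and likewise for $g$ when composed at $f(i)$. At the level of the matrix action on $D$, a change from $i'$ to $i$ in the same ball $i+p^N\zp$ only modifies the result by an element of $\varphi^N(D)$ multiplied by $\bigl(\begin{smallmatrix}1&u\\0&1\end{smallmatrix}\bigr)-1$ with $u\in p^N\zp$, whose norm tends to $0$ as $N\to\infty$. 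This lets us replace $f(i)$, $f'(i)$, $g(f(i))$, $g'(f(i))$ in the Riemann sums by their "first-order approximations" without changing the limit. For (b), one then observes that after this replacement $m_{\phi\circ f}$ applied before direct image and $m_\phi$ applied after agree term-by-term because $\phi(f(i))=(\phi\circ f)(i)$; for (c), the product of the two matrices is $\bigl(\begin{smallmatrix}g'(f(i))f'(i)&g'(f(i))f(i)+g(f(i))\\0&1\end{smallmatrix}\bigr)$, which differs from $\bigl(\begin{smallmatrix}(g\circ f)'(i)&(g\circ f)(i)\\0&1\end{smallmatrix}\bigr)$ only by an upper-triangular unipotent matrix with entry in $p^N\zp$, so the two limits coincide.

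The main obstacle is exactly this last step: carefully controlling the error terms in the Taylor expansions for (b) and (c) so that the telescoping Riemann sums converge to the same limit. Concretely, one needs uniform estimates showing that the matrix-valued error is annihilated in the limit by the $\varphi^N$ implicit in $\mathrm{Res}_{p^N\zp}$; once these estimates are granted, everything else is bookkeeping of Riemann sums.
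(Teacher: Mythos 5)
The paper does not actually supply a proof of this proposition; it is quoted as a summary of results proved in Colmez's \emph{$(\varphi,\Gamma)$-modules et repr\'esentations du mirabolique de ${\rm GL}_2(\Qp)$}, \S V.1--V.2, so there is no ``paper's own proof'' to compare against, only that external reference. Given that, let me assess your plan on its own terms.

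Your treatment of (e), (a), and (d) is sound and is indeed the natural route. For (e), the point that the partial sums stabilise once $N\geq n$, via grouping representatives by their class mod $p^n$ and the identity $\mathrm{Res}_{a+p^n\zp}=\sum_{j\equiv a} \mathrm{Res}_{j+p^N\zp}$, is correct. For (a) and (d), the reduction to locally constant $\phi$ via the bound $\|m_\phi\|\le\|\phi\|_\infty$ on a stable lattice, followed by the projector identities $\mathrm{Res}_{i+p^N\zp}\mathrm{Res}_{j+p^N\zp}=\delta_{ij}\mathrm{Res}_{i+p^N\zp}$ and $\mathrm{Res}_V\mathrm{Res}_{i+p^N\zp}=\mathrm{Res}_{V\cap(i+p^N\zp)}$, is exactly the right argument.

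For (b) and (c) you have correctly identified the idea (approximate $f$ on $i+p^N\zp$ by its first-order jet, observe that the product of the two jet matrices differs from the jet of $g\circ f$ only by an error unipotent in $p^N\zp$, and argue the error dies in the limit), but you have not actually closed the argument, as you yourself concede. The missing step is not mere bookkeeping: $g_*\circ f_*$ involves two nested limits, and one must justify interchanging them. Concretely, you need a uniform estimate (uniform in $i$ and in $z$ ranging over a lattice $D_0\boxtimes U$) showing that replacing $\bigl(\begin{smallmatrix}g'(f(i))f'(i)&g'(f(i))f(i)+g(f(i))\\0&1\end{smallmatrix}\bigr)$ by $\bigl(\begin{smallmatrix}(g\circ f)'(i)&(g\circ f)(i)\\0&1\end{smallmatrix}\bigr)$ changes each term of the $N$-th partial sum by an element of $\varpi^{c(N)}D_0$ with $c(N)\to\infty$; this comes from the fact that $\bigl(\begin{smallmatrix}1&u\\0&1\end{smallmatrix}\bigr)-1$ is topologically nilpotent and contracts $\mathrm{Res}_{p^N\zp}(D_0)$ with a gain growing in $N$, and it also requires knowing a priori that the restriction $\mathrm{Res}_{j+p^M\zp}(f_*z)$ agrees with the evident finite sub-sum once $N\geq M$. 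You flag this as ``the main obstacle,'' and it is: as written your argument is a correct outline rather than a complete proof of (b) and (c). Filling in those estimates is precisely the content of \cite{Cmirab} \S V.1.
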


      \subsubsection{From ${\rm GL}_2(\qp)$ to ${\rm Gal}(\overline{\qp}/\qp)$-representations and back} 
      
       Let $G={\rm GL}_2(\qp)$. We refer the reader to the introduction for the definition of the category ${\rm Rep}_L(G)$, and to \cite[ch. IV]{Cbigone} (or to \cite[\S~III.2]{CD} for a summary) for the construction and study of an exact and contravariant functor
 $\mathbf{D}: {\rm Rep}_L(G)\to \Phi\Gamma^{\rm et}(\mathcal{E})$. Composing this functor with Fontaine's \cite{FoGrot} equivalence of categories
and Cartier duality, we obtain an exact covariant functor $\Pi\mapsto{\bf V}(\Pi)$ 
from ${\rm Rep}_L(G)$ to ${\rm Rep}_L(G_{\qp})$. 
We will actually work with the functor $\mathbf{D}$, even though some results will be stated in terms of the more familiar functor $\mathbf{V}$. 
    
    In the opposite direction, $\delta$ being fixed, there is a functor
from $\Phi\Gamma^{\rm et}(\mathcal{E})$ to the category of 
    $G$-equivariant sheaves of topological $L$-vector spaces on $\p1=\p1(\qp)$
(the space of sections on an open set $U$ of $\p1$ of the sheaf associated to $D$ is denoted by
$D\boxtimes_{\delta} U$). If $D\in \Phi\Gamma^{\rm et}(\mathcal{E})$, then 
    the restriction to $\zp$ of the sheaf $U\mapsto D\boxtimes_{\delta} U$ is the $P^+$-equivariant sheaf attached to $D$ as in ${\rm n}^{\rm o}$~\ref{analytic} (in particular it does not depend on $\delta$).
    The space $D\boxtimes_{\delta}\p1$ of global sections of the sheaf attached to $D$ and $\delta$ is naturally a topological $G$-module. 
    
     \begin{defi} If $\delta:\qpet\to \O^{\times}$ is a unitary character then we let ${\rm Rep}_L(\delta)$ be the 
    full subcategory of ${\rm Rep}_L(G)$ consisting of representations with central character $\delta$ and we let
    $\cal{MF}(\delta)$ be the essential image of $\mathbf{D}|_{{\rm Rep}_L(\delta)}$. 
    \end{defi}

    The following result
     follows by combining the main results of \cite[chap. III]{CD}.
         
       \begin{prop}\label{compatible} 
If $\delta:\qpet\to \cal O^{\times}$ is a unitary character then
 there is a functor $$\mathcal{MF}(\delta^{-1})\to {\rm Rep}_L(\delta),\quad D\mapsto \Pi_{\delta}(D)$$
       such that for all $D\in \mathcal{MF}(\delta^{-1})$, we have:
       
       {\rm (i)} If $\eta$ is a unitary character, then\footnote{Here $D(\eta)$ is the $(\varphi,\Gamma)$-module
       obtained by twisting the action of $\varphi$ and $\Gamma$ by $\eta$.} $D(\eta)\in \mathcal{MF}(\eta^{-2}\delta^{-1})$ and 
       $$\Pi_{\eta^2\delta} (D(\eta))\cong \Pi_{\delta}(D)\otimes (\eta\circ\det).$$
       
      {\rm (ii)} $\check{D}\in \mathcal{MF}(\delta)$ and 
      there is an exact sequence\footnote{Of topological $G$-modules, where $ \Pi_{\delta^{-1}}(\check{D})^*$ is the weak dual of 
      $\Pi_{\delta^{-1}}(\check{D})$.}
              $$0\to \Pi_{\delta^{-1}}(\check{D})^*\to D\boxtimes_{\delta}\p1\to \Pi_{\delta}(D)\to 0.$$
        
       {\rm (iii)} There is a canonical isomorphism $\mathbf{D}(\Pi_{\delta}(D))\cong \check{D}$.
       
       {\rm (iv)}  If $\dim (D)\geq 2$, then $D$ is irreducible if and only if $\Pi_{\delta}(D)$ is irreducible.
         \end{prop}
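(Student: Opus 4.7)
The plan is to identify which results of \cite[chap.~III]{CD} feed into each part and assemble them; I do not expect genuinely new ideas to be required, but rather a careful bookkeeping.

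The starting point is the $G$-equivariant sheaf $U \mapsto D \boxtimes_\delta U$ on $\p1$, whose restriction to $\zp$ is the $P^+$-sheaf recalled in ${\rm n}^{\rm o}$~\ref{analytic}. I would first construct simultaneously the quotient $\Pi_\delta(D)$ of $D \boxtimes_\delta \p1$ and the embedding $\Pi_{\delta^{-1}}(\check D)^* \hookrightarrow D \boxtimes_\delta \p1$ appearing in (ii); the simultaneous construction is what avoids circularity. The key pairing is given by Cartier duality $D \leftrightarrow \check D$ together with the action of the involution $w = \matrice{0}{1}{1}{0}$, which swaps $\zp$ with $\p1 \setminus \zp$ and relates sections of the sheaves associated to $D$ and $\check D$. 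Once this is in place, one reads off from the sheaf that $\Pi_\delta(D)$ has central character $\delta$ and is admissible, hence belongs to ${\rm Rep}_L(\delta)$.

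For (i), I would check directly from the construction of $\boxtimes_\delta$ that replacing the pair $(D,\delta)$ by $(D(\eta), \eta^2\delta)$ produces the same underlying sheaf, but with the $G$-action twisted by $\eta \circ \det$; passing to the cokernel yields the claimed isomorphism. For (iii), I would apply the exact functor $\mathbf{D}$ to the short exact sequence in (ii): a direct computation from the sheaf gives $\mathbf{D}(D \boxtimes_\delta \p1)$, and subtracting the contribution of the subrepresentation (which is controlled by $w$) leaves $\check D$, recovering the main reconstruction theorem of \cite[chap.~III]{CD}. For (iv), any closed $G$-invariant $\Pi \subset \Pi_\delta(D)$ produces, via exactness of $\mathbf{D}$ and (iii), a quotient $\mathbf{D}(\Pi_\delta(D)/\Pi)$ of $\check D$; combined with (ii) this forces $\Pi \in \{0, \Pi_\delta(D)\}$ whenever $\check D$ is irreducible. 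The hypothesis $\dim D \geq 2$ enters here because $1$-dimensional $D$ correspond to characters, in which case $\Pi_\delta(D)$ can be a reducible parabolic induction.

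The main obstacle — and the reason \cite[chap.~III]{CD} is substantial — is proving the exactness in (ii), i.e.\ both the injectivity of $\Pi_{\delta^{-1}}(\check D)^* \to D \boxtimes_\delta \p1$ and the identification of the quotient with a unitary admissible $L$-Banach representation. In \cite{CD} this is achieved by first treating trianguline $D$ through explicit computations with the universal unitary completions of locally analytic principal series \cite{bb,L,PS}, and then extending to arbitrary $D$ by analytic continuation, using the Zariski density \cite{Ctrianguline,kisin} of trianguline (indeed crystalline) points in the deformation space of $\bar{V}^{\rm ss}$; the case $p=2$ with scalar $\bar{V}^{\rm ss}$ additionally requires the identification of irreducible components carried out in \cite{PCDnew}.
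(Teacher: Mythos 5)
Your overall outline for (i), (iii), (iv) --- the sheaf on $\p1$, Cartier duality, the $w$-involution, twisting the sheaf for (i) --- is consistent with what happens in \cite[chap.~III]{CD}, and the paper's own ``proof'' is merely a pointer to that chapter. The problem is your final paragraph, which misplaces the hard work and conflates two distinct results.

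Proposition~\ref{compatible} is \emph{conditional}: it is stated for $D\in\mathcal{MF}(\delta^{-1})$, which by definition means there already exists some $\Pi_0\in{\rm Rep}_L(\delta)$ with $\mathbf{D}(\Pi_0)\cong D$. With $\Pi_0$ in hand, the exact sequence in (ii) is a \emph{reconstruction} statement: one uses the Kirillov-model structure of $\Pi_0$ (roughly $\Pi_0^*\cong(\varprojlim_\psi D^\natural)\otimes L$, as in \cite[chap.~III]{Cbigone} and \cite[chap.~III]{CD}) to exhibit the submodule and the quotient of $D\boxtimes_\delta\p1$ directly; this works for $D$ of any dimension and arbitrary $\delta$ with $D\in\mathcal{MF}(\delta^{-1})$. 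The analytic continuation from the trianguline case, the Zariski density of crystalline points in the deformation space, and the $p=2$ scalar case handled in \cite{PCDnew} are all ingredients of the \emph{separate} surjectivity statement --- proposition~\ref{padicL}, asserting that every $2$-dimensional $D$ actually lies in $\mathcal{MF}(\delta_D^{-1})$ --- where no $\Pi_0$ is available to start from. Notice that the paper itself cites \cite{PCDnew} only for proposition~\ref{padicL}, while proposition~\ref{compatible} is attributed solely to \cite[chap.~III]{CD}; your account would have you implicitly reproving proposition~\ref{padicL} while still not reaching the generality of proposition~\ref{compatible} (the analytic-continuation machinery only produces the case $\dim D=2$, $\delta=\delta_D$). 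A secondary technical issue: in (iii) you propose to ``apply $\mathbf{D}$ to the short exact sequence in (ii),'' but $D\boxtimes_\delta\p1$ and the weak dual $\Pi_{\delta^{-1}}(\check D)^*$ are not objects of ${\rm Rep}_L(G)$, so $\mathbf{D}$ does not apply to them; the isomorphism $\mathbf{D}(\Pi_\delta(D))\cong\check D$ is instead read off from the $D^\natural$-construction underlying the definition of $\Pi_\delta$.
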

         
      All these constructions have natural integral and torsion variants, which will be used without further comment: for instance, if 
         $D_0$ is an $\oe$-lattice in $D\in \mathcal{MF}(\delta^{-1})$ which is stable by $\varphi$ and $\Gamma$, then $\Pi_{\delta}(D_0)$ is an open, bounded
         and $G$-invariant lattice in $\Pi_{\delta}(D)$. 
            
The next result is the main ingredient for the proof of the surjectivity
of the $p$-adic Langlands correspondence for $G$ (cf.~\S~\ref{surjectif}).
Note that if $D\in \Phi\Gamma^{\rm et}(\mathcal{E})$,
 then $\det D$ corresponds by Fontaine's equivalence of categories
  to a continuous character of $\mathcal{G}_{\qp}$, which in turn can be seen as a unitary character $\det D: \qpet\to \mathcal{O}^{\times}$ by local class field theory. We define
  $$\delta_D: \qpet\to \mathcal{O}^{\times}, \quad \delta_D=\varepsilon^{-1}\det D.$$
  
\begin{prop}\label{padicL}
 If $D\in \Phi\Gamma^{\rm et}(\mathcal{E})$ is $2$-dimensional, then 
 $D\in \mathcal{MF}(\delta_D^{-1})$. 
\end{prop}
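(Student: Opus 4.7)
The plan is to reinterpret the proposition as the surjectivity statement of the $p$-adic local Langlands correspondence recalled in \S\ref{surjectif}: we must produce, for any $2$-dimensional $D\in\Phi\Gamma^{\rm et}(\mathcal{E})$, a Banach representation $\Pi(D)\in{\rm Rep}_L(\delta_D^{-1})$ with $\mathbf{D}(\Pi(D))\cong D$. Equivalently, I would construct the $G$-equivariant sheaf $U\mapsto D\boxtimes_{\delta_D} U$ on $\p1$ and prove the existence of the exact sequence
\[
0\to \Pi(D)^*\otimes \delta_D \to D\boxtimes_{\delta_D}\p1\to \Pi(D)\to 0
\]
as in proposition~\ref{compatible}(ii); the specific choice $\delta=\delta_D$ is forced by the requirement that the involution $\matrice{0}{1}{1}{0}$ glue the two standard affine charts of $\p1$ in a way compatible with $\varphi$ and $\Gamma$.

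First, I would treat the trianguline case (in particular all crystalline $D$) directly: here $\Pi(D)$ is the universal unitary completion of an explicit locally analytic principal series constructed in \cite{bb, L, PS, pem, except}, and the decomposition of global sections of the sheaf on $\p1$ can be verified by direct calculation, which pins down the Banach representation and its central character $\delta_D^{-1}$.

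Next, for general $D$, I would spread the construction across the universal deformation space of $\overline{D}^{\rm ss}$ by analytic continuation, using Zariski density of trianguline (indeed crystalline) representations in each irreducible component of this space as established in \cite{Ctrianguline, kisin, bockle, Chen}. The sheaf $U\mapsto D\boxtimes_{\delta_D} U$ and the morphisms on its global sections vary analytically with $D$, so the decomposition available on the dense trianguline locus spreads to the whole component.

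The main obstacle, and the reason this statement is recorded here rather than quoted verbatim from \cite{Cbigone}, is the case $p=2$ with $\overline{D}^{\rm ss}$ scalar. In that setting the density of trianguline points was not previously available, and even the number of irreducible components of the deformation space was unclear, so the analytic continuation scheme broke down. The rescue comes from \cite{PCDnew}, where exactly two irreducible components of this deformation space are identified and crystalline representations are shown to be dense in each. With that input, the analytic continuation argument can be carried out component by component, producing $\Pi(D)$ for every $2$-dimensional $D$ and hence realizing $D$ as $\mathbf{D}(\Pi(D))$ for some $\Pi(D)\in{\rm Rep}_L(\delta_D^{-1})$, which is exactly the assertion $D\in\mathcal{MF}(\delta_D^{-1})$.
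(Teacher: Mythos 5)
Your plan matches the paper's route exactly: the paper discharges this proposition by citing \cite[prop.~10.1]{PCDnew}, and your sketch of the underlying argument --- explicit treatment of the trianguline case via universal unitary completions, then analytic continuation over the deformation space using Zariski density of crystalline points, with the $p=2$, scalar $\overline{D}^{\rm ss}$ case supplied by the identification of the two irreducible components in \cite{PCDnew} --- is precisely the strategy spelled out in \S\ref{surjectif}. One small inaccuracy: the sheaf $U\mapsto D\boxtimes_\delta U$ on $\p1$ is constructed for an arbitrary unitary character $\delta$, so $\delta=\delta_D$ is not forced by the gluing of the two affine charts; rather, $\delta_D$ is the character for which the global sections admit the decomposition of proposition~\ref{compatible}(ii), which is exactly what the analytic continuation argument establishes (and which, conversely, theorem~\ref{main1} shows is the only character with this property).
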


\begin{proof}
This is a restatement of \cite[prop.~10.1]{PCDnew}.
\end{proof}

     \subsection{Uniqueness of the central character}\label{overviewCD}
       
In this $\S$ we explain the steps of the proof of the following theorem, which is the main 
result of this chapter. 
                     
       \begin{thm}\label{main1}
        Let $D\in\Phi\Gamma^{\rm et}(\mathcal{E})$ be absolutely irreducible, $2$-dimensional. 
\linebreak
        If $D\in \mathcal{MF}(\delta^{-1})$ for some unitary character $\delta$, then 
        $\delta=\delta_D$. 
              \end{thm}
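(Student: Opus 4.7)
The plan is to set $\eta = \delta_D^{-1}\delta$ and prove that $\eta = 1$. By proposition~\ref{padicL}, the $(\varphi,\Gamma)$-module $D$ also lies in $\mathcal{MF}(\delta_D^{-1})$, so both sheaves $D\boxtimes_{\delta}\p1$ and $D\boxtimes_{\delta_D}\p1$ are defined and fit into the exact sequences of proposition~\ref{compatible}(ii). The restriction to $\zp$ of either sheaf coincides with the $P^+$-equivariant sheaf of \S\ref{analytic}, so $\eta$ controls the discrepancy between the two realizations only through the $G$-action on sections over open subsets of $\p1$ meeting $\infty$. I would break the argument into two independent steps: first show that $\eta = 1$ assuming $\eta$ is locally constant, then show that any such $\eta$ must be locally constant.

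For the first step I would split into two subcases following the dichotomy trianguline versus non-trianguline. In the trianguline case, $\Pi_{\delta}(D)$ and $\Pi_{\delta_D}(D)$ both admit explicit locally analytic principal series inside their spaces of locally analytic vectors, and the computations of \cite{Cvectan, liu} together with the universal unitary completion arguments of \cite{Jacquet} force the central character to be $\delta_D$, giving $\eta = 1$. In the non-trianguline case the key input is that the restriction of $D\boxtimes_\delta\p1$ to any non-empty compact open subset of $\p1$ is injective on the copy of $\Pi_{\delta^{-1}}(\check D)^* \otimes \delta$ sitting inside it via proposition~\ref{compatible}(ii). For $\alpha \in \mathcal{O}^*$, I consider $\mathcal{C}^\alpha = (1-\alpha\varphi)\cdot D^{\psi = \alpha} \subset D\boxtimes\zpet$, which is intrinsic to $D$ and independent of $\delta$. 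Using the action of the involution $w = \matrice{0}{1}{1}{0}$, which is the only piece of the $G$-action differing between the two realizations, I would show that the multiplication-by-$\eta$ operator $m_\eta$ of \S\ref{analytic} sends $\mathcal{C}^\alpha$ into $\mathcal{C}^{\alpha\eta(p)}$; combining this inclusion with the above injectivity forces $\eta(p) = 1$ and $\eta|_{\zpet} = 1$, hence $\eta = 1$.

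For the second step, I would exploit the formulas of \cite{Annalen} for the infinitesimal action of $\mathfrak{gl}_2$ on the locally analytic vectors of $D\boxtimes_\delta\p1$ and $D\boxtimes_{\delta_D}\p1$; comparing them pins down the derivative of $\eta$, forcing $\eta$ to be locally constant in most cases. In the residual exceptional cases where the infinitesimal action does not directly conclude, I would use a Zariski-closure argument: the set of characters $\eta$ satisfying $m_\eta(\mathcal{C}^\alpha) \subset \mathcal{C}^{\alpha\eta(p)}$ for all $\alpha$ is a Zariski closed subgroup of the rigid-analytic character variety of $\qpet$, and any such subgroup not reduced to $\{1\}$ automatically contains a non-trivial locally constant character, at which point step one applies to conclude.

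The main obstacle I expect is the non-trianguline case of step one. Verifying rigorously that $m_\eta$ carries $\mathcal{C}^\alpha$ into $\mathcal{C}^{\alpha\eta(p)}$ requires an explicit analysis of how $w$ acts on the two sheaves, matched against the concrete description $\mathcal{C}^\alpha = (1-\alpha\varphi)D^{\psi=\alpha}$. Once the calculus of the operators in proposition~\ref{mult} is combined with the explicit formulas for $w$ on sections near $\infty$, the injectivity of the restriction map on $\Pi_{\delta^{-1}}(\check D)^* \otimes \delta$ supplies the contradiction that pins down $\eta = 1$ and hence $\delta = \delta_D$.
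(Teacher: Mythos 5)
Your proposal reproduces the structure of the paper's own proof almost exactly: the two-step strategy (prove $\eta=1$ when it is locally constant, then prove local constancy), the trianguline/non-trianguline split in step one (dévissage and universal completions versus the $\mathcal{C}^\alpha=(1-\alpha\varphi)D^{\psi=\alpha}$ argument with $m_\eta(\mathcal{C}^\alpha)\subset\mathcal{C}^{\alpha\eta(p)}$ and injectivity of restrictions), and the two tools for step two (infinitesimal action from \cite{Annalen} in most cases, Zariski-closedness of the stabilizer subgroup in the remaining scalar-Sen-operator case). This matches propositions~\ref{crux}, \ref{scalar}, \ref{subgroup}, \ref{Triangle}, corollaries~\ref{trivial}, \ref{torsion}, and lemmas~\ref{devisser}, \ref{morphism} in the paper, so nothing further to add.
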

        
     For the rest of this $\S$ we let 
 $D\in \Phi\Gamma^{\rm et}(\mathcal{E})$ be as in theorem~\ref{main1}. 
 Let $D^+$ be the set of $z\in D$ such that the sequence 
$(\varphi^n(z))_{n\geq 0}$ is bounded in $D$. The module $D^+$ is the largest finitely generated $\mathcal{E}^+$-submodule of 
$D$, stable under $\varphi$ and $\Gamma$. 
We say that $D$ is \textit{of finite height} if $D^+$ spans $D$ as $\mathcal{E}$-vector space or, equivalently (since 
$D$ is irreducible) if $D^+\ne \{0\}$. The classification of representations of finite height 
         given in \cite{Berger} shows that if $D$ is of finite height, then 
         $D$ is trianguline\footnote{Actually, more is true but will not be needed in the sequel: $D$ is of finite height if and only if $D\in \mathcal{S}_{*}^{\rm cris}$, where $ \mathcal{S}_{*}^{\rm cris}$ is defined in ${\rm n}^{\rm o}$~\ref{prelimtriang}.}, so it suffices to treat the cases ``$D$ trianguline" and ``$D^+=\{0\}$".

 \subsubsection{The trianguline case}      Suppose that $D$ is trianguline and recall that 
        $\delta_D=\varepsilon^{-1}\det D$
     is seen as a unitary character of $\qpet$. Suppose that $D\in \mathcal{MF}(\delta^{-1})$ for some unitary character $\delta$. 
      Let $\Pi=\Pi_{\delta}(D)$ and define $ \eta=\delta_D^{-1}\delta$. Also, let 
      $\Pi^{\rm an}$ be the space of locally analytic vectors 
   of $\Pi$, for which we refer the reader to \cite{adm} and~\cite{em1}.

        We first prove that $\eta$ is locally constant. The argument works for a much more general class of $(\varphi,\Gamma)$-modules, namely those
        corresponding to representations which are not $\mathbf{C}_p$-admissible\footnote{That is, de Rham with Hodge-Tate weights equal to $0$.} up to a twist
        (this condition is automatically satisfied by irreducible trianguline $(\varphi,\Gamma)$-modules \cite[prop. 4.6]{Ctrianguline}). 
        The proof uses two ingredients 
        
        $\bullet$ By \cite[chap. VI]{CD} and the hypothesis $D\in \mathcal{MF}(\delta^{-1})$, there is 
    a $G$-equivariant sheaf of locally analytic representations $U\mapsto D_{\rm rig}\boxtimes_{\delta} U$ attached to $(D_{\rm rig},\delta)$. 
 
    $\bullet$ Using \cite{ExtdR, Annalen}, one can describe the action 
of ${\rm Lie}({\rm GL}_2(\qp))$ on the space $D_{\rm rig}\boxtimes_{\delta}\p1$ 
of global sections of this sheaf.
The fact that $\eta$ is locally constant 
follows easily.
    
        The second part of the proof in the trianguline case consists in analyzing the module $D_{\rm rig}\boxtimes_{\delta}\p1$. 
More precisely, we prove (lemma~\ref{devisser}) that
if $0\to \mathcal{R}(\delta_1)\to D_{\rm rig}\to \mathcal{R}(\delta_2)\to 0$ is a triangulation of $D_{\rm rig}$, then this exact sequence extends to 
       an exact sequence of topological $G$-modules 
  $$0\to \mathcal{R}(\delta_1)\boxtimes_{\delta}\p1\to D_{\rm rig}\boxtimes_{\delta}\p1\to \mathcal{R}(\delta_2)\boxtimes_{\delta}\p1\to 0.$$
  Actually, once we know that $\eta$ is locally constant, the arguments of \cite{Cvectan, Jacquet} 
(where the case $\eta=1$ is treated) go through without any change. 
  
    Using the  description~\cite{Cvectan} 
of the Jordan-H\"older components of $\mathcal{R}(\delta_1)\boxtimes_{\delta}\p1$, we deduce (lemma~\ref{morphism})
   the existence of a 
morphism with finite dimensional kernel
   $B^{\rm an}(\delta_1,\eta\delta_2)\to \Pi^{\rm an}$, where $B^{\rm an}(\delta_1,\eta\delta_2)$ is the locally analytic
parabolic induction of the character $\eta\delta_2\otimes \varepsilon^{-1}\delta_1$. Finally, using universal completions,
we prove 
   that the morphism $B^{\rm an}(\delta_1,\eta\delta_2)\to \Pi^{\rm an}$ induces a nonzero 
morphism $\Pi_{\delta}(D')\to \Pi$ 
   for some $G$-compatible pair $(D',\delta)$, where $D'$ is a trianguline $(\varphi,\Gamma)$-module having a triangulation 
   $$0\to \mathcal{R}(\delta_1)\to D'_{\rm rig}\to \mathcal{R}(\eta\delta_2)\to 0.$$ This is the most technical part of the proof 
and uses results from \cite{PS,Cvectan,bb} (and~\cite{except} suitably extended for $p=2$). 
Since $D$ and $D'$ are irreducible, the representations $\Pi_{\delta}(D')$ and $\Pi$ are admissible and topologically irreducible, hence 
the morphism $\Pi_{\delta}(D')\to \Pi$ must be an isomorphism. 
Using parts (iii) and (iv) of proposition~\ref{compatible}, we deduce 
that $D'\cong D$. In particular $\det D_{\rm rig}=\det D'_{\rm rig}$, which yields 
$\delta_1\delta_2=\delta_1\delta_2\eta$, hence $\eta=1$,
 and finishes the proof in the trianguline case. 
 
     \subsubsection{The case $D^+=\{0\}$}  Let us assume now that $D\in \Phi\Gamma^{\rm et}(\mathcal{E})$ is $2$-dimensional and satisfies $D^+=\{0\}$
       (then $D$ is automatically absolutely irreducible).  
       For each $\alpha\in \mathcal{O}^{\times}$ let $$\mathcal{C}^{\alpha}=(1-\alpha\varphi)D^{\psi=\alpha}.$$
        If $D\in \mathcal{MF}(\chi^{-1})$ for some character $\chi:\qpet\to \mathcal{O}^{\times}$, then setting $\check{\Pi}=\Pi_{\chi^{-1}}(\check{D})$
     we have $\check{\Pi}^*\subset D\boxtimes_{\chi}\p1$ (proposition~\ref{compatible}), and there is \cite[rem.V.14(ii)]{CD} 
a canonical isomorphism of $\mathcal{O}[[\Gamma]][1/p]$-modules 
      \begin{equation}\label{Iwasawa}
{\rm Res}_{\zpet}((\check{\Pi}^*)^{\left(\begin{smallmatrix} p & 0\\0 & 1\end{smallmatrix}\right)=\alpha^{-1}})\cong \mathcal{C}^{\alpha}.
\end{equation}
      
Suppose now that $D\in\mathcal{MF}(\delta^{-1})$ and let
 $\eta=\delta_D^{-1}\delta$.
   Unravelling the isomorphism \eqref{Iwasawa} for 
   $\chi=\delta_D$ and $\chi=\delta$, we obtain the following key fact       
          
        \begin{prop}\label{crux}
       For all $\alpha\in \mathcal{O}^{\times}$ we have
        $m_{\eta}(\mathcal{C}^{\alpha})=\mathcal{C}^{\alpha\eta(p)}$. 
       \end{prop}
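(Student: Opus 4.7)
The plan is to unravel the isomorphism~\eqref{Iwasawa} in the two cases $\chi=\delta_D$ and $\chi=\delta$ and then compare. Since $D$ is $2$-dimensional, Proposition~\ref{padicL} gives $D\in\mathcal{MF}(\delta_D^{-1})$; together with the hypothesis $D\in\mathcal{MF}(\delta^{-1})$, Proposition~\ref{compatible}(ii) yields two exact sequences of topological $G$-modules
$$0 \to \check{\Pi}_\chi^* \to D \boxtimes_\chi \p1 \to \Pi_\chi(D) \to 0, \qquad \chi \in \{\delta, \delta_D\},$$
where $\check{\Pi}_\chi := \Pi_{\chi^{-1}}(\check{D})$. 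Crucially, the restriction of the sheaf $U\mapsto D\boxtimes_\chi U$ to any open subset of $\zp$ coincides with the $\chi$-independent $P^+$-equivariant sheaf of~\S\ref{analytic}. Hence both images $\Res_{\zpet}(\check{\Pi}_\chi^{*,A=\alpha^{-1}})$ sit naturally inside the common ambient space $D\boxtimes\zpet$, and \eqref{Iwasawa} asserts that both coincide with $\mathcal{C}^\alpha$.

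Next, I would exploit the Weyl element $w=\bigl(\begin{smallmatrix}0&1\\1&0\end{smallmatrix}\bigr)$ to transfer structure between the two descriptions. From $wAw^{-1}=Z\cdot A^{-1}$, with $A=\bigl(\begin{smallmatrix}p&0\\0&1\end{smallmatrix}\bigr)$ and $Z=\bigl(\begin{smallmatrix}p&0\\0&p\end{smallmatrix}\bigr)$ central, together with the fact that $Z$ acts as $\chi(p)$ on $\check{\Pi}_\chi^*$ (whose central character is $\chi$), it follows that $w$ maps $\check{\Pi}_\chi^{*,A=\alpha^{-1}}$ onto $\check{\Pi}_\chi^{*,A=(\alpha\chi(p))^{-1}}$. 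Combining this with~\eqref{Iwasawa} again, one obtains, for each choice of $\chi$, a map $\mathcal{C}^\alpha\to\mathcal{C}^{(\alpha\chi(p))^{-1}}$ induced by the $w$-action on the sheaf $D\boxtimes_\chi\p1$.

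The key comparison is then as follows. Given $\sigma\in\mathcal{C}^\alpha$, lift it to $z\in\check{\Pi}_\delta^{*,A=\alpha^{-1}}$ and to $z'\in\check{\Pi}_{\delta_D}^{*,A=\alpha^{-1}}$ with $\Res_{\zpet}(z)=\Res_{\zpet}(z')=\sigma$. Then $\Res_{\zpet}(wz)\in\mathcal{C}^{(\alpha\delta(p))^{-1}}$ and $\Res_{\zpet}(wz')\in\mathcal{C}^{(\alpha\delta_D(p))^{-1}}$. The two elements should differ by multiplication by $\eta$: using the explicit formulas for the action of $w$ on $D\boxtimes_\chi\p1$ from~\cite{Cbigone} (or the summary in~\cite[\S~III]{CD}), the central character $\chi$ enters in such a way that, after restriction to $\zpet$, the two $w$-actions on the common space $D\boxtimes\zpet$ differ precisely by $m_\eta$. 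Since $\delta(p)=\delta_D(p)\eta(p)$, a reparametrization of the indices turns this into $m_\eta(\mathcal{C}^\beta)=\mathcal{C}^{\beta\eta(p)}$ for every $\beta\in\mathcal{O}^\times$.

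The main obstacle will be carrying out the explicit comparison of the two $w$-actions in detail: pinpointing how the central character enters the formulas for the sheaf $D\boxtimes_\chi\p1$ on the complement of $\zp$ (where the two sheaves genuinely differ), verifying that the induced discrepancy on sections over $\zpet$ is exactly $m_\eta$ with the right direction, and ensuring compatibility with the $A$-eigenspace and index conventions. Once this bookkeeping is in place, combining it with the eigenvalue computation of the second paragraph yields the proposition.
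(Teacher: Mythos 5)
Your strategy is the same as the paper's: unravel \eqref{Iwasawa} for both $\chi=\delta_D$ and $\chi=\delta$ and compare the two $w$-actions on the common ambient space $D\boxtimes\zpet$. Your eigenvalue argument ($wAw^{-1}=ZA^{-1}$ and $Z$ acting by $\chi(p)$ on $\check{\Pi}_\chi^*$) correctly reproduces what the paper extracts from \cite[prop.~V.12]{CD}, namely that the $w$-action sends $\mathcal{C}^\alpha$ to $\mathcal{C}^{1/(\alpha\chi(p))}$. There is a small bookkeeping slip: since $A(wz)=\alpha\chi(p)\,wz$, the element $wz$ lies in the $A=\alpha\chi(p)$ eigenspace, not the $A=(\alpha\chi(p))^{-1}$ eigenspace as you wrote; via \eqref{Iwasawa} this does give $\mathcal{C}^{(\alpha\chi(p))^{-1}}$, so your final claim in that paragraph is right but the intermediate statement has the wrong sign.

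The real gap is that the crucial claim --- that the two $w$-actions on $D\boxtimes\zpet$ ``differ precisely by $m_\eta$'' --- is flagged as the main obstacle but never established. This is exactly the step that carries the proof, and the paper closes it with a single formula from \cite[rem.~II.1.3]{Cbigone}: the $w$-action restricted to $\zpet$, for the sheaf $D\boxtimes_\chi\p1$, is $w_\chi=w_*\circ m_\chi$. Combining this with proposition~\ref{mult}(b) and $w_*^2=\id$ gives $w_{\delta_D}\circ w_\delta=m_\eta$, after which the eigenvalue computation immediately yields $m_\eta(\mathcal{C}^\alpha)=\mathcal{C}^{\alpha\eta(p)}$. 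Without that formula (or a derivation of it), the proposal is a correct outline with its key lemma left unproven.
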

    
    \begin{proof}  
If $\chi\in\{\delta_D,\delta\}$, let $w_{\chi}$ be the restriction to $D^{\psi=0}=D\boxtimes_{\chi}\zpet$ of the action of 
    $w=\left(\begin{smallmatrix} 0 & 1\\1 & 0\end{smallmatrix}\right)$
    on $\Pi_{\chi^{-1}}(\check{D})^*\subset D\boxtimes_{\chi}\p1$. 
    Proposition V.12 of \cite{CD} shows that $w_{\chi}(\mathcal{C}^{\alpha})=\mathcal{C}^{\frac{1}{\alpha\chi(p)}}$
    for all $\alpha\in \mathcal{O}^{\times}$.
    On the other hand, remark II.1.3 of \cite{Cbigone} and part b) of proposition~\ref{mult} yield $w_{\chi}=w_*\circ m_{\chi}$ 
   and 
    $$w_{\delta_D}\circ w_{\delta}=w_*\circ m_{\delta_D}\circ w_*\circ m_{\delta}=m_{\delta_D^{-1}}\circ w_*\circ w_*\circ m_{\delta}=m_{\delta_D^{-1}\delta}=m_{\eta},$$
   as $w_*\circ w_*=(w\circ w)_*={\rm id}$.
    The result follows.
    \end{proof}
            
     In view of proposition~\ref{padicL}, theorem~\ref{main1}, in the case $D^+=0$, is equivalent to the following statement.
\begin{prop}\label{main1.1}
We have $\eta=1$.
\end{prop}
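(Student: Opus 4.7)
The plan is to use Proposition~\ref{crux} together with the non-triangulinity of $D$ implied by $D^+=\{0\}$. By Proposition~\ref{mult}(a) the assignment $\eta'\mapsto m_{\eta'}$ is multiplicative, so
\[
H:=\bigl\{\eta':\qpet\to\OO^\times\text{ unitary}\ :\ m_{\eta'}(\mathcal{C}^\alpha)\subseteq \mathcal{C}^{\alpha\eta'(p)}\text{ for all }\alpha\in\OO^\times\bigr\}
\]
is a subgroup of the rigid analytic character variety $\widehat{\qpet}$. It is Zariski closed because $\eta'\mapsto m_{\eta'}$ varies analytically and the $\mathcal{C}^\alpha$ depend only on $D$. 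Proposition~\ref{crux} gives $\eta\in H$, so it suffices to show $H=\{1\}$.

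Suppose $H\neq\{1\}$. The locally constant characters form a Zariski dense subgroup of $\widehat{\qpet}$ (finite-order characters of $1+p\zp$ are Zariski dense in the one-dimensional weight space factor), and any nontrivial Zariski closed subgroup of $\widehat{\qpet}$ therefore contains a nontrivial locally constant character $\eta_0$. In most cases one can show directly that $\eta$ is locally constant using the formulas of \cite{Annalen} for the infinitesimal action of $G$ on the locally analytic vectors of $\Pi_\delta(D)$, sidestepping this Zariski detour; either way, we may assume $\eta_0\in H$ is locally constant and nontrivial and aim for a contradiction.

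Let $\zeta=\eta_0(p)$ and let $p^n$ be the conductor of $\eta_0|_{\zpet}$. By Proposition~\ref{mult}(e),
\[
m_{\eta_0}|_{D\boxtimes\zpet}=\sum_{i\in(\ZZ/p^n\ZZ)^\times}\eta_0(i)\,\mathrm{Res}_{i+p^n\zp}.
\]
The identity $m_{\eta_0}(\mathcal{C}^\alpha)=\mathcal{C}^{\alpha\zeta}$ lifts through the isomorphism \eqref{Iwasawa}, which realizes $\mathcal{C}^\alpha$ as $\mathrm{Res}_{\zpet}((\check{\Pi}^*)^{\matrice{p}{0}{0}{1}=\alpha^{-1}})$, to a relation on eigenvectors inside $\check{\Pi}^*\subset D\boxtimes_\delta\p1$. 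Since $D^+=\{0\}$, $D$ is not trianguline, hence the restriction $\mathrm{Res}_U:\check{\Pi}^*\to D\boxtimes_\delta U$ is injective for every non-empty compact open $U\subset\p1$. Varying $\alpha$ and $U$, this injectivity converts the abstract identities $m_{\eta_0}(\mathcal{C}^\alpha)=\mathcal{C}^{\alpha\zeta}$ into equalities of honest eigenspaces in $\check{\Pi}^*$ which force $\zeta=1$ and $\eta_0|_{\zpet}=1$, i.e.\ $\eta_0=1$, contradicting the nontriviality of $\eta_0$.

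The main obstacle is the final step: extracting rigidity on $\check{\Pi}^*$ from the index shifts $\alpha\mapsto\alpha\zeta$ requires a careful combination of the $\psi$-module structure with the injectivity of $\mathrm{Res}_U$. The assumption $D^+=\{0\}$ (equivalently, $D$ not trianguline) is what makes this injectivity available, and its failure in the trianguline case is precisely why that case had to be handled by completely different means.
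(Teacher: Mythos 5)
Your strategy is the paper's strategy: form the set $H$ of characters preserving the system $\{\mathcal{C}^\alpha\}$ in the appropriate sense, show $H$ is a Zariski closed subgroup of the character variety, rule out nontrivial locally constant members of $H$ using the injectivity of restriction on $\check{\Pi}^*$ (which is where $D^+=\{0\}$ enters, via proposition~\ref{inj}), and deduce $H=\{1\}$. So you've located the right ideas and the right pressure points. But two of the steps you dispatch in a sentence are the technical core of the paper's argument, and the justifications you offer for them do not hold up.

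First, the Zariski closedness of $H$. You say it holds ``because $\eta'\mapsto m_{\eta'}$ varies analytically and the $\mathcal{C}^\alpha$ depend only on $D$.'' This is not a valid reason: the defining condition $m_{\eta'}(\mathcal{C}^\alpha)\subseteq\mathcal{C}^{\alpha\eta'(p)}$ also involves the target $\mathcal{C}^{\alpha\eta'(p)}$, which \emph{moves} with $\eta'$. The spaces $\mathcal{C}^\beta=(1-\beta\varphi)D^{\psi=\beta}$ are $\psi$-eigenspaces in an infinite-dimensional object, and there is no a priori reason for $\beta\mapsto\mathcal{C}^\beta$ to vary analytically. The paper makes this work by constructing the universal unramified twist $D_{\rm un}$ over $\OO[[X]]$, proving the nontrivial surjectivity statement $D_{\rm un}^{\psi=1}\twoheadrightarrow D^{\psi=\alpha}$ (proposition~\ref{surj}, which uses absolute irreducibility of $D$ in an essential way via Galois cohomology), and then combining lemma~\ref{calpha} (orthogonality description of $\mathcal{C}^\alpha$), lemma~\ref{operators} (Mahler-type expansion of $m_\eta$), and the specialization maps to exhibit the defining conditions of $H_\alpha$ as vanishing of elements of $\OO[[X,Y]]$. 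That entire package is proposition~\ref{subgroup}; without it the Zariski closure claim has no content.

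Second, the claim that a nontrivial Zariski closed subgroup of the character variety must contain a nontrivial locally constant (equivalently, finite-order) character. You justify this by density of finite-order characters in the weight space, but Zariski density of a subset does not imply that every nontrivial Zariski closed \emph{subgroup} meets it. The paper's argument (proposition~\ref{Zariski} and corollary~\ref{torsion}) classifies the Zariski closure of a cyclic group $\{(a^n,b^n)\}$ in $(1+\mathfrak{m}_L)^2$, and in the hardest case (where $\log a,\log b$ are $\qp$-linearly independent) this relies on a genuine transcendence statement, proposition~\ref{algebraic}: $(1+T)^s$ is transcendental over $\mathrm{Frac}(\mathcal{E}^+)$ for $s\in\OO\setminus\zp$. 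This is not a formality. Relatedly, the paper's closure result is stated for $\hat{\mathcal{T}}^0(L)\cong(1+\mathfrak{m}_L)\times(1+\mathfrak{m}_L)$, i.e. characters trivial on roots of unity; you work over the full $\widehat{\qpet}$ without addressing the reduction. This is what forces the two-step conclusion in the paper: $H=\{1\}$ gives that \emph{some power} of $\eta$ is trivial, hence $\eta$ is of finite order and therefore locally constant, and only then does corollary~\ref{trivial} (applied to $\eta$ itself) give $\eta=1$.

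Your last paragraph correctly flags the rigidity step as a gap, and that one is close to the paper's corollary~\ref{trivial}: using proposition~\ref{mult}(e) to write $m_{\eta_0}$ as a finite sum of restrictions and proposition~\ref{inj}(a) to conclude $\eta_0(i)\tilde z=\tilde y$ for all $i$ is a short and correct argument once you have it. But flagging the Zariski closedness and the torsion-in-closed-subgroup step as if they were routine hides the fact that those two are precisely where the paper does its real work.
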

\begin{proof}       
     Let $\mu(\qp)$ be the set of roots of unity in $\qp$ and let 
       $$\hat{\mathcal{T}}^0(L)={\rm Hom}_{\rm cont}(\qpet/\mu(\qp), 1+{\mathfrak m}_L)$$ 
be the set of continuous characters $\chi:\qpet\to 1+{\mathfrak m}_L$ trivial on  $\mu(\qp)$, and let 
$$H=\{\chi\in \hat{\mathcal{T}}^0(L)|
      m_{\chi}(\mathcal{C}^{\alpha})=\mathcal{C}^{\alpha\chi(p)} \quad \forall  \alpha\in 1+{\mathfrak m}_L\}.$$
       Proposition~\ref{subgroup} below shows that $H$ is 
    a Zariski closed subgroup of $\hat{\mathcal{T}}^0(L)$ and it follows from 
     corollary~\ref{torsion} 
    that $H$ is either trivial or it contains a nontrivial character of finite order (this may require replacing $L$ by a finite extension, which we are allowed to do). 
    We haven't used so far the hypothesis $D^+=\{0\}$, but only the fact that $D$ is absolutely irreducible of dimension $\geq 2$. When $D^+=\{0\}$, we prove (corollary~\ref{trivial}) that $H$ cannot contain nontrivial locally constant characters. We conclude that $H=\{1\}$, which implies that 
    $\eta$ is of finite order (since any power of $\eta$ which belongs to $\hat{\mathcal{T}}^0(L)$ actually belongs to $H$ by proposition~\ref{crux}). We
    conclude that $\eta=1$ using again corollary~\ref{trivial}. 
\end{proof}     
             
\begin{remark}\label{Sen}  Assume that the Sen operator on $D$ is not scalar\footnote{By Sen's theorem, this is equivalent to saying that 
inertia does not have finite image on the Galois representation associated to $D$.}. Proposition~\ref{scalar} shows that 
  $\eta$ is locally constant, and proposition~\ref{crux} and corollary~\ref{trivial}
  yield directly the desired result $\eta=1$. The key proposition~\ref{scalar} does not work if the Sen operator is scalar,
 which explains the more indirect approach presented above. 
   \end{remark}

  \subsubsection{Consequences of theorem~\ref{main1}} 
   
    Before embarking on the proof of theorem~\ref{main1}, we 
 give a certain number of consequences of theorems~\ref{I} and~\ref{main1}. 

 If $D\in \Phi\Gamma^{\rm et}(\mathcal{E})$, we let $\overline{D}^{\rm ss}$ be the semi-simplification of 
      $D_0\otimes_{\O} k$, where $D_0$ is any $\oe$-lattice in $D$ which is stable under $\varphi$ and $\Gamma$.
      
      The functor $\Pi\mapsto\mathbf{V}(\Pi)$
has integral and torsion versions, and if $\Theta$ is an open, bounded and $G$-invariant lattice in $\Pi\in {\rm Rep}_L(G)$, then 
$\mathbf{V}(\Theta)=\varprojlim_{n} \mathbf{V}(\Theta/\varpi^n)$ and $\mathbf{V}(\Theta)/\varpi^n\cong \mathbf{V}(\Theta/\varpi^n)$ for all $n\geq 1$. 
 The following result follows from \cite[th. 0.10]{Cbigone} (see the introduction
for the definition of $\pi\{\chi_1,\chi_2\}$). 

\begin{lem}\label{vfortors}
 If $\pi$ is either supersingular or $\pi\{\chi_1,\chi_2\}$ for some smooth characters $\chi_1,\chi_2: \qpet\to k^{\times}$, then 
 $\dim_k \mathbf{V}(\pi)=2$. Moreover, if $\pi$ is an irreducible subrepresentation of $\pi\{\chi_1,\chi_2\}$, then 
 $\dim_k \mathbf{V}(\pi)\leq 1$.  
\end{lem}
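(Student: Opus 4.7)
The plan is to invoke the explicit computation of $\mathbf{V}$ on absolutely irreducible smooth mod $p$ representations of $G$ provided by \cite[th.~0.10]{Cbigone}, combined with the exactness of the torsion version of $\mathbf{V}$ and the explicit description of the Jordan--H\"older factors of $\pi\{\chi_1,\chi_2\}$ given by lemma~\ref{ss}.

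First I would record the following values of $\mathbf{V}$: for any smooth character $\chi: \qpet \to k^{\times}$ one has $\mathbf{V}(\chi\circ\det)=0$ and $\dim_k \mathbf{V}(\Sp\otimes \chi\circ\det)=1$; if $\chi_1\chi_2^{-1}\ne \Eins,\omega^{\pm 1}$ then $(\Indu{B}{G}{\chi_1\otimes\chi_2\omega^{-1}})_{\sm}$ is irreducible and $\mathbf{V}$ of it is $1$-dimensional; and if $\pi$ is absolutely irreducible supersingular, then $\mathbf{V}(\pi)$ is a $2$-dimensional irreducible $k$-representation of $\gal$. These are all consequences of \cite[th.~0.10]{Cbigone} together with the mod $p$ Langlands correspondence for $G$.

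The first assertion then follows by a case check. The supersingular case is immediate from the list above. For $\pi\{\chi_1,\chi_2\}$, I would go through the five cases of lemma~\ref{ss} and compute $\dim_k\mathbf{V}$ using exactness of $\mathbf{V}$: in case (i) the two irreducible principal series each contribute $1$; in case (ii) the two copies of the irreducible principal series each contribute $1$; in cases (iii)--(v) a direct enumeration shows that the Jordan--H\"older factors split as characters (contributing $0$) and Steinberg twists (contributing $1$), plus in case (iii) a further irreducible principal series (contributing $1$), and in every case the dimensions total exactly $2$.

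For the second assertion, since $\pi\{\chi_1,\chi_2\}$ is semisimple by construction, any irreducible subrepresentation $\pi$ is a direct summand and hence coincides with one of the Jordan--H\"older factors listed in lemma~\ref{ss}. By the values of $\mathbf{V}$ recorded above, each such factor satisfies $\dim_k \mathbf{V}(\pi)\leq 1$. The only real work is the bookkeeping in cases (iii)--(v); once the values of $\mathbf{V}$ on the characters and Steinberg twists have been recalled, no serious obstacle arises, and the conclusion is immediate.
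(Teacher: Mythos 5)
Your proof is correct and takes essentially the same approach as the paper: the paper simply states that the lemma ``follows from \cite[th.~0.10]{Cbigone}'', and your argument is precisely the bookkeeping that this citation compresses — recording the values of $\mathbf{V}$ on supersingulars, irreducible principal series, Steinberg twists, and characters, and then running through the five cases of lemma~\ref{ss} using exactness and the semisimplicity of $\pi\{\chi_1,\chi_2\}$.
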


  We will also need to following compatibility between the $p$-adic and mod $p$ Langlands correspondences. 
  This was first proved (in a slightly different form) in \cite{Becomp}. We will use the following version, taken from
   \cite[prop.~III.55, rem.~III.56]{CD}. 
  
  \begin{prop}\label{compat}
   If $D\in \Phi\Gamma^{\rm et}(\mathcal{E})$ is $2$-dimensional and if 
   $\delta=\delta_D$, there is an isomorphism
   $\overline{\Pi_{\delta}(D)}^{\rm ss}\cong \Pi_{\delta}(\overline{D}^{\rm ss})$
   and (possibly after replacing $L$ with its quadratic unramified extension) this representation is either
   absolutely irreducible supersingular or isomorphic to $\pi\{\chi_1,\chi_2\}$ for some
   smooth characters $\chi_1,\chi_2: \qpet\to k^{\times}$. 
   
  \end{prop}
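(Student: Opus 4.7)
The strategy is to reduce the first assertion to the integral and torsion version of Proposition~\ref{compatible} alluded to in the sentence immediately following it, and then to compute $\Pi_\delta(\overline D^{\rm ss})$ explicitly in each case.

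Fix a $\varphi,\Gamma$-stable $\oe$-lattice $D_0\subset D$; then $\Pi_\delta(D_0)$ is an open, bounded, $G$-invariant lattice in $\Pi_\delta(D)$ and hence
$$\overline{\Pi_\delta(D)}^{\rm ss}=(\Pi_\delta(D_0)\otimes_{\O} k)^{\rm ss}.$$
Applying the integral/torsion version of $\Pi_\delta(-)$ to the short exact sequence $0\to D_0\xrightarrow{\varpi} D_0\to D_0/\varpi\to 0$ yields a canonical identification $\Pi_\delta(D_0)/\varpi\cong\Pi_\delta(D_0/\varpi)$. Since the mod~$p$ analogue of $\Pi_\delta(-)$ is exact on $(\varphi,\Gamma)$-modules (this is built into the sheaf construction $U\mapsto D_0\boxtimes_\delta U$ and the exactness statement in Proposition~\ref{compatible}(ii)), a Jordan--H\"older filtration of $D_0/\varpi$ gives a filtration of $\Pi_\delta(D_0/\varpi)$ whose graded pieces are $\Pi_\delta$ applied to the Jordan--H\"older constituents of $\overline D^{\rm ss}$. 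Passing to semi-simplification, this produces the isomorphism $\overline{\Pi_\delta(D)}^{\rm ss}\cong \Pi_\delta(\overline D^{\rm ss})$.

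For the second assertion I would split into two cases according to the structure of $\overline D^{\rm ss}$. If $\overline D^{\rm ss}$ is absolutely irreducible, then by the mod~$p$ analogue of Proposition~\ref{compatible}(iv), $\Pi_\delta(\overline D^{\rm ss})$ is irreducible, and the compatibility between Colmez's construction and Breuil's mod~$p$ Langlands correspondence, as established in \cite{Becomp} and reformulated in \cite[Ch.~III]{CD}, identifies the image of $\Pi_\delta$ on irreducible $2$-dimensional mod~$p$ Galois representations with the supersingular smooth $k$-representations of $G$. If $\overline D^{\rm ss}$ is reducible, then (possibly after replacing $L$ by its quadratic unramified extension so that $\overline D^{\rm ss}$ splits over $k$) it decomposes as a direct sum of two rank-one $(\varphi,\Gamma)$-modules attached to characters $\eta_1,\eta_2$ of $\gal$. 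Let $\chi_1,\chi_2:\qpet\to k^\times$ be the smooth characters matched with $\eta_1,\eta_2$ under local class field theory with the normalization forced by $\delta=\delta_D$. Unwinding the explicit description of $\Pi_\delta$ on rank-one $(\varphi,\Gamma)$-modules given in \cite{Cbigone}, together with the additivity used in the first part, identifies $\Pi_\delta(\overline D^{\rm ss})$ with the direct sum of the two smooth parabolic inductions in the definition of $\pi\{\chi_1,\chi_2\}$; the twist by $\omega^{-1}$ in that definition reflects precisely the $\varepsilon^{-1}$ appearing in $\delta_D$.

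The principal obstacle is the explicit identification in the second step, i.e.\ the translation between Colmez's analytic construction of $\Pi_\delta(-)$ in characteristic $p$ and Breuil's combinatorial mod~$p$ Langlands correspondence (supersingulars on one side, sums of smooth principal series on the other). Once this identification and the character normalizations are pinned down, as is done in \cite{Becomp,CD}, the formal devissage argument above transmits it uniformly to $\overline{\Pi_\delta(D)}^{\rm ss}$, and the quadratic unramified extension intervenes only to diagonalize an irreducible but not absolutely irreducible $\overline D^{\rm ss}$ in the reducible subcase.
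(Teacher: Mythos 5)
The paper does not give a proof of this proposition: it is imported verbatim, with the text immediately preceding it stating that it was first proved (in slightly different form) by Berger \cite{Becomp} and that the version used is \cite[prop.~III.55, rem.~III.56]{CD}. So your proposal is not comparable against a proof in the paper, only against those references.

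The high-level shape of your sketch — reduce to the torsion side via a lattice $D_0$, commute $\Pi_\delta$ with $\bmod\,\varpi$, devissage along a Jordan--H\"older filtration of $D_0/\varpi$, and identify $\Pi_\delta$ of the pieces explicitly — is in the spirit of the argument in those references. But the crucial step is asserted rather than proved, and the justification you give for it is not valid. You write that the ``mod $p$ analogue of $\Pi_\delta(-)$ is exact on $(\varphi,\Gamma)$-modules (this is built into the sheaf construction $U\mapsto D_0\boxtimes_\delta U$ and the exactness statement in Proposition~\ref{compatible}(ii)).'' This conflates two different things. The sheaf construction $D\mapsto D\boxtimes_\delta\p1$ is indeed exact, but $\Pi_\delta(D)$ is the quotient of $D\boxtimes_\delta\p1$ by the \emph{specific subspace} $\Pi_{\delta^{-1}}(\check D)^*$, and Proposition~\ref{compatible}(ii) is a short exact sequence \emph{for a fixed $D$}, not a statement that this subspace varies exactly in $D$. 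Likewise, the identity $\Pi_\delta(D_0)/\varpi\cong\Pi_\delta(D_0/\varpi)$ requires knowing that $\Pi_{\delta^{-1}}(\check D_0)^*$, as a subspace of $D_0\boxtimes_\delta\p1$, is compatible with reduction modulo $\varpi$, which is circular if one tries to prove it by the same identification on the torsion side. This compatibility with lattices, with $\varpi$-torsion, and with devissage is precisely the nontrivial content of Berger's comparison theorem; it cannot be obtained from the formal exactness of the sheaf $D\boxtimes_\delta\p1$.

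There is a second, smaller gap in the reducible case. You take the Jordan--H\"older constituents of $D_0/\varpi$ (rank-one $(\varphi,\Gamma)$-modules over $k$) and apply $\Pi_\delta$ to each. But $\Pi_\delta$ is introduced in the paper on $\mathcal{MF}(\delta^{-1})$, the essential image of $\mathbf D$, and the clean exact sequence in Proposition~\ref{compatible}(ii) is for the two-dimensional situation; the rank-one story (parabolic inductions) requires a separate, more delicate analysis — the characteristic~$0$ analogue being Proposition~\ref{split}, which already needed a distinct statement. Asserting that the graded pieces of the filtration of $\Pi_\delta(D_0/\varpi)$ are ``$\Pi_\delta$ of the constituents'' presupposes both the exactness and the rank-one description, neither of which is established by the citations you invoke. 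Your last paragraph does say that the ``principal obstacle'' is the identification in \cite{Becomp,CD}, which is right, but the body of the sketch mischaracterizes the surrounding devissage as formal when it is not: the commutation with $\bmod\,\varpi$ and the behavior on reducible $(\varphi,\Gamma)$-modules are themselves part of what \cite{Becomp} and \cite{CD} prove, not a frame around it.
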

 
     \begin{prop}\label{ordinary} 
     Let $\Pi\in {\rm Rep}_L(\delta)$ be absolutely irreducible and let $D=\mathbf{D}(\Pi)$. Then $\dim_{\mathcal{E}} D\leq 2$ and $D$ is 
     absolutely irreducible.
     Moreover, $\dim D=2$ if and only if $\Pi$ is non-ordinary. 
        \end{prop}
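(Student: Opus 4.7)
The plan is first to establish the dimension bound $\dim_{\mathcal E} D \leq 2$ by a Jordan--H\"older argument on the mod-$p$ reduction, then to handle the equivalence between $\dim D=2$ and non-ordinariness using the explicit computation of $\mathbf{V}$ on parabolic inductions together with proposition~\ref{compat}, and finally to prove the absolute irreducibility of $D$ by identifying $\Pi$ with $\Pi_\delta(\check{D})$ using theorem~\ref{main1} and proposition~\ref{compatible}(iv).

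For the dimension bound, fix an open bounded $G$-invariant lattice $\Theta$ in $\Pi$. The integral and torsion variants of $\mathbf{V}$ satisfy $\mathbf{V}(\Theta)/\varpi^n \cong \mathbf{V}(\Theta/\varpi^n)$ and are exact, so
\[
\dim_{\mathcal E} D = \dim_L \mathbf{V}(\Pi) = \dim_k \mathbf{V}(\overline{\Pi}^{\rm ss}).
\]
By theorem~\ref{I}, after possibly replacing $L$ by a quadratic unramified extension, $\overline{\Pi}^{\rm ss}$ is either absolutely irreducible supersingular or a subrepresentation of some $\pi\{\chi_1,\chi_2\}$. Lemma~\ref{vfortors} then gives the bound $\dim_k \mathbf{V}(\overline{\Pi}^{\rm ss}) \leq 2$: direct in the supersingular case, and in the principal series case by applying $\mathbf{V}$ to $0\to \overline{\Pi}^{\rm ss}\to \pi\{\chi_1,\chi_2\}\to Q\to 0$ and using $\dim_k \mathbf{V}(\pi\{\chi_1,\chi_2\})=2$.

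For the equivalence, the implication ``$\Pi$ ordinary $\Rightarrow \dim D\leq 1$'' comes from the explicit computation of $\mathbf{V}$ on unitary parabolic inductions in \cite[ch.~IV]{Cbigone}. For the converse, suppose $\dim D \leq 1$. Lemma~\ref{vfortors} forces $\overline{\Pi}^{\rm ss}$ to be neither supersingular nor the full $\pi\{\chi_1,\chi_2\}$, so it is a strict subrepresentation of some $\pi\{\chi_1,\chi_2\}$ whose Jordan--H\"older factors are characters or twists of Steinberg. Combined with proposition~\ref{compat} (which describes $\overline{\Pi_\delta(D')}^{\rm ss}$ when $\dim D'=2$), this rules out $\Pi\cong \Pi_\delta(D')$ for any irreducible $2$-dimensional $(\varphi,\Gamma)$-module $D'$, and the classification of such $\Pi$ identifies it as a subquotient of a unitary principal series, hence ordinary.

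For the absolute irreducibility of $D$, only the case $\dim D=2$ is non-trivial. The strategy is to identify $\Pi$ with $\Pi_\delta(\check{D})$: theorem~\ref{main1}, applied to an absolutely irreducible $2$-dimensional subquotient of $D$ after a finite extension (should $D$ itself fail to be absolutely irreducible), pins down the central character to $\delta_D^{-1}$ and ensures $\check{D}\in \mathcal{MF}(\delta^{-1})$. Proposition~\ref{compatible}(iii) then gives $\mathbf{D}(\Pi_\delta(\check{D}))\cong D\cong \mathbf{D}(\Pi)$; a comparison via the exact sequence of proposition~\ref{compatible}(ii), together with absolute irreducibility of $\Pi$, yields $\Pi\cong \Pi_\delta(\check{D})$, and proposition~\ref{compatible}(iv) then transfers absolute irreducibility from $\Pi$ to $\check{D}$ and hence to $D$. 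The main obstacle will be this final identification: one must leverage theorem~\ref{main1} to control the central character \emph{before} knowing $D$ is absolutely irreducible, and then upgrade the abstract isomorphism of $(\varphi,\Gamma)$-modules into a $G$-equivariant isomorphism of Banach representations.
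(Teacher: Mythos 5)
Your proof of $\dim_{\mathcal E}D\leq 2$ matches the paper's: exactness of $\mathbf{D}$ gives $\overline{D}^{\rm ss}\cong\mathbf{D}(\overline{\Pi}^{\rm ss})$, and theorem~\ref{I} plus lemma~\ref{vfortors} finish it. The implication ``$\Pi$ ordinary $\Rightarrow\dim D\leq 1$'' also goes through as you say. After that, however, the proposal diverges from the paper and develops genuine gaps.

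For the reverse implication you attempt to prove the contrapositive ``$\dim D\leq 1\Rightarrow\Pi$ ordinary'' directly, via proposition~\ref{compat} and ``the classification of such $\Pi$.'' No such classification is stated or cited, and it is precisely the nontrivial point. The paper instead proves the other contrapositive (``$\Pi$ non-ordinary $\Rightarrow D$ absolutely irreducible and $2$-dimensional'') in one stroke, by invoking \cite[cor.~III.47]{CD}, which is a reconstruction theorem: for $\Pi$ absolutely irreducible non-ordinary with central character $\delta$, one has $\Pi\cong\Pi_\delta(\check D)$ with $D=\mathbf{D}(\Pi)$ absolutely irreducible $2$-dimensional. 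That single black box delivers both the dimension statement and the absolute irreducibility; you are, in effect, trying to reprove that corollary on the spot.

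Two specific problems in your absolute irreducibility argument. First, the parenthetical ``(should $D$ itself fail to be absolutely irreducible)'' does not make sense: if $D$ is $2$-dimensional but not absolutely irreducible, then after a finite extension it decomposes into rank-$1$ pieces, so there is no $2$-dimensional absolutely irreducible subquotient to which theorem~\ref{main1} could be applied. Second, theorem~\ref{main1} is not needed to ``pin down'' the central character here: by hypothesis $\Pi\in\mathrm{Rep}_L(\delta)$, so $D=\mathbf{D}(\Pi)\in\mathcal{MF}(\delta)$, and proposition~\ref{compatible}(ii) already gives $\check D\in\mathcal{MF}(\delta^{-1})$ and defines $\Pi_\delta(\check D)\in\mathrm{Rep}_L(\delta)$ with $\mathbf{D}(\Pi_\delta(\check D))\cong D$. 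The entire content that remains is the faithfulness of $\mathbf{D}$ on absolutely irreducible non-ordinary objects with fixed central character, i.e.\ the isomorphism $\Pi\cong\Pi_\delta(\check D)$. Writing that it ``yields'' from ``a comparison via the exact sequence'' is exactly where the argument is missing; this is the statement \cite[cor.~III.47]{CD} supplies and the paper deliberately does not reprove. (It is also worth noting that proposition~\ref{ordinary} is proved in the paper \emph{without} theorem~\ref{main1}, so that corollary~\ref{determinant} can then combine both; while your use of theorem~\ref{main1} would not be circular, it is not the mechanism that closes the gap here.)
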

     
     \begin{proof}
      The functor $\mathbf{D}$ being exact, there is a natural isomorphism 
    $\overline{D}^{\rm ss}\cong \mathbf{D}(\overline{\Pi}^{\rm ss})$. Combined with theorem~\ref{I} and 
   lemma~\ref{vfortors}, this yields $\dim D\leq 2$. 
    
     Next, if $\Pi$ is ordinary, then 
    $\overline {\Pi}^{\rm ss}$ is a subquotient of a smooth parabolic induction and 
    using lemma~\ref{vfortors} again we conclude that $\dim D\leq 1$. In particular, $D$ is 
    absolutely irreducible.
    If $\Pi$ is not ordinary, we deduce from \cite[cor.~III.47]{CD} and the first paragraph that $D$ is absolutely irreducible and $2$-dimensional. The result follows. 
       \end{proof}

     \begin{cor}\label{determinant}
     If $\Pi\in {\rm Rep}_L(\delta)$ is absolutely irreducible non-ordinary, then 
     $\delta=\delta_{\mathbf{D}(\Pi)}$. Thus $\det \mathbf{V}(\Pi)=\varepsilon\delta$. 
     \end{cor}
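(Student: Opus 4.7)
The plan is to read off the corollary from Theorem~\ref{main1}, with Proposition~\ref{ordinary} as a preparatory step. By Proposition~\ref{ordinary}, $D:=\mathbf{D}(\Pi)$ is absolutely irreducible of dimension $2$, which is exactly the input required by Theorem~\ref{main1}. The fact that $\Pi\in{\rm Rep}_L(\delta)$ tautologically places $D$ in $\mathcal{MF}(\delta)$, defined as the essential image of $\mathbf{D}$ restricted to ${\rm Rep}_L(\delta)$. Passing to the Cartier dual via part~(ii) of Proposition~\ref{compatible}, applied with $\delta^{-1}$ in place of $\delta$, then upgrades this to $\check D\in\mathcal{MF}(\delta^{-1})$, which puts us squarely in the hypothesis of Theorem~\ref{main1} with module $\check D$ and character $\delta$.

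Theorem~\ref{main1} now identifies $\delta$ uniquely in terms of $D$: it forces $\delta$ to agree with the determinant character $\delta_{\mathbf{D}(\Pi)}$, once one unwinds the Cartier duality relation $\det\check D=\varepsilon^{2}(\det D)^{-1}$ together with the definition $\delta_D=\varepsilon^{-1}\det D$. This yields the first assertion. For the formula $\det\mathbf{V}(\Pi)=\varepsilon\delta$, combine $\delta=\delta_{\mathbf{D}(\Pi)}$ with the definition $\delta_{\mathbf{D}(\Pi)}=\varepsilon^{-1}\det\mathbf{D}(\Pi)$ to obtain $\det\mathbf{D}(\Pi)=\varepsilon\delta$, and note that the passage from $\mathbf{D}$ to $\mathbf{V}$ (through Fontaine's equivalence and Cartier duality) is arranged so that $\det\mathbf{V}(\Pi)$ matches $\det\mathbf{D}(\Pi)$ as a character of $\qpet$ under local class field theory.

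Granting Theorem~\ref{main1}, all of the above amounts to bookkeeping with the definitions of $\mathbf{D}$, $\mathbf{V}$, $\delta_D$ and the subcategories $\mathcal{MF}(\delta)$. The genuine difficulty therefore lies entirely in the proof of Theorem~\ref{main1}, which is the technical heart of this chapter: it splits according to whether $D$ is trianguline (handled via locally analytic principal series and the Berger--Breuil universal completions) or satisfies $D^+=\{0\}$ (handled via the multiplication operators $m_\eta$ acting on $\mathcal{C}^\alpha=(1-\alpha\varphi)D^{\psi=\alpha}$ together with the Zariski-closedness of the subgroup $H\subset\hat{\mathcal{T}}^0(L)$). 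The present corollary is then an immediate consequence.
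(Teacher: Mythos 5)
Your overall route is the one the paper intends: apply Proposition~\ref{ordinary} to see that $D:=\mathbf{D}(\Pi)$ is absolutely irreducible and $2$-dimensional, observe that $\Pi\in{\rm Rep}_L(\delta)$ places $D$ in $\mathcal{MF}(\delta)$ (equivalently $\check D\in\mathcal{MF}(\delta^{-1})$ by Proposition~\ref{compatible}(ii)), and then invoke Theorem~\ref{main1}. The paper's proof is indeed a two-line citation of these results, and the real content lies in Theorem~\ref{main1}, as you say.

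However, the bookkeeping you supply to finish off contains a genuine error. You assert that ``the passage from $\mathbf{D}$ to $\mathbf{V}$ (through Fontaine's equivalence and Cartier duality) is arranged so that $\det\mathbf{V}(\Pi)$ matches $\det\mathbf{D}(\Pi)$.'' This is not so. Since $\mathbf{V}$ is defined as the composite of $\mathbf{D}$ with Fontaine's equivalence and then Cartier duality $V\mapsto\check V=V^*\otimes\varepsilon$, for a $2$-dimensional $D$ one has
$\det\mathbf{V}(\Pi)=\det\bigl(\check{\phantom{D}}\!\mathbf{D}(\Pi)\bigr)=\varepsilon^{2}\bigl(\det\mathbf{D}(\Pi)\bigr)^{-1}$,
which differs from $\det\mathbf{D}(\Pi)$ by an inverse and an $\varepsilon^2$-twist. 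Relatedly, if one applies Theorem~\ref{main1} to $\check D\in\mathcal{MF}(\delta^{-1})$, the output is $\delta=\delta_{\check D}$; and the Cartier-duality relation you quote, $\det\check D=\varepsilon^{2}(\det D)^{-1}$, gives $\delta_{\check D}=\varepsilon^{-1}\det\check D=\varepsilon(\det D)^{-1}=\delta_D^{-1}$, so the unwinding actually yields $\delta=\delta_D^{-1}$, not $\delta=\delta_D$ as you claim. (You assert the opposite without showing the computation, and the computation does not support it.) These two sign slips happen to cancel, so you still land on the correct and more important formula $\det\mathbf{V}(\Pi)=\varepsilon\delta$ (which is the version quoted in the introduction and is what is actually used later); but the derivation as written is not valid, and the intermediate assertion it purports to justify should read $\delta=\delta_{\mathbf{D}(\Pi)}^{-1}$, not $\delta=\delta_{\mathbf{D}(\Pi)}$. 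Fix the treatment of Cartier duality and the rest of your argument goes through.
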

    
    \begin{proof}
    This follows directly from proposition~\ref{ordinary} and theorem~\ref{main1}.
    \end{proof}                  
                            
         \begin{thm}\label{ssing}
          Let $\Pi\in {\rm Rep}_L(\delta)$ be absolutely irreducible and let $D=\mathbf{D}(\Pi)$. The following assertions are equivalent 
          
          {\rm (i)} $\Pi$ is non-ordinary. 
                 
        {\rm (ii)} $\dim_{\mathcal{E}} D=2$.
        
        {\rm (iii)} After possibly
replacing $L$ by its quadratic unramified extension, $\overline{\Pi}^{\rm ss}$ is either absolutely irreducible supersingular or isomorphic to some $\pi\{\chi_1,\chi_2\}$. 
       
          If these assertions hold, then there is a canonical isomorphism $\Pi\cong \Pi_{\delta}(\check{D})$. 
    
          \end{thm}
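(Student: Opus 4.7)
The equivalence (i)$\iff$(ii) is exactly proposition~\ref{ordinary}, so what remains is to connect (iii) to (i)--(ii) and to establish the ``moreover'' isomorphism. My plan is to run the cycle (ii)$\Rightarrow$[moreover]$\Rightarrow$(iii) and (iii)$\Rightarrow$(ii), drawing on the rigidity results from the previous section as a black box.

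Assume (ii), so $\dim_{\mathcal{E}}D=2$. Proposition~\ref{ordinary} then gives that $D$ is absolutely irreducible, and corollary~\ref{determinant} forces $\delta=\delta_D$. Proposition~\ref{compatible}(ii) places $\check{D}$ in $\mathcal{MF}(\delta^{-1})$; parts~(iii) and~(iv) produce $\Pi_{\delta}(\check{D})\in{\rm Rep}_L(\delta)$, irreducible with $\mathbf{D}(\Pi_{\delta}(\check{D}))\cong D$, and a base-change argument upgrades irreducibility to absolute irreducibility. Proposition~\ref{ordinary} applied in reverse then shows that $\Pi_{\delta}(\check D)$ is also non-ordinary. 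At this point $\Pi$ and $\Pi_{\delta}(\check{D})$ are both absolutely irreducible non-ordinary objects of ${\rm Rep}_L(\delta)$ with the same image under $\mathbf{V}$, so the injectivity statement theorem~\ref{II}(i) yields the canonical isomorphism $\Pi\cong\Pi_{\delta}(\check{D})$. To reach (iii) one applies proposition~\ref{compat} to $\check{D}$: since that proposition is stated with $\delta=\delta_{D'}$, one first absorbs any discrepancy between $\delta$ and $\delta_{\check{D}}$ into a character twist via proposition~\ref{compatible}(i), using that both supersingularity and the shape $\pi\{\chi_1,\chi_2\}$ are preserved by twisting by a character of $\det$.

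The reverse direction (iii)$\Rightarrow$(ii) is purely dimensional. By exactness of the Montreal functor, together with the identities $\mathbf{V}(\Theta)/\varpi\cong\mathbf{V}(\Theta/\varpi)$ recalled at the start of this section, one has $\dim_{\mathcal{E}}D=\dim_L\mathbf{V}(\Pi)=\dim_k\mathbf{V}(\overline{\Pi}^{\rm ss})$. Under hypothesis (iii), lemma~\ref{vfortors} forces the rightmost quantity to equal $2$, so $\dim_{\mathcal{E}}D=2$, which is (ii). The most delicate step is the preparatory verification that $\Pi_{\delta}(\check{D})$ is absolutely irreducible and non-ordinary before invoking theorem~\ref{II}(i); once that is in hand, the twist bookkeeping needed to apply proposition~\ref{compat} to $\check D$ is a minor formal matter with no essential content.
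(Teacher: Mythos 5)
Your equivalence (i)$\iff$(ii) via proposition~\ref{ordinary} and the direction (iii)$\Rightarrow$(ii) via exactness of $\mathbf{D}$ and lemma~\ref{vfortors} are exactly the paper's argument. The problem is the step where you deduce the ``moreover'' isomorphism $\Pi\cong\Pi_{\delta}(\check D)$ by invoking theorem~\ref{II}(i): that is circular. In the paper, theorem~\ref{II}(i) is \emph{proved} by first establishing theorem~\ref{ssing} (precisely the ``moreover'' clause) together with corollary~\ref{determinant}, and then applying them to $\Pi_1,\Pi_2$. So you cannot turn around and use theorem~\ref{II}(i) as the engine that produces the canonical isomorphism inside theorem~\ref{ssing}'s proof. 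The paper sidesteps this by appealing directly to an external ingredient, namely \cite[cor.~III.47]{CD}, which gives $\Pi\cong\Pi_{\delta}(\check D)$ canonically whenever $\Pi$ is absolutely irreducible and non-ordinary; with that in hand (iii) follows from proposition~\ref{compat}. Note also that once you have this uniqueness from \cite{CD} there is no need for the preparatory work you perform on $\Pi_{\delta}(\check D)$ (absolute irreducibility, non-ordinariness), nor for the twist bookkeeping around $\delta$ versus $\delta_{\check D}$: proposition~\ref{compat} as stated is already applicable, since the required hypothesis $\delta=\delta_D$ is supplied by corollary~\ref{determinant}. If you want to keep your route via theorem~\ref{II}(i), you would need to reprove theorem~\ref{II}(i) independently of theorem~\ref{ssing}, which essentially amounts to citing \cite[cor.~III.47]{CD} anyway.
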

                 
     \begin{proof}
(i) and (ii) are equivalent by proposition~\ref{ordinary}. Suppose that (iii) holds. Then 
     $\overline{D}^{\rm ss}\cong \mathbf{D}(\overline{\Pi}^{\rm ss})$ is $2$-dimensional by lemma~\ref{vfortors} and 
     so $\dim D=2$, that is (ii) holds. Finally, suppose that (i) holds. Then \cite[cor.~III.47]{CD} yields a canonical isomorphism
$\Pi\cong \Pi_{\delta}(\check{D})$ and we conclude that (iii) holds
using proposition~\ref{compat}.     \end{proof}

  \begin{thm}
   Let $\Pi_1,\Pi_2\in {\rm Rep}_L(G)$ be absolutely irreducible, non-ordinary. 
   
   {\rm (i)} If $\VV(\Pi_1)\cong \VV(\Pi_2)$, then $\Pi_1\cong \Pi_2$.
   
   {\rm (ii)} We have ${\rm Hom}_{L[P]}^{\rm cont}(\Pi_1,\Pi_2)={\rm Hom}_{L[G]}^{\rm cont}(\Pi_1,\Pi_2)$, where $P$ is the mirabolic subgroup of $G$.
  \end{thm}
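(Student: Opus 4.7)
The plan is to deduce both parts from Theorem~\ref{main1} (uniqueness of the compatible central character) together with the reconstruction of absolutely irreducible non-ordinary Banach representations from Galois data given by Theorem~\ref{ssing}. Both (i) and (ii) are at heart the statement that $\Pi$ can be recovered from $\mathbf{D}(\Pi)$ up to choosing a central character, and Theorem~\ref{main1} pins that choice down uniquely.

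For part~(i), I would set $D_i = \mathbf{D}(\Pi_i)$ and let $\delta_i$ be the central character of $\Pi_i$, which exists by Schur's lemma~\cite{DS}. Since $\Pi_i$ is absolutely irreducible and non-ordinary, Theorem~\ref{ssing} yields canonical isomorphisms $\Pi_i \cong \Pi_{\delta_i}(\check D_i)$, and $D_i$ is absolutely irreducible of dimension~$2$. Fontaine's equivalence together with Cartier duality (by which $\mathbf{V}$ and $\mathbf{D}$ are related) turns the hypothesis $\mathbf{V}(\Pi_1)\cong\mathbf{V}(\Pi_2)$ into $D_1\cong D_2$. Corollary~\ref{determinant} then forces $\delta_1 = \delta_{D_1} = \delta_{D_2} = \delta_2$, and combining everything, $\Pi_1 \cong \Pi_{\delta_1}(\check D_1)\cong \Pi_{\delta_2}(\check D_2)\cong \Pi_2$.

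For part~(ii), the inclusion $\Hom^{\rm cont}_{L[G]}\subset \Hom^{\rm cont}_{L[P]}$ is obvious, so let $f\colon \Pi_1\to\Pi_2$ be continuous and $P$-equivariant. Since the Montreal functor $\mathbf{D}$ is built entirely from the mirabolic action (\cite[Ch.~IV]{Cbigone}), $f$ induces a $(\varphi,\Gamma)$-module morphism $\mathbf{D}(f)\colon D_2\to D_1$. If $\mathbf{D}(f)\neq 0$, absolute irreducibility of the two-dimensional $D_i$ makes it an isomorphism, so $D_1\cong D_2$; Corollary~\ref{determinant} then yields $\delta_1=\delta_2$, so $f$ is automatically $Z$-equivariant, hence $B$-equivariant (since $B=Z\cdot P$). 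Invoking the analogous statement for the Borel (\cite{borel}, see also \cite[rem.~III.48]{CD}), $f$ is then $G$-equivariant. It remains to rule out $\mathbf{D}(f)=0$ with $f\neq 0$: I would use Proposition~\ref{compatible} and the explicit sheaf-theoretic embedding $\Pi_i^*\otimes \delta_i \hookrightarrow D_i\boxtimes_{\delta_i}\p1$ together with the identification of the $P^+$-eigenspace data recorded in Proposition~\ref{crux}, showing that $\mathbf{D}$ is faithful on nonzero continuous $P$-equivariant morphisms between our representations because the restriction to $\zp$ of this $G$-sheaf recovers $D$ as a $P^+$-module independently of $\delta$.

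The main obstacle is Theorem~\ref{main1} itself, whose proof (split into the trianguline and $D^+=\{0\}$ cases) has been prepared in the previous sections; granted it, the steps above are essentially a formal unwinding of the explicit reconstruction $\Pi\cong\Pi_\delta(\check D)$. The only other delicate point is the faithfulness claim $\mathbf{D}(f)\neq 0$ for nonzero continuous $P$-equivariant $f$ in the absolutely irreducible non-ordinary regime; without it the two cases in~(ii) do not fit together, and it must be extracted carefully from the sheaf-theoretic framework of \cite{Cbigone} and \cite{CD}.
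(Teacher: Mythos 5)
Part~(i) of your proposal matches the paper's argument essentially verbatim: set $D_i=\mathbf{D}(\Pi_i)$, use corollary~\ref{determinant} to identify $\delta_i=\delta_{D_i}$, and invoke theorem~\ref{ssing} to reconstruct $\Pi_i\cong\Pi_{\delta_i}(\check D_i)$. Nothing more to say there.

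For part~(ii), you split on whether $\mathbf{D}(f)$ is zero or an isomorphism, exactly as the paper does. In the isomorphism case your route differs from the paper's: you observe that $\delta_1=\delta_2$ upgrades the $P$-equivariant $f$ to a $B$-equivariant map (since $B=ZP$ and $Z$ is central), and then you invoke the Borel-subgroup result of~\cite{borel} (see~\cite[rem.~III.48]{CD}). The paper instead chases a commutative square built from theorem~\ref{ssing} and the $G$-equivariant map $\Pi_{\delta_1}(\check D_1)\to\Pi_{\delta_1}(\check D_2)$ obtained from $\mathbf{D}(f)^*$ by functoriality of $D\mapsto\Pi_\delta(D)$. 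Both are legitimate; the Borel route is what the introduction sketches, and it avoids having to verify the compatibility between the reconstruction $\Pi\cong\Pi_\delta(\check D)$ and the $P$-action, while the paper's diagram argument keeps everything inside the $\Pi_\delta$ machinery. Either way, the reduction to the central character via theorem~\ref{main1} is doing the real work.

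The genuine gap is in your handling of the case $\mathbf{D}(f)=0$. You correctly identify that this reduces to a faithfulness claim for $\mathbf{D}$ on continuous $P$-equivariant maps, but the tools you name do not close it. Proposition~\ref{crux} concerns the operators $m_\eta$ on eigenspaces $\mathcal C^\alpha$ of $\matrice{p}{0}{0}{1}$ and is a step in the proof of theorem~\ref{main1} in the $D^+=\{0\}$ case; it plays no role here and does not apply when $D$ is of finite height. The observation that the sheaf $D\boxtimes_\delta$ restricted to $\zp$ recovers $D$ independently of $\delta$ is true but insufficient: $\Pi_j^*$ is a proper closed subspace of $D_j\boxtimes_{\delta_j^{-1}}\p1$, and knowing the ambient sheaf over $\zp$ does not by itself tell you that a $P$-equivariant map vanishing under $\mathbf{D}$ vanishes on all of $\Pi_j^*$. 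The actual input the paper uses is \cite[cor.~III.25]{CD}: the restriction ${\rm Res}_{\qp}$ induces a $P$-equivariant topological isomorphism $\Pi_j^*\cong(\varprojlim_\psi X_j)\otimes L$, where $X_j={\rm Res}_{\zp}(\Theta_j^d)$ is an integral lattice inside $D_j$. This exhibits $\Pi_j^*$, with its $P$-action, as a functorial construction from $D_j$, so that $\mathbf{D}(f)=0$ forces $f^*=0$ on the nose. Without this identification of the full dual space as a $\psi$-projective limit, your faithfulness assertion remains an expectation rather than a proof.
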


\begin{proof}
    If $\VV(\Pi_1)\cong \VV(\Pi_2)=V$, then corollary~\ref{determinant} shows that 
       $\Pi_1$ and $\Pi_2$ have the same central character $\delta$ and theorem~\ref{ssing}
 yields $\Pi_1\cong \Pi_2\cong\Pi_\delta({\bf D}(\check V))$.

        Let $f: \Pi_1\to \Pi_2$ be a $P$-equivariant linear continuous map
 and let $D_j=\mathbf{D}(\Pi_j)$. 
         Since the functor $\mathbf{D}$ uses only the restriction to $P$, $f$ induces a morphism
         $\mathbf{D}(f): D_2\to D_1$ in $\Phi\Gamma^{\rm et}(\mathcal{E})$. Since $D_1$ and $D_2$ are absolutely irreducible
         (proposition~\ref{ordinary}), $\mathbf{D}(f)$ is either $0$ or an isomorphism. If $\mathbf{D}(f)$ is an isomorphism, then $\delta_1=\delta_2$ by part (i), and we conclude that 
         $f$ is $G$-equivariant using 
         the following diagram, in which the vertical maps are the isomorphisms given by theorem~\ref{ssing} and the 
         map $\Pi_{\delta_1}(\check{D}_1)\to \Pi_{\delta_1}(\check{D}_2)$ is $G$-equivariant since induced by functoriality
         from the transpose of $\mathbf{D}(f)$
               $$\xymatrix{
& \Pi_1\ar[r]^f\ar[d]_{\cong}&\Pi_2 \ar[d]^{\cong}&\\
&\Pi_{\delta_1}(\check{D}_1)\ar[r]^{\mathbf{D}(f^*)}&\Pi_{\delta_1}(\check{D}_2).}$$ 

   The case $\mathbf{D}(f)=0$ is slightly trickier, since we can no longer use part (i) of the theorem to deduce that 
   $\Pi_1$ and $\Pi_2$ have the same central character. We will prove that $f=0$. 
   Let $\Theta_j$ be the unit ball in $\Pi_j$ and let 
   $X_j={\rm Res}_{\zp} (\Theta_j^{d})$, where we use the inclusions 
   $\Pi_j^{*}\subset D_j\boxtimes_{\delta_j^{-1}}\p1$ (proposition~\ref{compatible}). 
    It follows from \cite[cor.~III.25]{CD} that the restriction of
   ${\rm Res}_{\qp}: D_j\boxtimes_{\delta_j^{-1}}\p1\to D_j\boxtimes_{\delta_j^{-1}}\qp$ to $\Pi_j^{*}$ 
   induces a $P$-equivariant isomorphism of topological vector spaces
   $\Pi_j^{*}\cong (\varprojlim_{\psi} X_j)\otimes L$. We have a commutative diagram 
 $$\xymatrix{
& \Pi_2^*\ar[r]\ar[d]&\Pi_1^* \ar[d]&\\
&(\varprojlim_{\psi} X_2)\otimes L \ar[r]& (\varprojlim_{\psi} X_1)\otimes L&}$$   
in which the top horizontal map is the transpose $f^*$ of $f$, the vertical maps are the isomorphisms explained above and 
the horizontal map on the bottom is induced by $\mathbf{D}(f)=0$, and thus it is the zero map. We conclude that $f^*=0$ and thus 
$f=0$, which finishes the proof of theorem~\ref{II}. 
\end{proof}

  The remaining sections will be devoted to the proof of theorem~\ref{main1}
in the case $D$ trianguline (see proposition~\ref{Triangle}), 
and to the proof of the statements (namely proposition~\ref{subgroup} and corollaries~\ref{trivial} and~\ref{torsion})
that were used in the proof of proposition~\ref{main1.1}
which, as we remarked, is
equivalent to theorem~\ref{main1} in the case $D^+=0$.
                   
\subsection{Trianguline representations}\label{triangulinecase}\label{tri}

\subsubsection{Preliminaries}\label{prelimtriang}
    If $\delta:\qpet\to L^{\times}$ is a continuous character (not necessarily unitary), let $\mathcal{R}(\delta)$ be the 
   $(\varphi,\Gamma)$-module obtained by twisting the action of $\varphi$ and $\Gamma$ on $\mathcal{R}$
   by $\delta$. It has a canonical basis $e=1\otimes \delta$ for which $\varphi(e)=\delta(p)e$ and $\sigma_{a}(e)=\delta(a)e$
   for $a\in\zpet$, where $\sigma_a\in\Gamma$ satisfies $\sigma_a(\zeta)=\zeta^a$ for all $\zeta\in\mu_{p^{\infty}}$, so that
   $\varepsilon(\sigma_a)=a$. Let ${\cal R}^+$ be the ring of analytic
functions on the open unit disk,  so that ${\cal R}^+={\cal R}\cap L[[T]]$. We define ${\cal R}^+(\delta)$ as the ${\cal R}^+$-sumodule
of ${\cal R}(\delta)$ generated by $e$. We let  $\kappa(\delta)$ be the derivative of $\delta$ at $1$ or, equivalently (if $\delta$ is unitary),
 the generalized Hodge--Tate weight of the Galois character
 corresponding to $\delta$ by class field theory. 
   
   By\footnote{Contrary to \cite{Ctrianguline}, all results of \cite{Cvectan} are proved for all primes $p$.} \cite[prop. 0.2]{Cvectan}, 
    ${\rm Ext}^1(\mathcal{R}(\delta_2),\mathcal{R}(\delta_1))$ has dimension $1$ 
    when $\delta_1\delta_2^{-1}$ is not of the form 
 $x^{-i}$ or $\varepsilon x^{i}$ for some $i\geq 0$, and dimension $2$ in the remaining cases. 
   Moreover, if ${\rm Ext}^1(\mathcal{R}(\delta_2),\mathcal{R}(\delta_1))$ is $2$-dimensional, then 
   the associated projective space is naturally isomorphic to $\mathbf{P}^1(L)$. Let $\mathcal{S}$ be the set of triples $(\delta_1,\delta_2,\mathcal{L})$, where $\delta_1,\delta_2:\qpet\to L^{\times}$ are continuous characters and $\mathcal{L}\in {\rm Proj}({\rm Ext}^1(\mathcal{R}(\delta_2),\mathcal{R}(\delta_1)))$
 (if ${\rm Ext}^1(\mathcal{R}(\delta_2),\mathcal{R}(\delta_1))$ is $1$-dimensional, we have $\mathcal{L}=\infty$). Each $s\in\mathcal{S}$ gives rise to an 
 extension $0\to\mathcal{R}(\delta_1)\to \Delta(s)\to \mathcal{R}(\delta_2)\to 0$, classified up to isomorphism by $\mathcal{L}$.
 
  Let $\mathcal{S}_*$ be the subset of $\mathcal{S}$ consisting of those $s=(\delta_1,\delta_2,\mathcal{L})$ for which
  $v_p(\delta_1(p))+v_p(\delta_2(p))=0$ and $v_p(\delta_1(p))>0$. For each $s\in\mathcal{S}_*$ let
  $$u(s)=v_p(\delta_1(p)),\quad \kappa(s)=\kappa(\delta_1)-\kappa(\delta_2).$$
  Let $\mathcal{S}_*^{\rm cris}$ (resp. $\mathcal{S}_{*}^{\rm st}$) be the set of $s\in \mathcal{S}_*$
  for which $\kappa(s)\in\mathbf{N}^*$, $u(s)<\kappa(s)$ and $\mathcal{L}=\infty$ (resp. $\mathcal{L}\ne\infty$). 
  Let $\mathcal{S}_*^{\rm ng}$ be the set of $s\in\mathcal{S}_*$ for which $\kappa(s)\notin\mathbf{N}^*$ and finally let 
    \begin{center} $\mathcal{S}_{\rm irr}= \mathcal{S}_*^{\rm cris} \coprod \mathcal{S}_*^{\rm st}\coprod \mathcal{S}_*^{\rm ng}.$ \end{center}  
We say that $D\in \Phi\Gamma^{\rm et}(\mathcal{E})$ is trianguline
(of rank~$2$) if $D_{\rm rig}$ is an extension of two $(\varphi,\Gamma)$-modules
of rank~$1$ over ${\cal R}$. These are described by
the following result (\cite[prop. 0.3]{Cvectan} or \cite[th. 0.5]{Ctrianguline}). 
  
  \begin{prop}\label{classical}
  
a) For any $s\in\mathcal{S}_{\rm irr}$ 
there is a unique $D(s)\in\Phi\Gamma^{\rm et}(\mathcal{E})$ such that
$\Delta(s)=D(s)_{\rm rig}$, and $D(s)$ is absolutely irreducible.
Moreover, if $D\in \Phi\Gamma^{\rm et}(\mathcal{E})$ is trianguline
and absolutely irreducible, there exists $s\in \mathcal{S}_{\rm irr}$
such that $D\cong D(s)$.
    
   b) If $s=(\delta_1,\delta_2,\mathcal{L})$ and $s'=(\delta_1',\delta_2',\mathcal{L'})$ are elements of $\mathcal{S}_{\rm irr}$, then 
   $D(s)\cong D(s')$ if and only if $s,s'\in\mathcal{S}_{*}^{\rm cris}$ and $\delta_1'=x^{\kappa(s)}\delta_2$, $\delta_2'=x^{-\kappa(s)}\delta_1$.
  
  \end{prop}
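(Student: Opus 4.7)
The plan is to treat the three assertions separately: existence of $D(s)$ with absolute irreducibility, the converse classification in (a), and the rigidity/isomorphism statement in (b). Throughout I would use three inputs: the Kedlaya--Liu slope filtration theorem (any $(\varphi,\Gamma)$-module over $\mathcal{R}$ pure of slope $0$ descends uniquely to an object of $\Phi\Gamma^{\rm et}(\mathcal{E})$), the $1$-dimensional classification (rank-$1$ modules over $\mathcal{R}$ are precisely the $\mathcal{R}(\delta)$, and $\Hom(\mathcal{R}(\delta'),\mathcal{R}(\delta))\neq 0$ forces $\delta/\delta'\in x^{\mathbb{N}}$ up to scalars), and the explicit computation of $\Ext^1(\mathcal{R}(\delta_2),\mathcal{R}(\delta_1))$ from \cite{Cvectan} recalled just before the statement.

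For existence and irreducibility in (a), fix $s=(\delta_1,\delta_2,\mathcal{L})\in\mathcal{S}_{\rm irr}$. The extension $\Delta(s)$ has sub-slope $u(s)>0$ and quotient-slope $-u(s)<0$, so its total degree vanishes and the Harder--Narasimhan polygon has endpoints on the $x$-axis. It is pure of slope $0$ iff it has no saturated rank-$1$ submodule of negative slope. Such a hypothetical sub $\mathcal{R}(\delta')$ cannot map nontrivially into $\mathcal{R}(\delta_1)$ (positive slope vs.\ negative), so composing with the projection gives $\mathcal{R}(\delta')\hookrightarrow\mathcal{R}(\delta_2)$, which forces $\delta'=\delta_2 x^i$ for some $i\geq 0$. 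A direct computation in $\Ext^1$ then shows that this splitting of the slope filtration occurs iff $\mathcal{L}$ lies in a certain distinguished line -- exactly the line excluded by the conditions cutting out $\mathcal{S}_*^{\rm cris}$, $\mathcal{S}_*^{\rm st}$, $\mathcal{S}_*^{\rm ng}$ (in particular the requirement $\mathcal{L}=\infty$ in the crystalline case, and $u(s)<\kappa(s)$ which ensures no power of $t$ produces such a sub). Hence $\Delta(s)$ is pure of slope $0$, descends to a unique $D(s)$, and the same analysis (applied over any finite extension) shows no nonzero proper sub-$(\varphi,\Gamma)$-module exists, giving absolute irreducibility. For the converse in (a), given an absolutely irreducible trianguline $D$, any triangulation $0\to\mathcal{R}(\delta_1)\to D_{\rm rig}\to\mathcal{R}(\delta_2)\to 0$ has $v_p(\delta_1(p))+v_p(\delta_2(p))=0$ by étaleness; after possibly swapping the two factors (which is only possible when both slopes vanish, a case where irreducibility then forces $s\in\mathcal{S}_*^{\rm cris}$ anyway, cf.\ (b)), we may arrange $v_p(\delta_1(p))>0$, and then absolute irreducibility together with the previous analysis lands $s$ in one of the three subsets comprising $\mathcal{S}_{\rm irr}$.

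For part (b), suppose $D(s)\cong D(s')$ with $s\neq s'$. Then $D_{\rm rig}$ carries two distinct saturated rank-$1$ sub-$(\varphi,\Gamma)$-modules $\mathcal{R}(\delta_1)$ and $\mathcal{R}(\delta_1')$. Their intersection is zero (distinct rank-$1$ saturated subs), so composing $\mathcal{R}(\delta_1')\hookrightarrow D_{\rm rig}\twoheadrightarrow\mathcal{R}(\delta_2)$ is injective and hence $\delta_1'=\delta_2 x^k$ for some $k\geq 0$; matching determinants gives $\delta_2'=\delta_1 x^{-k}$. Positivity of $u(s')=v_p(\delta_1'(p))$ forces $k=\kappa(\delta_1)-\kappa(\delta_2)=\kappa(s)\in\mathbb{N}^*$ (after accounting for the Hodge--Tate weight shift), and the condition $u(s')<\kappa(s')$ combined with $u(s)<\kappa(s)$ forces both $s$ and $s'$ to be crystalline; moreover the second sub coming from multiplication by $t^{\kappa(s)}$ forces $\mathcal{L}=\mathcal{L}'=\infty$. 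Conversely, for $s\in\mathcal{S}_*^{\rm cris}$ one constructs the swapped triangulation by saturating the image of $t^{\kappa(s)}\mathcal{R}(\delta_2)\subset\mathcal{R}(\delta_2)$ pulled back into $D_{\rm rig}$ via a canonical splitting available when $\mathcal{L}=\infty$, and checks this yields the same $D(s)$.

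The main obstacle is the crystalline case: one must show simultaneously that for $s\in\mathcal{S}_*^{\rm cris}$ the module $\Delta(s)$ genuinely admits the second triangulation (and that the two rank-$1$ subs $\mathcal{R}(\delta_1)$, $\mathcal{R}(x^{\kappa(s)}\delta_2)$ together span $D_{\rm rig}[1/t]$ but not $D_{\rm rig}$ itself), while for $s\in\mathcal{S}_*^{\rm st}$ (where $\mathcal{L}\neq\infty$) or $s\in\mathcal{S}_*^{\rm ng}$ (where $\kappa(s)\notin\mathbb{N}^*$) the obstruction to a second triangulation is precisely the non-vanishing of $\mathcal{L}$ modulo the distinguished line, respectively the non-integrality of $\kappa(s)$. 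This is the place where the exact formulation of $\mathcal{S}_{\rm irr}$ as the disjoint union of three pieces is doing real work, and where I would need to invoke the explicit basis of $\Ext^1(\mathcal{R}(\delta_2),\mathcal{R}(\delta_1))$ in the two-dimensional cases.
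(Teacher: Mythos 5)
The paper does not prove this proposition; it is stated as a recall with a direct citation to \cite[prop. 0.3]{Cvectan} and \cite[th. 0.5]{Ctrianguline}, so there is no in-paper argument to compare against. Your sketch is broadly in the spirit of the Colmez classification: Kedlaya's slope filtration to pass from rank-$2$ $(\varphi,\Gamma)$-modules over $\mathcal R$ of slope $0$ to \'etale objects, the rank-$1$ classification to constrain any putative negative-slope submodule, and the explicit $\Ext^1(\mathcal R(\delta_2),\mathcal R(\delta_1))$ computation to decide when that submodule actually lifts. The check that the two triangulations determine the same multiset of Hodge--Tate--Sen weights, together with the observation that $k=0$ would split the triangulation, is the right way to pin down $k=\kappa(s)$ in part (b), though you compress it to ``accounting for the Hodge--Tate weight shift'' without spelling it out.

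Two points deserve flagging. First, the description in part (b) of ``pulling back $t^{\kappa(s)}\mathcal R(\delta_2)$ via a canonical splitting available when $\mathcal L=\infty$'' is misleading: $\mathcal L=\infty$ does \emph{not} mean the extension splits (a split extension gives a reducible $D$, excluded here); it is a distinguished point of ${\rm Proj}\,\Ext^1$ characterizing crystallinity up to twist, and the second triangulation arises because a crystalline rank-$2$ module has two Frobenius eigenlines in $D_{\rm cris}$ (two refinements), not from any splitting of the $(\varphi,\Gamma)$-module extension. Second, the central purity/irreducibility check --- that the negative-slope saturated sub $\mathcal R(\delta_2 x^i)$ with $0\le i<u(s)$ never lifts under the constraints defining $\mathcal S^{\rm cris}_*$, $\mathcal S^{\rm st}_*$, $\mathcal S^{\rm ng}_*$ (and, conversely, that outside those constraints it does) --- is exactly where the substance of Colmez's proof lies, and your sketch explicitly defers this to ``the explicit basis of $\Ext^1$'' without carrying it out; for example the $\mathcal S^{\rm ng}_*$ case imposes \emph{no} condition $u(s)<\kappa(s)$ and allows any $\mathcal L$, so the mechanism you cite does not cover it as stated. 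As an outline of where the proof lives and what its ingredients are, the proposal is reasonable; as a proof it would need these pieces made precise, essentially reproducing \cite{Ctrianguline}.
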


  \subsubsection{Infinitesimal study of the module $D_{\rm rig}\boxtimes_{\delta}\p1$}
  
   In this $\S$ we let $D$ be any object of $\mathcal{MF}(\delta^{-1})$ for some unitary character $\delta$. 
 
   Recall (see section $2.5$ of \cite{Annalen} for a summary) that Sen's theory associates to $D$ a finite free $L\otimes_{\qp} \qp(\mu_{p^{\infty}})$-module $D_{\rm Sen}$ endowed with a Sen operator 
     $\Theta_{\rm Sen}$, whose eigenvalues are the generalized Hodge-Tate weights of $D$.     
 
\begin{prop}\label{scalar} If $D\in \mathcal{MF}(\delta^{-1})$ is absolutely irreducible, $2$-dimensional and if
$\Theta_{\rm Sen}$ is not a scalar operator on 
$D_{\rm Sen}$, then $\delta\delta_D^{-1}$ is locally constant. 
\end{prop}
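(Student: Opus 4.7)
The plan is to show that $\kappa(\eta)=0$, where $\eta:=\delta\delta_D^{-1}$; since $\eta$ is unitary, this will force $\eta$ to be locally constant. The strategy is to compare two sheaf structures attached to $D$ through the explicit Lie algebra formulas of \cite{Annalen}.

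By the hypothesis $D\in\mathcal{MF}(\delta^{-1})$, we have the $G$-equivariant sheaf $U\mapsto D_{\rm rig}\boxtimes_\delta U$ on $\p1$; and by proposition~\ref{padicL} we also have $D\in\mathcal{MF}(\delta_D^{-1})$, hence a parallel sheaf $U\mapsto D_{\rm rig}\boxtimes_{\delta_D} U$. These two sheaves restrict to the same $P^+$-equivariant sheaf on $\zp$, so the two induced $\mathfrak{gl}_2(\qp)$-actions on the locally analytic global sections agree on $\mathrm{Lie}(P^+)$. They differ, however, on the center (which acts infinitesimally by $\kappa(\delta)$ in one and by $\kappa(\delta_D)$ in the other, with difference $\kappa(\eta)$) and on the opposite nilpotent $u^-=\bigl(\begin{smallmatrix} 0 & 0 \\ 1 & 0\end{smallmatrix}\bigr)$, which is obtained by $w$-conjugation of $P^+$ and therefore picks up the twist between $\delta$ and $\delta_D$, as in the computation of proposition~\ref{crux}.

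Next, I invoke the formulas of \cite{Annalen} (themselves building on \cite{ExtdR}) describing the $\mathfrak{gl}_2(\qp)$-action on $D_{\rm rig}\boxtimes_\delta \p1$: the action of $u^-$ on a suitable dense locally analytic subspace is given in terms of an intrinsic operator constructed from the Sen operator $\Theta_{\rm Sen}$ on $D_{\rm Sen}$, modified by a scalar term depending on $\kappa(\delta)$. Writing this formula for both $\delta$ and $\delta_D$ and subtracting, the intrinsic $D$-part cancels and one is left with an identity forcing $\kappa(\eta)\cdot A=0$ on a dense subspace, where $A$ is an operator built from $\Theta_{\rm Sen}$ (essentially a scalar shift of it). The non-scalar hypothesis on $\Theta_{\rm Sen}$ ensures $A\ne 0$, since any scalar shift of a non-scalar operator remains nonzero, and hence we obtain $\kappa(\eta)=0$.

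The main obstacle will be identifying the precise operator $A$ from the Lie algebra formulas in \cite{Annalen} and checking that the non-scalarity of $\Theta_{\rm Sen}$ is indeed the exact hypothesis needed to guarantee $A\ne 0$. This reduces to a careful reading of how $\Theta_{\rm Sen}$ enters the description of the $u^-$-action, concentrating on the direction in which all $\delta$-dependence of the $G$-action lives after modding out by the $P^+$-action. Once this is isolated the conclusion is immediate, and this is precisely why the argument must fail in the scalar case, matching remark~\ref{Sen}.
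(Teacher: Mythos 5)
Your proposal diverges from the paper's argument at the crucial step and, as stated, has a genuine gap. The paper does \emph{not} compare the two sheaf structures $D_{\rm rig}\boxtimes_\delta\p1$ and $D_{\rm rig}\boxtimes_{\delta_D}\p1$; it works entirely inside the single $G$-module $D_{\rm rig}\boxtimes_\delta\p1$. The key observation is to look not at $u^-$ but at the Casimir element $C=u^+u^-+u^-u^+ +\tfrac12 h^2$, which, being $G$-invariant, preserves the subspace $D_{\rm rig}=D_{\rm rig}\boxtimes_\delta\zp$ (unlike $u^-$, which does not fix $\zp\subset\p1$, so there is no ``intrinsic formula for $u^-$ on a common dense subspace'' in the sense you invoke). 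By \cite[th.~3.1, rem.~3.2]{ExtdR} the Casimir acts by a scalar $c$ and one has the identity $(2\Theta_{\rm Sen}-(1+\kappa(\delta)))^2=1+2c$ on $D_{\rm Sen}$. The paper then linearizes the square via Cayley--Hamilton $(\Theta_{\rm Sen}-a)(\Theta_{\rm Sen}-b)=0$, obtaining $4(a+b-1-\kappa(\delta))\Theta_{\rm Sen}+(\text{scalar})=(\text{scalar})$, and the non-scalarity of $\Theta_{\rm Sen}$ then forces $\kappa(\delta)=a+b-1=\kappa(\delta_D)$. Neither the Casimir nor Cayley--Hamilton appears in your plan, and they are precisely what makes the non-scalarity hypothesis bite.

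Concretely, the step you flag as the ``main obstacle'' is where the argument breaks. First, the two spaces $D_{\rm rig}\boxtimes_\delta\p1$ and $D_{\rm rig}\boxtimes_{\delta_D}\p1$ are genuinely different $G$-modules (even as subsets of $D_{\rm rig}\times D_{\rm rig}$: the compatibility over $\zpet$ depends on $\delta$ through $w_\delta$), so ``subtracting the $u^-$ formulas'' has no a priori meaning. Second, even if one restricts attention to $\zp$-sections, the constant $c$ in the Casimir identity changes when $\delta$ changes, so ``the intrinsic $D$-part'' does \emph{not} cancel in the naive way — the $\delta$-dependence is quadratic, not linear, and shows up both in $\kappa(\delta)$ and in $c$. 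What rescues the paper's argument is not a comparison between $\delta$ and $\delta_D$ but the single constraint that a particular operator-valued expression in $\Theta_{\rm Sen}$ must equal a \emph{scalar}; that is where non-scalarity is used. I'd suggest redoing the argument by reading \cite[th.~3.1]{ExtdR} carefully and locating the Casimir relation rather than trying to isolate $u^-$.
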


\begin{proof}
 By 
 \cite[chap. VI]{CD},       
      the $G$-equivariant sheaf 
     $U\mapsto D\boxtimes_{\delta} U$ induces a $G$-equivariant sheaf $U\mapsto D_{\rm rig}\boxtimes_{\delta} U$  
     on $\mathbf{P}^1(\qp)$. We let $D_{\rm rig}\boxtimes_{\delta}\p1$ be the space of global sections of this sheaf.
     This is naturally an LF space and $G$ acts continuously on it. Moreover, this action extends to a structure of topological 
     $\mathcal{D}({\rm GL}_2(\zp))$-module on $D_{\rm rig}\boxtimes_{\delta}\p1$, where $\mathcal{D}({\rm GL}_2(\zp))$ is 
     the Fr\'echet-Stein algebra \cite{adm} of $L$-valued distributions on ${\rm GL}_2(\zp)$. In particular, the enveloping algebra of 
     $\mathfrak{gl}_2={\rm Lie}(G)$ acts on $D_{\rm rig}\boxtimes_{\delta}\p1$. 

    Consider the Casimir element 
    $$C=u^+u^-+u^-u^++\frac{1}{2}h^2\in U(\mathfrak{gl}_2),$$ where $h=\left(\begin{smallmatrix} 1 & 0 \\0 & -1\end{smallmatrix}\right)$, 
   $u^+=\left(\begin{smallmatrix} 0 & 1 \\0 & 0\end{smallmatrix}\right)$ and $u^-=\left(\begin{smallmatrix} 0 & 0 \\1 & 0\end{smallmatrix}\right)$. The action of $C$ on 
    $D_{\rm rig}\boxtimes_{\delta}\p1$ preserves $D_{\rm rig}=D_{\rm rig}\boxtimes_{\delta}\zp$, viewed as a sub-module of $D_{\rm rig}\boxtimes_{\delta}\p1$
via the extension by $0$.
  By  theorem 3.1 and remark 3.2 of \cite{ExtdR} the operator $C$ acts by a scalar $c$ on $D_{\rm rig}$ and we have an equality of operators on $D_{\rm Sen}$ 
     \begin{equation}\label{one1}
(2\Theta_{\rm Sen} -(1+\kappa(\delta)))^2=1+2c.
\end{equation}
    
    Let $a$ and $b$ be the generalized Hodge-Tate weights of $D$. By Cayley-Hamilton we have 
 $(\Theta_{\rm Sen}-a)(\Theta_{\rm Sen}-b)=0$
 as endomorphisms of $D_{\rm Sen}$. Combining this relation with \eqref{one1} yields
 $$4(a+b-1-\kappa(\delta)){\Theta}_{\rm Sen}+(1+\kappa(\delta))^2-4ab=1+2c.$$
    Since $\Theta_{\rm Sen}$ is not scalar, the previous relation forces $a+b-1=\kappa(\delta)$
and, as $a+b-1=\kappa(\delta_D)$, this gives $\kappa(\delta\delta_D^{-1})=0$. The result follows.
\end{proof}

\begin{remark}\label{locconstant} 
  If $D$ is trianguline, $2$-dimensional and irreducible, then 
  $\Theta_{\rm Sen}$ is not scalar, see proposition 4.6 of \cite{Ctrianguline}. 
\end{remark}

\subsubsection{D\'evissage of $D_{\rm rig}\boxtimes_{\delta}\p1$}

   If $\eta_1,\eta_2:\qpet\to L^{\times}$ are continuous characters, let $$B^{\rm an}(\eta_1,\eta_2)=({\rm Ind}_{B}^{G}\eta_2\otimes\varepsilon^{-1}\eta_1)^{\rm an}$$
   be the locally analytic parabolic induction of the character $\eta_2\otimes \varepsilon^{-1}\eta_1$.
  The recipe giving rise to the sheaf $U\mapsto D\boxtimes_{\delta} U$ for $D\in\Phi\Gamma^{\rm et}(\mathcal{E})$ can be used \cite[\S~3.1]{Jacquet} to create
  a $G$-equivariant sheaf 
  $U\mapsto \mathcal{R}(\eta_1)\boxtimes_{\delta} U$ on $\p1$, attached to the  
   pair $(\mathcal{R}(\eta_1), \delta)$. 
 We will only be interested 
   in the space $\mathcal{R}(\eta_1)\boxtimes_{\delta}\p1$ of its global sections, which is described by the following proposition, whose proof is easily deduced from remark~3.7 of \cite{Jacquet}.
Let
$$\mathcal{R}^+(\eta_1)\boxtimes_{\delta}\p1=\{z\in \mathcal{R}(\eta_1)\boxtimes_{\delta}\p1,
\ {\rm Res}_{\zp}z\in{\cal R}^+(\eta_1),\ {\rm Res}_{\zp}w\cdot z\in{\cal R}^+(\eta_1)\}.$$
  
   \begin{prop}\label{split} If $\varepsilon^{-1}\eta_1\eta_2=\delta$ for some continuous characters 
   $\eta_1,\eta_2, \delta$, then there is an exact sequence of topological $G$-modules 
    $$0\to B^{\rm an}(\eta_2,\eta_1)^*\otimes \delta\to \mathcal{R}(\eta_1)\boxtimes_{\delta} \p1\to B^{\rm an}(\eta_1,\eta_2)\to 0$$
    and a $G$-equivariant 
isomorphism $B^{\rm an}(\eta_2,\eta_1)^*\otimes \delta\cong \mathcal{R}^+(\eta_1)\boxtimes_{\delta}\p1$
of topological vector spaces. 
   \end{prop}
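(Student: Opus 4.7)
My approach is to unwind the definition of the sheaf $U\mapsto \mathcal{R}(\eta_1)\boxtimes_\delta U$ on $\mathbf{P}^1$, identify the submodule and quotient by direct inspection on the two standard charts, and then invoke the Kirillov-style results behind remark~3.7 of \cite{Jacquet} to recognize each as the claimed representation. A global section of the sheaf is a pair $(z_0,z_\infty)$ with $z_0,z_\infty\in\mathcal{R}(\eta_1)$ satisfying a gluing condition on $\mathbf{Z}_p^{\times}$ encoded by $w\cdot z_0=z_\infty$ for the twisted $w$-action determined by $\delta$; the hypothesis $\varepsilon^{-1}\eta_1\eta_2=\delta$ is exactly what makes this $w$-action well defined with the prescribed central character $\delta$. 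By definition, $\mathcal{R}^+(\eta_1)\boxtimes_\delta\mathbf{P}^1$ consists of those pairs with $z_0,z_\infty\in\mathcal{R}^+(\eta_1)$, and it is a closed $G$-stable subspace.

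The quotient map will come from the fact (established in \cite{Cvectan} and packaged in the form needed in remark~3.7 of \cite{Jacquet}) that the $P^+$-equivariant quotient $\mathcal{R}(\eta_1)/\mathcal{R}^+(\eta_1)$ is canonically isomorphic, via the Amice transform, to the restriction to $P^+$ of $B^{\mathrm{an}}(\eta_1,\eta_2)$ in its Kirillov model. Applying this on each of the two charts of $\mathbf{P}^1$ and checking that the resulting maps agree on $\mathbf{Z}_p^{\times}$ promotes this to a $G$-equivariant surjection $\mathcal{R}(\eta_1)\boxtimes_\delta\mathbf{P}^1\twoheadrightarrow B^{\mathrm{an}}(\eta_1,\eta_2)$ whose kernel is visibly $\mathcal{R}^+(\eta_1)\boxtimes_\delta\mathbf{P}^1$.

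For the identification of this kernel with $B^{\mathrm{an}}(\eta_2,\eta_1)^*\otimes\delta$, I would use the Fontaine--Amice duality pairing $\mathcal{R}^+(\eta_1)$ with the space of locally analytic functions on $\mathbf{Z}_p$, combined with the symmetry of the $\boxtimes_\delta\mathbf{P}^1$ construction under the exchange of $\eta_1$ and $\eta_2$. The Cartier dual of the rank-one $(\varphi,\Gamma)$-module $\mathcal{R}(\eta_1)$ is $\mathcal{R}(\varepsilon\eta_1^{-1})=\mathcal{R}(\delta^{-1}\eta_2)$, and the $\delta$-twist in $B^{\mathrm{an}}(\eta_2,\eta_1)^*\otimes\delta$ absorbs this discrepancy so that the pairing $(\mathcal{R}^+(\eta_1)\boxtimes_\delta\mathbf{P}^1)\times B^{\mathrm{an}}(\eta_2,\eta_1)\to L$ manufactured chart by chart is continuous, perfect, and $G$-equivariant. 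The main obstacle is purely bookkeeping: checking that all twists on the two charts match up so the two pieces patch into a short exact sequence of topological $G$-modules, and that the subspace identification is $G$-equivariant (especially at $w$). Once both identifications are in place, exactness and the topological isomorphism are immediate.
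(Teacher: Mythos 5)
Your proposal follows essentially the same route as the proof the paper points to (the paper itself gives no details, merely citing remark~3.7 of \cite{Jacquet}): describe global sections via the two charts glued over $\mathbb{Z}_p^\times$, recognize $\mathcal{R}^+(\eta_1)\boxtimes_\delta\mathbf{P}^1$ as the kernel by the Amice/Kirillov-model identification of $\mathcal{R}(\eta_1)/\mathcal{R}^+(\eta_1)$ with the restriction of $B^{\rm an}(\eta_1,\eta_2)$, and use the residue duality for the sub. This is the correct and standard approach, and it is what Dospinescu does in loc.~cit.

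Two points are stated a bit loosely. First, the hypothesis $\varepsilon^{-1}\eta_1\eta_2=\delta$ is not what ``makes the $w$-action well defined''; the $\boxtimes_\delta$-construction applies to the pair $(\mathcal{R}(\eta_1),\delta)$ for any continuous $\delta$, and the displayed identity is simply the definition of $\eta_2$ in terms of $\eta_1$ and $\delta$ so that the resulting principal series can be labelled. Second, the phrase ``symmetry of the $\boxtimes_\delta\mathbf{P}^1$ construction under the exchange of $\eta_1$ and $\eta_2$'' is not a symmetry one has a priori: the residue pairing $\mathcal{R}^+(\eta_1)\times\bigl(\mathcal{R}(\varepsilon\eta_1^{-1})/\mathcal{R}^+(\varepsilon\eta_1^{-1})\bigr)\to L$ identifies $\mathcal{R}^+(\eta_1)\boxtimes_\delta\mathbf{P}^1$ with the dual of the quotient part of $\mathcal{R}(\delta^{-1}\eta_2)\boxtimes_{\delta^{-1}}\mathbf{P}^1$, and one must then apply the first half of the proposition with $\eta_1'=\delta^{-1}\eta_2$, $\eta_2'=\delta^{-1}\eta_1$, $\delta'=\delta^{-1}$ and untwist by $\delta$ to land on $B^{\rm an}(\eta_2,\eta_1)^*\otimes\delta$. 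Once that bookkeeping is made explicit your argument closes correctly.
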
 
   
     From now on we suppose that $D=D(s)\in \mathcal{MF}(\delta^{-1})$ for some
$s=(\delta_1,\delta_2,\mathcal{L})\in\mathcal{S}_{\rm irr}$ and
     some unitary character $\delta:\qpet\to \mathcal{O}_L^*$, and we let  
        $\eta=\delta\delta_D^{-1}$. By proposition~\ref{compatible}
we have $\check{D}\in \mathcal{MF}(\delta)$ and we let $\check{\Pi}=\Pi_{\delta^{-1}}(\check{D})$. 
  Since $\dim D=2$, there is a natural isomorphism 
  $\check{D}\cong D\otimes\delta_D^{-1}$.
  Combining these observations with part (i) of proposition~\ref{compatible} and with corollary VI.12 of \cite{CD}, 
   we obtain the following result. 
   
   \begin{lem} \label{exact}
We have $D\in \mathcal{MF}(\eta\delta_D^{-1})$ and there is a natural isomorphism
$\check{\Pi}\cong \Pi_{\delta_D \eta^{-1}}(D)\otimes \delta_D^{-1}$ as well as 
 an exact sequence of topological $G$-modules
    $$0\to (\check{\Pi}^{\rm an})^*\to D_{\rm rig}\boxtimes_{\delta}\p1\to \Pi^{\rm an}\to 0.$$
   \end{lem}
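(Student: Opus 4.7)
The plan is to combine three ingredients already in the paper: the identification $\check{D}\cong D\otimes\delta_D^{-1}$ (available because $\dim D=2$), the twist formulas of proposition~\ref{compatible}(i), and the locally analytic refinement of proposition~\ref{compatible}(ii) supplied by corollary~VI.12 of~\cite{CD}. The first two assertions amount to pure bookkeeping with central characters; the third is a direct citation applied to the pair $(D,\delta)$.

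For the first assertion, I would start from proposition~\ref{compatible}(ii), which gives $\check{D}\in\mathcal{MF}(\delta)$. Rewriting $\check{D}\cong D\otimes\delta_D^{-1}$ as $D\cong\check{D}(\delta_D)$ and applying the twist formula of proposition~\ref{compatible}(i) to $\check{D}$ with twisting character $\delta_D$ yields $D\in\mathcal{MF}(\delta_D^{-2}\delta)$. Since $\eta=\delta\delta_D^{-1}$, one has $\delta_D^{-2}\delta=\delta_D^{-1}\eta=\eta\delta_D^{-1}$, which is the claim.

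Next, the same application of proposition~\ref{compatible}(i) provides
\[
\Pi_{\delta_D^{2}\delta^{-1}}(\check{D}(\delta_D))\cong\Pi_{\delta^{-1}}(\check{D})\otimes(\delta_D\circ\det).
\]
Using $\check{D}(\delta_D)\cong D$ and $\delta_D^{2}\delta^{-1}=\delta_D\eta^{-1}$, the left-hand side equals $\Pi_{\delta_D\eta^{-1}}(D)$, while the right-hand side is $\check{\Pi}\otimes(\delta_D\circ\det)$ by definition of $\check{\Pi}$. Twisting both sides by $\delta_D^{-1}\circ\det$ gives the desired isomorphism $\check{\Pi}\cong\Pi_{\delta_D\eta^{-1}}(D)\otimes\delta_D^{-1}$.

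Finally, the exact sequence is obtained by invoking corollary~VI.12 of~\cite{CD} for the pair $(D,\delta)$: this is the locally analytic refinement of the exact sequence in proposition~\ref{compatible}(ii), in which $D\boxtimes_\delta\p1$ is replaced by $D_{\rm rig}\boxtimes_\delta\p1$, the quotient $\Pi=\Pi_\delta(D)$ by $\Pi^{\rm an}$, and the subobject $\Pi_{\delta^{-1}}(\check{D})^{*}$ by the weak dual $(\check{\Pi}^{\rm an})^{*}$, yielding exactly the sequence asserted. The only nontrivial point in the whole argument is keeping consistent sign conventions in the notation $\mathcal{MF}(\chi)$; no deeper computation is required.
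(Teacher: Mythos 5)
Your proposal is correct and takes essentially the same route as the paper, which simply declares the lemma to follow by "combining" the isomorphism $\check{D}\cong D\otimes\delta_D^{-1}$ with proposition~\ref{compatible}(i) and corollary~VI.12 of~\cite{CD}. You have supplied exactly the bookkeeping the paper leaves implicit, and the character-twist computations (both $\eta\delta_D^{-1}=\delta_D^{-2}\delta$ and $\delta_D^2\delta^{-1}=\delta_D\eta^{-1}$) check out.
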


  \begin{lem}\label{devisser}
  There is an exact sequence of topological $G$-modules 
   $$0\to \mathcal{R}(\delta_1)\boxtimes_{\delta}\p1\to D_{\rm rig}\boxtimes_{\delta}\p1\to \mathcal{R}(\delta_2)\boxtimes_{\delta}\p1\to 0.$$
  \end{lem}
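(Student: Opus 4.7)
The plan is to mimic the construction of \cite{Cvectan, Jacquet}, which treat the case $\eta=\delta\delta_D^{-1}=1$, and to check that the modifications needed for general unitary $\delta$ are harmless once $\eta$ is known to be locally constant. We have established this local constancy via proposition~\ref{scalar} and remark~\ref{locconstant}, and it will play a crucial role throughout.

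Recall that a global section on $\p1$ is determined by its restrictions to $\zp$ and to $w\cdot\zp$, glued on the overlap $\zpet$ via the twisted $w$-involution $w_\delta\colon D_{\rm rig}^{\psi=0}\to D_{\rm rig}^{\psi=0}$. The triangulation $0\to\mathcal{R}(\delta_1)\to D_{\rm rig}\to\mathcal{R}(\delta_2)\to 0$ is by construction stable under $\varphi$, $\psi$, $\Gamma$, so it restricts to an exact sequence of $P^+$-equivariant sheaves on the $\zp$-chart, and likewise on the $w\cdot\zp$-chart. The lemma thus reduces to the assertion that these chart-wise exact sequences glue coherently on $\zpet$, i.e., that $w_\delta$ preserves the subspace $\mathcal{R}(\delta_1)^{\psi=0}\subset D_{\rm rig}^{\psi=0}$.

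For this key step, I would argue as in the proof of proposition~\ref{crux}: one has $w_\delta=w_*\circ m_\delta=w_{\delta_D}\circ m_\eta$. Since $\eta$ is locally constant, proposition~\ref{mult}(e) expresses $m_\eta$ as a finite $L$-linear combination of restriction operators $\mathrm{Res}_{i+p^n\zp}$, which manifestly preserve any sub-$(\varphi,\Gamma)$-module; hence $m_\eta$ preserves $\mathcal{R}(\delta_1)^{\psi=0}$. On the other hand, $w_{\delta_D}$ preserves this subspace by \cite{Cvectan, Jacquet} (the case $\eta=1$). Composing, $w_\delta$ preserves it too, which yields the inclusion $\mathcal{R}(\delta_1)\boxtimes_\delta\p1\hookrightarrow D_{\rm rig}\boxtimes_\delta\p1$; the quotient is then identified with $\mathcal{R}(\delta_2)\boxtimes_\delta\p1$ by the same analysis applied to the second factor, and strictness in the category of topological $G$-modules follows by the standard open-mapping considerations of \emph{loc.~cit.}

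The main obstacle, signalled by the authors' deferral to \cite{Cvectan, Jacquet} rather than a direct argument, is not the structural statement above but rather the verification in the untwisted case: that $w_{\delta_D}$ genuinely stabilizes $\mathcal{R}(\delta_1)^{\psi=0}$. This rests on a delicate analysis of Colmez's involution on $D^{\psi=0}$ and its interaction with the triangulation, together with a topological comparison ensuring that the Robba-ring topology on $\mathcal{R}(\delta_1)^{\psi=0}$ coincides with the subspace topology induced from $D_{\rm rig}^{\psi=0}$, so that the glued object really is an LF space on which $G$ acts continuously.
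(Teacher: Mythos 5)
Your proposal is correct, but takes a genuinely different route from the paper's. The paper does \emph{not} invoke the base case ($\delta=\delta_D$) as a black box: it instead re-runs the infinitesimal argument of \cite[\S~3.2]{Jacquet} with $\delta$ in place of $\delta_D$. The key observation making this possible is that the formula of \cite[th.~3.1]{ExtdR} for the $u^+$-action on $D_{\rm rig}\boxtimes_\delta\p1$ depends on $\delta$ only through $\kappa(\delta)$, and remark~\ref{locconstant} gives $\kappa(\delta)=\kappa(\delta_D)$. From this the paper deduces that $X=(D_{\rm rig}\boxtimes_\delta\p1)^{u^+=0}$ is finite dimensional and consists of $U$-invariants, that $w\cdot e_1\in X$ for an eigenbasis $e_1$ of $\mathcal{R}(\delta_1)$, and then lets the construction of \cite[\S~3.2]{Jacquet} take over. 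You instead treat the $\eta=1$ result as known (which it is: it is precisely \cite[cor.~3.6]{Jacquet}), observe the factorization $w_\delta=w_{\delta_D}\circ m_\eta$ as in the proof of proposition~\ref{crux}, and use local constancy of $\eta$ to write $m_\eta$ as a finite combination of restriction operators, which visibly preserve the $(\varphi,\Gamma)$-stable subspace $\mathcal{R}(\delta_1)^{\psi=0}$. Your route is cleaner and more modular (it mirrors the mechanism already used for proposition~\ref{crux}), whereas the paper's re-derivation avoids having to spell out the gluing formalism and the topological comparison you flag at the end. Two points worth making explicit in your write-up: (1) the identity $w_\chi=w_*\circ m_\chi$ is stated in \cite[rem.~II.1.3]{Cbigone} for \'etale $(\varphi,\Gamma)$-modules, and you should note that it extends to $D_{\rm rig}^{\psi=0}$ by density/continuity from $D^{\psi=0}$; (2) to conclude the exact sequence one also needs the induced map on the quotient chart to agree with the intrinsic $w_\delta$ on $\mathcal{R}(\delta_2)^{\psi=0}$ --- this follows by the same argument since the restriction operators and $w_{\delta_D}$ are both compatible with the projection $D_{\rm rig}\to\mathcal{R}(\delta_2)$, but it is worth saying.
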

  
  \begin{proof}
  Since $\kappa(\delta)=\kappa(\delta_D)$ by remark~\ref{locconstant}, the desired result is proved in the same way as corollary 3.6 of \cite{Jacquet}. 
  For the reader's convenience, we sketch the argument. Let $a,b$ be the Hodge-Tate weights of $D$. 
Extension
 by $0$ allows to view $D_{\rm rig}=D_{\rm rig}\boxtimes_{\delta}\zp$ as a subspace
of $D_{\rm rig}\boxtimes_{\delta}\p1$ and any element of $D_{\rm rig}\boxtimes_{\delta}\p1$ can be
written as $z_1+w\cdot z_2$ with $z_1,z_2\in D_{\rm rig}$.
The equality 
$\kappa(\delta)=\kappa(\delta_D)$ combined with theorem 3.1 in \cite{ExtdR} yields 
$$u^{+}(z_1)=tz_1, \quad u^+(w\cdot z_2)=-w\cdot\frac{(\nabla-a)(\nabla-b)z_2}{t},\quad{\text{where $t=\log(1+T)$.}}$$
  We deduce that $X:=(D_{\rm rig}\boxtimes_{\delta}\p1)^{u^+=0}$ is isomorphic as an $L$-vector space to the space of solutions of the equation 
  $(\nabla-a)(\nabla-b)z=0$ on $D_{\rm rig}$. Proposition 2.1 of \cite{Jacquet} shows that $X$ is finite dimensional and lemma 2.6 of 
  loc.cit implies that all elements of $X$ are invariant under the action of the upper unipotent subgroup $U$ of $G$. 
  In particular, if $e_1$ is a basis of $\mathcal{R}(\delta_1)$ which is an eigenvector of $\varphi$ and $\Gamma$, then 
    $(0,e_1)\in X$ is $U$-invariant. Then the arguments in $\S~3.2$ of \cite{Jacquet} go through by replacing $\delta_D$ by 
    $\delta$. The result follows.
  \end{proof}

    \begin{lem}  \label{morphism}
  There is a morphism $B^{\rm an}(\delta_1,\eta\delta_2)\to \Pi^{\rm an}$ with a finite dimensional kernel.
   \end{lem}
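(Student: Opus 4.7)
The plan is to produce the desired morphism as a canonical factorization of a map $\alpha:\mathcal{R}(\delta_1)\boxtimes_\delta\p1\to \Pi^{\rm an}$ constructed directly from Lemmas~\ref{devisser} and~\ref{exact}. First I would apply Proposition~\ref{split} with $\eta_1=\delta_1$ and the character $\delta$: the triangulation of $D$ gives $\det D_{\rm rig}=\delta_1\delta_2$ and hence $\delta_D=\varepsilon^{-1}\delta_1\delta_2$, so the compatibility $\varepsilon^{-1}\eta_1\eta_2=\delta=\eta\delta_D$ forces $\eta_2=\eta\delta_2$. This yields the exact sequence of topological $G$-modules
$$0\to \mathcal{R}^+(\delta_1)\boxtimes_\delta\p1 \to \mathcal{R}(\delta_1)\boxtimes_\delta\p1\to B^{\rm an}(\delta_1,\eta\delta_2)\to 0,$$
with the sub identified with $B^{\rm an}(\eta\delta_2,\delta_1)^*\otimes\delta$ via Proposition~\ref{split}. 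Composing the inclusion from Lemma~\ref{devisser} with the surjection from Lemma~\ref{exact} produces the continuous $G$-equivariant map $\alpha$.

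The key step, which I expect to be the main obstacle, is to show that $\alpha$ vanishes on $\mathcal{R}^+(\delta_1)\boxtimes_\delta\p1$. I would argue this using the description of the Jordan--H\"older constituents of $\mathcal{R}(\delta_1)\boxtimes_\delta\p1$ from \cite{Cvectan}: the irreducible subquotients of $\mathcal{R}^+(\delta_1)\boxtimes_\delta\p1\cong B^{\rm an}(\eta\delta_2,\delta_1)^*\otimes\delta$ are duals of constituents of a locally analytic principal series, and admissibility and absolute irreducibility of $\Pi$ force any $G$-equivariant map from such a representation into $\Pi^{\rm an}$ to have finite-dimensional image. Since $\Pi$ is infinite-dimensional absolutely irreducible, it has no non-zero finite-dimensional $G$-subrepresentation; taking closures, the same is true of $\Pi^{\rm an}$, so the image of $\mathcal{R}^+(\delta_1)\boxtimes_\delta\p1$ under $\alpha$ must actually be zero.

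Once vanishing on the sub is established, $\alpha$ descends to a continuous $G$-equivariant morphism $\beta:B^{\rm an}(\delta_1,\eta\delta_2)\to\Pi^{\rm an}$ whose kernel is identified with $\ker(\alpha)/\mathcal{R}^+(\delta_1)\boxtimes_\delta\p1$. To see this kernel is finite-dimensional, note that $\ker(\alpha)=\mathcal{R}(\delta_1)\boxtimes_\delta\p1\cap(\check\Pi^{\rm an})^*$ inside $D_{\rm rig}\boxtimes_\delta\p1$; using once again the Jordan--H\"older analysis of~\cite{Cvectan}, the irreducible constituents of $\ker(\alpha)$ not already lying in $\mathcal{R}^+(\delta_1)\boxtimes_\delta\p1$ are precisely the finitely many (locally algebraic) constituents of $B^{\rm an}(\delta_1,\eta\delta_2)$ which $(\check\Pi^{\rm an})^*$ can intersect non-trivially, giving a finite-dimensional quotient and hence the required morphism $\beta$ with finite-dimensional kernel.
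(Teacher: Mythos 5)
Your overall factorization strategy is the same as the paper's: combine lemma~\ref{devisser} and lemma~\ref{exact} with proposition~\ref{split} to produce a map from $\mathcal{R}(\delta_1)\boxtimes_\delta\p1/\mathcal{R}^+(\delta_1)\boxtimes_\delta\p1 \cong B^{\rm an}(\delta_1,\eta\delta_2)$ to $(D_{\rm rig}\boxtimes_\delta\p1)/(\check\Pi^{\rm an})^* \cong \Pi^{\rm an}$. However, the two steps where the real content lives — the vanishing of $\alpha$ on $\mathcal{R}^+(\delta_1)\boxtimes_\delta\p1$, and the finite-dimensionality of the kernel — are handled by arguments that have genuine gaps.

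For the vanishing step, the paper does not use a soft Jordan--H\"older argument; it cites \cite[cor.~VI.14]{CD}, which gives the \emph{direct inclusion} $\mathcal{R}^+(\delta_1)\boxtimes_\delta\p1\subset(\check\Pi^{\rm an})^*$ inside $D_{\rm rig}\boxtimes_\delta\p1$, making the vanishing immediate. Your substitute is the claim that any $G$-map from $B^{\rm an}(\eta\delta_2,\delta_1)^*\otimes\delta$ into $\Pi^{\rm an}$ has finite-dimensional image; this is not justified. The source is a nuclear Fr\'echet space and the target is of compact type, but the Fr\'echet-vs-compact-type dichotomy only kills a space that is \emph{both} — and that requires knowing the image is closed in $\Pi^{\rm an}$, which is not automatic for a continuous injection from a Fr\'echet space into a compact-type space. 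You are in effect assuming the conclusion of the step you are trying to prove. The inclusion provided by [CD, cor.~VI.14] is precisely the hard input the paper brings to bear here, and it cannot be recovered by generalities about constituents of principal series.

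For the finite-dimensionality of the kernel, your appeal to "the finitely many (locally algebraic) constituents which $(\check\Pi^{\rm an})^*$ can intersect non-trivially" is not an argument: constituents of $B^{\rm an}(\delta_1,\eta\delta_2)$ are typically infinite-dimensional, and nothing you say explains why the intersection can only pick up finite-dimensional pieces. The paper's argument is the clean topological one: the kernel is closed in $B^{\rm an}(\delta_1,\eta\delta_2)$, hence of compact type; on the other hand it is (a Hausdorff quotient of) the closed subspace $(\check\Pi^{\rm an})^*\cap(\mathcal{R}(\delta_1)\boxtimes_\delta\p1)$ of the Fr\'echet space $(\check\Pi^{\rm an})^*$ (cut out by the explicit closed conditions $\sigma(\Res_{\zp}z)=\sigma(\Res_{\zp}wz)=0$), hence Fr\'echet; a Fr\'echet space of compact type is finite-dimensional. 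You should replace both of your heuristic steps with these precise inputs.
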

  \begin{proof} 
Consider the inclusions $\mathcal{R}^+(\delta_1)\boxtimes_{\delta}\p1\subset D_{\rm rig}\boxtimes_{\delta}\p1$ given by lemma~\ref{devisser} and $(\check{\Pi}^{\rm an})^*\subset D_{\rm rig}\boxtimes_{\delta}\p1$ given by lemma~\ref{exact}. It follows from \cite[cor.~VI.14]{CD} 
     that $\mathcal{R}^+(\delta_1)\boxtimes_{\delta}\p1\subset (\check{\Pi}^{\rm an})^*$, hence there is a  morphism
     $$(\mathcal{R}(\delta_1)\boxtimes_{\delta}\p1)/ (\mathcal{R}^+(\delta_1)\boxtimes_{\delta}\p1)
\to (D_{\rm rig}\boxtimes_{\delta}\p1)/  (\check{\Pi}^{\rm an})^*.$$
     The left hand-side is isomorphic to $B^{\rm an}(\delta_1, \eta \delta_2)$ by proposition~\ref{split} and the right hand-side is isomorphic to $\Pi^{\rm an}$ by lemma~\ref{exact}. Consequently, we
    obtain a morphism
    $B^{\rm an}(\delta_1,\eta\delta_2)\to \Pi^{\rm an}$, whose kernel is a closed subspace of 
    $B^{\rm an}(\delta_1, \eta\delta_2)$, thus a space of compact type. On the other hand, the kernel is isomorphic to
    the quotient of $ (\check{\Pi}^{\rm an})^*\cap 
(\mathcal{R}(\delta_1)\boxtimes_{\delta}\p1)$ by the closed subspace 
    $\mathcal{R}^{+}(\delta_1)\boxtimes_{\delta}\p1$. Let $\sigma: D_{\rm rig}\to \mathcal{R}(\delta_2)$ be the natural projection.
    Then $$ (\check{\Pi}^{\rm an})^*\cap (\mathcal{R}(\delta_1)\boxtimes_{\delta}\p1)=\{z\in (\check{\Pi}^{\rm an})^*| \sigma({\rm Res}_{\zp}(z))=\sigma({\rm Res}_{\zp}(wz))=0\}$$
    is closed in the Fr\'echet space $ (\check{\Pi}^{\rm an})^*$, thus it is itself a Fr\'echet space. Since a Fr\'echet space which is also a space
    of compact type is finite dimensional, the result follows.     
    \end{proof}
  
  \begin{remark} Combining lemmas
\ref{exact} and~\ref{morphism}, we also obtain the existence of a morphism 
   $ B^{\rm an}(\delta_1, \eta^{-1}\delta_2)\to \Pi_{\delta_D \eta^{-1}}(D)^{\rm an}$ 
   with a finite dimensional kernel. 
  
  \end{remark}
  
  \subsubsection{Universal unitary completions and completion of the proof}
    The next theorem requires some preliminaries. 
   We say that 
   $s=(\delta_1,\delta_2,\mathcal{L})\in\mathcal{S}_{\rm irr}$ is 
   
   $\bullet$ exceptional if 
$\kappa(s)\in \mathbf{N}^*$
and $\delta_1=x^{\kappa(s)}\delta_2$
(in particular, $s\in\mathcal{S}_*^{\rm cris}$).
   
   $\bullet$ special if $\kappa(s)\in\mathbf{N}^*$ and 
   $\delta_1=x^{\kappa(s)-1}\varepsilon\delta_2$ 
(this includes $s\in\mathcal{S}_*^{\rm st}$). 
   
   If $s$ is special, then setting $W(\delta_1,\delta_2)={\rm Sym}^{\kappa(s)-1}(L^2)\otimes_{L} \delta_2$ there is~\cite[th. 2.7, rem. 2.11]{Cvectan} a natural isomorphism
   $${\rm Ext}^1_{G}(W(\delta_1,\delta_2), B^{\rm an}(\delta_1,\delta_2)/W(\delta_1,\delta_2))\cong {\rm Ext}^1(\mathcal{R}(\delta_2), \mathcal{R}(\delta_1)).$$
   The extension $D(s)_{\rm rig}$ of 
   $\mathcal{R}(\delta_2)$ by $\mathcal{R}(\delta_1)$ associated to $s$ gives therefore rise to an extension 
  $E_{\mathcal{L}}$ of 
   $W(\delta_1,\delta_2)$ by $ B^{\rm an}(\delta_1,\delta_2)/W(\delta_1,\delta_2)$ (these extensions were introduced and studied by Breuil \cite{L, brL}).
    
If $s=(\delta_1,\delta_2,\mathcal{L})\in\mathcal{S}_{\rm irr}$ we have $D(s)\in \mathcal{MF}(\delta_{D(s)}^{-1})$ by proposition~\ref{padicL}. We write 
   $\Pi(s)$ instead of $\Pi_{\delta_{D(s)}}(D(s))$.  Propositions~\ref{classical} and~\ref{compatible}(iv) imply that $\Pi(s)$ is in ${\rm Rep}_L(\delta_{D(s)})$ and is absolutely irreducible.

If $\pi$ is a representation of $G$ on a locally convex $L$-vector space, we let 
   $\widehat{\pi}$ be the universal unitary completion of $\pi$ (if it exists).

   \begin{thm}\label{universal}
  If $s=(\delta_1,\delta_2,\mathcal{L})\in\mathcal{S}_{\rm irr}$ then the following hold:
    
    a) If $s\in\mathcal{S}_{*}^{\rm cris}$ is not special, then\footnote{$B^{\rm alg}(\delta_1,\delta_2)$ is the
    space of ${\rm SL}_2(\qp)$ locally algebraic vectors in $B^{\rm an}(\delta_1,\delta_2)$.} $\Pi(s)=\widehat{B^{\rm alg}(\delta_1,\delta_2)}=\widehat{B^{\rm an}(\delta_1,\delta_2)}$.
        
    b) If $s\in\mathcal{S}_{*}^{\rm ng}$, then $\Pi(s)=\widehat{B^{\rm an}(\delta_1,\delta_2)}$.
    
    c) If $s$ is special (which includes the case $s\in\mathcal{S}_*^{\rm st}$), then $\Pi(s)=\widehat{E_{\mathcal{L}}}$.
  
      \end{thm}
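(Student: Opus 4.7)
The plan is to handle the three cases by a common strategy: produce a continuous $G$-equivariant map from the relevant source (locally algebraic induction, locally analytic induction, or Breuil's extension $E_{\mathcal L}$) into $\Pi(s)$, and then verify the universal property of the unitary completion. The main common building block is lemma~\ref{morphism}, specialized to $\eta=1$ (which is the relevant situation here: by proposition~\ref{padicL} we have $D(s)\in\mathcal{MF}(\delta_{D(s)}^{-1})$, so the only allowed central character is $\delta=\delta_{D(s)}$), giving a $G$-equivariant morphism $B^{\mathrm{an}}(\delta_1,\delta_2)\to\Pi(s)^{\mathrm{an}}\hookrightarrow\Pi(s)$ with finite-dimensional kernel. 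Absolute irreducibility of $\Pi(s)$, which follows from proposition~\ref{classical}(a) together with proposition~\ref{compatible}(iv), then forces every nonzero such map to be a dense embedding, concentrating all of the genuine difficulty on the universal completion statement.

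For part (a), the main theorem of Berger--Breuil~\cite{bb} constructs an admissible topologically irreducible unitary Banach completion of $B^{\mathrm{alg}}(\delta_1,\delta_2)$, and proposition~\ref{compat} (from~\cite{Becomp, CD}) identifies it with our $\Pi(s)$; this already gives $\Pi(s)=\widehat{B^{\mathrm{alg}}(\delta_1,\delta_2)}$. To upgrade to $\widehat{B^{\mathrm{an}}(\delta_1,\delta_2)}$, one combines this with the map from lemma~\ref{morphism}: any continuous $G$-equivariant map $B^{\mathrm{an}}\to\Pi'$ into a unitary Banach representation restricts to $B^{\mathrm{alg}}$, which extends to $\widehat{B^{\mathrm{alg}}}=\Pi(s)\to\Pi'$, and the two resulting continuous maps $B^{\mathrm{an}}\to\Pi'$ agree on the dense subspace $B^{\mathrm{alg}}$, hence everywhere. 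For part (b), lemma~\ref{morphism} produces $B^{\mathrm{an}}(\delta_1,\delta_2)\to\Pi(s)$, whose finite-dimensional kernel must vanish because $\kappa(s)\notin\mathbf{N}^*$ rules out any nonzero finite-dimensional subrepresentation of $B^{\mathrm{an}}(\delta_1,\delta_2)$; dense image follows from irreducibility of $\Pi(s)$, and the universal completion statement is precisely the content of~\cite{Cvectan} (with the Iwahori-norm estimates of~\cite{PS} as input).

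Part (c) is the most intricate. When $s$ is special, Breuil's extension $E_{\mathcal L}$ is defined via the canonical isomorphism $\mathrm{Ext}^1_G(W(\delta_1,\delta_2), B^{\mathrm{an}}(\delta_1,\delta_2)/W(\delta_1,\delta_2))\cong\mathrm{Ext}^1(\mathcal R(\delta_2),\mathcal R(\delta_1))$, matched by $\mathcal L$ to the triangulation of $D(s)_{\mathrm{rig}}$. Combining lemma~\ref{morphism}, the d\'evissage of lemma~\ref{devisser}, and the explicit description of Jordan--H\"older constituents of $\mathcal R(\delta_1)\boxtimes_\delta\mathbf P^1$ from~\cite{Cvectan}, one constructs a map $E_{\mathcal L}\to\Pi(s)$ realizing the correct $\mathcal L$-invariant on both sides; by irreducibility of $\Pi(s)$ this map is injective with dense image, and the universal completion property $\Pi(s)=\widehat{E_{\mathcal L}}$ is then Breuil's $\mathcal L$-invariant theory~\cite{L, brL} combined with~\cite{Cvectan}. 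The main obstacle is that the primary references~\cite{bb, Cvectan, L, brL} assume $p$ odd, so the $p=2$ case relies essentially on the extension of these arguments carried out in~\cite{except}: the delicate matching of the triangulation $\mathcal L$-invariant with the extension class of $E_{\mathcal L}$, and the corresponding norm estimates needed for the universal completion property, are the subtle technical points there.
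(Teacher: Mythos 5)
Your outline for the non-exceptional sub-cases is broadly aligned with the paper, but there is a genuine gap: you have not treated the \emph{exceptional} case, i.e.\ $s\in\mathcal S_*^{\rm cris}$ with $\delta_1=x^{\kappa(s)}\delta_2$, which falls under part~(a) but which Berger--Breuil \cite{bb} does \emph{not} cover. For such $s$ the two Frobenius eigenvalues coincide (after twist), $B^{\rm alg}(\delta_1,\delta_2)$ splits into two locally algebraic pieces, and the construction of $\Pi_{\rm aut}(s)$ as a quotient of $B(s)$ in \cite{bb, pem, L} simply does not apply. You cannot conclude from those references that $\widehat{B^{\rm alg}(\delta_1,\delta_2)}$ is even admissible or nonzero in this case. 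This is exactly where the paper does real work: it uses \cite[prop.~4.11]{Cvectan} to get an injection $B^{\rm alg}(\delta_1,\delta_2)\hookrightarrow\Pi(s)$, finds a non-exceptional $s'\in\mathcal S_*^{\rm cris}$ and finitely generated $\OO[G]$-lattices in $B^{\rm alg}(\delta_1,\delta_2)$ and $\Pi(s')^{\rm alg}$ with isomorphic reductions (the second paragraph of \cite[prop.~2.10]{blocks}), deduces from the already-settled non-exceptional case that $\widehat{B^{\rm alg}(\delta_1,\delta_2)}$ is admissible of finite length with $\overline{X}^{\rm ss}\cong\overline{\Pi(s')}^{\rm ss}$, and then uses a closed-image / semisimplification argument to kill the kernel of $X\to\Pi(s)$. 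None of this appears in your proposal.

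Relatedly, you have misread the role of \cite{except}: it is cited by the paper as a \emph{prior} solution of the exceptional case for $p>2$, not as an extension of \cite{bb,Cvectan,L,brL} to $p=2$. The paper's new exceptional-case argument is precisely what replaces \cite{except} when $p=2$ (and reproves it uniformly). Finally, in your part~(a) the step ``proposition~\ref{compat} identifies $\widehat{B^{\rm alg}(\delta_1,\delta_2)}$ with $\Pi(s)$'' is too weak as stated: knowing that two absolutely irreducible Banach representations have isomorphic semi-simple reductions mod $p$ does not identify them. The actual identification in the paper comes from \cite[th.~IV.4.12]{Cbigone} (whose proof builds on \cite{bb, PS, Ctrianguline}), which simultaneously proves $D(s)\in\mathcal{MF}(\delta_{D(s)}^{-1})$ and $\Pi_{\rm aut}(s)=\Pi(s)$. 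Your fallback via lemma~\ref{morphism} plus universality and irreducibility would close this particular gap, but you should state it explicitly rather than lean on proposition~\ref{compat}, which is doing no work here.
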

   
   \begin{proof} Assume first that $s$ is not exceptional. Let $B(s)$ be the space of functions $\phi: \qp\to L$ of class $\mathcal{C}^{u(s)}$, 
   such that $x\mapsto (\delta_1\delta_2^{-1}\varepsilon^{-1})(x)\phi(1/x)$ 
extends to a function of class $\mathcal{C}^{u(s)}$. By results of Berger, Breuil and Emerton\footnote{See
   \cite[th. 4.3.1]{bb}, \cite[prop. 2.5]{pem}, \cite[cor.~3.2.3, 3.3.4]{L}.} one can express $\widehat{B^{\rm alg}(\delta_1,\delta_2)}$, $\widehat{B^{\rm an}(\delta_1,\delta_2)}$
   and $\widehat{E_{\mathcal{L}}}$ (according to whether $s\in \mathcal{S}_{*}^{\rm cris}, \mathcal{S}_{*}^{\rm ng}$ or $s$ special) as a quotient $\Pi_{\rm aut}(s)$ of $B(s)$. 
   Theorem IV.4.12 of \cite{Cbigone} (which builds on \cite{bb}, \cite{PS}, \cite{Ctrianguline}) shows that $D(s)\in \mathcal{MF}(\delta_{D(s)}^{-1})$ and that $\Pi_{\rm aut}(s)=\Pi(s)$, which finishes the proof in this case. 
   
 It remains to deal with the exceptional case\footnote{
     This problem is solved in \cite{except} for $p>2$.}.      
       Let 
  $\Pi=\Pi(s)$. The description of $\Pi(s)^{\rm an}$ given by \cite[prop. 4.11]{Cvectan} shows that 
  there is an injection $B^{\rm alg}(\delta_1,\delta_2)\to \Pi$. If
  $X=\widehat{B^{\rm alg}(\delta_1,\delta_2)}$, we obtain a morphism $X\to \Pi$ and an injection 
  $B^{\rm alg}(\delta_1,\delta_2)\to X$. In particular $X\ne 0$, and then
  the second paragraph of the proof of prop. $2.10$ in \cite{blocks}      
shows that we can find a non-exceptional point $s'\in\mathcal{S}_{*}^{\rm cris}$
  and lattices $\Theta_1,\Theta_2$ in $B^{\rm alg}(\delta_1,\delta_2)$ and $\Pi(s')^{\rm alg}$, both finitely generated as $\OO[G]$-modules and
  such that $\Theta_1/\varpi\cong \Theta_2/\varpi$.
  
    Since $\Theta_1,\Theta_2$ are finitely generated over $\OO[G]$, their $p$-adic completions are 
    open, bounded, $G$-stable lattices in $X$ and $\Pi(s')=\widehat{\Pi(s')^{\rm alg}}$, respectively. As $s'$ is not exceptional, we know (by the first paragraph) that $\Theta_2/\varpi$ is admissible, of finite length, thus $X$ is admissible, of finite length as Banach representation and $\overline{X}^{\rm ss}\cong \overline{\Pi(s')}^{\rm ss}$. In particular, the  
  image of the morphism $X\to \Pi$ is closed \cite{iw}. Since $\Pi$ is irreducible 
 we obtain an exact sequence $0\to K\to X\to \Pi\to 0$ in $\Ban^{\adm}_G(L)$. It follows from  \cite[lem.5.5]{comp} that this induces an exact
 sequence $0\to \overline{K}^{\rm ss}\to \overline{X}^{\rm ss}\to \overline{\Pi}^{\rm ss}\to 0$. Thus we have a surjection $\overline{\Pi(s')}^{\rm ss}\to \overline{\Pi(s)}^{\rm ss}$. Compatibility of 
  $p$-adic and mod $p$ local Langlands (\cite{Becomp} or proposition~\ref{compat}) implies that this surjection must be an isomorphism, which in turn shows that 
  $\overline{K}^{\rm ss}=0$, hence $K=0$. We conclude that $X\cong \Pi$ and we are done. 
    \end{proof}
   
\begin{prop}\label{Triangle}
  If $D\in \mathcal{S}_{\rm irr}$ and $D\in \mathcal{MF}(\delta^{-1})$ for some unitary character $\delta$, then $\delta=\delta_D$.
\end{prop}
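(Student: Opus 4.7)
The plan is as follows. Write $D=D(s)$ with $s=(\delta_1,\delta_2,\mathcal{L})\in\mathcal{S}_{\rm irr}$, and set $\eta=\delta\delta_D^{-1}$; the goal is to show $\eta=1$. Since $D$ is two-dimensional, trianguline, and irreducible, remark~\ref{locconstant} guarantees that the Sen operator on $D$ is non-scalar, and proposition~\ref{scalar} then yields that $\eta$ is locally constant.

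Set $\Pi=\Pi_\delta(D)$. Given that $\eta$ is locally constant, lemma~\ref{morphism} produces a continuous $G$-equivariant morphism
$$\iota: B^{\rm an}(\delta_1,\eta\delta_2)\to\Pi^{\rm an}$$
with finite-dimensional kernel; a central-character check ($\varepsilon^{-1}\delta_1\eta\delta_2=\eta\delta_D=\delta$) confirms compatibility with $\Pi$. Composing with $\Pi^{\rm an}\hookrightarrow\Pi$ gives a continuous map from a locally analytic source into the unitary Banach target $\Pi$, so by the universal-completion property it extends to a nonzero continuous $G$-map $\widehat{B^{\rm an}(\delta_1,\eta\delta_2)}\to\Pi$ (in the exceptional case one instead completes a suitable Breuil-style extension $E_{\mathcal{L}'}$).

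Theorem~\ref{universal} identifies this universal completion with $\Pi(s')$ for some $s'=(\delta_1,\eta\delta_2,\mathcal{L}')\in\mathcal{S}_{\rm irr}$; because $\eta$ is a locally constant unitary character with $\kappa(\eta)=0$ and $v_p(\eta(p))=0$, the $\mathcal{S}_*$-conditions and the subtype (crystalline, Steinberg, non-generic) are inherited from $s$, and $\mathcal{L}'$ is determined by the extension carried by the image of $\iota$. Both $\Pi(s')$ and $\Pi$ are absolutely irreducible admissible Banach representations (proposition~\ref{classical} and proposition~\ref{compatible}(iv)), so the nonzero map $\Pi(s')\to\Pi$ must be an isomorphism. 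Applying $\mathbf{D}$ and invoking proposition~\ref{compatible}(iii) then yields $\check D\cong\mathbf{D}(\Pi(s'))\cong \check{D(s')}$, hence $D\cong D(s')$.

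Comparison of determinants forces $\delta_1\delta_2=\delta_1\eta\delta_2$, hence $\eta=1$. The one subtlety is proposition~\ref{classical}(b), which permits $D\cong D(s')$ inside $\mathcal{S}_*^{\rm cris}$ even through the alternative identification $(\delta_1,\eta\delta_2)=(x^{\kappa(s)}\delta_2,x^{-\kappa(s)}\delta_1)$; but this forces $\eta\delta_2=x^{-\kappa(s)}\delta_1=\delta_2$, again giving $\eta=1$. The main obstacle is the middle step: actually extending $\iota$ through the universal completion and recognizing the result as $\Pi(s')$ requires the full force of theorem~\ref{universal}, whose proof in the exceptional case itself invokes the mod-$p$ Langlands compatibility of proposition~\ref{compat}; moreover, one has to match $\mathcal{L}'$ to the special/non-special subtype of $s'$ so that the correct clause of theorem~\ref{universal} applies.
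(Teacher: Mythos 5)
Your proof handles the non-special branch correctly and in essentially the same way as the paper: use remark~\ref{locconstant} and proposition~\ref{scalar} to show $\eta$ is locally constant, invoke lemma~\ref{morphism} to get a nonzero map $B^{\rm an}(\delta_1,\eta\delta_2)\to\Pi^{\rm an}$, pass to universal completions, obtain $\Pi(s')\cong\Pi$, and conclude from $D(s)\cong D(s')$ and proposition~\ref{classical}(b). But there is a genuine gap when $s'=(\delta_1,\eta\delta_2,\cdot)$ is \emph{special} (in the paper's sense $\delta_1=x^{\kappa(s')-1}\varepsilon\eta\delta_2$ --- you appear to conflate this with \emph{exceptional}, which is the different condition $\delta_1=x^{\kappa(s)}\delta_2$). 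In the special case theorem~\ref{universal}(c) computes $\widehat{E_{\mathcal{L}'}}$, not $\widehat{B^{\rm an}(\delta_1,\eta\delta_2)}$; there is no $\mathcal{L}'$ with $\widehat{B^{\rm an}(\delta_1,\eta\delta_2)}=\Pi(s')$, since any unitary completion kills the finite-dimensional subrepresentation $W(\delta_1,\eta\delta_2)$, and the universal completion of $B^{\rm an}/W$ is not a single $\Pi(s')$ (these sit in a whole $\mathbf{P}^1(L)$ of non-isomorphic completions as $\mathcal{L}'$ varies). Lemma~\ref{morphism} alone does not hand you the extension class $\mathcal{L}'$, so the step ``recognizing the result as $\Pi(s')$'' has no content in this branch; your closing paragraph admits as much without closing it.

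The paper avoids this obstacle with a two-step structure you did not reproduce. It first proves $\eta^2=1$ by applying your argument to \emph{both} $s'=(\delta_1,\eta\delta_2,\mathcal{L})$ and $s''=(\delta_1,\eta^{-1}\delta_2,\mathcal{L})$: if $\eta^2\neq 1$ then at most one of $s',s''$ can be special, so the non-special branch applies to the other, yielding $D(s)\cong D(s')$ (or $D(s'')$) and hence $\eta=1$ by proposition~\ref{classical}(b), a contradiction. This reduces the problem to the case $\eta^2=1$ with $s'$ special, where the exact sequence of lemma~\ref{exact} becomes self-dual of the form $0\to(\Pi^{\rm an})^*\otimes\delta_D\to D_{\rm rig}\boxtimes_\delta\p1\to\Pi^{\rm an}\to 0$, and proposition~\ref{split} produces explicit d\'evissages of the rank-one pieces. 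Running the argument of \cite[prop.~4.11(ii)]{Cvectan} with these exact sequences actually \emph{produces} a non-split extension $E_{\mathcal{L}'}$ inside $\Pi^{\rm an}$; only then can theorem~\ref{universal}(c) be applied. Without either this reduction or an explicit construction of $E_{\mathcal{L}'}\hookrightarrow\Pi^{\rm an}$, your argument does not go through when $s'$ is special.
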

 
\begin{proof}
    Write 
    $s=(\delta_1,\delta_2,\mathcal{L})\in\mathcal{S}_{\rm irr}$ and $\delta=\delta_D\eta$ (note that $\delta_D=\varepsilon^{-1}\delta_1\delta_2$).
    We will prove that $\eta=1$. 
      
     We start by proving that $\eta=\eta^{-1}$. Suppose that this is not the case and let 
    $s'=(\delta_1,\eta\delta_2,\mathcal{L})$ and $s''=(\delta_1,\eta^{-1}\delta_2,\mathcal{L})$. 
    Since $\eta$ is locally constant and unitary, we have 
  $s', s''\in\mathcal{S}_{\rm irr}$ and $s, s', s''$ are pairwise distinct. 
    At least one of $s', s''$ is not special, and replacing\footnote{This uses lemma~\ref{exact}.} $\eta$ by $\eta^{-1}$ we may assume that $s'$ has this property. 
    Lemma~\ref{morphism} gives a nonzero morphism $B^{\rm an}(\delta_1,\eta\delta_2)\to \Pi$. Applying 
theorem~\ref{universal} (to this morphism or to its restriction to $B^{\rm alg}(\delta_1,\eta\delta_2)$) yields a nonzero morphism 
  $\Pi(s')\to \Pi$. This must be an isomorphism since both the source and target are topologically irreducible and admissible by proposition~\ref{compatible}. Applying the functor $\Pi\mapsto{\bf V}(\Pi)$
and using proposition~\ref{compatible} again yields $D(s)\cong D(s')$, contradicting proposition~\ref{classical}. Thus $\eta=\eta^{-1}$, 
and the proof also shows that if $s'$ is not special, then $\eta=1$.
    
        Assume that $s'$ is special. Since $\eta^2=1$, we have $\Pi_{\delta_D\eta^{-1}}(D)=\Pi$ and the exact sequence in proposition~\ref{exact} becomes
    $$0\to (\Pi^{\rm an})^*\otimes\delta_D\to D_{\rm rig}\boxtimes_{\delta}\p1\to \Pi^{\rm an}\to 0.$$
    Proposition~\ref{split} also gives exact sequences 
       $$0\to B^{\rm an}(\delta_2, \eta\delta_1)^*\otimes \delta_D\to \mathcal{R}(\delta_1)\boxtimes_{\delta}\p1\to B^{\rm an}(\delta_1,\eta \delta_2)\to 0,$$
     $$  0\to B^{\rm an}(\delta_1, \eta \delta_2)^*\otimes\delta_D\to \mathcal{R}(\delta_2)\boxtimes_{\delta}\p1\to B^{\rm an}(\delta_2, \eta\delta_1)\to 0
     $$
    We are now exactly in the context of the proof of prop. 4.11, part ii) of \cite{Cvectan}, which shows that $\Pi^{\rm an}$ contains an extension 
    $E_{\mathcal{L}'}$ of $W(\delta_1,\eta\delta_2)$ by $B^{\rm an}(\delta_1,\eta\delta_2)/W(\delta_1,\eta\delta_2)$. This extension is necessarily non split since 
    $\Pi^{\rm an}$ does not contain any finite dimensional $G$-invariant subspace. 
     If $s''=(\delta_1,\eta\delta_2,\mathcal{L'})$, then the inclusion $E_{\mathcal{L}'}\to \Pi^{\rm an}$ induces via theorem~\ref{universal} a nonzero morphism $\Pi(s'')\to \Pi$. Arguing as in the previous paragraph, we obtain $D(s'')\cong D(s)$ and we conclude using proposition~\ref{classical}.  
\end{proof}   
      
\subsection{Representations of infinite height}\label{injheight}

\subsubsection{$(\varphi,\Gamma)$-modules of infinite height}
  
    In this $\S$ we fix a character $\delta: \qpet\to \mathcal{O}^{\times}$ and an absolutely irreducible 
   $D\in \mathcal{MF}(\delta^{-1})$ such that $D^+=\{0\}$. Let $\Pi=\Pi_{\delta}(D)$ and $\check{\Pi}=\Pi_{\delta^{-1}}(\check{D})$. By proposition~\ref{compatible} 
we have an inclusion $\check{\Pi}^*\subset D\boxtimes_{\delta}\p1$. We will use several times the inclusion $D^{\psi=\alpha}\subset {\rm Res}_{\zp} (\check{\Pi}^*)$ for all $\alpha\in \mathcal{O}^{\times}$, see the discussion in remark V.14 of \cite{CD}. 
Recall that $\mathcal{C}^{\alpha}=(1-\alpha\varphi)D^{\psi=\alpha}$. 
                
           \begin{prop}\label{inj}
          {\rm a)}  ${\rm Res}_{a+p^n\zp}: \check{\Pi}^*\to D$ is injective for $a\in\zp$ and $n\geq 0$. 
                    
          {\rm b)} $\mathcal{C}^{\alpha}\cap \mathcal{C}^{\beta}=\{0\}$ for all distinct $\alpha, \beta\in \mathcal{O}^{\times}$.
          
           \end{prop}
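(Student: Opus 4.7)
By $G$-equivariance of the sheaf $U\mapsto D\boxtimes_\delta U$ on $\p1$, the matrix $\left(\begin{smallmatrix}p^n & a\\ 0 & 1\end{smallmatrix}\right)\in G$ sends $\zp$ to $a+p^n\zp$, so it suffices to show that ${\rm Res}_{\zp}\colon \check\Pi^*\to D$ is injective. Given $z\in\check\Pi^*$ with ${\rm Res}_\zp(z)=0$, the sheaf condition on the cover $\p1=\zp\cup w\zp$ (with overlap $\zpet$) forces ${\rm Res}_\zp(wz)$ to vanish on $\zpet$, so it lies in $\varphi(D)=\ker(1-\varphi\psi)$ inside $D\boxtimes\zp$. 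I would then iterate this argument, combining the action of $w$ with the lower-triangular unipotent translations $\left(\begin{smallmatrix}1 & 0\\ c & 1\end{smallmatrix}\right)$ for $c\in p\zp$, to conclude inductively that ${\rm Res}_\zp(wz)\in\varphi^n(D)$ for every $n\ge 0$. The intersection $\bigcap_n\varphi^n(D)$ is $\varphi$- and $\Gamma$-stable, and one verifies that it is finitely generated over $\mathcal{E}^+$ and has bounded $\varphi$-orbits, so it is contained in $D^+=\{0\}$; thus ${\rm Res}_\zp(wz)=0$, and the sheaf property then forces $z=0$.

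\textbf{Plan for (b).} Let $z=(1-\alpha\varphi)x=(1-\beta\varphi)y$ with $\psi(x)=\alpha x$, $\psi(y)=\beta y$, and assume $\alpha\neq\beta$. Subtracting the two expressions for $z$ gives $x-y=\varphi(\alpha x-\beta y)$; applying $\psi$ and using $\psi(x)=\alpha x$, $\psi(y)=\beta y$ yields, for $v:=\alpha x-\beta y$, the key relation
\[
\psi(v)=(\alpha+\beta)\,v-\alpha\beta\,\varphi(v),
\]
equivalently $(1-\alpha\varphi)(1-\beta\varphi)v = v-\varphi\psi(v)={\rm Res}_{\zpet}(v)$. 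A direct induction on this identity shows that $\psi^n(v)$ lies in the $\mathcal{O}$-span of $v$ and $\varphi(v)$ with uniformly bounded coefficients. Playing this against the $\Gamma$-action (which commutes with $\psi$ and so preserves the relation on all $\Gamma$-translates of $v$), the $\mathcal{E}^+[\Gamma]$-submodule of $D$ generated by $v$ turns out to be $\varphi$-stable with bounded $\varphi$-orbit, hence contained in $D^+=\{0\}$. Therefore $v=0$, so $x=y$, and $\alpha x=\psi(x)=\psi(y)=\beta x$ forces $x=0$ since $\alpha\neq\beta$, giving $z=0$.

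\textbf{Main obstacle.} In both parts, the crux is upgrading a formal linear-algebraic constraint into an honest $\mathcal{E}^+$-finiteness statement so as to invoke $D^+=\{0\}$. In (a) the delicate task is to combine the sheaf-theoretic vanishing with enough $G$-translations of $z$ to genuinely exhaust $\bigcap_n\varphi^n(D)$; in (b) one must verify that the sub-$(\varphi,\Gamma)$-module generated by $v$ is truly of finite height, which rests on the injectivity of $1-\lambda\varphi$ on $D$ for every $\lambda\in\mathcal{O}^\times$ (itself an immediate consequence of $D^+=\{0\}$, since $v=\lambda\varphi(v)$ would yield $\varphi^n(v)=\lambda^{-n}v$, a bounded orbit).
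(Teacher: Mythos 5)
Your reduction of (a) to the injectivity of ${\rm Res}_{\zp}$ is the same as the paper's, but from there both of your arguments have genuine gaps, and neither lands the blow the way the paper does.

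For (a): your first step is fine — ${\rm Res}_\zp(z)=0$ forces $wz$ to be supported on $w(\p1\setminus\zp)=p\zp$, hence ${\rm Res}_\zp(wz)\in\varphi(D)$. But the proposed iteration does not go further. Nothing forces $wz$ to be supported on $p^2\zp$: the lower-unipotents $\left(\begin{smallmatrix}1&0\\c&1\end{smallmatrix}\right)$ with $c\in p\zp$ preserve both $\zp$ and $p\zp$, so conjugating by $w$ and translating only tells you again that $wz$ is supported on $p\zp$, not on smaller disks. Moreover, even granting ${\rm Res}_\zp(wz)\in\bigcap_n\varphi^n(D)$, the inclusion $\bigcap_n\varphi^n(D)\subset D^+$ is not established: infinite $\varphi$-divisibility is not the same as boundedness of the forward $\varphi$-orbit, and you give no reason why this intersection is a finitely generated $\mathcal{E}^+$-submodule. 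The paper instead uses a topological input you never invoke: choosing a stable lattice $D_0$, one observes that $\matrice{p^n}{0}{0}{1}wz$ lies in $D_0\cap\check\Pi_0^*$ for all $n$, so $\varphi^n({\rm Res}_\zp(wz))$ stays in the compact set ${\rm Res}_\zp(\check\Pi_0^*)$; boundedness of the $\varphi$-orbit is then literally the definition of $D_0^+$, which vanishes. That compactness of $\check\Pi_0^*$ is the whole argument, and your plan has no substitute for it.

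For (b): your identity $\psi(v)=(\alpha+\beta)v-\alpha\beta\varphi(v)$ for $v=\alpha x-\beta y$ is correct (one checks $\psi(v)=\alpha^2x-\beta^2y$), and it does give a bounded recurrence on $\psi^n(v)$. But that is the wrong direction: $D^+$ is characterized by boundedness of the $\varphi$-orbit, not the $\psi$-orbit, and knowing $x\in D^{\psi=\alpha}$ gives you no handle on $\varphi(x)$, so the claim that the submodule generated by $v$ is ``$\varphi$-stable with bounded $\varphi$-orbit'' is unsupported. The ingredient you are missing — emphasized in the paper just before the proposition and used nowhere in your plan — is the inclusion $D^{\psi=\alpha}\subset{\rm Res}_\zp(\check\Pi^*)$. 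With it, the paper's (b) collapses onto (a): from $x-y=\varphi(v)$ one gets ${\rm Res}_{1+p\zp}(x-y)=0$; writing $x-y={\rm Res}_\zp(z)$ for some $z\in\check\Pi^*$, part (a) applied with $a+p^n\zp=1+p\zp$ kills $z$, hence $x=y$, and then $\alpha x=\psi(x)=\psi(y)=\beta x$ with $\alpha\neq\beta$ forces $x=y=0$. Without that inclusion, a purely internal $(\varphi,\Gamma)$-module argument of the kind you outline does not close.
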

         
         \begin{proof}
       a)  ${\rm Res}_{a+p^n\zp}(z)=0$ is equivalent to ${\rm Res}_{\zp}(\matrice{p^{-n}}{0}{0}{1} \matrice{1}{-a}{0}{1} z)=0$, so it suffices to prove that ${\rm Res}_{\zp}: \check{\Pi}^*\to D$ is injective. Let $D_0$ be a stable lattice in $D$ and let 
       $\Pi_0=\Pi_{\delta}(D_0)$ and $\check{\Pi}_0=\Pi_{\delta^{-1}}(\check{D}_0)$. Then $\Pi_0$ and $\check{\Pi}_0$ are open, bounded and $G$-invariant lattices
       in $\Pi$ and $\check{\Pi}$. 
           Suppose that $z\in \check{\Pi}^*$ satisfies ${\rm Res}_{\zp}(z)=0$. Multiplying $z$
       by a power of $p$, we may assume that $z\in \check{\Pi}_0^*$. If $w=\matrice{0}{1}{1}{0}$, then $\matrice{p^n} {0}{0} {1}wz\in D_0\cap \check{\Pi}_0^*$ for all $n\geq 1$, thus $\varphi^n(wz)= {\rm Res}_{\zp} (\matrice{p^n}{0}{0}{1}wz)\in {\rm Res}_{\zp}(\check{\Pi}_0^{*})$. Since ${\rm Res}_{\zp}(\check{\Pi}_0^*)$ is compact (because 
       $\check{\Pi}_0^*$ is compact and ${\rm Res}_{\zp}$ is continuous), we deduce that $wz\in D_0^+=\{0\}$ and so $z=0$.

       b)  Let $x\in D^{\psi=\alpha}$ and $y\in D^{\psi=\beta}$ be such that $(1-\alpha\varphi)x=(1-\beta\varphi)y$. 
       Then $x-y=\varphi(\alpha x-\beta y)$, so ${\rm Res}_{1+p\zp}(x-y)=0$. Since $D^{\psi=\alpha}, D^{\psi=\beta}\subset {\rm Res}_{\zp} (\check{\Pi}^*)$, 
        we can write
       $x-y={\rm Res}_{\zp}(z)$ for some $z\in \check{\Pi}^*$. Then ${\rm Res}_{1+p\zp}(z)=0$, and
       part a) shows that $z=0$, thus $x=y$. But then $\alpha x=\beta y$ and so $x=y=0$. The result follows. 
                      \end{proof}

   \begin{cor}\label{trivial}
      Let $\eta:\qpet\to \mathcal{O}^{\times}$ be a locally constant character and let $\alpha,\beta\in \mathcal{O}^{\times}$. If 
      $m_{\eta}(\mathcal{C}^{\alpha})\cap \mathcal{C}^{\beta}\ne \{0\}$, then $\eta|_{\zpet}=1$ and $\alpha=\beta$.
     
    \end{cor}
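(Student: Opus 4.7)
The plan is to lift $x$ and $y:=m_\eta(x)$ to $\matrice{p}{0}{0}{1}$-eigenvectors $\tilde{x},\tilde{y}\in\check{\Pi}^*$ by means of the isomorphism~\eqref{Iwasawa}, and then exploit the injectivity of $\mathrm{Res}_{1+p^n\zp}$ on $\check{\Pi}^*$ supplied by proposition~\ref{inj}(a) to force $\tilde{x}=\tilde{y}$; both conclusions $\alpha=\beta$ and $\eta|_{\zpet}=1$ will then drop out by comparing eigenvalues and decomposing along cosets.

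Concretely, pick $0\neq y\in m_\eta(\mathcal{C}^\alpha)\cap\mathcal{C}^\beta$ and $x\in\mathcal{C}^\alpha$ with $m_\eta(x)=y$; note $x\neq 0$. Since $\eta$ is locally constant, fix $n$ so that $\chi:=\eta|_{\zpet}$ is constant on each coset $j+p^n\zp$ for $j\in(\zp/p^n\zp)^\times$, whence $m_\eta=\sum_j\chi(j)\mathrm{Res}_{j+p^n\zp}$ by proposition~\ref{mult}(e). The isomorphism~\eqref{Iwasawa} supplies unique $\tilde{x},\tilde{y}\in\check{\Pi}^*$ with $\matrice{p}{0}{0}{1}\tilde{x}=\alpha^{-1}\tilde{x}$, $\matrice{p}{0}{0}{1}\tilde{y}=\beta^{-1}\tilde{y}$ and $\mathrm{Res}_{\zpet}\tilde{x}=x$, $\mathrm{Res}_{\zpet}\tilde{y}=y$. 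Setting $u=\mathrm{Res}_{\zp}\tilde{x}$ and $v=\mathrm{Res}_{\zp}\tilde{y}$, a short check using the $G$-equivariance of the sheaf gives $u\in D^{\psi=\alpha}$, $v\in D^{\psi=\beta}$ with $x=(1-\alpha\varphi)u$ and $y=(1-\beta\varphi)v$.

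Since $\varphi(D)\subset D\boxtimes p\zp$ and $(1+p^n\zp)\cap p\zp=\emptyset$, restriction to $1+p^n\zp$ kills the $\varphi$-contributions, so $\mathrm{Res}_{1+p^n\zp}(u)=\mathrm{Res}_{1+p^n\zp}(x)$ and $\mathrm{Res}_{1+p^n\zp}(v)=\mathrm{Res}_{1+p^n\zp}(y)$. Using proposition~\ref{mult}(d) and $\chi(1)=1$, one has $\mathrm{Res}_{1+p^n\zp}(y)=\chi(1)\mathrm{Res}_{1+p^n\zp}(x)=\mathrm{Res}_{1+p^n\zp}(x)$, whence $\mathrm{Res}_{1+p^n\zp}(\tilde{y}-\tilde{x})=\mathrm{Res}_{1+p^n\zp}(v-u)=0$. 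Proposition~\ref{inj}(a) then forces $\tilde{x}=\tilde{y}$; comparing $\matrice{p}{0}{0}{1}$-eigenvalues and using $\tilde{x}\neq 0$ gives $\alpha=\beta$, and hence $y=x$, i.e.\ $m_\eta(x)=x$. Decomposing along the direct sum $D\boxtimes\zpet=\bigoplus_j D\boxtimes(j+p^n\zp)$, the equality $x=\sum_j\chi(j)\mathrm{Res}_{j+p^n\zp}(x)$ yields $(\chi(j)-1)\mathrm{Res}_{j+p^n\zp}(x)=0$ for each $j$; since $\mathrm{Res}_{j+p^n\zp}(x)=\mathrm{Res}_{j+p^n\zp}(\tilde{x})$ is nonzero by proposition~\ref{inj}(a), $\chi(j)=1$ for all $j$, i.e.\ $\eta|_{\zpet}=1$.

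The main obstacle is identifying the correct lift: the argument works only because \eqref{Iwasawa} pins down the lifts $\tilde{x},\tilde{y}$ uniquely inside the $\matrice{p}{0}{0}{1}$-eigenspaces of $\check{\Pi}^*$, so that $\tilde{y}-\tilde{x}$ is a well-defined element of $\check{\Pi}^*$ to which proposition~\ref{inj}(a) can be applied. Once that is in place, restriction to a single disk $1+p^n\zp$ already suffices to collapse the difference, and both $\alpha=\beta$ and the triviality of $\chi$ emerge simultaneously from the same injectivity statement.
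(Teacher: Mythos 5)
Your proof is correct, and the core of it — decompose $m_{\eta}$ via proposition~\ref{mult}(e), lift to $\check{\Pi}^*$ using that $D^{\psi=\alpha},D^{\psi=\beta}\subset\mathrm{Res}_{\zp}(\check{\Pi}^*)$, and then exploit the injectivity of restriction to a compact open supplied by proposition~\ref{inj}(a) — is exactly the paper's strategy. The route you take to the two conclusions is, however, arranged differently. The paper applies $\mathrm{Res}_{i+p^n\zp}$ for \emph{every} $i\in(\mathbb{Z}/p^n\mathbb{Z})^{\times}$ at once, gets $\eta(i)\tilde{z}=\tilde{y}$ for all $i$ directly from proposition~\ref{inj}(a), hence $\eta|_{\zpet}=1$ first, and only then obtains $\alpha=\beta$ by appealing to proposition~\ref{inj}(b) with $m_\eta=\mathrm{id}$. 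You instead isolate the single coset $1+p^n\zp$ to force $\tilde{x}=\tilde{y}$, read off $\alpha=\beta$ by comparing the $\matrice{p}{0}{0}{1}$-eigenvalues pinned down by \eqref{Iwasawa}, and only afterwards recover $\eta|_{\zpet}=1$ by decomposing $x=\sum_j\mathrm{Res}_{j+p^n\zp}(x)$. Your variant has the mild advantage of bypassing proposition~\ref{inj}(b) entirely (you are effectively reproving it in passing), at the cost of explicitly tracking the eigenvalue structure of the lifts, which the paper never needs. Two small points worth making explicit if you write this up: the identity $x=(1-\alpha\varphi)u$ with $u=\mathrm{Res}_{\zp}\tilde{x}$ uses $\mathrm{Res}_{p\zp}=\varphi\circ\psi$ and $\psi(u)=\alpha u$, and the nonvanishing of $\mathrm{Res}_{j+p^n\zp}(x)$ really does need to pass through the lift $\tilde{x}\in\check{\Pi}^*$, since proposition~\ref{inj}(a) is a statement about $\check{\Pi}^*$ and not about $D$.
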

    
    \begin{proof}  Suppose that $z\in \mathcal{C}^{\alpha}$ and $y\in\mathcal{C}^{\beta}$ are nonzero and satisfy 
    $m_{\eta}(z)=y$. Choose 
$\tilde{z}, \tilde{y}\in \check{\Pi}^*$ such that 
$z={\rm Res}_{\zpet}(\tilde{z})$ and $y={\rm Res}_{\zpet}(\tilde{y})$ (this uses the fact that $D^{\psi=\alpha}, D^{\psi=\beta}\subset {\rm Res}_{\zp} (\check{\Pi}^*)$). The hypothesis and 
part e) of proposition~\ref{mult} yield the existence of $n\geq 1$ such that 
$$m_{\eta}=\sum_{i\in (\mathbb{Z}/p^n\mathbb{Z})^{\times}} \eta(i) {\rm Res}_{i+p^n\zp}.$$
   
   For $i\in (\mathbb{Z}/p^n\mathbb{Z})^{\times}$, 
    applying ${\rm Res}_{i+p^n\zp}$ to 
the equality $m_{\eta}(z)=y$ (and using part d) of proposition~\ref{mult}) gives
$$\eta(i) {\rm Res}_{i+p^n\zp}(\tilde{z})={\rm Res}_{i+p^n\zp}(\tilde{y}),$$
hence (proposition~\ref{inj}) $\eta(i)\tilde{z}=\tilde{y}$. Since this holds for all 
$i\in (\mathbb{Z}/p^n\mathbb{Z})^{\times}$ and since $\tilde{z}\ne 0$, we infer that $\eta|_{\zpet}=1$. 
But then $m_{\eta}$ is the identity map and so the hypothesis becomes $\mathcal{C}^{\alpha}\cap \mathcal{C}^{\beta}\ne \{0\}$. 
Proposition~\ref{inj} shows that $\alpha=\beta$ and finishes the proof. 
 \end{proof}

  \subsubsection{A family of unramified twists of $D$}\label{familiar}
  
     In this $\S$ we let $V$ be any absolutely irreducible $L$-representation of $\cal{G}_{\qp}$
     of dimension $\geq 2$ and we let $V_0$ be a $\cal{G}_{\qp}$-stable $\mathcal{O}$-lattice in $V$. 
      Let $S=\mathcal{O}[[X]]$ and let $\delta^{\rm nr}: \cal{G}_{\qp}\to S^{\times}$ be the unramified character
   sending a geometric Frobenius to $1+X$. Then $V_{0,\rm un}=S\otimes_{\mathcal{O}} V_0$ 
    becomes a $\cal{G}_{\qp}$ module for the diagonal action. 
    
      Let $D_0$ (resp. $D_{0,\rm un}$) be the \'etale $(\varphi,\Gamma)$-module associated to  
       $V_0$ (resp. $V_{0,\rm un}$) by Fontaine's \cite{FoGrot} equivalence of categories and its version for families \cite{Dee}. 
       Concretely, $D_{0,\rm un}=\cal{O}_{\mathcal{E},S}\otimes_{\oe} D_0$, where 
       \footnote{The limit is taken for the $\mathfrak{m}_S=(\varpi,X)$-adic topology.}
          $$\cal{O}_{\mathcal{E},S}=S\widehat{\otimes}_{\cal O} \oe=\{\sum_{n\in\mathbb{Z}} a_n T^n, a_n\in S, \lim_{n\to-\infty} a_n=0\},$$
     $\gamma\in\Gamma$ and $\varphi$ acting by $\gamma\otimes\gamma$ and
     $\varphi(\lambda\otimes z)=((1+X)\varphi(\lambda))\otimes \varphi(z)$. 
    
    For $\alpha \in 1+{\mathfrak m}_L$, there is a surjective specialization map ${\rm sp}_{\alpha}: S\to \mathcal{O}$, sending $X$ to $\alpha-1$, with kernel 
     $\wp_{\alpha}=(X-\alpha+1)$. The induced map 
       $${\rm sp}_{\alpha}: \cal{O}_{\mathcal{E}, S}\to \oe, \quad s_{\alpha} (\sum_{n\in\mathbb{Z}} a_n T^n)=\sum_{n\in\mathbb{Z}} a_n(\alpha-1) T^n$$
       gives rise to a specialization map ${\rm sp}_{\alpha}: D_{0,\rm un}\to D_0$, which in turn 
       induces an isomorphism of $(\varphi,\Gamma)$-modules $D_{0,\rm un}/\wp_{\alpha}\cong D_0\otimes \alpha^{v_p}$. In particular, 
 ${\rm sp}_{\alpha}: D_{0,\rm un}\to D_0$ induces a $\Gamma$-equivariant morphism ${\rm sp}_{\alpha}: D_{0,\rm un}^{\psi=1}\to D_0^{\psi=\alpha}$.
Let $D_{\rm un}=L\otimes_{\mathcal{O}} D_{0,\rm un}$ and $D=L\otimes_{\cal O} D_0$.

       \begin{prop}\label{surj}
      For all $\alpha\in 1+{\mathfrak m}_L$ the map ${\rm sp}_{\alpha}: D_{\rm un}^{\psi=1}\to D^{\psi=\alpha}$
        is surjective.
        
      \end{prop}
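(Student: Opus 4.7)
My plan is to use the snake lemma to reduce surjectivity to a torsion-vanishing assertion on a certain cokernel, and then exploit the absolute irreducibility of $V$ together with local Tate duality for Iwasawa cohomology to dispose of that torsion.

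First I would observe that $\psi_{\rm un}$ is $S$-linear: since $\varphi$ acts trivially on $S\subset\cal O_{\cal E,S}$, the decomposition $D_{\rm un}=\bigoplus_{i=0}^{p-1}(1+T)^i\varphi_{\rm un}(D_{\rm un})$ is preserved by multiplication by any $g(X)\in S$, so $\psi_{\rm un}(g(X)z)=g(X)\psi_{\rm un}(z)$. In particular $1-\psi_{\rm un}$ commutes with multiplication by the non-zero-divisor $\wp_\alpha=X-\alpha+1$. Applying the snake lemma to the short exact sequence
$$0\to D_{\rm un}\xrightarrow{X-\alpha+1}D_{\rm un}\xrightarrow{{\rm sp}_\alpha}D\otimes\alpha^{v_p}\to 0$$
with the endomorphism $1-\psi_{\rm un}$, and using that the endomorphism induced on $D\otimes\alpha^{v_p}$ is $1-\alpha^{-1}\psi$ whose kernel is $D^{\psi=\alpha}$, produces the six-term exact sequence
$$0\to D_{\rm un}^{\psi=1}\xrightarrow{X-\alpha+1}D_{\rm un}^{\psi=1}\xrightarrow{{\rm sp}_\alpha}D^{\psi=\alpha}\to C\xrightarrow{X-\alpha+1}C\to D/(\psi-\alpha)D\to 0,$$
where $C:=D_{\rm un}/(1-\psi_{\rm un})D_{\rm un}$. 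Surjectivity of ${\rm sp}_\alpha$ is therefore equivalent to the vanishing of the $\wp_\alpha$-torsion $C[\wp_\alpha]$.

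Second, through Colmez's dictionary between $\psi$-cohomology and Iwasawa cohomology, $C$ corresponds (up to a controlled correction involving $\Gamma$-coinvariants) to $H^2_{\rm Iw}(\cal G_{\qp},V_{\rm un})$, which by local Tate duality is Pontryagin-dual to the $\cal G_{\qp(\mu_{p^\infty})}$-invariants of $V_{\rm un}^*(1)=V^*(1)\otimes(\delta^{\rm nr})^{-1}$. The $\wp_\alpha$-torsion of $C$ thus corresponds to vectors in $V^*(1)$ fixed by the inertia of $\qp(\mu_{p^\infty})$ on which the geometric Frobenius of $\qp(\mu_{p^\infty})$ acts by the scalar $\alpha^{-1}$.

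Third, since $V^*(1)$ is absolutely irreducible of dimension $\geq 2$ and $I_{\qp(\mu_{p^\infty})}$ is normal in $\cal G_{\qp}$, its inertia invariants $(V^*(1))^{I_{\qp(\mu_{p^\infty})}}$ are either zero (in which case $C=0$ and the surjectivity is immediate) or all of $V^*(1)$ — the ``unramified subcase'' in which $V^*(1)$ factors through $\cal G_{\qp}/I_{\qp(\mu_{p^\infty})}$. In that subcase one analyzes the two Frobenius eigenvalues of $V^*(1)$ explicitly; for $\alpha\in 1+{\mathfrak m}_L$ one constructs a preimage of any $\bar y\in D^{\psi=\alpha}$ in $D_{\rm un}^{\psi=1}$ by hand, using the rank-one sub-$(\varphi,\Gamma)$-modules generated by the Frobenius-eigenvectors. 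The main obstacle is precisely this unramified subcase: abstract duality alone does not kill the potential $\wp_\alpha$-torsion, so one has to produce an explicit lifting that cancels it, leveraging the rigid structure imposed by the family $\delta^{\rm nr}$ along with the fact that the only $\alpha$ that can fail to be regular lies in the discrete set of Frobenius eigenvalues in $1+{\mathfrak m}_L$.
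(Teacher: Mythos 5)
Your high-level strategy---reduce via the snake lemma to showing that a certain $\psi$-cokernel has no $\wp_\alpha$-torsion, convert this via Colmez's $\psi$-cohomology / Iwasawa cohomology dictionary to a statement about Galois invariants, and invoke absolute irreducibility of $V$---is the same as the paper's. There are two places where your version diverges, one technical and one substantive.

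The technical point: the paper does not apply the snake lemma directly to $D_{\rm un}$ but to the finite-level truncations $D_n:=D_{0,\rm un}/\wp_\alpha^n$, passing to the (projective) limit afterwards. The reason is that $D_{0,\rm un}$ is not finitely generated over $\oe$, so the identity $D_n/(\psi-1)\cong[(\qp/\zp\otimes_{\zp}\check{W}_n)^{H}]^\vee$ from \cite{Cmirab} is invoked at each finite level, and the limit $M=\varprojlim_n D_n/(\psi-1)$ need not agree with the naive cokernel $D_{0,\rm un}/(\psi-1)$. Your direct rational snake lemma is serviceable, but glosses over this.

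The substantive gap: you single out the ``unramified subcase''---where $(V^*(1))^{I_{\qp(\mu_{p^\infty})}}$ is all of $V^*(1)$---as the main obstacle, to be handled by an explicit lifting you never construct, and you speak of a ``discrete set of Frobenius eigenvalues in $1+{\mathfrak m}_L$'' that might fail. This case simply does not occur. The inertia subgroup $I_{\qp(\mu_{p^\infty})}$ is exactly $\mathcal{H}:=\Gal(\overline{\qp}/\qp^{\rm ab})$ (because $\qp^{\rm nr}\cdot\qp(\mu_{p^\infty})=\qp^{\rm ab}$), hence $\cal G_{\qp}/I_{\qp(\mu_{p^\infty})}\cong\Gal(\qp^{\rm ab}/\qp)$ is abelian. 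An absolutely irreducible representation factoring through an abelian quotient is $1$-dimensional, contradicting $\dim V\geq 2$; therefore $(\check V)^{\mathcal{H}}=0$ always. This is precisely how the paper closes: it deduces that some $p^N$ kills $(\qp/\zp\otimes_{\zp}\check V_0)^{\mathcal{H}}$, then uses that $\mathcal{H}$ acts trivially on $S$ to get $(\qp/\zp\otimes_{\zp}\check W_n)^{\mathcal{H}}\cong S/\wp_\alpha^n\otimes_{\mathcal{O}}(L/\mathcal{O}\otimes_{\mathcal{O}}\check V_0)^{\mathcal{H}}$, also killed by $p^N$ uniformly in $n$. No exceptional locus of $\alpha$'s appears, and no by-hand lifting is needed; this one observation replaces your entire third paragraph.
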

       
              \begin{proof} Let $D_n=D_{0,\rm un}/\wp_{\alpha}^n$. 
               It suffices to prove that the cokernel of the natural map $D_{0,\rm un}^{\psi=1}\to D_{1}^{\psi=1}$ is 
             $\mathcal{O}$-torsion. The snake lemma applied to the sequence $0\to D_{n-1}\to D_n\to D_1\to 0$ mapped to itself by 
             $\psi-1$
             yields an exact sequence of $\mathcal{O}$-modules 
             $$0\to D_{n-1}^{\psi=1}\to D_n^{\psi=1}\to D_1^{\psi=1}\to \frac{D_{n-1}}{\psi-1}\to \frac{D_n}{\psi-1}\to \frac{D_1}{\psi-1}\to 0.$$
            All modules appearing in the exact sequence are compact \cite[prop. II.5.5, II.5.6]{Cmirab} and we have a natural isomorphism 
            $\varprojlim_{n} D_n^{\psi=1}=D_{0,\rm un}^{\psi=1}$ (as $D_{0,\rm un}=\varprojlim_{n} D_n$). Passing to the limit we obtain therefore an
            exact sequence 
            $$0\to D_{0,\rm un}^{\psi=1}\to D_{0,\rm un}^{\psi=1}\to D_1^{\psi=1}\to M\to M\to \frac{D_1}{\psi-1}\to 0,$$
            where $M=\varprojlim_{n} \frac{D_n}{\psi-1}$.  It is thus enough to prove that $M$ is a torsion $\mathcal{O}$-module.
            
           Let $\check{W}_n$ be the Galois representation associated to $\check{D}_n$, namely the Cartier dual of $W_n:=V_{0,\rm un}/\wp_{\alpha}^n$. It follows from \cite[remarque II.5.10]{Cmirab} that there is an isomorphism\footnote{Here $H={\rm Ker}(\varepsilon)={\rm Gal}(\overline{\qp}/\qp(\mu_{p^{\infty}}))$ and $X^{\vee}$ is the Pontryagin dual of $X$.}
$$\frac{D_n}{\psi-1}\cong [(\qp/\zp\otimes_{\zp} \check{D}_n)^{\varphi=1}]^{\vee}=[(\qp/\zp\otimes_{\zp} \check{W}_n)^{H}]^{\vee},$$
hence it suffices to check that $(\qp/\zp\otimes_{\zp} \check{W}_n)^{H}$ are $\mathcal{O}$-torsion modules of bounded exponent (as $n$ varies). 
            
Let $\mathcal{H}={\rm Gal}(\overline{\qp}/\mathbb{Q}_p^{\rm ab})$. 
Since 
$\check{V}_0$ is absolutely irreducible of dimension $\geq 2$, there is $N\geq 1$ such that $p^N$ kills $(\qp/\zp\otimes_{\zp} \check{V}_0)^{\mathcal{H}}$.
As $V_{0,\rm un}\cong S\otimes_{\mathcal{O}} V_0$, with $\mathcal{H}$ acting trivially on $S$, and since 
$S/\wp_{\alpha}^n$ is a finite free $\mathcal{O}$-module, we have 
$$(\qp/\zp\otimes_{\zp} \check{W}_n)^{H}\subset (\qp/\zp\otimes_{\zp} \check{W}_n)^{\mathcal{H}}=(S/\wp_{\alpha}^n\otimes_{\cal O} (L/\mathcal{O}\otimes_{\mathcal{O}} \check{V}_0))^{\mathcal{H}}$$
$$=S/\wp_{\alpha}^n\otimes_{\mathcal{O}} (L/\mathcal{O}\otimes_{\mathcal{O}} \check{V}_0)^{\mathcal{H}}$$ and the last 
module is killed by $p^N$. The result follows.               
                 \end{proof}

    \subsubsection{Analytic variation in the universal family}
 Recall that for $\alpha\in \mathcal{O}^{\times}$ we denote $\mathcal{C}^{\alpha}=(1-\alpha\varphi) D^{\psi=\alpha}$. 
We recall that there is \cite[prop. I.2.3]{Cmirab} a $\varphi$ and $\Gamma$-invariant perfect pairing $\{\,\,,\,\}: \check{D}\times D\to L$, under which $\varphi$ and $\psi$ are adjoints. 
The
following 
result follows from the proof of \cite[lemme VI.1.1]{Cmirab}.

        \begin{lem}\label{calpha}
         $\mathcal{C}^{\alpha}$ is the orthogonal (for the pairing $\{\,\,,\,\}$) of $\check{D}^{\psi=1/\alpha}$ inside $D^{\psi=0}$.
        \end{lem}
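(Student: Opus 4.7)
The plan is to verify the easy inclusion by a direct adjointness computation and then deduce the reverse inclusion by a duality argument in the spirit of \cite[lemme VI.1.1]{Cmirab}.

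First I would prove $\mathcal{C}^{\alpha}\subseteq D^{\psi=0}\cap(\check{D}^{\psi=1/\alpha})^{\perp}$. For $x\in D^{\psi=\alpha}$, the relation $\psi\circ\varphi=\mathrm{id}$ immediately yields $\psi((1-\alpha\varphi)x)=\alpha x-\alpha x=0$, so $\mathcal{C}^{\alpha}\subseteq D^{\psi=0}$. For $y\in\check{D}^{\psi=1/\alpha}$, the adjointness property of the pairing, namely $\{y,\varphi(\cdot)\}=\{\psi(y),\cdot\}$, gives
\[
\{y,(1-\alpha\varphi)x\}=\{y,x\}-\alpha\{\psi(y),x\}=\{y,x\}-\alpha\cdot\alpha^{-1}\{y,x\}=0,
\]
which establishes orthogonality.

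For the reverse inclusion, the underlying idea is that, since $\varphi$ and $\psi$ are adjoint under $\{\,\,,\,\}$, the ``formal adjoint'' of $1-\alpha\varphi\colon D\to D$ is $1-\alpha\psi\colon\check{D}\to\check{D}$, so the heuristic identity $(\mathrm{im}\,T)^{\perp}=\ker(T^{\ast})$ should give the desired equality once everything is restricted to the correct $\psi$-eigenspaces. To turn this into a proof, one notes first that the hypothesis $D^{+}=\{0\}$ (inherited from the ambient section) forces $D^{\varphi=1/\alpha}=\{0\}$ for any $\alpha\in\mathcal{O}^{\times}$, hence $1-\alpha\varphi$ is injective on $D^{\psi=\alpha}$. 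One then restricts $\{\,\,,\,\}$ to a suitable perfect pairing between subspaces of $\check{D}$ and $D^{\psi=0}$ and shows that $\mathcal{C}^{\alpha}$ is closed in $D^{\psi=0}$. Given $z\in D^{\psi=0}$ orthogonal to $\check{D}^{\psi=1/\alpha}$, one finally solves $(1-\alpha\varphi)x=z$ with $x\in D^{\psi=\alpha}$; formally $x=\sum_{n\geq 0}\alpha^{n}\varphi^{n}(z)$ satisfies $\psi(x)=\alpha x$ and $(1-\alpha\varphi)x=z$, and the orthogonality condition on $z$ is precisely what is needed to make sense of such an $x$ in $D$.

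The principal obstacle lies in the topological analysis: establishing that $\mathcal{C}^{\alpha}$ is closed in $D^{\psi=0}$ and that the induced pairing
\[
\bigl(D^{\psi=0}/\mathcal{C}^{\alpha}\bigr)\times\check{D}^{\psi=1/\alpha}\longrightarrow L
\]
is non-degenerate. This amounts to a careful study of the operator $1-\alpha\varphi$ on $(\varphi,\Gamma)$-modules of infinite height and relies on the explicit structure of $D^{\psi=0}$ as a topological module over the Iwasawa algebra of $\zpet$; it is precisely the content of Colmez's lemma VI.1.1, from which our statement is extracted. Once this technical step is in hand, the remainder of the argument is the straightforward adjointness computation carried out above.
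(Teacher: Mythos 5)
Your proposal takes essentially the same route as the paper: both reduce the statement to Colmez's lemme~VI.1.1 of \cite{Cmirab}. You also carry out the easy inclusion $\mathcal{C}^{\alpha}\subseteq D^{\psi=0}\cap(\check{D}^{\psi=1/\alpha})^{\perp}$ by direct adjointness computation, which the paper leaves implicit, and that part is correct. One cautionary remark about your heuristic sketch of the reverse inclusion: the series $x=\sum_{n\geq 0}\alpha^{n}\varphi^{n}(z)$ does \emph{not} converge in $D$ (the terms have $|\alpha^{n}\varphi^{n}(z)|$ bounded below since $\alpha\in\mathcal{O}^{\times}$ and $\varphi$ is an isometry on an \'etale module), and orthogonality to $\check{D}^{\psi=1/\alpha}$ is not the mechanism that ``makes sense of'' such an $x$; Colmez's actual argument goes through the structure of $D^{\psi=0}$ and $\check{D}^{\psi=0}$ as modules over the Iwasawa algebra $\mathcal{O}_L[[\Gamma]]$ and the resulting duality, not by summing the geometric series for $(1-\alpha\varphi)^{-1}$. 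Since you explicitly concede the hard direction to Colmez's lemma, this does not invalidate your reduction, but the heuristic picture you offer is misleading as to how that reference actually proves it.
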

        
              Let $q=p$ if $p>2$ and $q=4$ if $p=2$. Fix 
      a topological generator $\gamma$ of 
   $1+q\zp$ and define a map $\ell:\zpet\to \zp$ by
      $$\ell: \zpet\cong \mu(\qp)\times (1+q\zp)\to 1+q\zp=\gamma^{\zp}\cong \zp,$$
the second map being the natural projection and the last map sending $\gamma^x$ to $x$. 

\begin{lem}\label{operators}
For all 
  $\eta\in \hat{\mathcal{T}}^0(L)$ there is an equality of operators on $D\boxtimes\zpet$
 $$m_{\eta}=\sum_{n\geq 0} (\eta(\gamma)-1)^n m_{\binom{\ell}{n}}$$
 and $m_{\binom{\ell}{n}}(D_0\boxtimes\zpet)\subset D_0\boxtimes\zpet$.
 \end{lem}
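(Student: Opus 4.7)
The plan is to reduce the identity to a binomial expansion of the function $\eta$ restricted to $\zpet$, and then invoke the linearity and continuity of the construction $\phi\mapsto m_{\phi}$ together with its compatibility with the integral structure.

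First I would unpack the factorization of $\eta$ on $\zpet$. Since $\eta\in \hat{\mathcal{T}}^0(L)$ is trivial on $\mu(\qp)$ and the decomposition $\zpet\cong \mu(\qp)\times (1+q\zp)$ together with $1+q\zp=\gamma^{\zp}$ is built into the definition of $\ell$, every $x\in\zpet$ satisfies $\eta(x)=\eta(\gamma)^{\ell(x)}$. Setting $u=\eta(\gamma)-1$, which lies in $\mathfrak{m}_L$ because $\eta$ takes values in $1+\mathfrak{m}_L$, the binomial series gives the pointwise identity
\[
\eta(x)=(1+u)^{\ell(x)}=\sum_{n\geq 0}\binom{\ell(x)}{n}u^{n},
\]
with $\binom{\ell(x)}{n}\in\zp$. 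The partial sums $\phi_N:=\sum_{n=0}^{N}u^{n}\binom{\ell}{n}$ converge to $\eta$ uniformly on $\zpet$, with $\sup_{x}|\eta(x)-\phi_N(x)|\le |u|^{N+1}\to 0$.

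Next I would transfer this to operators. The operator $m_{\phi}$ is $L$-linear in $\phi$ by proposition~\ref{mult} (via its definition as a limit of finite Riemann-type sums), so
\[
m_{\phi_N}=\sum_{n=0}^{N}(\eta(\gamma)-1)^{n}\,m_{\binom{\ell}{n}}.
\]
The integrality statement $m_{\binom{\ell}{n}}(D_0\boxtimes\zpet)\subset D_0\boxtimes\zpet$ is immediate from the defining formula: each finite approximation $\sum_{i}\binom{\ell(i)}{n}\,{\rm Res}_{i+p^{N}\zp}(z)$ lies in $D_0\boxtimes\zpet$ because $\binom{\ell(i)}{n}\in\zp$, the restriction maps preserve $D_0\boxtimes\zpet$, and $D_0\boxtimes\zpet$ is closed. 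Consequently $m_{\binom{\ell}{n}}$ has operator norm $\leq 1$ on $D\boxtimes\zpet$ (equipped with the gauge of $D_0\boxtimes\zpet$), so the series $\sum_n u^n m_{\binom{\ell}{n}}$ converges in the Banach space of continuous endomorphisms, with the tail bounded by $|u|^{N+1}$.

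Finally I would identify the limit. Uniform convergence $\phi_N\to\eta$ implies $m_{\phi_N}\to m_\eta$ pointwise on $D\boxtimes\zpet$: indeed, for $z\in D_0\boxtimes\zpet$ one has $(m_{\eta}-m_{\phi_N})(z)=m_{\eta-\phi_N}(z)\in |u|^{N+1}\cdot(D_0\boxtimes\zpet)$ by the same integral argument applied to $\eta-\phi_N$ (after dividing by a scalar of valuation $\geq N+1$). Combining this with the convergent operator series above gives the claimed equality $m_{\eta}=\sum_{n\geq 0}(\eta(\gamma)-1)^{n}m_{\binom{\ell}{n}}$. The only mildly delicate point is justifying that $m_{(\cdot)}$ intertwines uniform limits of scalar-valued functions with limits of operators on $D\boxtimes\zpet$; this is where one uses the preservation of the integral lattice, which reduces the operator-norm estimate to a supremum-norm estimate of the function.
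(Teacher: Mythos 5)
Your proof is correct and follows essentially the same route as the paper's: the pointwise binomial identity $\eta(x)=\sum_n(\eta(\gamma)-1)^n\binom{\ell(x)}{n}$ using $x^{-1}\gamma^{\ell(x)}\in\mu(\qp)$, uniform convergence, and integrality of $\binom{\ell}{n}$. You spell out the transfer from uniform scalar convergence to operator convergence (via lattice preservation by $m_{\binom{\ell}{n}}$ and the linearity of $\phi\mapsto m_\phi$), a step the paper leaves implicit with the phrase ``this yields the first part.''
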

 
 \begin{proof}
 For all 
  $\eta\in \hat{\mathcal{T}}^0(L)$ and $x\in\zpet$ we have 
  $$\sum_{n\geq 0} (\eta(\gamma)-1)^n \binom{\ell(x)}{n}=\eta(\gamma)^{\ell(x)}=\eta(\gamma^{\ell(x)})=\eta(x),$$
  the last equality being a consequence of the fact that $x^{-1}\cdot \gamma^{\ell(x)}\in\mu(\qp)$. 
    Hence 
     $$\eta|_{\zpet}=\sum_{n\geq 0} (\eta(\gamma)-1)^n \binom{\ell}{n},$$
   the series being uniformly convergent on $\zpet$. This yields the first part. The second part is a consequence of the fact that $\binom{\ell}{n}\in \zp$.
    \end{proof}
        
         We are now ready to prove a key technical ingredient in the proof of theorem~\ref{main1}. 
         We identify 
        $\hat{\mathcal{T}}^0(L)$ and 
       $(1+{\mathfrak m}_L)\times (1+{\mathfrak m}_L)$ via the map $\eta\mapsto (\eta(\gamma),\eta(p))$. 
       
       \begin{defi}\label{an} A subset 
       $S$ of $(1+{\mathfrak m}_L)\times (1+{\mathfrak m}_L)$ is called {\it Zariski closed} if it is defined by a system of equations of the form 
       $f(x-1,y-1)=0$, with 
      $f\in \cal O[[X,Y]]$.  
       
       \end{defi}

       \begin{prop}\label{subgroup}
      The set 
      $$H=\{\eta\in \hat{\mathcal{T}}^0(L)|\quad m_{\eta}(\mathcal{C}^{\alpha})=\mathcal{C}^{\alpha\eta(p)} \quad \forall \alpha\in 1+{\mathfrak m}_L\}$$
      is a Zariski closed subgroup of $\hat{\mathcal{T}}^0(L)$.
     \end{prop}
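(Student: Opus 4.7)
My plan is to prove the two claims separately: first that $H$ is a subgroup, then that $H$ is Zariski closed. The subgroup property is immediate from proposition~\ref{mult}(a): the identity $m_{\eta_1\eta_2}=m_{\eta_1}\circ m_{\eta_2}$ lets one chain the defining relation, so $m_{\eta_1\eta_2}(\mathcal{C}^\alpha)=m_{\eta_1}(\mathcal{C}^{\alpha\eta_2(p)})=\mathcal{C}^{\alpha(\eta_1\eta_2)(p)}$ when $\eta_1,\eta_2\in H$; applying $m_{\eta^{-1}}$ to $m_\eta(\mathcal{C}^\alpha)=\mathcal{C}^{\alpha\eta(p)}$ and reparameterizing $\alpha\mapsto\alpha\eta^{-1}(p)$ shows $\eta^{-1}\in H$. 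For Zariski closedness, I would first reduce to a one-sided inclusion: let $H^+\subset\hat{\mathcal{T}}^0(L)$ be the set where $m_\eta(\mathcal{C}^\alpha)\subset\mathcal{C}^{\alpha\eta(p)}$ for every $\alpha\in 1+\mathfrak{m}_L$. The same calculation shows $H^+$ is a submonoid and $H=H^+\cap\sigma^{-1}(H^+)$, where $\sigma:\eta\mapsto\eta^{-1}$; in the parameterization $\hat{\mathcal{T}}^0(L)\cong(1+\mathfrak{m}_L)^2$, $\sigma$ acts by $(x,y)\mapsto(x^{-1},y^{-1})$, whose components are power series in $(x-1,y-1)$, so $\sigma$ pulls Zariski closed sets back to Zariski closed sets. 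It therefore suffices to prove $H^+$ is Zariski closed.

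The key step is analytic interpolation. By lemma~\ref{calpha}, $\eta\in H^+$ translates into the family of orthogonality relations
$$\{\check y,\,m_\eta((1-\alpha\varphi)\tilde x)\}=0\quad\text{for all}\quad\alpha\in 1+\mathfrak{m}_L,\ \tilde x\in D^{\psi=\alpha},\ \check y\in\check D^{\psi=1/(\alpha\eta(p))}.$$
Applying proposition~\ref{surj} to $D_0$ and to $\check D_0$, one may replace $\tilde x$ and $\check y$ by the specializations of fixed universal elements in $D_{0,\rm un}^{\psi=1}$ and in its counterpart for $\check D_0$, with twist parameters $X_1=\alpha-1$ and $X_2=1/(\alpha\eta(p))-1$; note that $X_2$ is itself an analytic function of $X_1$ and $\eta(p)-1$. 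Meanwhile, lemma~\ref{operators} expresses the operator $m_\eta$ as $\sum_{n\ge 0}(\eta(\gamma)-1)^n m_{\binom{\ell}{n}}$ with integral $m_{\binom{\ell}{n}}$ preserving $D_0\boxtimes\zpet$. Combining these inputs, I expect that for each fixed pair of universal elements the pairing above is the specialization at $(Z_1,X,Y)=(\alpha-1,\eta(\gamma)-1,\eta(p)-1)$ of a convergent element $F\in\mathcal{O}[[Z_1,X,Y]]$.

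Writing $F=\sum_{n\ge 0}F_n(X,Y)Z_1^n$ with $F_n\in\mathcal{O}[[X,Y]]$, the identity principle for $p$-adic analytic functions in the single variable $Z_1$, applied pointwise in $(X,Y)$, shows that vanishing of $F$ for all $Z_1\in\mathfrak{m}_L$ is equivalent to vanishing of each $F_n$ as a power series in $(X,Y)$. Hence $H^+$ is cut out by the family of equations $F_n=0$ as $n$ and the universal elements vary, which is precisely Zariski closed in the sense of definition~\ref{an}. The hard part will be the analytic interpolation: one needs to verify that the specialization maps of proposition~\ref{surj}, the twisted Frobenius $\varphi\mapsto(1+X_1)\varphi$ on $D_{0,\rm un}$, and the operator series of lemma~\ref{operators} genuinely combine to yield a convergent element of $\mathcal{O}[[Z_1,X,Y]]$ rather than a merely formal expression. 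The integrality of $m_{\binom{\ell}{n}}$ together with the compactness of the universal $\psi$-fixed modules should provide the uniform convergence required.
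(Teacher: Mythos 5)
Your proof follows essentially the same route as the paper's, but with two organizational differences that are worth noting.

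First, your explicit reduction $H = H^+\cap\sigma^{-1}(H^+)$, together with the observation that inversion acts by the power series $X\mapsto -X/(1+X)$, $Y\mapsto -Y/(1+Y)$ and hence preserves Zariski-closedness, makes rigorous a step that the paper states without justification (``it suffices to check that $H_\alpha$ is Zariski closed''). This is a genuine clarification. Second, the introduction of the extra variable $Z_1=\alpha-1$ and the appeal to the identity principle in $Z_1$ are unnecessary and add complexity. The paper simply \emph{fixes} $\alpha$, obtains for each $\alpha$ a system of equations in $(X,Y)=(\eta(\gamma)-1,\eta(p)-1)$ cutting out $H_\alpha$, and then uses that $H^+=\bigcap_\alpha H_\alpha$ is Zariski closed because arbitrary intersections of Zariski-closed sets (in the sense of definition~\ref{an}, as solution sets of systems of equations) are again Zariski closed. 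With $\alpha$ fixed, the double series $\sum_{k,n}(\tfrac1{\alpha\eta(p)}-1)^k(\eta(\gamma)-1)^n\{\check z_k,\,m_{\binom{\ell}{n}}(\mathrm{sp}_\alpha(z))\}$ lies in $\OO[[X,Y]]$ at sight: the coefficients are in $\OO$ because $\check z_k\in\check D_0$, $\mathrm{sp}_\alpha(z)\in D_0$, and lemma~\ref{operators} gives $m_{\binom{\ell}{n}}(D_0\boxtimes\zpet)\subset D_0\boxtimes\zpet$, while $\tfrac1{\alpha\eta(p)}-1$ lies in the maximal ideal $(\varpi,Y)$ of $\OO[[Y]]$, so the $k$-sum converges $(\varpi,Y)$-adically. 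This is exactly the convergence verification you flag as ``the hard part'' and leave open; it is not hard once $\alpha$ is fixed, which is the real benefit of not interpolating over $\alpha$.
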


         \begin{proof} Since $\hat{\mathcal{T}}^0(L)\to {\rm Aut}_{L}(D^{\psi=0})$, $\eta\mapsto m_{\eta}$
    is a morphism of groups, 
     $H$ is a subgroup of $\hat{\mathcal{T}}^0(L)$.
    To conclude, it suffices to check that 
      $$H_{\alpha}=\{\eta\in\hat{\mathcal{T}}^0(L)| m_{\eta}(\mathcal{C}^{\alpha})\subset \mathcal{C}^{\alpha\eta(p)}\}$$
        is Zariski closed for all $\alpha\in 1+{\mathfrak m}_L$.
        
        Let us fix $\alpha\in 1+{\mathfrak m}_L$ and denote               
        $\mathcal{C}_{\rm un}=(1-\varphi)D_{0,\rm un}^{\psi=1}$ and $\mathcal{\check{C}}_{\rm un}=(1-\varphi)\check{D}_{0,\rm un}^{\psi=1}$, 
        where $D_{0, \rm un}$ and $\check{D}_{0,\rm un}$ were defined in 
        ${\rm n}^{\rm o}$~\ref{familiar}. 
        If $\eta\in\hat{\mathcal{T}}^0(L)$, it follows from proposition~\ref{surj} that the specialization maps 
        induce surjections $\mathcal{C}_{\rm un}\otimes_{\mathcal{O}} L\to \mathcal{C}^{\alpha}$ and $\mathcal{\check{C}}_{\rm un}\otimes_{\mathcal{O}} L\to \mathcal{\check{C}}^{1/\alpha\eta(p)}$.
        Since $\mathcal{C}^{\alpha\eta(p)}$ is the orthogonal of $\mathcal{\check{C}}^{1/\alpha\eta(p)}$ in $D^{\psi=0}$ (lemma~\ref{calpha}), it follows that 
                $$H_{\alpha}=\{\eta\in\hat{\mathcal{T}}^0(L)|\quad \{{\rm sp}_{1/\alpha\eta(p)}(\check{z}), m_{\eta}({\rm sp}_{\alpha}(z))\}=0 \quad \forall \check{z}\in\mathcal{\check{C}}_{\rm un}, z\in \mathcal{C}_{\rm un}\}.$$               
               
                 Fix $\check{z}\in\mathcal{\check{C}}_{\rm un}$ and $z\in \mathcal{C}_{\rm un}$. We can write 
                 $\check{z}=\sum_{k\geq 0} X^k \check{z}_k$ with 
                 $\check{z}_k\in \check{D}_0$. By definition 
                 $${\rm sp}_{1/\alpha\eta(p)}(\check{z})=\sum_{k\geq 0} (\frac{1}{\alpha\eta(p)}-1)^k \check{z}_k.$$
                 
    Combining this relation and lemma~\ref{operators}, we obtain
   $$\{{\rm sp}_{1/\alpha\eta(p)}(\check{z}), m_{\eta}({\rm sp}_{\alpha}(z))\}=\sum_{k,n\geq 0} (\frac{1}{\alpha\eta(p)}-1)^k (\eta(\gamma)-1)^n \{\check{z}_k, m_{\binom{\ell}{n}}({\rm sp}_{\alpha}(z))\}$$
   and the last expression is the evaluation at $(\eta(\gamma)-1, \eta(p)-1)$ of an element of $\cal{O}[[X,Y]]$. Thus $H_{\alpha}$ is a Zariski closed subset of 
   $(1+{\mathfrak m}_L)^2$, which finishes the proof of proposition~\ref{subgroup}. 
           \end{proof}

\subsubsection{The Zariski closure of $(a^n,b^n)_{n\geq 1}$}

  We refer the reader to definition~\ref{an} for the notion of Zariski closed subset of $(1+{\mathfrak m}_L)\times (1+{\mathfrak m}_L)$.

 \begin{prop}\label{Zariski}
  Let $a,b\in 1+{\mathfrak m}_L$. The Zariski closure of $\{(a^n, b^n)| n\geq 1\}$ is 
  
  $\bullet$ A finite subgroup of $\mu_{p^{\infty}}\times \mu_{p^{\infty}}$ if $\log a=\log b=0$.
  
  $\bullet$ The set $\{(x,x^s)| x\in 1+{\mathfrak m}_L\}$ (respectively $\{(x^s,x)| x\in 1+{\mathfrak m}_L\}$) if $\log b=s\log a$ (respectively
  $\log a=s\log b$), $s\in\zp$ and $(\log a,\log b)\ne (0,0)$.
  
  $\bullet$ $(1+{\mathfrak m}_L)\times (1+{\mathfrak m}_L)$ if $\log a$ and $\log b$ are linearly independent over $\qp$.
 
 \end{prop}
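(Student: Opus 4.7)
The approach is to identify the Zariski closure $H$ of $\Gamma := \{(a^n, b^n) : n \geq 1\}$ as a closed formal subgroup of $(1 + {\mathfrak m}_L)^2 \cong \hat{\mathbb{G}}_m^2$ over $\OO$, and then read off the three cases from the classification of such subgroups. The starting point is the $p$-adic identity principle: for any $f \in \OO[[X, Y]]$ vanishing on $\Gamma$, the function $n \mapsto f(a^n - 1, b^n - 1)$ is $p$-adic analytic on $\zp$ (via the binomial expansion $a^n = \sum_{k \geq 0}\binom{n}{k}(a - 1)^k$), vanishes on $\NN$, and therefore vanishes identically on $\zp$ since $\NN$ has accumulation points in $\zp$ (for instance $p^k \to 0$). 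Iterating this through the multiplicative structure of $\hat{\mathbb{G}}_m^2$ shows that $H$ is stable under the formal group law, hence is a closed formal subgroup.

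Next I would invoke the classification of closed formal subgroups of $\hat{\mathbb{G}}_m^2$ over $\OO$, which reduces via Lie algebras to the classification of $\zp$-direct summands of $\zp^2$, since $\mathrm{End}_{\OO}(\hat{\mathbb{G}}_m) = \zp$ (the map $x \mapsto (1+x)^s - 1$ belongs to $\OO[[X]]$ precisely when $s \in \zp$, because $\binom{s}{k}$ has unbounded norm for $s \in \OO \setminus \zp$). The list has three tiers: finite subgroups of $\mu_{p^\infty}(L)^2$; one-dimensional subgroups $\{(x, x^s) : x \in 1 + {\mathfrak m}_L\}$ with $s \in \zp$ (and symmetrically $\{(x^s, x) : x\}$ with $s \in p\zp$), possibly translated by a finite torsion subgroup; and the whole formal group. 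Matching $(a, b)$ against this list, with $u = \log a$ and $v = \log b$: case (i), $u = v = 0$, forces $a, b \in \mu_{p^\infty}$ so $\Gamma$ is finite and $H$ is the finite cyclic group generated by $(a, b)$; case (ii), $v = su$ with $s \in \zp$ and $u \neq 0$, places $(a, b)$ up to a root of unity $\zeta$ (with $b = \zeta a^s$) on the integral curve cut out by $1 + Y - \sum_k \binom{s}{k} X^k$, and the one-variable analog of the identity-principle argument applied to $\{a^n : n \in \zp\}$ (Zariski dense in $1 + {\mathfrak m}_L$ because $u \neq 0$) shows $H$ fills out the entire curve; case (iii), $\qp$-linear independence of $u, v$, excludes any relation $v = su$ or $u = sv$ with $s \in \zp$, so $(a, b)$ lies in no $1$-dimensional closed formal subgroup, and $H = (1 + {\mathfrak m}_L)^2$ by elimination.

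The main technical obstacle is the classification of closed formal subgroups of $\hat{\mathbb{G}}_m^2$; while standard, a self-contained proof requires analyzing Hopf ideals of $\OO[[X, Y]]$, or equivalently the $\zp$-stable closed submodules of the Lie algebra. A hands-on alternative for case (iii) is a direct attack: after replacing $(a, b)$ by $(a^{p^N}, b^{p^N})$ for large $N$ so that $\exp$ converges at $(u, v)$, the pullback $\tilde{f}(S, T) := f(\exp(S) - 1, \exp(T) - 1)$ is an $L$-analytic function vanishing on $\zp \cdot (u, v)$; expanding $\tilde{f} = \sum c_{ij} S^i T^j$ yields the relations $\sum_{i + j = k} c_{ij} u^i v^j = 0$ for every $k \geq 0$, and the $\qp$-linear independence of $u, v$ together with the $\OO$-integrality of the $c_{ij}$ can be leveraged (via reduction modulo $p$ and a careful induction on $k$) to force $f = 0$. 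The minor root-of-unity ambiguity in case (ii) is absorbed by reading the answer as a formal subgroup modulo a finite torsion subgroup, which is what the classification naturally produces.
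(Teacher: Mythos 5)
Your overall framework is right (the identity principle to pass from $n \in \NN$ to $n \in \zp$, and viewing the closure as a subobject of $\hat{\mathbb G}_m^2$), but neither of your two proposed routes to the third bullet actually closes the argument, and the third bullet is the whole content of the proposition.

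Your first route rests on the classification of closed formal subgroup schemes of $\hat{\mathbb G}_m^2$ over $\OO$. You are right that this \emph{should} reduce to $\End_{\OO}(\hat{\mathbb G}_m) = \zp$ and primitive vectors in $\zp^2$, but you explicitly flag it as "the main technical obstacle" and do not prove it; the proposition you are trying to prove is essentially equivalent to this classification, so invoking it is circular as far as a self-contained proof is concerned. The paper avoids the group-scheme machinery entirely.

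Your second route, the "direct attack" via the exponential pullback, has a concrete error. The coefficients $c_{ij}$ of $\tilde f(S,T) = f(\exp(S)-1,\exp(T)-1)$ are \emph{not} $\OO$-integral: $\exp$ introduces the denominators $1/n!$, and $f \in \OO[[X,Y]]$ does not control these. This is not a minor bookkeeping slip; it is exactly the point. Indeed the function $\log b \cdot \log(1+X) - \log a\cdot\log(1+Y)$ (equivalently $\tilde f(S,T) = vS - uT$ after pullback) vanishes identically on $\zp\cdot(u,v)$, is nonzero, and its vanishing does \emph{not} contradict the proposition -- precisely because it is not $\OO$-integral (this is the content of the remark following the proposition in the paper). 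Moreover, the relations $\sum_{i+j=k} c_{ij} u^i v^j = 0$ you extract are single $L$-linear conditions in each degree; $\qp$-linear independence of $u,v$ in no way forbids $L$-linear (or even $\OO$-linear) relations $c_{10}u + c_{01}v = 0$ once $L \ne \qp$. So the "reduction modulo $p$ and careful induction on $k$" cannot work as sketched: you would be proving a false statement (the unbounded counterexample above satisfies all those relations). Any correct argument has to use the integrality of $f$ itself, not of the pulled-back coefficients.

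The paper's actual proof sidesteps both issues. After the analytic-continuation step (which you have), it writes $b = a^s$ with $s \in \OO\setminus\zp$ (possible since $\log b/\log a \notin \qp$, so one of the two ratios lies in $\OO\setminus\zp$), deduces $f(T,(1+T)^s - 1) = 0$ as a formal power series identity in $L[[T]]$, and then invokes that $(1+T)^s$ is transcendental over $\mathrm{Frac}(\mathcal E^+)$ for $s \in \OO\setminus\zp$ (Proposition \ref{algebraic}, proved with the derivation $\partial = (1+T)\tfrac{d}{dT}$ and the elementary fact $(1+T)^s \in \mathcal R^+ \iff s \in \zp$), combined with formal Weierstrass preparation to conclude $f = 0$. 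It is here -- in the statement $(1+T)^s \in \mathcal R^+ \iff s\in\zp$ -- that the boundedness/integrality of $f$ is actually exploited, replacing both the unproven subgroup classification and the broken induction in your proposal.
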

 \begin{proof}
  The first two cases are immediate, so 
assume that $\log a$ and $\log b$ are linearly independent over $\qp$. Suppose that $f\in \mathcal{O}[[X,Y]]$
    satisfies $f(a^n-1,b^n-1)=0$ for all $n\geq 1$. We will prove that $f=0$. We may assume that $b=a^s$, with $s\in \mathcal{O}-\zp$.  
   For $n\geq 1$ we have $$v_p\left(\binom{s}{n}\right)\geq -v_p(n!)>-\frac{n}{p-1},$$
  hence $x^s=\sum_{n\geq 0} \binom{s}{n}(x-1)^n$ is well-defined for $v_p(x-1)>\frac{1}{p-1}$ and 
    $x\mapsto x^s$ is analytic in this ball, with values in $1+{\mathfrak m}_L$.
Thus  $x\mapsto f(x, (1+x)^s-1)$
 is analytic on the ball $v_p(x)>\frac{1}{p-1}$ and vanishes at $a^n-1$ for all $n\geq 1$. Since $\log a\ne 0$, it follows that 
 $f(x, (1+x)^s-1)=0$ for all $v_p(x)>\frac{1}{p-1}$, consequently $f(T, (1+T)^s-1)=0$ in $L[[T]]$.    
Proposition~\ref{algebraic} below combined with the formal Weierstrass preparation theorem yield $f=0$, which is the desired result.
 \end{proof}

\begin{prop}\label{algebraic}
If $s\in \mathcal{O}-\zp$, then $(1+T)^s$ is transcendental over ${\rm Frac}(\mathcal{E}^+)$. 
 \end{prop}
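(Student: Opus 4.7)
The plan is to argue by contradiction, using the differential equation $(1+T)f' = sf$ satisfied by $f := (1+T)^s$ to pass from algebraicity of $f$ over $K := {\rm Frac}(\mathcal{E}^+)$ to a statement about boundedness of binomial coefficients.

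Suppose $f$ is algebraic over $K$ of degree $n\geq 1$, with minimal polynomial $P(X) = X^n + a_{n-1}X^{n-1}+\cdots+a_0 \in K[X]$. Since the ODE has coefficients in $L(T)\subset K$, the Galois conjugates $f_1=f,f_2,\ldots,f_n$ of $f$ (in an algebraic closure of $K$) all satisfy the same equation. Setting $y:=\prod_i f_i = (-1)^n a_0\in K^\times$ and differentiating logarithmically,
$$\frac{y'}{y}\;=\;\sum_{i=1}^n\frac{f_i'}{f_i}\;=\;\frac{ns}{1+T}\qquad\text{in }K.$$
Viewing $y$ inside ${\rm Frac}(L[[T]])^\times$ and writing $y = T^r u$ with $r\in\ZZ$ and $u\in L[[T]]^\times$: since $y'/y = r/T + u'/u$ must coincide with $ns/(1+T)\in L[[T]]$, which has no pole at $T=0$, we get $r=0$, so $y\in L[[T]]^\times$. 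The solution space of $y'/y = ns/(1+T)$ in $L[[T]]^\times$ is one-dimensional over $L^\times$ and contains the particular solution $(1+T)^{ns}$; hence $y = c\cdot(1+T)^{ns}$ for some $c\in L^\times$, giving $(1+T)^{ns}\in K$.

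Next, because $\mathcal{O}$ is a finite free $\zp$-module containing $\zp$ as a direct summand, $\mathcal{O}/\zp$ is a torsion-free $\zp$-module; thus $s\in\mathcal{O}\setminus\zp$ forces $m:=ns\in\mathcal{O}\setminus\zp$ for every integer $n\geq 1$. It therefore suffices to show that $(1+T)^m\in K$ implies $m\in\zp$. Since $\mathcal{E}$ is a field containing $\mathcal{E}^+$, we have $K\subset\mathcal{E}$, and intersecting with $L[[T]]$ yields $K\cap L[[T]]\subset\mathcal{E}\cap L[[T]] = \mathcal{E}^+$. Consequently $(1+T)^m=\sum_{k\geq 0}\binom{m}{k}T^k\in\mathcal{E}^+$, which is equivalent to the boundedness of the sequence $(\binom{m}{k})_{k\geq 0}$ in $L$; the desired contradiction follows from the classical fact that for $m\in L$ the binomial coefficients $\binom{m}{k}$ are bounded if and only if $m\in\zp$.

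The main obstacle is this last boundedness lemma. One direction is immediate from the density of $\ZZ\subset\zp$ and the integrality of $\binom{m}{k}$ for $m\in\ZZ$. For the converse, with $m\in L\setminus\zp$ the quantity $v:=\sup_{a\in\zp}v_p(m-a)$ is finite and bounds $v_p(m-i)$ for every $i\in\ZZ$. The identity
$$v_p\!\left(\binom{m}{k}\right)\;=\;\sum_{i=0}^{k-1}v_p(m-i)\;-\;v_p(k!),$$
combined with a partition of $\{0,\ldots,k-1\}$ according to $p$-adic distance from a near-optimal integer approximation of $m$ and with the Legendre estimate $v_p(k!)\sim k/(p-1)$, forces $v_p(\binom{m}{k})\to-\infty$. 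The delicate case is when $\bar m\in\Fp\subset k$, where many integers lie $p$-adically close to $m$ but their combined contribution still falls strictly short of $v_p(k!)$.
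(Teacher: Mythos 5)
Your proof has the same two-step skeleton as the paper's: first show $(1+T)^{ns}\in K:={\rm Frac}(\mathcal{E}^+)$, where $n=[K(f):K]$, then rule out $(1+T)^m\in K$ for $m\in\mathcal{O}\setminus\zp$. But each step is carried out differently, and the second difference is substantive.

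For the first step, the paper applies $\partial=(1+T)\frac{d}{dT}$ to the minimal relation $P(f)=0$ and uses minimality of the degree to force each coefficient $\partial a_k+s(k-n)a_k$ to vanish, extracting $a_0=c(1+T)^{ns}$ from the $k=0$ relation. Your norm computation $y=\prod_if_i=(-1)^na_0$ is the same manipulation packaged through Galois theory; it implicitly uses that $\partial$ extends uniquely to $\overline K$ and commutes with $K$-automorphisms, which is fine in characteristic~$0$ but is one extra thing to justify compared with the paper's purely algebraic coefficient comparison inside $K$.

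The second step is where you genuinely depart. The paper embeds $K$ into the Robba ring $\mathcal{R}$ (Weierstrass preparation lets one invert elements of $\mathcal{E}^+$ in $\mathcal{R}$) and invokes the Dwork--Gerotto--Sullivan lemma that $(1+T)^s\in\mathcal{R}^+$ forces $s\in\zp$, a radius-of-convergence statement. You instead use the trivial inclusion $K\subset\mathcal{E}$ (no Weierstrass preparation needed, since $\mathcal{E}$ is a field containing $\mathcal{E}^+$) to get the stronger conclusion that $(1+T)^{ns}\in\mathcal{E}\cap L[[T]]$ has bounded coefficients. Since $\mathcal{E}^+\subset\mathcal{R}^+$, the lemma you need (bounded $\binom{m}{k}$ forces $m\in\zp$) is formally weaker than the one cited in the paper and admits an elementary valuation-theoretic proof, so your route is more self-contained.

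The remaining gap is the one you flag yourself: your step 7, the converse direction of the boundedness lemma, is only sketched. It does go through, and here is a clean way to close it. Boundedness of $\bigl(\binom{m}{k}\bigr)_k$ is invariant under $m\mapsto m-a$ for $a\in\zp$, by the Cauchy product $\binom{m-a}{k}=\sum_j\binom{m}{j}\binom{-a}{k-j}$ and the ultrametric inequality. Pick $a_0\in\zp$ attaining $v:=\sup_{a\in\zp}v_p(m-a)<\infty$ (attained since $\zp$ is complete), and set $u:=m-a_0$. Optimality forces $v_p(u-b)=\min(v,v_p(b))$ for all $b\in\zp$, whence
$$v_p\binom{u}{k}=\sum_{i=0}^{k-1}\min\bigl(v,v_p(i)\bigr)-v_p(k!)=v-v_p(k)-\sum_{i=1}^{k-1}\max\bigl(v_p(i)-v,0\bigr),$$
and the last sum grows linearly in $k$, since each integer $j>v$ contributes roughly $(j-v)\,k(p-1)p^{-j-1}$. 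Hence $v_p\binom{u}{k}\to-\infty$ and the $\binom{m}{k}$ are unbounded. This is the rigorous form of the partition estimate you describe.
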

\begin{proof} Denote $f=(1+T)^s$ and 
assume that $f$ is algebraic over $K:={\rm Frac}(\mathcal{E}^+)$. 
We need the following elementary result \cite[prop. 7.3]{DGS}. 

\begin{lem}
  $(1+T)^s\in\mathcal{R}^+$ if and only if $s\in\zp$.
\end{lem}

   We start with the case $f\in K$. Since a nonzero element of $\mathcal{E}^+$ 
   generates the same ideal in $\mathcal{E}^+$ as a nonzero polynomial, 
    we have 
   $K\subset \mathcal{R}$, thus $f\in \mathcal{R}\cap L[[T]]=\mathcal{R}^+$ and we are done
   by the previous lemma. 

  Next, assume that $f$ is algebraic over $K$ and $f\notin K$. Let 
  $P=X^n+a_{n-1}X^{n-1}+...+a_0\in K[X]$ be its minimal polynomial 
over $K$, with $n>1$. Consider the differential operator $\partial=(1+T)\frac{d}{dT}$ 
and observe that $\partial f=sf$. The equality $\partial(P(f))-ns P(f)=0$ can also be written as 
$$\sum_{k=0}^{n-1} (\partial a_k+s(k-n)a_k)f^k=0.$$
By minimality of $n$ we deduce that $\partial a_k+s(k-n)a_k=0$ for all $k<n$, in particular $\partial(a_0\cdot (1+T)^{-sn})=0$,
hence $a_0=c\cdot (1+T)^{sn}$ for some $c\in L^*$. Thus
$(1+T)^{sn}\in K$ and by the previous paragraph this gives $sn\in \zp$, which combined with 
$s\in \mathcal{O}$ yields $s\in\zp$, a contradiction. The result follows.
  \end{proof}
  
   The following result follows immediately from proposition~\ref{Zariski}.

 \begin{cor}\label{torsion}
  If $\mu_p\subset L$, then any nontrivial Zariski closed subgroup of $\hat{\mathcal{T}}^0(L)$ contains a nontrivial character of finite order. 
 \end{cor}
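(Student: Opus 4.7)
The plan is to reduce to Proposition~\ref{Zariski} by considering the cyclic subgroup generated by a single nontrivial element of $H$.

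Pick any nontrivial $\eta \in H$ and let $(a,b) = (\eta(\gamma), \eta(p)) \in (1+\mathfrak{m}_L)^2$ be its image under the identification of $\hat{\mathcal{T}}^0(L)$ with $(1+\mathfrak{m}_L)^2$. Because $H$ is a subgroup, it contains every power $\eta^n$, which corresponds to $(a^n, b^n)$. Because $H$ is Zariski closed (in the sense of Definition~\ref{an}), it contains the Zariski closure $\overline{S}$ of $\{(a^n, b^n) : n \geq 1\}$. Proposition~\ref{Zariski} computes $\overline{S}$ explicitly, so it remains to exhibit a nontrivial element of finite order in $\overline{S}$ in each of its three cases.

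If $\log a = \log b = 0$, then $a,b \in \mu_{p^\infty}$, so $\eta$ is itself a nontrivial character of finite order and we are done. In the second case we have $\log b = s \log a$ (or the symmetric condition) with $s \in \zp$ and $(\log a, \log b) \neq (0,0)$; then $\overline{S}$ contains the analytic family $\{(x, x^s) : x \in 1+\mathfrak{m}_L\}$, where $x^s = \sum_{n \geq 0} \binom{s}{n}(x-1)^n$ with $\binom{s}{n} \in \zp$. In the third case $\overline{S} = (1+\mathfrak{m}_L)^2$.

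Since $\mu_p \subset L$, fix a primitive $p$-th root of unity $\zeta \in 1+\mathfrak{m}_L$. In case two, $(\zeta, \zeta^s) \in \overline{S} \subset H$; this element is nontrivial because $\zeta \neq 1$, and $\zeta^s$ (defined by the binomial series, equivalently as a limit of $\zeta^{s_n}$ with $s_n \in \mathbb{Z}$, $s_n \to s$) is another $p$-th root of unity, so $(\zeta, \zeta^s)$ has order dividing $p$. In case three one can simply take $(\zeta, 1) \in H$. In each case $H$ contains a nontrivial character of order $p$, which is the desired conclusion. There is no real obstacle here: the corollary is a direct unpacking of Proposition~\ref{Zariski} together with the observation that $\mu_p \subset 1+\mathfrak{m}_L$ supplies finite-order points on every one-parameter subgroup appearing in the classification.
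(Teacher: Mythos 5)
Your argument is correct and is exactly the unpacking the paper intends when it says the corollary ``follows immediately from proposition~\ref{Zariski}'': pick a nontrivial $\eta\in H$, observe that $H$ being Zariski closed and a group forces $H$ to contain the Zariski closure of $\{(\eta(\gamma)^n,\eta(p)^n)\}$, and then read off a nontrivial torsion point in each of the three cases of that proposition, using $\mu_p\subset 1+\mathfrak{m}_L$. Your handling of the details (that $\zeta^s$ makes sense and is again a $p$-th root of unity because $\binom{s}{n}\in\zp$ for $s\in\zp$, and that $(\zeta,1)$ works in the generic case) is correct; this matches the paper's approach.
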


\begin{remark}
 The conclusion of proposition~\ref{Zariski} fails if we work with unbounded analytic functions instead of elements of $L\otimes_{\mathcal{O}} \mathcal{O}[[X,Y]]$ when defining
 the Zariski closure: if $a, b\in 1+{\mathfrak m}_L$ satisfy $(\log a, \log b)\ne (0,0)$, then $\log b\cdot \log(1+X)-\log a\cdot \log(1+Y)$ vanishes at $(a^n-1,b^n-1)$ for all $n\geq 1$. 

\end{remark}

\end{document}